\def\atSign{@@}
\def\mathbb{\Bbb}
\def\mathfrak{\frak}
\def\mathbf{\bold}
\def\boldsymbol#1{{\bold #1}}
\def\mathbit{\boldsymbol}
\newenvironment{proof}{%
	\@ifnextchar[{%
		\expandafter\let\expandafter\end@proof
		\csname endpf*\endcsname
		\my@proof
	}{\let\end@proof\endpf\pf}%
}{\end@proof}
\def\my@proof[#1]{\@nameuse{pf*}{#1}}
\def\xrightarrow[#1]#2{@>{#2}>{#1}>}
\def\xleftarrow[#1]#2{@<{#2}<{#1}<}
\def\providecommand#1{\def#1}
\def\emph#1{{\em #1}}
\def\textbf#1{{\bf #1}}
\def\mathring{\overset{\,\,{}_\circ}}
\DeclareMathAccent{\mathring}{\mathalpha}{operators}{"17}
\long\def\FAKEendPROOF{\endtrivlist}
\def\endproof{\qed\endtrivlist}
\DeclareMathAlphabet{\mathbit}{OML}{cmm}{b}{it}
\def\atSign{@}
\def\Sb#1\endSb{_{\substack{#1}}}
\def\Sp#1\endSp{^{\substack{#1}}}
		\def\mathcal{\cal}
		\def\pcyr{%
			\def\default@family{UWCyr}%
			\let\oldSl@\sl
			\def\sl{\def\default@shape{it}\oldSl@}%
			\cyracc
			\language\Russian\family{UWCyr}\selectfont
		}
	\DeclareFontFamily{OT2}{cmr}{\hyphenchar\font45 }
	\DeclareFontShape{OT2}{cmr}{m}{n}{%
		<5><6><7><8><9><10>gen*wncyr %
		<10.95><12><14.4><17.28><20.74><24.88> wncyr10 %
	}{}
	\DeclareFontShape{OT2}{cmr}{m}{it}{%
		<5><6><7><8><9><10> gen * wncyi%
		<10.95><12><14.4><17.28><20.74><24.88> wncyi10%
	}{}
	\DeclareFontShape{OT2}{cmr}{bx}{n}{%
		<5><6><7><8><9><10> gen * wncyb%
		<10.95><12><14.4><17.28><20.74><24.88> wncyb10%
	}{}
	\DeclareFontShape{OT2}{cmr}{m}{sl}{%
		<-> ssub * cmr/m/it%
	}{}
	\DeclareFontShape{OT2}{cmr}{m}{sc}{%
		<5><6><7><8><9><10>%
		<10.95><12><14.4><17.28><20.74><24.88> wncysc10%
	}{}
	\DeclareFontFamily{OT2}{cmss}{\hyphenchar\font45 }
	\DeclareFontShape{OT2}{cmss}{m}{n}{%
		<8><9><10> gen * wncyss%
		<10.95><12><14.4><17.28><20.74><24.88> wncyss10%
	}{}
	\def\cyrencodingdefault{OT2}
	\def\pcyr{%
		\cyracc
		\let\encodingdefault\cyrencodingdefault
		\language\Russian\fontencoding{OT2}\selectfont
	}
	\def\theorembodyfont#1{\relax}
	\let\@@th@plain\th@plain
	\def\th@plain{ \@@th@plain \slshape }
	\let\normalshape\relax
\def\cprime{$'$}
\def\@sect@my#1#2#3#4#5#6[#7]#8{%
	\ifnum #2>\c@secnumdepth
	\let\@svsec\@empty
	\else
	\refstepcounter{#1}%
	\edef\@svsec{\ifnum#2<\@m
		\@ifundefined{#1name}{}{\csname #1name\endcsname\ }\fi
		\noexpand\rom{\csname the#1\endcsname.}\enspace}\fi
	\@tempskipa #5\relax
	\ifdim \@tempskipa>\z@ 
	\begingroup #6\relax
	\@hangfrom{\hskip #3\relax\@svsec}{\interlinepenalty\@M #8\par}%
	\endgroup
	\if@article\else\csname #1mark\endcsname{%
		\ifnum \c@secnumdepth >#2\relax\csname the#1\endcsname. \fi#7}\fi
	\ifnum#2>\@m \else
	\let\@tempf\\ \def\\{\protect\\}\addcontentsline{toc}{#1}%
	{\ifnum #2>\c@secnumdepth \else
		\protect\numberline{%
			\ifnum#2<\@m
			\@ifundefined{#1name}{}{\csname #1name\endcsname\ }\fi
			\csname the#1\endcsname.}\fi
		#8}\let\\\@tempf
	\fi
	\else
	\def\@svsechd{#6\hskip #3\@svsec
		\@ifnotempty{#8}{\ignorespaces#8\unskip
			\ifnum\spacefactor<1001.\fi}%
		\ifnum#2>\@m \else
		\let\@tempf\\ \def\\{\protect\\}\addcontentsline{toc}{#1}%
		{\ifnum #2>\c@secnumdepth \else
			\protect\numberline{%
				\ifnum#2<\@m
				\@ifundefined{#1name}{}{\csname #1name\endcsname\ }\fi
				\csname the#1\endcsname.}\fi
			#8}\let\\\@tempf\fi}%
	\fi
	\@xsect{#5}}
\let\@sect\@sect@my             
\def\th@remark@my{\theorempreskipamount6\p@\@plus6\p@
	
	8 warnings
	
	\theorempostskipamount\theorempreskipamount
	\def\theorem@headerfont{\it}\normalshape}
\let\th@remark\th@remark@my
\let\o@@remark\th@remark
\def\th@remark{\o@@remark
	\ifdim\theorempostskipamount < 2pt\relax
	\theorempostskipamount\theorempreskipamount
	\multiply\theorempostskipamount\tw@
	\divide\theorempostskipamount\thr@@
	\fi
}
\let\myLabel\@gobble
\def\labelsONmargin{\@mparswitchfalse\def\myLabel##1{\@bsphack\marginpar
		{\normalshape\tiny\rm Label ##1}\@esphack}}
\def\url#1{{\tt #1}}%
\def\cyracc{\def\u##1{
		\if \i##1\char"1A%
		\else \if I##1\char"12%
		\else \accent"24 ##1\fi\fi }%
	\def\"##1{\if e##1{\char"1B}%
		\else \if E##1{\char"13}%
		\else \accent"7F ##1\fi\fi }%
	\def\9##1{\if##1z\char"19 
		\else\if##1Z\char"11 
		\else\if##1E\char"03 
		\else\if##1e\char"0B 
		\else\if##1u\char"18 
		\else\if##1U\char"10 
		\else\if##1A\char"17 
		\else\if##1a\char"1F 
		\else\if##1p\char"7E 
		\else\if##1P\char"5E 
		\else\if##1Q\char"5F 
		\else\if##1q\char"7F 
		\else\if##1i\char"1A 
		\else\if##1I\char"12 
		\else\if##1N\char"7D 
		\fi
		\fi
		\fi
		\fi
		\fi
		\fi
		\fi
		\fi
		\fi
		\fi
		\fi
		\fi
		\fi
		\fi
		\fi
	}%
	\def\cydot{{\kern0pt}}}%
\def\cydot{$\cdot$}
\def\Russian{0\relax
	\message{Don't know the hyphenation rules for Russian^^J
		Please do INITeX with `input  russhyph' in the 
		command line}%
	\gdef\Russian{0\relax}%
}
\def\@putname#1#2#3#4{\def\@@ref{#3}\let\old@bf\bf
	\def\bf##1{\old@bf\if?\noexpand##1?{#4}\else##1\fi}%
	#1{#2}%
	\let\bf\old@bf}
\def\@putname#1#2#3#4{\def\@@ref{#3}\let\old@bf\bf	
	\let\old@reset@font\reset@font			
	\def\bf##1{\old@bf\if?\noexpand##1?{#4}\else##1\fi}%
	\def\reset@font##1##2{\old@reset@font##1\if?\noexpand##2?{#4}\else##2\fi}#1{#2}%
	\let\bf\old@bf\let\reset@font\old@reset@font}
\let\my@ref=\ref
\def\ref#1{\@putname\my@ref{#1}{#1}{\tiny\rm\@@ref}}
\let\my@pageref=\pageref
\def\pageref#1{\@putname\my@pageref{#1}{#1}{\tiny\rm\@@ref}}
\let\my@cite=\cite
\def\cite#1{\@putname\my@cite{#1}{\@citeb}{\tiny\rm\@@ref}}
\theoremstyle{plain} 
\numberwithin{equation}{section}
\theoremstyle{definition}
\newtheorem{definition}{Definition}[section]
\newtheorem{conjecture}[definition]{Conjecture}
\newtheorem{example}[definition]{Example}
\theoremstyle{remark}
\newtheorem{remark}[definition]{Remark} 
\theoremstyle{plain} 
\newtheorem{theorem}[definition]{Theorem}
\newtheorem{lemma}[definition]{Lemma}
\newtheorem{corollary}[definition]{Corollary}
\newtheorem{proposition}[definition]{Proposition}
\newcommand{\cF}{\mathcal{F}}
\newcommand{\JJ}{\mathcal{J}}
\renewcommand{\SS}{\mathcal{S}}
\renewcommand{\gg}{\mathfrak{g}}
\newcommand{\pp}{\mathfrak{p}}
\newcommand{\qq}{\mathfrak{q}}
\renewcommand{\aa}{\mathfrak{a}}
\renewcommand{\ll}{\mathfrak{l}}
\newcommand{\hh}{\mathfrak{h}}
\newcommand{\bb}{\mathfrak{b}}
\newcommand{\kk}{\mathfrak{k}}
\newcommand{\ft}{\mathfrak{t}}
\newcommand{\fg}{\mathfrak{g}}
\renewcommand{\tt}{\operatorname{t}}
\renewcommand{\dim}{\operatorname{dim}}
\newcommand{\sdim}{\operatorname{sdim}}
\newcommand{\CC}{\mathbb{C}}
\newcommand{\ZZ}{\mathbb{Z}}
\newcommand{\RR}{\mathbb{R}}
\newcommand{\Aut}{\operatorname{Aut}}
\newcommand{\soc}{\operatorname{soc}}
\newcommand{\ad}{\operatorname{ad}}
\newcommand{\Hom}{\operatorname{Hom}}
\newcommand{\Ext}{\operatorname{Ext}}
\newcommand{\End}{\operatorname{End}}
\renewcommand{\Im}{\operatorname{Im}}
\newcommand{\Ker}{\operatorname{Ker}}
\newcommand{\Ind}{\operatorname{Ind}}
\newcommand{\Res}{\operatorname{Res}}
\newcommand{\Mod}{\operatorname{mod}}
\newcommand{\Span}{\operatorname{span}}
\newcommand{\Gr}{\mathscr{K}}
\newcommand{\fgl}{\mathfrak{gl}}
\newcommand{\fsl}{\mathfrak{sl}}
\newcommand{\osp}{\mathfrak{osp}}
\newcommand{\fp}{\mathfrak{p}}
\newcommand{\fq}{\mathfrak{q}}
\newcommand{\fh}{\mathfrak{h}}
\newcommand{\vareps}{\varepsilon}
\newcommand{\Id}{\operatorname{Id}}
\newcommand{\bV}{\mathbf{V}}
\newcommand{\bM}{\mathbf{M}}
\newcommand{\bK}{\mathbf{K}}
\newcommand{\bJ}{\mathbf{J}}
\newcommand{\bT}{\mathbf{T}}
\newcommand{\bW}{\mathbf{W}}
\newcommand{\bP}{\mathbf{P}}
\newcommand{\bQ}{\mathbf{Q}}
\newcommand{\bI}{\mathbf{I}}
\newcommand{\str}{\text{str}}
\newcommand{\sch}{\operatorname{sch}}
\newcommand{\DS}{DS}
\newcommand{\ds}{ds}
\newcommand{\Fin}{\operatorname{{\mathcal F}in}}
\newcommand{\supp}{\operatorname{supp}}
\newcommand{\atyp}{\operatorname{atyp}}
\newcommand{\defect}{\operatorname{def}}
\newcommand{\rank}{\operatorname{rk}}
\newcommand{\cS}{\mathcal{S}}
\newcommand{\smult}{\text{smult}}
\tikzset{snake it/.style={decorate, decoration=snake}}
\tikzset{cross/.style={cross out, draw=black, minimum size=2*(#1-\pgflinewidth), inner sep=0pt, outer sep=0pt},
	cross/.default={1pt}}
\def\l@subsection{\@tocline{2}{0pt}{2.5pc}{5pc}{}}
\begin{document}
	\relax 
	
	\title[The Duflo--Serganova functor]{The Duflo--Serganova functor, vingt ans apr\`es}
	
	\author{ Maria Gorelik, Crystal Hoyt, Vera Serganova, Alexander Sherman }
	
	\date{ \today }
	
	\address{Dept. of Mathematics, The Weizmann Institute of Science, Rehovot 76100, Israel}
	\email{maria.gorelik@weizmann.ac.il}

	\address{Dept. of Mathematics, Bar-Ilan University, Ramat Gan 52900, Israel}
	
	\email{math.crystal\atSign{}gmail.com}
	
	\address{ Dept. of Mathematics, University of California, Berkeley, CA 94720, USA}
	
	\email{serganov\atSign{}math.berkeley.edu}
	
	\address{ Dept. of Mathematics, Ben Gurion University, Beer-Sheva 8410501,
		Israel}
	
	\email{xandersherm\atSign{}gmail.com}

	\maketitle
	
	\centerline{\emph{To Michel Duflo, with admiration.}}

	\begin{abstract}  We review old and new results concerning
		the  $\DS$ functor and associated varieties for Lie superalgebras. These notions were introduced in the unpublished manuscript \cite{DS} by Michel Duflo and the third author. This paper includes the results and proofs of the original manuscript, as well as a survey of more recent results. \end{abstract}

	\tableofcontents
	
	
	\section{Introduction}\label{intro}

	The $\DS$ functor was introduced by Michel Duflo and the third author approximately 20 years ago, but the original manuscript 
	\cite{DS} was never published. Since then much progress has been made in the study of the $\DS$ functor. This paper includes the results of the original manuscript, as well as a survey of more recent results obtained by different authors.

	The $\DS$ functor has a large and growing list of applications throughout the literature. 
	It was used  in \cite{S2}
	to prove the Kac--Wakimoto conjecture (see Section~\ref{app sch}); in  \cite{IRS} to describe the  supercharacter ring for $\pp(n)$ (see Section~\ref{sec ds groth}); in \cite{HPS} to study important $\mathfrak{sl}(\infty)$-modules (see Section~\ref{sec DS sl}); in \cite{ES2} to give a formula for the superdimension of $\pp(n)$-modules; in \cite{HsW} to give a new proof of the superdimension formula for $GL(m|n)$-modules; in \cite{Hs} to obtain reductive envelopes of certain supergroups; in \cite{BKN2} to compute complexity of certain modules over $\mathfrak{gl}(m|n)$; in \cite{CH} to classify the indecomposable summands of tensor powers of the standard representation of $OSP(m|2n)$; and in \cite{EHS} to construct  universal tensor categories.
	The $\DS$ functor has been applied  to study Deligne categories in numerous papers  (see e.g., \cite{CH,EHS,ES1}).

	The associated variety of a module over a Lie superalgebra $\gg=\gg_0\oplus\gg_1$ is a subvariety of the cone $ X\subset\gg_{1} $ of self-commuting odd elements. The cone $ X $ was studied  in \cite{Gr1,Gr2,Gr3}, where  geometric properties of $ X $ were used to obtain important results about the cohomology of Lie superalgebras.

	Now if $x\in X$ and $M$ is a $\gg$-module, then $x^2(M)=0$ and hence we can take the cohomology $M_x=\Ker x_M/\Im x_M$.
	The assignment $M\mapsto M_x$  defines the Duflo--Serganova functor  $\DS_x:\Mod(\gg)\to\Mod(\gg_x)$, where $\gg_x=\Ker\,\ad x/\Im\,\ad x$ is a Lie algebra. It is easy to see that this functor is symmetric monoidal. This obvious but remarkable fact does not have an analogue  in the theory of Harish-Chandra modules or in the theory of restricted Lie algebras. 
	
	For the basic classical Lie superalgebras, $\DS_x(L)$ has been computed for every simple
	finite-dimensional module $L$. These computations show that $\DS_x(L)$
	is semisimple and ``pure"  in the following sense: for every simple $\gg_x$-module $L'$
	one has $[\DS_x(L):L']\cdot [\DS_x(L):\Pi L']=0$. It would be interesting to find a conceptual
	proof of these facts, see Section~\ref{action of DS} for  details.
	
	The associated variety $X_M$ for a $\gg$-module $M$ is the {closure} in $X$ of the subset consisting of all elements $x\in X$ for which $M_x$ is nonzero.
	The associated variety for a module over a Lie superalgebra can be seen as an analogue of the associated varieties for Harish-Chandra modules, if we think about a Lie
	superalgebra $\gg=\gg_0\oplus\gg_1$ as a symmetric pair. Associated varieties for Harish-Chandra modules have many interesting
	applications in the classical representation theory (see, for example,
	\cite{V,KO,NOT}). 
	While the associated variety in the theory of Harish-Chandra modules is
	trivial if a module is finite-dimensional, finite-dimensional modules over
	Lie superalgebras have interesting associated varieties. Some applications  of these associated varieties are given in Sections~\ref{app sch} and \ref{app cohom}.

	On the other hand, the associated variety for a module over a Lie superalgebra  is also an analogue of the rank variety for restricted Lie algebras in positive characteristic,
	see \cite{FP}. For example, in many cases these associated varieties for Lie superalgebras detect projectivity in the category of finite-dimensional $\gg$-modules.
	This is proven in Section~\ref{sec proj} of the present paper; the original proof in the preprint  \cite{DS} had a mistake.
	
	In the category of finite-dimensional $\gg$-modules, associated varieties are closely related to blocks and central characters, see Theorem~\ref{th2} and
	Theorem~\ref{th3}. In the original preprint \cite{DS}, Theorem~\ref{th3} was proven for $\mathfrak{gl}(m|n)$, now it is known for all basic classical superalgebras (\cite{S2, M}).
	It also seems that associated varieties can be used to study category $\mathcal O$ for Kac--Moody superalgebras. Some results in this direction are
	obtained in \cite{CS} and \cite{GS}.
	
	Finally, let us mention that in contrast with restricted Lie algebras, \cite{FP}, the cohomological support varieties defined and
	studied in \cite{BKN1} and \cite{BKN2} are
	quite different from  the varieties  studied in this paper. This may indicate existence of a third definition which interpolates these two constructions.
	
	The Duflo-Serganova functor has also appeared under different guises in theoretical physics.  Odd operators $Q$ in a supersymmetric field theory that satisfy $Q^2=0$ are examples of BRST operators, and allow one to employ the so-called BRST formalism, which includes taking cohomology in $Q$.  Such a situation arises in several places in the literature.  In \cite{W}, it was used as a key part in topological and holomorphic twists of  supersymmetric field theories.  Twisting a supersymmetric field theory gives rise to simpler field theories that can either be topological (giving a TQFT), holomorphic, or something in between, depending on properties of the chosen $Q$.  A mathematically rigorous approach to the twisting of supersymmetric field theories is explained in \cite{Cos}. In \cite{CCMV}, the authors apply $\DS$ functors (referred to as cohomological reductions in their paper) to the algebra of smooth functions as well as certain vector bundles on the target spaces of sigma models, which are homogeneous superspaces.  Like in the mathematical setting, this reduction produces simpler theories which then in turn can give information about the original theory.

	
	\subsection{Acknowledgements} We would like to thank Kevin Coulembier, Inna Entova-Aizenbud, Thorsten Heidersdorf, Vladimir Hinich, Victor Kac, Victor Ostrik, Ivan Penkov, Julia Pevtsova,  Shifra Reif, and  Ilya Zakharevich for helpful comments and suggestions.  Needless to say, this paper would not have been possible without the original contribution of Michel Duflo.  In addition we thank the referees for very thorough reviews of an earlier version of this article.
	
	\subsection{Notation} \label{rem: classical LSA}
	Throughout this paper (and in particular in Sections 4-9 and 12), we will  primarily focus on the following important list of Lie superalgebras, which by slight abuse of terminology we will refer to as \emph{classical Lie superalgebras}:
	\begin{equation}\label{list1}
		\mathfrak{sl}(m|n),\ m\neq n,\  \mathfrak{gl}\left(m|n\right), 
		\ \mathfrak{osp}\left(m|2n\right),\  D\left(2|1;a\right),\ F(4),
		\  G(3),\  \pp(n), \ \qq(n).\end{equation}
	Note that each superalgebra appearing in this list has a ``cousin'' that is a classical Lie superalgebra in the sense of \cite{K1}. 
	Additionally, by \emph{basic classical Lie superalgebra}, we will mean a superalgebra from the above list, excluding $\pp(n)$ and $\qq(n)$.
	We will sometimes 
	refer to the superalgebras $D(2|1;a)$, $G(3)$ and $F(4)$ as ``exceptional''
	and to other superalgebras from our list as ``non-exceptional''.
	
	\subsubsection{List of notation} We present below a table of the commonly used notation in the article:
	\begin{itemize}
		\item $\Mod(\gg)$ the category of $\gg$-modules.
		\item $\Fin(\gg)$ the category of finite-dimensional $\gg$-modules.
		\item $\cF(\gg)$ the category of finite-dimensional $\gg$-modules semisimple over $\gg_0$.
		\item $p$ a parity function on weights.
		\item $\Mod_{\chi}^r(\gg)$ the category of $\gg$-modules with generalized central character $\chi$ of order $r$.
		\item $\Mod_{\chi}(\gg)$ category of $\gg$-modules admitting generalized central character $\chi$.
		\item $\Res_{\kk}^{\gg}$ the restriction functor from $\gg$-modules to $\kk$-modules.
		\item $\Ind_{\kk_0}^{\gg}$ the induction functor.
		\item $X$ the self-commuting cone of $\gg$.
		\item $X_k$ the set of rank $k$ elements in $X$.
		\item $\mathcal{O}_X$ the structure sheaf on $X$.
		\item $\partial$ the differential on $\mathcal{O}_X\otimes M$ in Section~\ref{app cohom}.
		\item $X_M$ the associated variety of $M$.
		\item $\DS_x$ the Duflo--Serganova functor determined by $x\in X$.
		\item $DS^r$ the Duflo--Serganova functor determined by a rank $r$ element.
		\item $\eta_x:\mathcal{Z}(\gg)\to\mathcal{Z}(\gg_x)$ the induced map on center.
		\item $\sigma_x$ the involution of $\gg_x$ for classical Lie superalgebras (\ref{involutions section}). 
		\item  $\Gr(\mathcal{C})$ the Grothendieck group of a full abelian subcategory $\mathcal{C}$ of $\Mod(\gg)$.
		\item $M_{gr}$ the image of a module $M$ in $\Gr(\CC)$.
		\item $\Gr_-(\mathcal{C})$ the reduced Grothendieck group
		(quotient by $M_{gr}=-(\Pi M)_{gr}$).
		\item $[M]$ the image of a module $M$ in $\Gr_{-}(\CC)$.
		\item $\Gr_+(\mathcal{C})$ the character group (quotient by $M_{gr}=(\Pi M)_{gr}$).   
		\item $\sch M$ the supercharacter of $M$.
		\item $ds_x$ the map induced by the functor $DS_x$ on reduced Grothendieck groups.
		\item $ds^r$ the map $ds_x$ for a rank $r$ element $x$.
		\item $G_0$ the simply connected, connected Lie group corresponding to $\gg_0$.
		\item $\Delta$ the roots of $\gg$ with respect to a Cartan subalgebra of $\gg_0$.
		\item $W$ the Weyl group.
		\item $\rho$ the Weyl vector.
		\item $L(\lambda)=L_{\gg}(\lambda)$ is the irreducible $\gg$-module of highest weight $\lambda$ with respect to a chosen Borel subalgebra.
		\item $\atyp\chi,\atyp\lambda$ the degree of atypicality of $\chi,\lambda$ resp.
		\item $[M:L]$ the multiplicity of a simple module $L$ in a finite-length module $M$.
		\item $[M:L]_{non}$ the non-graded multiplicity of a simple module $L$ in $M$, meaning the number of times both $L$ and $\Pi L$ appear.
		\item $R$ the super Weyl denominator.
		\item $k(\lambda)$ a virtual Kac module.
	\end{itemize}


	\section{Definitions and basic properties }\label{sec:definitions}
	
	Our ground field is $\CC$, and by $\overline{Y}$ we denote the Zariski closure of a subset $Y$ of an affine space.
	By $\Pi$ we denote the change of parity functor in the category of vector superspaces.
	
	Throughout this paper we assume that the Lie superalgebra $\gg=\gg_{0}\oplus\gg_{1} $ is finite dimensional. Let $ G_{0} $ 
	denote a simply-connected connected algebraic group with Lie algebra $ \gg_{0} $.  We will write $\Fin(\gg)$ for the category of finite-dimensional $\gg$-modules, and $\cF(\gg)$ for the full subcategory of $\Fin(\gg)$ consisting of modules which are semisimple over $\gg_0$.  The  category $\cF(\gg)$ will be the main object of study after Section~\ref{sec:definitions}.
	
	\subsection{The associated variety $X_M$}
	
	Let
	\begin{equation}
		X=\left\{x\in\gg_{1} \mid \left[x,x\right]=0\right\}.
		\notag\end{equation}
	It is clear that $ X $ is a $ G_{0} $-invariant Zariski closed cone in $ \gg_{1} $.

	Let $M$ be a
	$ \gg $-module. For every $ x\in X $, the corresponding
	element $x_M\in\operatorname{End}_{\CC}(M)$ satisfies
	$x_M^2=0$. Set 
	$$ 
	M_{x}:=\operatorname{Ker} x_M/xM 
	$$ 
	and define
	\begin{equation}
		X_M:=\left\{x\in X \mid M_{x}\not=0\right\}.
		\notag\end{equation}
	We call $ X_M $ the \emph{associated variety} of $ M $.
	
	\begin{lemma} \label{lm1}\myLabel{lm1}\relax  If $ M $ is a finite-dimensional $( \gg, G_0) $-module, then $ X_M $ is Zariski
		closed $ G_{0} $-invariant subvariety.
	\end{lemma}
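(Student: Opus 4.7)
The plan is to establish the two claims separately: $G_0$-invariance, and Zariski closedness in $X$.

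For $G_0$-invariance, the $(\gg,G_0)$-module structure on $M$ means that for every $g\in G_0$ the action of $g$ on $\gg$ via $\operatorname{Ad}$ and on $M$ are compatible, so that $(\operatorname{Ad}(g)x)_M = g\,x_M\,g^{-1}$ as endomorphisms of $M$. It follows that $\ker(\operatorname{Ad}(g)x)_M = g\cdot\ker x_M$ and $(\operatorname{Ad}(g)x)M = g\cdot (xM)$, and hence $g$ induces a vector space isomorphism $M_x \xrightarrow{\sim} M_{\operatorname{Ad}(g)x}$. In particular $x\in X_M$ if and only if $\operatorname{Ad}(g)x \in X_M$.

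For closedness, the key observation is that $x_M^2=0$ gives the inclusion $\operatorname{im} x_M \subseteq \ker x_M$, whence
\[
\dim M_x \;=\; \dim\ker x_M - \dim\operatorname{im} x_M \;=\; \dim M - 2\,\operatorname{rank}(x_M).
\]
Therefore $x\in X_M$ if and only if $\operatorname{rank}(x_M) < \tfrac{1}{2}\dim M$, an integer inequality equivalent to $\operatorname{rank}(x_M)\le k$ for a fixed $k$ depending only on $\dim M$. Fixing a basis of $M$, the map $x\mapsto x_M$ is linear in $x\in\gg_1$, so the entries of the matrix of $x_M$ are linear functions of $x$; the condition $\operatorname{rank}(x_M)\le k$ is then cut out by the vanishing of all $(k{+}1)\times(k{+}1)$ minors, giving a Zariski closed subset of $\gg_1$. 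Intersecting with the closed cone $X\subset\gg_1$ yields $X_M$ as a Zariski closed subvariety.

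There is no real obstacle here; both properties reduce to elementary linear-algebraic facts once one notices the identity $\dim M_x = \dim M - 2\operatorname{rank}(x_M)$ arising from $x_M^2=0$, and the $G_0$-invariance is automatic from the compatibility built into the definition of a $(\gg,G_0)$-module. The only mild care needed is to distinguish closedness in $\gg_1$ (which the minor argument gives) from closedness in $X$ (which is then immediate).
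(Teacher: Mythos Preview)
Your proof is correct and follows essentially the same approach as the paper: the identity $\dim M_x = \dim M - 2\operatorname{rank}(x_M)$ reduces closedness to a rank condition, and $G_0$-invariance follows from the isomorphism $M_x \cong M_{\operatorname{Ad}(g)x}$ induced by $g$. The paper phrases closedness via the complement (where $\operatorname{rank} x_M$ is maximal, hence open), while you phrase it directly via vanishing of minors, but these are the same argument.
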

	
	\begin{proof} For a finite-dimensional $M$,
		$$X\setminus X_M=\{x\in X\,|\,\operatorname{rank}\,x_M=\dim M_0=\dim M_1\}.$$
		Hence $X_M$ is Zariski closed.
		Now  $ M $ is a $ G_{0} $-module. For each
		$ g\in G_{0} $ and $ x\in M $ one has
		\begin{equation}
			M_{\operatorname{Ad}_{g}\left(x\right)}=gM_{x},
			\notag\end{equation}
		which implies the lemma.\end{proof}
	
	\subsection{The Lie superalgebra $\gg_x$}
	For $x\in X$, we
	define
	$$
	\gg_x:=\gg^x / [x,\gg],
	$$
	where $\gg^x:=\Ker \ad_x$ and $[x,\gg]:=\Im\,\ad_x$.

	The next lemma follows from the definitions.
	
	\begin{lemma}\label{gx Mx}
		Let $\gg$ be a finite-dimensional Lie superalgebra and $x\in X$.
		\begin{enumerate}
			\item Then $[x,\gg]$ is an ideal in $\gg^x$, and hence $\gg_x$ has the natural structure of a Lie superalgebra.
			\item If $M$ is a $\gg$-module, then $M_x$ is a $\gg_x$-module.
		\end{enumerate}
	\end{lemma}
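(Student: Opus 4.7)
The plan is to prove both parts by direct application of the super Jacobi identity, the only ingredients being the odd parity of $x$ and the identity $[x,x]=0$. There is no real obstacle; the proof is essentially bookkeeping, but one should be aware that the ``factor of $2$'' appearing when applying super Jacobi to $[x,[x,-]]$ makes the argument depend on working over a field of characteristic $\neq 2$, which is guaranteed since our ground field is $\CC$.

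For part (1), I would first verify that $[x,\gg]\subseteq \gg^x$, i.e.\ that $[x,[x,y]]=0$ for every $y\in\gg$. Since $x$ is odd, super Jacobi gives
\[
[x,[x,y]] \;=\; [[x,x],y] - [x,[x,y]],
\]
so $2[x,[x,y]]=[[x,x],y]=0$, whence $[x,[x,y]]=0$. Next, for $a\in\gg^x$ and $y\in\gg$, another application of super Jacobi together with $[a,x]=0$ yields
\[
[a,[x,y]] \;=\; [[a,x],y] + (-1)^{|a|}[x,[a,y]] \;=\; (-1)^{|a|}[x,[a,y]] \,\in\, [x,\gg].
\]
Hence $[x,\gg]$ is a (super) ideal of $\gg^x$, and $\gg_x=\gg^x/[x,\gg]$ inherits a Lie superalgebra bracket from $\gg^x$.

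For part (2), I would first check that $\gg^x$ preserves both $\Ker x_M$ and $xM$, so that it acts on the quotient $M_x$. If $a\in\gg^x$ and $m\in\Ker x_M$, then
\[
x(am) \;=\; (-1)^{|a|}\,a(xm) + [x,a]m \;=\; 0,
\]
so $am\in\Ker x_M$; and if $m=xm'\in xM$, then $a(xm') = (-1)^{|a|}x(am') + [a,x]m' = (-1)^{|a|}x(am')\in xM$. Finally, I would show that $[x,\gg]$ acts trivially on $M_x$: for $y\in\gg$ and $m\in\Ker x_M$,
\[
[x,y]m \;=\; x(ym) - (-1)^{|y|}y(xm) \;=\; x(ym) \,\in\, xM,
\]
which projects to $0$ in $M_x$. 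Therefore the $\gg^x$-action descends to a $\gg_x$-action on $M_x$, completing the proof.
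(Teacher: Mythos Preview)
Your proof is correct and is exactly what the paper intends: the paper simply states that the lemma ``follows from the definitions'' and gives no further argument, so your direct verification via the super Jacobi identity is precisely the omitted bookkeeping. There is nothing to add or correct.
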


	Now  we observe that for each $x\in X$, the correspondence $M\mapsto M_x$ is functorial. Let $\Mod(\gg)$ (respectively, $\Mod(\gg_x)$) denote the category of all $\gg$-modules (respectively, $\gg_x$-modules).

	\begin{definition} 
		The \emph{Duflo--Serganova functor}  $\DS_x:\Mod(\gg)\to\Mod(\gg_x)$ is defined by $\DS_x(M):=M_x$.  \end{definition}

	The functor $\DS_x$ has many nice properties. The following lemma shows that $\DS_x$ is a symmetric monoidal tensor functor.
	
	\begin{lemma}\label{tensor} Let $\gg$ be a finite-dimensional Lie superalgebra, let $x\in X$, and let $M,N$ be $\gg_x$-modules. \begin{enumerate}
			\item  We have a canonical isomorphism  $(M\otimes N)_x\simeq M_x\otimes N_x$ of $\gg_x$-modules.
			\item  For any $\gg$-module $M$ we have a canonical isomorphism $(M^*)_x\to (M_x)^*$  of $\gg_x$-modules.  
		\end{enumerate}
		Hence, $\DS_x:\Mod(\gg)\to\Mod(\gg_x)$  is a symmetric monoidal tensor functor.  
	\end{lemma}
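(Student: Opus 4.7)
The plan is to prove part (1) by constructing a natural candidate map $\phi_{M,N}\colon M_x\otimes N_x\to (M\otimes N)_x$ and then showing it is an isomorphism via a super analogue of the Künneth theorem. Part (2) will follow by a direct construction (or, for dualizable $M$, by applying (1) to the evaluation pairing), and the symmetric monoidal structure of $\DS_x$ will fall out automatically from the evident naturality, associativity, and graded commutativity of $\phi$.

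For part (1), recall that $x$ acts on $M\otimes N$ by the graded Leibniz rule $x(m\otimes n)=(xm)\otimes n+(-1)^{|m|}m\otimes(xn)$, so if $m\in\operatorname{Ker}x_M$ and $n\in\operatorname{Ker}x_N$ then $m\otimes n\in\operatorname{Ker}x_{M\otimes N}$; moreover if $m=xm'$ then $m\otimes n=x(m'\otimes n)$ since $xn=0$, and similarly if $n$ is a boundary. Thus the rule $[m]\otimes[n]\mapsto[m\otimes n]$ descends to a well-defined $\gg_x$-linear map $\phi_{M,N}$. To prove $\phi_{M,N}$ is an isomorphism, I would fix (noncanonical) graded vector space decompositions
\[
M=xM\oplus H_M\oplus F_M,\qquad N=xN\oplus H_N\oplus F_N,
\]
where $H_M$ is a complement of $xM$ inside $\operatorname{Ker}x_M$ (so $H_M\simeq M_x$) and $F_M$ is a complement of $\operatorname{Ker}x_M$ in $M$ on which $x_M$ restricts to an isomorphism onto $xM$; similarly for $N$. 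The nine resulting tensor summands form a bicomplex-style decomposition of $M\otimes N$, and a direct computation of cycles modulo boundaries identifies $(M\otimes N)_x$ with $H_M\otimes H_N$, so that $\phi_{M,N}$ becomes the identity under this identification.

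For part (2), the rule $\bar f([m]):=f(m)$ for $f\in\operatorname{Ker}x_{M^*}$ and $m\in\operatorname{Ker}x_M$ is well-defined thanks to the sign-twisted relation $(xf)(m)=-(-1)^{|f|}f(xm)$ (both representatives of a boundary class are killed when paired with a cycle on the other side), and gives the required natural $\gg_x$-linear map. Bijectivity in the finite-dimensional case follows either from a dimension count applied to the complex $M^*$, or by applying (1) to the evaluation pairing $M\otimes M^*\to\CC$ together with the identity $\DS_x(\CC)=\CC$. Finally, since $\phi_{M,N}$ is built at the level of representatives, it is visibly compatible with the associativity and braiding constraints on supermodules, and thus $\DS_x$ is a symmetric monoidal functor. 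The main obstacle is the Künneth-style surjectivity in part (1), which requires the noncanonical splittings above together with careful bookkeeping of the super signs introduced by the Leibniz rule, but is otherwise routine.
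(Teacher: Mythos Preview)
Your approach to part (1) is essentially the same as the paper's: both construct the natural map $M_x\otimes N_x\to (M\otimes N)_x$ and verify it is an isomorphism by splitting $M$ and $N$ as $\CC x$-modules. The paper's packaging is slightly cleaner: instead of your three-piece decomposition $M=xM\oplus H_M\oplus F_M$, it writes $M=M_x\oplus F$ with $F$ a \emph{free} $\CC x$-module (your $xM\oplus F_M$), and then observes directly that $F\otimes N+ M\otimes F'$ is free, so its $x$-cohomology vanishes. This avoids the nine-summand bookkeeping, though of course the content is identical.

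For part (2) there is a small gap in your proposal relative to the stated generality: the lemma claims the isomorphism for \emph{any} $\gg$-module $M$, but your argument (dimension count, or dualizability via the evaluation pairing) only covers finite-dimensional $M$. The paper instead builds an explicit inverse: given $\varphi\in (M_x)^*$, lift it to $\tilde\varphi$ on $\operatorname{Ker} x$, choose a splitting $M=\operatorname{Ker} x\oplus V$, and set $\phi=\tilde\varphi\oplus 0$; one checks $x\phi=0$ and that $[\phi]\mapsto\varphi$. This works without any finiteness hypothesis. Your map in the forward direction is the same as the paper's, so you only need to replace the dimension count by this explicit section.
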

	\begin{proof} 
		For (1), we have the natural morphism of $\gg_x$-modules
		$M_x\otimes N_x\to (M\otimes N)_x$. We have to check that this is an
		isomorphism. This follows from the fact that
		over the $(0|1)$-dimensional superalgebra  $\CC x$, we have
		$M=M_x\oplus F$, $N=N_x\oplus F'$ for some free $\CC x$-modules $F$ and $F'$, and we have
		$$(M\otimes N)=M_x\otimes N_x\oplus (F\otimes N\oplus M\otimes F'),$$
		where $F\otimes N\oplus M\otimes F'$ is a free $\CC x$-module.
		
		For (2), we have a natural map $(M^*)_x\to(M_x)^*$ given by $\varphi\mapsto\varphi|_{\Ker x}$, using the fact that $\varphi(\operatorname{im}x)=0$.  In the other direction: given $\varphi:M_x\to\CC$, write $\tilde\varphi$ for the lift of $\varphi$ to $\Ker x$ and choose a splitting $M=\Ker x\oplus V$.  Then $\phi=\tilde\varphi\oplus0$ is annihilated by $x$, and this defines a morphism $(M_x)^*\to (M^*)_x$ inverse to our previous map.    
	\end{proof}

	The next lemma shows that the  functor $\DS_x$ preserves the superdimension   of a finite-dimensional module $M$, where the superdimension of $ M=M_0\oplus M_1 $ is given by  $\sdim M :=\dim M_0 -\dim M_1$.
	
	\begin{lemma}\label{lem sdim}
		For any finite-dimensional $ \gg $-module $ M $ and $ x\in X $, $\sdim  M= \sdim  M_{x} $.
	\end{lemma}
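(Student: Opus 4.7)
My plan is to view $M$ as a module over the super-commutative algebra $\CC[x]/(x^2)$, exactly as in the proof of Lemma~\ref{tensor}, and exploit the fact that every finite-dimensional $\CC x$-module decomposes as a direct sum of a trivial part and a free part.

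Concretely, I would first observe that since $x \in X$ is odd and $[x,x] = 2x^2 = 0$, the operator $x_M \in \End_\CC(M)$ is an odd endomorphism of square zero. Hence $M$ carries the structure of a finite-dimensional $\CC x$-module, where $\CC x$ denotes the $(0|1)$-dimensional superalgebra $\CC[x]/(x^2)$. Since this superalgebra is finite-dimensional and semisimple in the appropriate supersense, any finite-dimensional module $M$ decomposes (non-canonically) as
\begin{equation*}
M \;\cong\; M_x \oplus F,
\end{equation*}
where $M_x$ is the trivial part (on which $x$ acts by $0$) and $F$ is a free $\CC x$-module. This is the same decomposition already invoked in part (1) of Lemma~\ref{tensor}.

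Next I would compute superdimensions. A free $\CC x$-module of rank $(a|b)$ has basis consisting of $a$ even generators $v_i$ together with their odd translates $xv_i$, and $b$ odd generators $w_j$ together with their even translates $xw_j$. Thus its even and odd parts both have dimension $a+b$, so its superdimension is zero. Since superdimension is additive over direct sums, we obtain
\begin{equation*}
\sdim M \;=\; \sdim M_x + \sdim F \;=\; \sdim M_x.
\end{equation*}

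There is no real obstacle here; the only subtle point is justifying the decomposition $M \cong M_x \oplus F$, which amounts to choosing a complement to $\Ker x_M$ in $M$ and a complement to $x_M M$ inside $\Ker x_M$. Alternatively, one can give a more computational proof by splitting $x_M$ into its restrictions $x_M|_{M_0} : M_0 \to M_1$ and $x_M|_{M_1}: M_1 \to M_0$ of ranks $r_0$ and $r_1$ respectively, and computing directly
\begin{equation*}
\sdim M_x \;=\; (\dim M_0 - r_0 - r_1) - (\dim M_1 - r_0 - r_1) \;=\; \sdim M,
\end{equation*}
which is simply the Euler characteristic argument for the $2$-periodic complex $(M, x_M)$.
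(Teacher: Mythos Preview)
Your proposal is correct. Your principal argument, via the decomposition $M \cong M_x \oplus F$ with $F$ free over $\CC x$, is a slightly different route from the paper's. The paper argues directly from the first isomorphism theorem: since $x_M$ is an odd map, $M/\operatorname{Ker} x_M \cong \Pi(xM)$, so
\[
\sdim M = \sdim(\operatorname{Ker} x_M) - \sdim(xM) = \sdim(\operatorname{Ker} x_M / xM) = \sdim M_x.
\]
This is a pure Euler-characteristic computation that avoids choosing any splittings. Your alternative computation with the ranks $r_0,r_1$ is in fact this same argument written in coordinates. What your decomposition-based approach buys is a parallel with the proof of Lemma~\ref{tensor}, where the same splitting $M = M_x \oplus F$ is already used; what the paper's approach buys is that it sidesteps the (admittedly easy) justification of that splitting and is one line shorter.
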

	
	\begin{proof}
		Let $ \Pi\left(N\right) $ stand for the superspace
		isomorphic to $ N $ with switched parity. Since $ M/\operatorname{Ker} x_M $ is isomorphic to $ \Pi\left(xM\right) $,
		we have
		$$
		\sdim M=\sdim(\operatorname{Ker} x_M)-\sdim\left(x M\right)=\sdim \left(\operatorname{Ker} x_M /xM\right)=\sdim M_{x}.
		$$
	\end{proof}
	
	In fact, Lemma~\ref{lem sdim} has a natural generalization, as we will see in the next section.

	\subsection{Reduced Grothendieck groups and $ds_x$}\label{rem ds induce}
	Let $\mathcal{C}$ be a full abelian subcategory of $\Mod(\gg)$ such that:
	\[
	(*) \ \ \ \ \Pi M\text{ is an object of }\mathcal{C}\text{ whenever }M\text{ is.}
	\]
	We define the Grothendieck group $\Gr(\mathcal{C})$ in the usual way as the quotient of the free $\mathbb{Z}$-module with basis $M_{gr}$, for each object $M$ in $\mathcal{C}$, with relations $M_{gr}=M_{gr}'+M_{gr}''$ for every short exact sequence $0\to M'\to M\to M''\to 0$ in $\mathcal{C}$.
	
	The \emph{reduced Grothendieck group}  $\Gr_-(\mathcal{C})$ of the category  $\mathcal{C}$ is the quotient $\Gr(\mathcal{C})$ by the relation  $M_{gr}=-(\Pi M)_{gr}$ for all objects $M\in\mathcal{C}$.  We  define the {\em character group} $\Gr_+(\mathcal{C})$ to be the quotient of $\Gr(\mathcal{C})$ by $M_{gr}=(\Pi M)_{gr}$. Write $(-)_{\mathbb{Q}}$ for the extension of scalars from $\mathbb{Z}$ to $\mathbb{Q}$.  Then by the Chinese Remainder Theorem we have
	\begin{equation}\label{GR decomp}
		\Gr(\mathcal{C})_{\mathbb{Q}}\cong\Gr_-(\mathcal{C})_{\mathbb{Q}}\times\Gr_+(\mathcal{C})_{\mathbb{Q}}.
	\end{equation}
	If $\mathcal{C}$ is closed under $\otimes$, then its tensor structure provides $\Gr(\mathcal{C})$ with a ring structure such that $\Gr_-(\mathcal{C})$ and $\Gr_+(\mathcal{C})$ are quotient rings, and (\ref{GR decomp}) becomes an isomorphism of rings.
	
	\begin{remark}
		Since we work over the integers, observe that if $M$ is any module in $\mathcal{C}$ with $M\cong \Pi M$, its image in $\Gr_{-}(\mathcal{C})$ will be 2-torsion (although it need not be 0).
	\end{remark}
	
	\begin{lemma}\label{reduced}[V. Hinich] If
		$$0\to N\xrightarrow{\psi} M\xrightarrow{\varphi} L\to 0$$
		is an exact sequence of $\gg$-modules, then there exists an exact sequence
		$$0\to E\to N_x\to M_x\to L_x\to\Pi E\to 0 $$
		for some $\gg_x$-module $E$.
	\end{lemma}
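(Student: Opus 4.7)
The plan is to regard $(N, x_N)$, $(M, x_M)$, $(L, x_L)$ as $\mathbb{Z}/2$-graded complexes with the odd differential given by the action of $x$, for which $x^2 = 0$. By definition $N_x$, $M_x$, $L_x$ are then the (super) cohomology of these complexes, and the given short exact sequence $0 \to N \to M \to L \to 0$ is a short exact sequence of such complexes.

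First I would construct the connecting homomorphism $\delta \colon L_x \to \Pi N_x$ by the standard snake-lemma recipe: lift a cocycle $\ell \in \Ker x_L$ to $m \in M$ via $\varphi$, apply $x_M$, and pull back along $\psi$ to obtain a well-defined class in $N_x$. Because $x$ is odd, this procedure shifts parity, so $\delta$ is naturally valued in $\Pi N_x$. Diagram-chasing (verbatim from the classical snake lemma, with parities tracked carefully) then produces a periodic six-term exact sequence of $\gg_x$-modules
\begin{equation*}
\cdots \to N_x \to M_x \to L_x \xrightarrow{\delta} \Pi N_x \to \Pi M_x \to \Pi L_x \xrightarrow{\Pi \delta} N_x \to \cdots,
\end{equation*}
where $\gg_x$-equivariance of every map is inherited from the naturality of the construction and from the functoriality of $\DS_x$.

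To extract the claimed six-term sequence, I would set $E := \Ker(N_x \to M_x) \subset N_x$; by exactness of the periodic sequence at $N_x$ we have $E = \Im(\Pi\delta \colon \Pi L_x \to N_x)$, equivalently $\Pi E = \Im(\delta \colon L_x \to \Pi N_x)$. Restricting $\delta$ then yields a surjection $L_x \twoheadrightarrow \Pi E$ whose kernel coincides with $\Im(M_x \to L_x)$ by exactness at $L_x$. Splicing these pieces together gives exactly
\begin{equation*}
0 \to E \to N_x \to M_x \to L_x \to \Pi E \to 0.
\end{equation*}
The only real difficulty is parity bookkeeping in this super-snake-lemma construction; once one commits to treating $x$ as an odd differential and tracks the resulting parity shift in $\delta$, the proof follows the classical argument line by line.
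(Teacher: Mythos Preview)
Your proof is correct and follows essentially the same approach as the paper: the paper likewise defines $E$ as the kernel of $N_x \to M_x$, writes the tautological exact sequence $0 \to E \to N_x \to M_x \to L_x \to E' \to 0$ with $E' = L_x/\varphi(M_x)$, and then observes that the odd connecting map $\psi^{-1}x\varphi^{-1}$ (exactly your snake-lemma $\delta$) induces an isomorphism $E' \cong \Pi E$. Your framing in terms of the periodic long exact sequence for $\mathbb{Z}/2$-graded complexes is just a slightly more expansive packaging of the same argument.
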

	\begin{proof} Set $E$ be the kernel of the induced map $\psi:N_x\to M_x$ and $E'$ be the quotient $L_x/\varphi (M_x)$. Then we have the exact sequence
		$$0\to E\to N_x\to M_x\to L_x\to E'\to 0.$$
		The odd map $\psi^{-1}x\varphi^{-1}:L_x\to N_x$ induces an isomorphism $E'\to\Pi E$.
	\end{proof}
	
	Lemma~\ref{tensor} and Lemma~\ref{reduced} imply the following.
	
	\begin{corollary}
		The functor $\DS_x$ is a middle exact tensor functor and satisfies $\DS_x(\Pi M)=\Pi \DS_x(M)$.
	\end{corollary}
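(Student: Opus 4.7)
The three claims decompose as follows: (i) tensor functoriality, (ii) middle exactness, and (iii) parity compatibility. Claim (i) is essentially Lemma~\ref{tensor} restated, so nothing new is required beyond noting that the isomorphisms $(M\otimes N)_x\simeq M_x\otimes N_x$ and $(M^*)_x\simeq (M_x)^*$ shown there make $\DS_x$ into a symmetric monoidal tensor functor between the abelian categories $\Mod(\gg)$ and $\Mod(\gg_x)$.

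For claim (ii), I would read off the conclusion directly from Lemma~\ref{reduced}. Given a short exact sequence $0\to N\to M\to L\to 0$, that lemma supplies an exact six-term sequence
$$0\to E\to N_x\to M_x\to L_x\to\Pi E\to 0.$$
Truncating on both ends shows that the three-term sequence $N_x\to M_x\to L_x$ is exact in the middle, which is precisely the definition of middle exactness. (Note that $\DS_x$ is generally neither left nor right exact, since $E$ and $\Pi E$ need not vanish.)

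For claim (iii), I would give a direct argument at the level of vector superspaces: the underlying space of $\Pi M$ equals that of $M$ with reversed parity, and the action of $x$ on $\Pi M$ is the same linear map as on $M$. Hence $\Ker x_{\Pi M}=\Pi(\Ker x_M)$ and $x(\Pi M)=\Pi(xM)$, and taking the quotient gives
$$\DS_x(\Pi M)=\Pi(\Ker x_M)/\Pi(xM)\cong \Pi\bigl(\Ker x_M/xM\bigr)=\Pi\,\DS_x(M),$$
naturally in $M$.

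None of the three steps is hard; the only subtle point is making sure that the odd map $\psi^{-1}x\varphi^{-1}$ constructed in Lemma~\ref{reduced} is genuinely well-defined on $E'=L_x/\varphi(M_x)$ with image in $E\subset N_x$, but that verification belongs to the proof of Lemma~\ref{reduced} and has already been carried out.
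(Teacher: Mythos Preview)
Your proof is correct and follows the same route as the paper, which simply states that the corollary is implied by Lemma~\ref{tensor} and Lemma~\ref{reduced}. You have just unpacked the argument a bit more, in particular spelling out the parity compatibility $\DS_x(\Pi M)=\Pi\,\DS_x(M)$ directly, which the paper leaves implicit.
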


	\begin{corollary}  Let $\mathcal{C}$, (resp. $\mathcal{C}_x$) be full abelian subcategories of $\Mod(\gg)$ (resp. $\Mod(\gg_x)$) satisfying (*).  Suppose that $DS_x(M)$ lies in $\mathcal{C}_x$ whenever $\mathcal{C}$ lies in $\mathcal{C}$.  Then the functor  $\DS_x:\mathcal{C}\to\mathcal{C}_x$ induces a homomorphism on the corresponding reduced Grothendieck groups 
		$$
		\ds_x:\Gr_{-}(\mathcal{C})\to \Gr_{-}(\mathcal{C}_x).
		$$
		
	\end{corollary}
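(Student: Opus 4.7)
The plan is to build the homomorphism $ds_x$ in two stages: first define a map $\Gr(\mathcal{C})\to\Gr_{-}(\mathcal{C}_x)$ on the ordinary Grothendieck group, then show it factors through $\Gr_{-}(\mathcal{C})$. The two key tools are already in hand: Hinich's Lemma~\ref{reduced}, which tells us how $\DS_x$ fails to be exact, and the preceding corollary, which tells us that $\DS_x$ commutes with parity shift.

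First I would define a map on the free abelian group on isomorphism classes of objects of $\mathcal{C}$ by $M_{gr}\mapsto (\DS_x M)_{gr}$; this lands in $\Gr_{-}(\mathcal{C}_x)$ by the hypothesis that $\DS_x$ sends $\mathcal{C}$ into $\mathcal{C}_x$. Next, given a short exact sequence $0\to N\to M\to L\to 0$ in $\mathcal{C}$, Lemma~\ref{reduced} produces an exact sequence
\[
0\to E\to N_x\to M_x\to L_x\to\Pi E\to 0
\]
of $\gg_x$-modules. A priori $E$ sits in $\Mod(\gg_x)$, but since $N_x,M_x,L_x\in\mathcal{C}_x$ and $\mathcal{C}_x$ is a full abelian subcategory closed under $\Pi$, we get $E,\Pi E\in\mathcal{C}_x$. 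Taking the alternating sum in $\Gr(\mathcal{C}_x)$ gives
\[
[E]_{gr}-(N_x)_{gr}+(M_x)_{gr}-(L_x)_{gr}+(\Pi E)_{gr}=0,
\]
and passing to $\Gr_{-}(\mathcal{C}_x)$ the two $E$-terms cancel, leaving $(N_x)_{gr}-(M_x)_{gr}+(L_x)_{gr}=0$. Hence the assignment descends to a well-defined homomorphism $\Gr(\mathcal{C})\to\Gr_{-}(\mathcal{C}_x)$.

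Finally, to factor through $\Gr_{-}(\mathcal{C})$ I must check the relation $M_{gr}=-(\Pi M)_{gr}$ is respected. But by the previous corollary $\DS_x(\Pi M)\cong \Pi\DS_x(M)$, so in $\Gr_{-}(\mathcal{C}_x)$ we have $(\DS_x\Pi M)_{gr}=(\Pi\DS_xM)_{gr}=-(\DS_x M)_{gr}$, exactly as required. This yields the desired homomorphism $ds_x:\Gr_{-}(\mathcal{C})\to\Gr_{-}(\mathcal{C}_x)$.

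There is no real obstacle here; the only subtle point is making sure the auxiliary module $E$ in Hinich's sequence actually belongs to $\mathcal{C}_x$ so that $[E]$ and $[\Pi E]$ make sense in $\Gr_{-}(\mathcal{C}_x)$. This is automatic from $\mathcal{C}_x$ being a full abelian subcategory closed under $\Pi$ together with $N_x,M_x,L_x\in\mathcal{C}_x$, since $E$ is a subquotient of $N_x$ and $\Pi E$ of $L_x$.
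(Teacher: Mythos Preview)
Your proof is correct and is exactly the argument the paper has in mind: it states this corollary without proof, relying on Lemma~\ref{reduced} (Hinich's exact sequence) and the preceding corollary ($\DS_x(\Pi M)\cong\Pi\DS_x(M)$), which are precisely the two ingredients you use. Your observation that $E\in\mathcal{C}_x$ because it is the kernel of the map $N_x\to M_x$ between objects of the full abelian subcategory $\mathcal{C}_x$ is the only point that needed spelling out, and you handled it correctly.
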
 
	
	We now focus in particular on the case when $\mathcal{C}=\Fin(\gg)$.  Then Lemma~\ref{reduced} in particular implies that the following diagram commutes:
	\begin{equation}\label{eqdsx}
		\xymatrix{ \Gr_-(\Fin(\gg))\ar[rd]_{\ds_x} \ar[r] & \Gr_-(\Fin(\gg^x))  \\
			&        \Gr_-(\Fin(\gg_x)) \ar[u]}
	\end{equation}
	where the horizontal arrow is induced by the restriction functor $\Res^{\gg}_{\gg^x}$, and the up arrow is induced by $\Res^{\gg_x}_{\gg^x}$, where  $\gg^x \to \gg_x$ is the canonical surjection.

	\begin{remark}
		The map $\ds_x:\Gr_{-}(\Fin (\gg))\to \Gr_{-}(\Fin (\gg_x))$ is a ring homomorphism compatible with duality. This follows from the fact that $\Fin (\gg)$ and $\Fin(\gg_x)$ are tensor categories, since 
		$\DS_x$ is a tensor functor that preserves the duality.
	\end{remark}
	
	\begin{remark}
		The existence of a homomorphism between reduced Grothendieck rings was first observed when $\gg=\mathfrak{gl}(m|n)$ in \cite{HsW}.  In \cite{HR}, this homomorphism was introduced in the language of supercharacters for the category of finite-dimensional modules of finite-dimensional Kac-Moody superalgebras, and its kernel and image were described.
	\end{remark}

	\subsection{$ds_x$ as restriction} For a module $M$, we write $[M]$ for its image in the reduced Grothendieck ring. 
	
	\begin{lemma}\label{ds restriction}
		Suppose that we have a splitting $\gg_x\subseteq\gg^x$ so that $\gg^x=\gg_x\ltimes [x,\gg]$.  Then for a finite-dimensional $\gg$-module $M$  we have
		\[
		ds_x[M]=[\Res_{\gg_x}^{\gg}M].
		\]
	\end{lemma}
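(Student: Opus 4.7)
The plan is to transport everything into $\Fin(\gg_x)$ via the splitting and then read off the identity from the two obvious short exact sequences produced by the action of $x$ on $M$, together with the defining relation $[\Pi N]=-[N]$ in the reduced Grothendieck group.

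First, I would observe that, thanks to the inclusion $\gg_x\hookrightarrow\gg^x$ provided by the splitting, the odd element $x$ supercommutes with all of $\gg_x$. Consequently $K:=\Ker x_M$ and $I:=\operatorname{Im} x_M$ are $\ZZ/2$-graded $\gg_x$-submodules of $\Res^{\gg}_{\gg_x}M$, and the parity-reversing map $m\mapsto xm$ is $\gg_x$-equivariant; it induces an isomorphism $M/K\cong \Pi(I)$ in $\Fin(\gg_x)$.

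Next, I would feed the two short exact sequences
\[
0\to K\to M\to M/K\to 0,\qquad 0\to I\to K\to M_x\to 0
\]
in $\Fin(\gg_x)$ into $\Gr_-(\Fin(\gg_x))$. Using additivity along exact sequences and the reduced relation $[\Pi I]=-[I]$, this gives
\[
[\Res^{\gg}_{\gg_x}M]=[K]+[M/K]=[K]-[I]=[M_x]=ds_x[M],
\]
which is the desired identity.

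The step that actually uses the hypothesis — and is really the only conceptual point — is the promotion of $K$ and $I$ from $\gg^x$-submodules (which they automatically are) to $\gg_x$-submodules, so that the short exact sequences can be regarded as living in $\Fin(\gg_x)$. Without the splitting the same calculation would only recover the commutativity of diagram \eqref{eqdsx} inside $\Gr_-(\Fin(\gg^x))$; the splitting is precisely what lets one apply the further restriction along $\gg_x\hookrightarrow\gg^x$ coherently on both sides.
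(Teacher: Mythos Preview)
Your proof is correct and follows essentially the same idea as the paper's. The paper argues more tersely: it first records (via Lemma~\ref{reduced} and diagram~\eqref{eqdsx}) that $[\DS_x M]=[\Res^{\gg}_{\gg^x}M]$ already holds in $\Gr_-(\Fin(\gg^x))$, and then applies the restriction $\Gr_-(\Fin(\gg^x))\to\Gr_-(\Fin(\gg_x))$ afforded by the splitting. Your two short exact sequences $0\to K\to M\to \Pi I\to 0$ and $0\to I\to K\to M_x\to 0$ are exactly what underlies that equality; you have simply carried out the computation directly in $\Fin(\gg_x)$ rather than in $\Fin(\gg^x)$ first, as you yourself note in your final paragraph.
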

	\begin{proof}
		This follows immediately by applying the restriction $\Gr_{-}(\Fin(\gg^x))\to \Gr_{-}(\Fin(\gg_x))$ to the equality $[DS_xM]=[\Res_{\gg^x}^{\gg}M]$ coming from Lemma~\ref{reduced}.
	\end{proof}
	
	\begin{lemma}\label{ds composition}
		Let $x,y\in X$ such that $[x,y]=0$, and suppose that we have splittings
		\[
		\gg^{y}=\gg_y\ltimes [y,\gg],\ \ \ \gg^{x+y}=\gg_{x+y}\ltimes[x+y,\gg],
		\]
		Furthermore suppose that under these splittings, $x\in\gg_{y}$ and 
		\[
		(\gg_{y})^{x}=\gg_{x+y}\ltimes[x,\gg_{y}].
		\]
		Then we have
		\[
		ds_{x+y}=ds_{x}\circ ds_{y}:\Gr_{-}(\gg)\to \Gr_{-}(\gg_{x+y})
		\]
	\end{lemma}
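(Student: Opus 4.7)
The plan is to reduce everything to transitivity of restriction via the previous lemma. Under the splittings given in the hypotheses, $\gg_y$, $\gg_{x+y}$, and $(\gg_y)_x$ all get identified with honest Lie sub-superalgebras of $\gg$ (or $\gg_y$), and the three maps $ds_y$, $ds_{x+y}$, and $ds_x$ (on $\gg_y$-modules) will each be computed as restriction functors on reduced Grothendieck groups.

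Concretely, first I would apply Lemma~\ref{ds restriction} to the splittings $\gg^y=\gg_y\ltimes[y,\gg]$ and $\gg^{x+y}=\gg_{x+y}\ltimes[x+y,\gg]$. This gives, for every finite-dimensional $\gg$-module $M$,
\[
ds_y[M]=[\Res_{\gg_y}^{\gg}M],\qquad ds_{x+y}[M]=[\Res_{\gg_{x+y}}^{\gg}M].
\]
Next, since $[x,x]=0$ and $x\in\gg_y$ by hypothesis, $x$ is a self-commuting odd element of $\gg_y$. The splitting $(\gg_y)^x=\gg_{x+y}\ltimes[x,\gg_y]$ lets me apply Lemma~\ref{ds restriction} a second time, now inside $\gg_y$, to conclude that for any finite-dimensional $\gg_y$-module $N$,
\[
ds_x[N]=[\Res_{\gg_{x+y}}^{\gg_y}N].
\]
Here the essential point is that the hypothesis $(\gg_y)^x=\gg_{x+y}\ltimes[x,\gg_y]$ identifies $(\gg_y)_x$ with $\gg_{x+y}$, and moreover identifies this with the $\gg_{x+y}$ coming from the splitting of $\gg^{x+y}$ as a subalgebra of $\gg$; this is the compatibility that is built into the statement.

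Composing these three identifications, I would then write
\[
(ds_x\circ ds_y)[M]=ds_x[\Res_{\gg_y}^{\gg}M]=[\Res_{\gg_{x+y}}^{\gg_y}\Res_{\gg_y}^{\gg}M]=[\Res_{\gg_{x+y}}^{\gg}M]=ds_{x+y}[M],
\]
the second-to-last equality being transitivity of restriction. The main (and really the only) obstacle is the bookkeeping: one must verify that the subalgebra of $\gg$ playing the role of ``$\gg_{x+y}$'' in the definition of $ds_{x+y}$ coincides with the subalgebra of $\gg_y\subseteq\gg$ playing the role of $(\gg_y)_x$ in the definition of $ds_x$ on $\gg_y$-modules. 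Once the three compatible splittings in the hypothesis are unpacked, this identification is immediate, and there is no further mathematical content beyond the transitivity of $\Res$.
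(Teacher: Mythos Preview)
Your proof is correct and follows exactly the approach of the paper: the paper's proof is a one-liner stating that the result follows from Lemma~\ref{ds restriction} together with transitivity of restriction, which is precisely what you have spelled out in detail.
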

	
	\begin{proof}
		This follows immediately from Lemma~\ref{ds restriction} and the corresponding statement for restriction.
	\end{proof}

	\begin{lemma}\label{lem:group ker}
		Suppose that $x,y\in X$ and that there exists $g\in G_0$ such that $gx=y$.  Then we have a commutative diagram
		\[
		\xymatrix{\Gr_{-}(\Fin(\gg))\ar[r]^{ds_x}\ar[dr]_{ds_y} & \Gr_{-}(\Fin(\gg_x))\ar[d]^{\sim} \\ & \Gr_{-}(\Fin(\gg_y))}
		\]
		where the downward arrow is an isomorphism and is induced by the action of $g$.  In particular:
		\[
		\ker \left(ds_{x}|_{\Gr_{-}(\Fin(\gg))}\right)=\ker\left(ds_{y}|_{\Gr_{-}(\Fin(\gg))}\right)
		\]
	\end{lemma}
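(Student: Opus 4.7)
The plan is to promote the element $g \in G_0$ to an isomorphism between the Duflo--Serganova data at $x$ and at $y = \Ad_g(x)$, and then read off the diagram.

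First I would set up the relevant algebra isomorphism. Since $\Ad_g$ is a Lie superalgebra automorphism of $\gg$, it carries $\ker\ad_x$ onto $\ker\ad_{\Ad_g(x)} = \ker\ad_y$ and $[x,\gg]$ onto $[y,\gg]$. Hence $\Ad_g$ descends to a Lie superalgebra isomorphism
\[
\phi_g\colon \gg_x \xrightarrow{\;\sim\;} \gg_y .
\]
Pullback along $\phi_g^{-1}$ defines an exact symmetric monoidal equivalence of categories $\Fin(\gg_x) \xrightarrow{\sim} \Fin(\gg_y)$, compatible with parity change, and in particular an isomorphism $\Gr_-(\Fin(\gg_x)) \xrightarrow{\sim} \Gr_-(\Fin(\gg_y))$; this is the vertical arrow in the diagram.

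Next I would identify $DS_y(M)$ with the $\phi_g$-twist of $DS_x(M)$. For any $(\gg,G_0)$-module $M$, the action of $g$ on $M$ satisfies $g \circ x_M \circ g^{-1} = y_M$, so it maps $\ker x_M$ isomorphically to $\ker y_M$ and $x M$ to $y M$. As already recorded in the proof of Lemma~\ref{lm1}, this yields a canonical vector space isomorphism $M_x \to M_y$, and by construction it intertwines the $\gg_x$-action with the $\gg_y$-action through $\phi_g$. In other words, pushforward along $\phi_g$ sends $DS_x(M)$ to $DS_y(M)$. Applied at the level of reduced Grothendieck groups, this is exactly the commutativity $ds_y = (\text{vertical iso}) \circ ds_x$. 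The equality of kernels on $\Gr_-(\Fin(\gg))$ is then immediate because the vertical arrow is an isomorphism.

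I expect no serious obstacle here; the only mild care required is to verify that the identification $M_x \to M_y$ is equivariant not only set-theoretically but as a map of modules over the (twisted) algebra, which is a routine unwinding of $g\cdot(\xi \cdot m) = \Ad_g(\xi)\cdot(g\cdot m)$ for $\xi \in \gg^x$ and $m \in \ker x_M$, followed by passage to quotients.
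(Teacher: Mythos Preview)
Your proposal is correct and follows essentially the same approach as the paper: the paper's proof simply asserts the commutative diagram at the level of categories $\Fin(\gg)\to\Fin(\gg_x)\xrightarrow{\sim}\Fin(\gg_y)$ with the vertical equivalence induced by $g$, and then passes to reduced Grothendieck groups. You have spelled out in more detail how $\Ad_g$ induces the isomorphism $\phi_g\colon\gg_x\to\gg_y$ and why the action of $g$ on $M$ intertwines $DS_x(M)$ with $DS_y(M)$ through $\phi_g$, which is exactly the content the paper leaves implicit.
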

	
	\begin{proof}
		We have a commutative diagram
		\[
		\xymatrix{\Fin(\gg)\ar[r]^{DS_x}\ar[dr]_{DS_y} & \Fin(\gg_x)\ar[d]^{\sim} \\ & \Fin(\gg_y)}
		\]
		where the downward arrow is induced by the action of $g$, and is an equivalence.  Passing to the reduced Grothendieck ring completes the argument.
	\end{proof}
	
	\subsection{Supermultiplicity}
	Let $\gg$ be  a finite-dimensional Lie superalgebra, and let $\mathfrak{a}$ be any subalgebra of $\gg$.
	We will view
	$$\mathfrak{a}_x:=\mathfrak{a}^x/([\gg,x]\cap \mathfrak{a})$$
	as a subalgebra of $\gg_x$. 
	
	In addition to preserving superdimension, the $\DS$ functor  also preserves the supermultiplicity of $\gg^x$-modules, when this notion is well-defined.  We continue to work just with finite-dimensional modules.  
	The \emph{multiplicity}  of a simple module $L$ in a finite-length module $M$, denoted  $[M:L]$, is the number of factors in the Jordan--H\"older series of $M$ which are isomorphic to $N$.  If $M$ is a finite-dimensional module and $L$ is a finite-dimensional simple module, then we can define the \emph{supermultiplicity} of $L$ in $M$ to be:
	
	\begin{equation}\label{smult}
		\smult(M;L):=\left\{\begin{array}{ll}
			\ [M:L]-[M:\Pi L] & \text{ if }L\not\cong\Pi L \\
			\ [M:L] \text{ mod } 2 & \text{ if }L\cong\Pi L.\end{array}\right.
	\end{equation}
	
	The following lemma is immediate.
	\begin{lemma}
		Let $L$ be simple, finite-dimensional $\gg$-module.  Then $\smult(-;L)$ defines a homomorphism 
		\[
		\Gr_-(\Fin(\gg))\to \left\{\begin{array}{ll}\mathbb{Z} & \text{ if }L\not\cong\Pi L \\  \mathbb{Z}_{2} & \text{ if }L\cong\Pi L\end{array}\right.
		\]
	\end{lemma}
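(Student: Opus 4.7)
The proof will be a direct verification from definitions, so I will break it into two steps and observe that no real obstacle arises.

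First I would check that $\smult(-;L)$ is additive on short exact sequences, which shows it descends to a homomorphism out of the unreduced Grothendieck group $\Gr(\Fin(\gg))$. This is immediate from the Jordan--H\"older theorem: for any simple finite-dimensional module $N$, the ordinary multiplicity $[-:N]$ is additive on short exact sequences of finite-length modules. In the case $L\not\cong \Pi L$, $\smult(-;L)=[-:L]-[-:\Pi L]$ is a difference of two such homomorphisms and hence itself a homomorphism to $\mathbb Z$. In the case $L\cong\Pi L$, $\smult(-;L)$ is the reduction mod $2$ of $[-:L]$, hence a homomorphism to $\mathbb Z_2$.

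Second I would verify the relation needed to descend to the reduced Grothendieck group $\Gr_-(\Fin(\gg))$, namely $\smult(M;L)+\smult(\Pi M;L)=0$ for every $M$. This uses the obvious identity $[\Pi M:N]=[M:\Pi N]$, which holds because applying $\Pi$ to a Jordan--H\"older filtration of $M$ yields one of $\Pi M$ with each factor parity-shifted. In the case $L\not\cong \Pi L$, this gives
\[
\smult(\Pi M;L)=[\Pi M:L]-[\Pi M:\Pi L]=[M:\Pi L]-[M:L]=-\smult(M;L),
\]
as required in $\mathbb Z$. In the case $L\cong \Pi L$, we get $[\Pi M:L]=[M:\Pi L]=[M:L]$, so $\smult(\Pi M;L)=\smult(M;L)$, which equals $-\smult(M;L)$ in $\mathbb Z_2$.

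The only point that warrants attention is the case $L\cong\Pi L$, where the naive formula $[M:L]-[M:\Pi L]$ would be identically zero; the mod~$2$ convention in \eqref{smult} is exactly what is needed so that the map is nontrivial while still satisfying the relation $M_{gr}=-(\Pi M)_{gr}$ of $\Gr_-(\Fin(\gg))$. Once this bookkeeping is in place, both steps go through and there is no substantive obstacle; the lemma is essentially a consistency check on the definitions of $\smult$ and $\Gr_-$.
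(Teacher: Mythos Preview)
Your proof is correct and is exactly the direct verification the paper has in mind; the paper itself simply declares the lemma ``immediate'' without writing out a proof, and your two-step check (additivity on short exact sequences via Jordan--H\"older, then compatibility with the relation $M_{gr}=-(\Pi M)_{gr}$ using $[\Pi M:N]=[M:\Pi N]$) is the natural way to unpack that.
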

	
	The following proposition is from \cite{G3}.  
	
	\begin{proposition}\label{super_mult_preserved_ds}
		Let $M$ be in $\Fin(\gg)$, and let $L$ be a simple finite-dimensional $\gg^x$-module.  Then  one has
		\[
		\smult(\Res^{\gg}_{\gg^x} M;L)=
		\smult(\DS_x(M);L),
		\]
		where $\DS_x(M)$ is viewed as a $\gg^x$-module.
	\end{proposition}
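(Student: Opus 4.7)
The plan is to reduce the statement to an equality in the reduced Grothendieck group $\Gr_{-}(\Fin(\gg^x))$ using the natural filtration of $M$ associated to the action of $x$, since $\smult(-;L)$ descends to a homomorphism from this reduced Grothendieck group (as noted in the lemma preceding the proposition), in view of the identities $\smult(\Pi N;L)=-\smult(N;L)$ when $L\not\cong\Pi L$ and $\smult(\Pi N;L)=\smult(N;L)$ when $L\cong\Pi L$ (noting that in the latter case $2[N]=0$ in $\Gr_{-}$).

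Because $x\in\gg^x$ is central in the super sense, the subspaces $xM\subseteq\Ker x_{M}\subseteq M$ form a filtration by $\gg^x$-submodules. The associated graded consists of three pieces: $xM$, the middle quotient $\Ker x_{M}/xM=\DS_x(M)$, and the top quotient $M/\Ker x_{M}$. Multiplication by $x$ furnishes an odd isomorphism of $\gg^x$-modules $M/\Ker x_{M}\xrightarrow{\sim}\Pi(xM)$ (odd because it shifts parity and intertwines the $\gg^x$-action up to the usual sign). Therefore in $\Gr(\Fin(\gg^x))$ we have
\[
[\Res^{\gg}_{\gg^x}M]=[xM]+[\DS_x(M)]+[\Pi(xM)].
\]
Passing to $\Gr_{-}(\Fin(\gg^x))$, the relation $[\Pi N]=-[N]$ kills the outer two terms, yielding
\[
[\Res^{\gg}_{\gg^x}M]=[\DS_x(M)]\quad\text{in }\Gr_{-}(\Fin(\gg^x)).
\]

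Applying the homomorphism $\smult(-;L):\Gr_{-}(\Fin(\gg^x))\to\ZZ$ (or $\ZZ_{2}$) yields the desired equality. The only mild subtlety (and the closest thing to a real obstacle) is verifying that multiplication by $x$ really does give a $\gg^x$-module isomorphism $M/\Ker x_{M}\cong\Pi(xM)$ with the correct parity, which follows from $[x,y]=0$ for $y\in\gg^x$ together with the definition of the parity-shift functor $\Pi$; everything else is bookkeeping with the short exact sequences
\[
0\to \Ker x_{M}\to M\to xM\to 0,\qquad 0\to xM\to \Ker x_{M}\to \DS_x(M)\to 0.
\]
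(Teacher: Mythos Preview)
Your proof is correct and is essentially the argument underlying the paper's one-line proof. The paper simply invokes the commutative diagram~(\ref{eqdsx}), which asserts precisely that $[\Res^{\gg}_{\gg^x}M]=[\DS_x(M)]$ in $\Gr_{-}(\Fin(\gg^x))$; you have spelled out explicitly why this identity holds, via the $\gg^x$-stable filtration $xM\subseteq\Ker x_M\subseteq M$ and the isomorphism $M/\Ker x_M\cong\Pi(xM)$, which is exactly the content the paper leaves implicit when it says the diagram commutes.
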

	
	\begin{proof}
		This follows immediately from (\ref{eqdsx}).
	\end{proof}
	
	\begin{remark}
		In many cases $\gg_x$ can be viewed as a subalgebra of $\gg$ in a way that $\gg^x = \gg_x\ltimes [x,\gg]$, and in these cases, the above formula also
		holds for each simple $\gg_x$-module $L$. 
		In particular, the claim holds if $\gg$ is a classical Lie superalgebra (see Proposition~\ref{lm202}).  \end{remark}

	\begin{proposition}\label{propka}
		We have 
		the following commutative diagram
		$$
		\xymatrix{ \Gr_{-}(\Fin(\gg)) \ar[d]^{\ds_x} \ar[r] &  \Gr_{-}(\Fin(\mathfrak{a}^x))  \\
			\Gr_{-}(\Fin(\gg_x))  \ar[r] & \Gr_{-}(\Fin(\mathfrak{a}_x))\ar[u]^{\text{res}^{\mathfrak{a}_x}_{\mathfrak{a}^x}}}  
		$$
		where the horizontal arrows are induced by  the corresponding restriction functors
		and $\text{res}^{\mathfrak{a}_x}_{\mathfrak{a}^x}$ is induced by the functor  $\text{Res}^{\mathfrak{a}_x}_{\mathfrak{a}^x}$  for the canonical surjection $\aa^x\to\aa_x$. 
	\end{proposition}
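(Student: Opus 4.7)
The plan is to factor the diagram through $\Gr_{-}(\Fin(\gg^x))$, reducing commutativity to two formal statements: the triangle~(\ref{eqdsx}), and compatibility of restriction along the inclusion $\aa^x\subseteq\gg^x$ with pullback along the canonical surjections $\gg^x\twoheadrightarrow\gg_x$ and $\aa^x\twoheadrightarrow\aa_x$. Concretely, the upper restriction in the proposition factors as $\Gr_{-}(\Fin(\gg))\to\Gr_{-}(\Fin(\gg^x))\to\Gr_{-}(\Fin(\aa^x))$ through $\aa^x\subseteq\gg^x$, while diagram~(\ref{eqdsx}) identifies $\Res^{\gg}_{\gg^x}[M]$ in $\Gr_{-}(\Fin(\gg^x))$ with the pullback of $\ds_x[M]$ along $\gg^x\twoheadrightarrow\gg_x$.

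Given this factorization, it suffices to prove commutativity of the square
\[
\xymatrix{
\Gr_{-}(\Fin(\gg^x)) \ar[r] & \Gr_{-}(\Fin(\aa^x)) \\
\Gr_{-}(\Fin(\gg_x)) \ar[u] \ar[r] & \Gr_{-}(\Fin(\aa_x)) \ar[u]
}
\]
whose horizontal arrows are restrictions and whose vertical arrows are pullbacks. This follows at once from commutativity of the corresponding diagram
\[
\xymatrix{
\aa^x \ar@{^{(}->}[r] \ar@{->>}[d] & \gg^x \ar@{->>}[d] \\
\aa_x \ar@{^{(}->}[r] & \gg_x
}
\]
of Lie superalgebras, since a commuting square of algebra homomorphisms translates directly into a commuting square of the induced restriction/pullback functors on module categories, which descends to reduced Grothendieck groups.

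The only nontrivial point, which I expect to be the main (minor) obstacle, is verifying that the bottom map $\aa_x\to\gg_x$ is well-defined and injective; equivalently, that $\aa\cap[x,\gg]=\aa^x\cap[x,\gg]$. This reduces to showing that every element of $[x,\gg]$ already lies in $\gg^x$, which follows from the super-Jacobi identity combined with $[x,x]=0$: for any $y\in\gg$, one obtains $[x,[x,y]]=-[x,[x,y]]$, hence $[x,[x,y]]=0$, so $[x,\gg]\subseteq\gg^x$. With this inclusion in hand, both subdiagrams commute formally, and together they yield the commutativity stated in the proposition.
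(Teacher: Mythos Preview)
Your proof is correct and follows essentially the same approach as the paper: factor through $\Gr_{-}(\Fin(\gg^x))$, use the commutative square of restriction functors coming from the commutative square of Lie superalgebra maps, and invoke the triangle~(\ref{eqdsx}). The paper's version is terser and simply asserts the commuting square of categories, taking the well-definedness and injectivity of $\aa_x\hookrightarrow\gg_x$ for granted (this was recorded in the text just before the proposition), whereas you supply the Jacobi-identity verification explicitly; but the logical structure is identical.
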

	\begin{proof}
		The restriction functors give the commutative diagram
		$$\xymatrix{ \Fin(\gg^x)  \ar[r] & \Fin(\mathfrak{a}^x) \\
			\Fin(\gg_x)  \ar[r]  \ar[u] & \Fin(\mathfrak{a}_x)\ar[u] \\
		}$$
		which allows us to rewrite the original diagram in the form
		$$ 
		\xymatrix{ \Gr_{-}(\Fin(\gg))\ar[d]^{\ds_x} \ar[r] & \Gr_{-}(\Fin(\mathfrak{a}^x))  \\
			\Gr_{-}(\Fin(\gg_x)) \ar[r] & \Gr_{-}(\Fin(\gg^x))\ar[u]  }
		$$
		where all  arrows except $\ds_x$ are induced by the  restriction functors. By~(\ref{eqdsx}), the above diagram is commutative.
	\end{proof}

	\begin{example}\label{exafin} 
		Suppose $\gg$ is a classical Lie superalgebra in the sense of \cite{K1}, and let $\mathfrak{a}:=\hh$
		be a Cartan subalgebra of $\gg_{0}$.  Restriction induces a map
		$\Gr_{-}(\Fin(\gg))\to \Gr_{-}(\Fin(\hh))$  which we write as $[N]\mapsto \sch N$, where $\sch N$ denotes the supercharacter of $N$  (see  (\ref{def sch})). 
		If $\hh_x$ is  a Cartan subalgebra of $(\gg_x)_{0}$, then the composed map 
		$ds_x:\Gr_{-}(\Fin(\gg))\to \Gr_{-}(\Fin(\hh_x))$ is given by
		$[N]\mapsto \sch \DS_x(N)$. If we fix an embedding $\hh_x\to \hh$, then Proposition~\ref{propka} gives the Hoyt--Reif formula \cite{HR}
		\begin{equation}\label{HR formula}
			\sch \DS_x(N)=(\sch N)|_{\hh_x}.
		\end{equation}
	\end{example}
	
	\subsection{Properties of associated varieties} Here we list a few basic properties of associated varieties for a finite-dimensional Lie superalgebra $\gg$.  Let $\mathcal{U}(\gg)$ denote the universal enveloping algebra of $\gg$.
	
	We have the following.
	
	\begin{lemma}\label{lm2}\myLabel{lm2}\relax 
		Let $\gg$ be a finite-dimensional Lie superalgebra.
		\begin{enumerate}
			\item
			If $ M=\mathcal U\left(\gg\right)\otimes_{\mathcal U\left(\gg_{0}\right)}M' $ for some $ \gg_{0} $-module $ M' $, then $ X_M=\left\{0\right\} $;
			\item
			If $ M={\CC} $ is trivial, then $ X_M=X $;
			\item
			For any $ \gg $-modules $ M $ and $ N $, one has $ X_{M\oplus N}=X_M\cup X_{N} $;
			\item
			For any $ \gg $-modules $ M $ and $ N $, one has $ X_{M\otimes N}=X_M\cap X_{N} $;
			\item
			For any $ \gg $-module $ M $, $ X_{M^{*}}=X_M $;
		\end{enumerate}
	\end{lemma}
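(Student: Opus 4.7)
The plan is to treat each of the five claims in turn. Parts (2)--(5) are essentially formal and will follow immediately from the definitions and from Lemma~\ref{tensor}; the real content lies in part (1), which requires a PBW-type calculation to show that $M=\mathcal{U}(\gg)\otimes_{\mathcal{U}(\gg_0)}M'$ is free as a $\CC x$-module whenever $0\neq x\in X$.

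For (1), I would fix a nonzero $x\in X$ and extend it to a basis $x=y_0,y_1,\dots,y_n$ of $\gg_1$. By the PBW theorem for Lie superalgebras, $\mathcal{U}(\gg)$ is a free right $\mathcal{U}(\gg_0)$-module with basis the ordered monomials $y_I:=y_{i_1}\cdots y_{i_k}$, $i_1<\cdots<i_k$. Consequently $M$ has a $\CC$-basis of the form $\{y_I\otimes m_j\}$ where $\{m_j\}$ is a basis of $M'$. Since $[x,x]=0$, in $\mathcal{U}(\gg)$ we have $x^2=\tfrac12[x,x]=0$. Therefore left multiplication by $x$ sends $y_I\otimes m_j$ to $y_{\{0\}\cup I}\otimes m_j$ when $0\notin I$, and to $0$ when $0\in I$. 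This exhibits $M$ as a free $\CC x$-module, with basis indexed by those $I$ not containing $0$, so $\Ker x_M = xM$ and $M_x=0$. Combined with the trivial observation that $0\in X_M$ whenever $M\neq 0$, this yields $X_M=\{0\}$.

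Part (2) is immediate: every $x\in X$ acts as $0$ on the trivial module, so $(\CC)_x=\CC\neq 0$. Part (3) follows from the fact that $x_M$ on a direct sum decomposes as a direct sum of the two maps, so $\Ker$ and $\operatorname{Im}$ distribute, giving $(M\oplus N)_x=M_x\oplus N_x$, which is nonzero precisely when at least one summand is. Part (4) is an immediate consequence of Lemma~\ref{tensor}(1): $(M\otimes N)_x\cong M_x\otimes N_x$ is nonzero iff both factors are, which is exactly the statement $X_{M\otimes N}=X_M\cap X_N$. Part (5) uses Lemma~\ref{tensor}(2): $(M^*)_x\cong (M_x)^*$, and a vector space is nonzero iff its dual is, giving $X_{M^*}=X_M$.

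The principal obstacle is the PBW argument in (1); once one has chosen the basis with $x$ in the first position and checked that left multiplication by $x$ acts in the expected ``combinatorial'' way on PBW monomials (using $x^2=0$ in $\mathcal{U}(\gg)$), the freeness over $\CC x$ drops out with no further analysis. The remaining items reduce to bookkeeping and to invoking the monoidal and dualizing properties of $\DS_x$ already established.
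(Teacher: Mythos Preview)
Your proof is correct and follows essentially the same approach as the paper: parts (2)--(5) are dispatched exactly as in the paper (via the definitions and Lemma~\ref{tensor}), and for part (1) both you and the paper choose a basis of $\gg_1$ with $x$ in the first position and use PBW to see that left multiplication by $x$ pairs off the monomials, giving $\Ker x_M=xM$. Your phrasing in terms of freeness over $\CC x$ is a slightly cleaner way to state the same computation.
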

	
	\begin{proof} (2) and (3)  follow directly from the definition, while (4) and (5) follow from Lemma~\ref{tensor}.

		To prove (1), let $ x\in X $ and $ x\not=0 $. Let $ \left\{v_{j}\right\}_{j\in J} $ be a basis of  $ M' $  and $ x_{1},\dots ,x_{m} $ be a basis of $ \gg_{1} $
		such that $ x=x_{1} $. Then by the PBW Theorem for Lie superalgebras, the elements $ x_{i_{1}}x_{i_{2}}\dots x_{i_{k}}\otimes v_{j} $ for all
		$ 1\leq i_{1}<i_{2}<\dots <i_{k}\leq m $, $ j\in J $ form a basis of $ M $. The action of
		
		$ x=x_{1} $ in this basis is
		easy to write, and it is clear that $ \operatorname{Ker} x=xM $ is spanned by the vectors
		$ x_{1}x_{i_{2}}\dots x_{i_{k}}\otimes v_{j} $.
	\end{proof}
	
	The following lemma is the premise of Section~\ref{sec proj}, where the relationship between projectivity of module and its associated variety will be studied more in depth.
	\begin{lemma}\label{proj_lemma}
		Suppose that $\gg_{0}$ is reductive (i.e., $\gg$ is quasireductive).  If $M$ is projective in $\cF(\gg)$ then we have $X_M=\{0\}$.
	\end{lemma}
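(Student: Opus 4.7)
The plan is to show that any projective $M\in\cF(\gg)$ is a direct summand of an induced module of the form $\mathcal{U}(\gg)\otimes_{\mathcal{U}(\gg_0)}N$, and then invoke parts (1) and (3) of Lemma~\ref{lm2}. First I would verify that the induction functor $\Ind_{\gg_0}^{\gg}:=\mathcal{U}(\gg)\otimes_{\mathcal{U}(\gg_0)}(-)$ sends $\operatorname{Fin}(\gg_0)$ into $\cF(\gg)$: the PBW theorem for Lie superalgebras gives $\Ind_{\gg_0}^{\gg}N\cong \Lambda(\gg_1)\otimes N$ as $\gg_0$-modules, which is finite-dimensional and, since $\gg_0$ is reductive, semisimple over $\gg_0$.

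Next I would observe that $\Ind_{\gg_0}^{\gg}$ is left adjoint to the (exact) restriction functor $\Res_{\gg_0}^{\gg}:\cF(\gg)\to\operatorname{Fin}(\gg_0)$, hence carries projectives to projectives. Because $\gg_0$ is reductive, every finite-dimensional $\gg_0$-module is projective in $\operatorname{Fin}(\gg_0)$, so $\Ind_{\gg_0}^{\gg}N$ is projective in $\cF(\gg)$ for every finite-dimensional $\gg_0$-module $N$. Taking $N:=\Res_{\gg_0}^{\gg}M$, the counit of the adjunction $\Ind_{\gg_0}^{\gg}\Res_{\gg_0}^{\gg}M\to M$, $u\otimes m\mapsto um$, is surjective since $1\otimes m\mapsto m$; as $M$ is assumed projective, this surjection splits, exhibiting $M$ as a direct summand of $\Ind_{\gg_0}^{\gg}N$.

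By Lemma~\ref{lm2}(1) we have $X_{\Ind_{\gg_0}^{\gg}N}=\{0\}$, and by Lemma~\ref{lm2}(3) the associated variety of a direct summand is contained in that of the whole module, so $X_M\subseteq\{0\}$, giving $X_M=\{0\}$ (with the trivial case $M=0$ being clear).

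The argument has no serious obstacle; the only step that needs a moment of care is the verification that $\Ind_{\gg_0}^{\gg}$ really does land in $\cF(\gg)$ (finite-dimensionality via PBW and $\gg_0$-semisimplicity from reductivity) and that it preserves projectivity, both of which are routine once the adjunction with the exact functor $\Res_{\gg_0}^{\gg}$ is in place.
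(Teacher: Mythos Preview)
Your argument is correct and follows exactly the route sketched in the paper: the paper's one-line proof simply states that $M$ is a direct summand of $\Ind_{\gg_0}^{\gg}\Res_{\gg_0}^{\gg}M$ and appeals to Lemma~\ref{lm2}(1), and you have supplied the standard details (counit surjectivity plus projectivity of $M$ gives the splitting). Your intermediate observation that $\Ind_{\gg_0}^{\gg}$ preserves projectives is true but not actually needed here, since the splitting comes from the projectivity of $M$, not of the induced module.
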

	\begin{proof}
		This follows from (1) of Lemma~\ref{lm2} using that $M$ will be a direct summand of $\Ind_{\gg_0}^{\gg}\Res_{\gg_0}^{\gg}M$.
	\end{proof}
	
	\begin{remark}
		There is a natural action of $G_0\times\mathbb{G}_m$ on the associated variety $X$ of $\gg$, where the one-dimensional torus $\mathbb{G}_m$ acts by scaling.  
		
		For $\lambda\in\mathbb{G}_m$, it is easy to check we have an equality of functors $\DS_{\lambda x}=\DS_{x}$.  For $g\in G_0$, the functors $\DS_{gx}$ and $\DS_x$ are isomorphic, in a suitable sense, when we restrict to finite-dimensional modules.
	\end{remark}


	\section{The universal enveloping algebra and central characters}\
	
	In this section, $\gg$ denotes a finite-dimensional Lie superalgebra.
	Let  $Z(\gg)$ (respectively, $Z(\gg_x)$) denote the center  of the universal enveloping algebra $\mathcal{U}(\gg)$ (respectively, $\mathcal{U}(\gg_x)$). 
	
	For each central character $\chi: Z(\gg)\to\mathbb{C}$, we denote by $\Mod_\chi^r(\gg)$ 
	the full subcategory of $\Mod(\gg)$ consisting of the modules 
	that are annihilated by $(z-\chi(z))^r$ for every $z\in Z(\gg)$.  We set
	\[
	\Mod_\chi(\gg)=\bigcup\limits_{r=1}^\infty\Mod_\chi^r(\gg),
	\] 
	and we say that a $\gg$-module $M$ admits central character $\chi$ if $M$ lies in $\Mod_\chi(\gg)$.
	By Dixmier's generalization of Schur's Lemma (see \cite{Dix}), each simple module lies in $\Mod_\chi^1(\gg)$ for a suitable central character $\chi$.

	Symmetrization gives an isomorphism $\mathcal{U}(\gg)\simeq S(\gg)$ as $\ad_\gg$-modules. Then since $M\mapsto M_x$ is a tensor functor we have
	$$\mathcal{U}(\gg)_x\simeq \mathcal{U}(\gg_x).$$
	
	Observe that $\ad_x(\mathcal{U}(\gg))$ is an ideal in $\mathcal{U}(\gg)^{\ad_x}$ and consider the canonical homomorphism of algebras  $\pi:\mathcal{U}(\gg)^{\ad_x}\to \mathcal{U}(\gg_x)$.
	Then we have a homomorphism
	$$Z(\gg)\hookrightarrow \mathcal{U}(\gg)^{\ad_x}\xrightarrow{\pi}\mathcal{U}(\gg_x)$$
	and since $Z(\gg_x)=\mathcal{U}(\gg_x)^{ad \gg_x}$ we have a well defined homomorphism
	$$\eta_x: Z(\gg)\to Z(\gg_x).$$
	The dual map of central characters 
	\begin{equation}\label{def eta^*}
		\eta_x^{*}\colon \operatorname{Hom} \left(Z\left(\gg_{x}\right),{\CC}\right) \to \operatorname{Hom} \left(Z(\gg),{\CC}\right)
	\end{equation}
	is very important due to the following statement.

	\begin{proposition}\label{maincch}
		Take  $M\in \Mod_\chi^r(\gg)$.
		\begin{enumerate}
			\item
			If $\eta_x$ is surjective, then $\DS_x(M)$ lies in $\Mod_{(\eta_x^*)^{-1}\chi}^r(\gg_x)$.
			
			\item
			For each simple subquotient $L'$ of $\DS_x(M)$ there exists
			$\chi'\in (\eta_x^*)^{-1}(\chi)$ such that  $L'\in  \Mod_{\chi'}^1(\gg_x)$.
			In particular, $\DS_x(M)=0$ if $\chi\not\in \, Im(\eta_x^*)$.
		\end{enumerate}
	\end{proposition}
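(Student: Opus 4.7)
The whole statement rests on a single compatibility: for every $z\in Z(\gg)$, viewed inside $\mathcal{U}(\gg)^{\ad_x}$, the endomorphism of $M$ given by $z$ preserves both $\Ker x_M$ and $xM$, and the induced endomorphism of $M_x=\DS_x(M)$ coincides with the action of $\eta_x(z)\in Z(\gg_x)$. I would establish this first. Preservation of $\Ker x_M$ and $xM$ is immediate from $[x,z]=0$ in the super sense. To identify the induced endomorphism with that of $\eta_x(z)$, I would invoke the monoidal structure of $\DS_x$ from Lemma~\ref{tensor} applied to the action map $\mathcal{U}(\gg)\otimes M\to M$: this yields an action of $\mathcal{U}(\gg)_x\cong \mathcal{U}(\gg_x)$ on $M_x$, and elements of $\mathcal{U}(\gg)^{\ad_x}$ act through their image under the quotient $\mathcal{U}(\gg)^{\ad_x}\to \mathcal{U}(\gg_x)$. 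Since $\eta_x$ is defined precisely as the restriction of this quotient to $Z(\gg)$, the claim follows.

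\textbf{Part (1).} Assume $\eta_x$ is surjective, so $\eta_x^*$ is injective and $\chi':=(\eta_x^*)^{-1}\chi$ is a well-defined central character of $\gg_x$ (provided $\chi\in\mathrm{Im}\,\eta_x^*$; otherwise the target category is empty and $\DS_x(M)=0$ by part (2)). Given any $z'\in Z(\gg_x)$, pick $z\in Z(\gg)$ with $\eta_x(z)=z'$. Then
\[
\chi'(z')=\chi'(\eta_x(z))=(\eta_x^*\chi')(z)=\chi(z),
\]
so $(z'-\chi'(z'))^r=(\eta_x(z)-\chi(z))^r$, and by the compatibility above this operator on $M_x$ is induced by the action of $(z-\chi(z))^r$ on $M$, which vanishes. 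Thus $\DS_x(M)\in \Mod_{\chi'}^r(\gg_x)$.

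\textbf{Part (2).} Let $L'$ be a simple subquotient of $\DS_x(M)$, and by Dixmier's lemma (cited in the text) let $\chi'\in \mathrm{Hom}(Z(\gg_x),\CC)$ be its central character, so each $z'\in Z(\gg_x)$ acts on $L'$ by the scalar $\chi'(z')$. For any $z\in Z(\gg)$, the operator $(z-\chi(z))^r$ annihilates $M$, hence, by the compatibility established at the outset, $(\eta_x(z)-\chi(z))^r$ annihilates $M_x$ and in particular $L'$. Since $\eta_x(z)$ acts on $L'$ by $\chi'(\eta_x(z))$, we must have $\chi'(\eta_x(z))=\chi(z)$, i.e.\ $\eta_x^*(\chi')=\chi$. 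Consequently, if $\chi\notin\mathrm{Im}\,\eta_x^*$ no such simple subquotient can exist, forcing $\DS_x(M)=0$.

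\textbf{Main obstacle.} The only nontrivial step is the opening identification of the $z$-action on $M_x$ with the $\eta_x(z)$-action; everything else is formal. The delicate point is that $Z(\gg)$ a priori acts on $M_x$ only through its restriction to $\Ker x_M$ modulo $xM$, whereas $Z(\gg_x)$ acts through the $\gg_x$-module structure on $\DS_x(M)$. Reconciling the two requires the observation that the monoidal natural isomorphism $\DS_x(\mathcal{U}(\gg))\otimes \DS_x(M)\to \DS_x(\mathcal{U}(\gg)\otimes M)$ carries the class of $z\in \mathcal{U}(\gg)^{\ad_x}$ to its image in $\mathcal{U}(\gg_x)$, which is exactly $\eta_x(z)$. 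Once this bookkeeping is in hand, both parts reduce to direct manipulation with scalars.
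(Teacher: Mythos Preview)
Your proof is correct and follows essentially the same route as the paper: establish that the action of $z\in Z(\gg)$ on $M$ descends to the action of $\eta_x(z)$ on $\DS_x(M)$, then read off both parts by elementary manipulation with $(z-\chi(z))^r$. The only difference is cosmetic: the paper justifies the opening compatibility in one line by observing that $\DS_x(M)$ is a subquotient of $M$ as a $Z(\gg)$-module (and the induced action factors through $\eta_x$ by the definition of the $\gg_x$-structure on $M_x$), whereas you route it through the monoidal isomorphism of Lemma~\ref{tensor}; both are valid, yours is just a bit more elaborate than needed.
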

	\begin{proof}
		View $M$ as a $\mathcal{Z}(\gg)$-module. Note that $\DS_x(M)$ 
		can be viewed as a subquotient
		of this module. 
		Take $z\in\operatorname{Ker}\chi$ and set  $z_x:=\eta_x(z)$.
		Since $M\in\Mod_\chi^r(\gg)$ one has 
		$z^r M=0$, so
		$z_x^r \DS_x(M)=0$. This gives (i). For (ii)
		take $\chi':\mathcal{Z}(\gg_x)\to\mathbb{C}$ such that  
		$\operatorname{Ker}\chi' L'=0$. Then $(z_x-\chi'(z_x))L'=0$, so
		$\chi'(z_x)=0$. Hence
		$\eta_x(\operatorname{Ker} \chi)\subset \operatorname{Ker}\chi'$ which implies
		$\eta_x(\chi')=\chi$ as required.
	\end{proof}
	
	\begin{corollary} If $M$ admits a central character $\chi$ and $M_x$ has a subquotient admitting a central character $\zeta$,
		then $\chi=\eta^*(\zeta)$.
	\end{corollary}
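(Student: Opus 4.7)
The plan is to reduce directly to Proposition~\ref{maincch}(ii). Since $M$ admits central character $\chi$, we have $M\in\Mod_\chi^r(\gg)$ for some $r\ge 1$, so the proposition applies to $M$.

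First I would pick a simple subquotient. Let $N$ be the subquotient of $M_x$ that admits central character $\zeta$, and choose any simple subquotient $L'$ of $N$. Since ``subquotient of a subquotient is a subquotient,'' $L'$ is a simple subquotient of $\DS_x(M)=M_x$, so Proposition~\ref{maincch}(ii) produces some $\chi'\in(\eta_x^*)^{-1}(\chi)$ with $L'\in \Mod_{\chi'}^1(\gg_x)$.

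Next I would identify $\chi'$ with $\zeta$. Because $N\in\Mod_\zeta(\gg_x)$, we have $(z-\zeta(z))^s N=0$ for every $z\in Z(\gg_x)$ and some $s$, hence the same holds on the subquotient $L'$. But $L'$ is simple and finite-dimensional over its endomorphism ring, so Dixmier's generalization of Schur's lemma (cited right before the proposition) forces $L'\in \Mod_\zeta^1(\gg_x)$, i.e.\ $z$ acts by $\zeta(z)$ on $L'$. Combined with $L'\in\Mod_{\chi'}^1(\gg_x)$ and the fact that $L'\neq 0$, the action of $Z(\gg_x)$ on $L'$ determines a unique scalar, giving $\chi'=\zeta$.

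Putting the two steps together yields $\chi=\eta_x^*(\chi')=\eta_x^*(\zeta)$. There is no real obstacle here; the only point deserving attention is the uniqueness-of-central-character step, which is exactly what Dixmier's lemma provides and which is implicit in the way $\Mod_\chi^1(\gg)$ is used in Proposition~\ref{maincch}.
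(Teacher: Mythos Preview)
Your argument is correct and is exactly the route the paper intends: the corollary is stated without proof as an immediate consequence of Proposition~\ref{maincch}(ii), and you have simply spelled out the two obvious steps (pass to a simple subquotient $L'$ of the given subquotient, then observe that the central character of $L'$ is forced to be $\zeta$). One cosmetic remark: the phrase ``finite-dimensional over its endomorphism ring'' is not the right justification---Dixmier's lemma applies because $\mathcal{U}(\gg_x)$ is of countable dimension over $\mathbb{C}$, and that is all you need to conclude that the center acts by scalars on the simple module $L'$.
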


	Using Proposition~\ref{super_mult_preserved_ds}, we obtain the following interesting corollary.
	
	\begin{corollary}
		Assume that $\gg_x$ can be embedded into $\gg$ (i.e.,~$\gg^x=\gg_x\ltimes [x,\gg]$). 
		If $M\in \Mod_\chi(\gg)$ and $L'\in \Mod_{\chi'}(\gg_x)$ is a simple $\gg_x$-module  such that $[\Res^{\gg}_{\gg_x} M:L']<\infty$ and $[\Res^{\gg}_{\gg_x} M:\Pi(L')]<\infty$, then 
		\[
		[\Res^{\gg}_{\gg_x} M:L']=[\Res^{\gg}_{\gg_x} M:\Pi(L')]\ \text{ if }\ \chi'\not\in (\eta_x^*)^{-1}(\chi).
		\]
	\end{corollary}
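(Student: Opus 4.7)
The plan is to reduce the statement to an application of Proposition~\ref{maincch}(ii) together with a short-exact-sequence computation of multiplicities, using the hypothesis $\gg^{x} = \gg_{x}\ltimes[x,\gg]$ to view $\gg_{x}$ as a genuine subalgebra of $\gg$ (and in particular of $\gg^{x}$).

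First I would observe that $\Pi L'$ admits the same central character $\chi'$ as $L'$, because $\mathcal{Z}(\gg_x)\subseteq \mathcal{U}(\gg_x)_{0}$ is purely even and its action on a module is unaffected by parity shift. Next, using the embedding $\gg_{x}\hookrightarrow\gg$, I restrict everything to $\gg_{x}$. The subspaces $xM$ and $\ker x_{M}$ are $\gg^{x}$-stable, hence $\gg_{x}$-stable, and we have the two short exact sequences of $\gg_{x}$-modules
\[
0\to \ker x_{M}\to M\to \Pi(xM)\to 0,\qquad 0\to xM\to \ker x_{M}\to M_{x}\to 0.
\]
By hypothesis $[\Res^{\gg}_{\gg_{x}}M:L']$ and $[\Res^{\gg}_{\gg_{x}}M:\Pi L']$ are finite, which forces the analogous multiplicities in the subquotients $xM$, $\ker x_{M}$, and $M_{x}$ to be finite as well. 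Computing multiplicities along these two sequences (and using $[\Pi N:L']=[N:\Pi L']$) gives
\[
[\Res^{\gg}_{\gg_{x}}M:L']=[xM:L']+[xM:\Pi L']+[M_{x}:L'],
\]
together with the analogous identity for $\Pi L'$. Subtracting yields
\[
[\Res^{\gg}_{\gg_{x}}M:L']-[\Res^{\gg}_{\gg_{x}}M:\Pi L']=[M_{x}:L']-[M_{x}:\Pi L'],
\]
which is essentially the infinite-dimensional analogue of Proposition~\ref{super_mult_preserved_ds}.

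Now I would invoke Proposition~\ref{maincch}(ii): every simple $\gg_{x}$-subquotient of $\DS_{x}(M)=M_{x}$ admits a central character in $(\eta_x^{*})^{-1}(\chi)$. Since $\chi'\notin(\eta_x^{*})^{-1}(\chi)$, and both $L'$ and $\Pi L'$ admit $\chi'$, neither of them occurs as a composition factor of $M_{x}$. Hence $[M_{x}:L']=[M_{x}:\Pi L']=0$, and the displayed identity above reduces to the desired equality.

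The only potential obstacle is book-keeping in the possibly infinite-length setting: one must know that multiplicities in $xM$, $\ker x_{M}$, and $M_{x}$ are well-defined and finite, and that they add correctly across short exact sequences whenever the two multiplicities in $M$ are finite. This is standard for simple modules with finite multiplicity, so nothing beyond the stated hypothesis is required.
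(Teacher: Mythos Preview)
Your proof is correct and follows the same strategy as the paper: combine Proposition~\ref{maincch}(ii) (so that $L'$ and $\Pi L'$ cannot occur in $M_x$) with the supermultiplicity identity of Proposition~\ref{super_mult_preserved_ds}. The paper simply cites Proposition~\ref{super_mult_preserved_ds}, which is stated only for $M\in\Fin(\gg)$; you supply the extra work needed for general $M\in\Mod_\chi(\gg)$ by running the two short exact sequences $0\to\ker x_M\to M\to\Pi(xM)\to 0$ and $0\to xM\to\ker x_M\to M_x\to 0$ directly, which is exactly the right way to extend that proposition to the infinite-dimensional setting under the finiteness hypotheses on the multiplicities.
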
 
	
	\subsection{The involutions $\sigma_x$ for classical Lie superalgebras}\label{involutions section}
	The maps $\eta_x$ for classical Lie superalgebras were described in~\cite{G3} using the results of \cite{Ser1}, \cite{K2}, \cite{Ser2}.  There is a nice uniform description of the image, which requires us to introduce an involution $\sigma_x$ on $\gg_x$.  We note that for $\pp(n)$ the center is always trivial, however we will introduce an involution $\sigma_x$ for later use.
	
	\begin{itemize}
		\item For $\gg=\gg\ll(m|n),\mathfrak{osp}(2m+1|2n),\qq(n),\pp(n)$ or $G(3)$, we declare the involution $\sigma_x$ on $\gg_x$ to be trivial, i.e.,~$\sigma_x=\text{Id}$.
		\item For $\gg=D(2|1;a)$ and $x\not=0$, one has $\gg_x=\mathbb{C}z$, and we set $\sigma_x=-\text{Id}$. 
		\item For $\gg=F(4)$ and $x\not=0$, 
		one has $\gg_x\cong \mathfrak{sl}_3$, and $\sigma_x$ is induced by the  involution of the Dynkin diagram of $\mathfrak{sl}_3$. 
		\item For $\gg=\mathfrak{osp}(2m|2n)$  one has $\gg_x=\mathfrak{osp}(2(m-s)|2(n-s))$; we set $\sigma_x=\text{Id}$ if $m-s=0$, and if $m-s>0$, $\sigma_x$ is induced by the involution of one of the Dynkin diagrams of $\gg_x$.
	\end{itemize}

	\begin{remark}\label{inv_rem} Let us give another description of the involution $\sigma_x$. Consider
		an embedding $x\in\mathfrak{sl}(1|1)\subset\gg$ as in the proof of Theorem~\ref{th7}. Let $K$ denote the normalizer of $\mathfrak{sl}(1|1)$ inside the adjoint supergroup of $\gg$.  Then $K$ has a normal subgroup with the Lie superalgebra $\gg_x$.  
		The image of the natural homomorphism 
		$K\to \operatorname{Aut}\gg_x$ is disconnected
		if $\gg=\mathfrak{osp}(2m|2n)$, $m-s>0$, $D(2,1;a)$
		or $F_4$. In these cases, the image is a semidirect
		product of the adjoint group of $\gg_x$ and $\mathbb{Z}_2$. The involution $\sigma_x$ is a generator of $\mathbb{Z}_2$.
		
	\end{remark}

	\begin{proposition}\label{image eta}
		For $\gg$ a classical Lie superalgebra, $x\in X$, and involution $\sigma_x$ on $\gg_x$ as above, we have
		\[
		\eta_x(\mathcal{Z}(\gg))=\mathcal{Z}(\gg_x)^{\sigma_x}.
		\]
	\end{proposition}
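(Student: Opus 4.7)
My plan is to establish the two inclusions $\eta_x(\mathcal{Z}(\gg))\subseteq\mathcal{Z}(\gg_x)^{\sigma_x}$ and $\mathcal{Z}(\gg_x)^{\sigma_x}\subseteq\eta_x(\mathcal{Z}(\gg))$ separately: the first by naturality of $\eta_x$, the second by Harish--Chandra theory combined with the explicit descriptions of $\eta_x$ from \cite{G3}.

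For the inclusion $\eta_x(\mathcal{Z}(\gg))\subseteq\mathcal{Z}(\gg_x)^{\sigma_x}$, I would invoke Remark~\ref{inv_rem}: whenever $\sigma_x$ is nontrivial it is induced by the action of an element of the normalizer $K$ of the copy of $\mathfrak{sl}(1|1)\supset\CC x$ inside the adjoint supergroup of $\gg$. Such an element acts as an automorphism of $\gg$ preserving $\gg^x$ and $[x,\gg]$, and hence descends to $\gg_x$ as $\sigma_x$. The symmetrization $\mathcal{U}(\gg)\simeq S(\gg)$ and the canonical projection $\pi:\mathcal{U}(\gg)^{\ad_x}\to\mathcal{U}(\gg_x)$ used in the definition of $\eta_x$ are manifestly natural in $\gg$, so they are equivariant with respect to this $K$-action. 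Since $\mathcal{Z}(\gg)$ is pointwise fixed by every automorphism of $\gg$, every element of the image of $\eta_x$ must therefore be $\sigma_x$-invariant.

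For the reverse inclusion I would choose a Cartan $\hh\subset\gg_0$ compatible with $x$, so that $\hh_x$ is a Cartan subalgebra of $(\gg_x)_0$. Combining Example~\ref{exafin}, the Hoyt--Reif formula \eqref{HR formula}, and the symmetrization construction of $\eta_x$ produces a commutative square
\[
\xymatrix{
\mathcal{Z}(\gg)\ar[r]^-{HC}\ar[d]_{\eta_x} & S(\hh^*)^{W}\ar[d]^{\text{res}} \\
\mathcal{Z}(\gg_x)\ar[r]^-{HC_x} & S(\hh_x^*)^{W_x}
}
\]
in which the right-hand vertical arrow is restriction along $\hh_x^*\hookrightarrow\hh^*$ and the horizontal arrows are the Harish--Chandra maps. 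The images $HC(\mathcal{Z}(\gg))$ and $HC_x(\mathcal{Z}(\gg_x))$ are known explicitly as rings of supersymmetric $W$-invariant polynomials by Sergeev--Kac (with the appropriate $\qq$- and $\pp$-variants). The problem reduces to checking that restriction sends the first ring \emph{onto} the $\sigma_x$-fixed points of the second.

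The main obstacle is this last surjectivity, which is intrinsically case-by-case. For $\gg\ll(m|n)$, $\mathfrak{osp}(2m+1|2n)$, $\qq(n)$, $\pp(n)$ and $G(3)$ the involution $\sigma_x$ is trivial and surjectivity reduces to a direct manipulation of supersymmetric polynomial identities (for $\pp(n)$ both sides collapse to $\CC$). For $\mathfrak{osp}(2m|2n)$ with $m-s>0$, the diagram involution of $\gg_x=\mathfrak{osp}(2(m-s)|2(n-s))$ is precisely the obstruction to naive surjectivity, so the restricted image lies exactly in $\mathcal{Z}(\gg_x)^{\sigma_x}$; for the exceptional $D(2|1;a)$ and $F(4)$ the verification is a direct computation showing that the restrictions of HC-generators of $\mathcal{Z}(\gg)$ produce exactly $\CC[z^2]$, respectively the $\sigma_x$-fixed part of $\mathcal{Z}(\mathfrak{sl}_3)$. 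All of these restrictions are effectively determined in \cite{G3}, where $\eta_x$ is computed case-by-case; the present proposition simply consolidates those descriptions into the single uniform statement in terms of $\sigma_x$.
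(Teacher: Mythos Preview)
The paper does not actually prove this proposition: immediately after stating it, the authors write ``Although we don't give a proof of Proposition~\ref{image eta}\ldots'', and earlier they note that ``the maps $\eta_x$ for classical Lie superalgebras were described in~\cite{G3} using the results of \cite{Ser1}, \cite{K2}, \cite{Ser2}.'' So there is no proof in the paper to compare against; the statement is imported wholesale from the case-by-case computations in \cite{G3}, repackaged via the uniform involution $\sigma_x$.

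Your outline is a reasonable reconstruction of how such a proof is organized, and it ends in exactly the same place: the reverse inclusion reduces, via the Harish--Chandra homomorphisms, to the explicit case-by-case descriptions of $\eta_x$ carried out in \cite{G3}. That part is fine, and you are right that the proposition is essentially a uniform restatement of those computations.

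One small point to tighten in your argument for the inclusion $\eta_x(\mathcal{Z}(\gg))\subseteq\mathcal{Z}(\gg_x)^{\sigma_x}$: you invoke an element of the normalizer $K$ of the $\mathfrak{sl}(1|1)$-subalgebra, but $K$ normalizes $\mathfrak{sl}(1|1)$, not $\CC x$ specifically. For the element to preserve $\gg^x$ and $[x,\gg]$ (and hence for $\eta_x$ to be equivariant) you need it to normalize $\CC x$. In each of the cases where $\sigma_x\neq\mathrm{Id}$ one can indeed choose such a representative---for $\mathfrak{osp}(2m|2n)$ it comes from $O(2m)\setminus SO(2m)$ fixing the isotropic lines determined by $x$, and similarly in the exceptional cases---but this should be said rather than assumed. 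A minor notational slip: your Harish--Chandra square should land in $S(\hh)$ (functions on $\hh^*$), not $S(\hh^*)$, and for $\qq(n)$ one works with $\ft=\hh_0$ rather than $\hh$.
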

	
	\begin{remark}
		Although we don't give a proof of Proposition~\ref{image eta}, it can be used to give another proof of Theorem~\ref{th210}, using Proposition~\ref{thetabasic} below.
	\end{remark}
	
	In the following lemma $\gg$ is general, but with the obvious view toward the cases of interest above.

	\begin{proposition}\label{thetabasic}
		Assume that 
		$\Im\eta_x=\mathcal{Z}(\gg_x)^{\sigma_x}$, where $\sigma_x$ is an involutive automorphism of
		$\gg_x$. For any $\chi\in\Im\eta_x^*$ we have
		
		\begin{enumerate}
			\item
			the set  $(\eta_x^*)^{-1}(\chi)$ is of the form $\{\chi',\sigma_x(\chi')\}$;
			\item if $\sigma_x(\chi')\not=\chi'$, then
			$\DS_x(\Mod_{\chi}^1(\gg))\subseteq\Mod_{\chi'}^1(\gg_x)\oplus \Mod_{\sigma_x(\chi')}^1(\gg_x)$;
			\item
			if  $\sigma_x(\chi')=\chi'$, then  $\DS_x(\Mod_\chi^1(\gg))\subseteq\Mod_{\chi'}^2(\gg_x)$.
		\end{enumerate}
	\end{proposition}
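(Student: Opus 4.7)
The plan centers on a single polynomial identity: for every $w\in\mathcal{Z}(\gg_x)$ and every $M\in\Mod_\chi^1(\gg)$,
$$(w-\chi'(w))(w-\sigma_x(\chi')(w))\cdot\DS_x(M)=0. \qquad (*)$$
To establish this I would first observe that both $w+\sigma_x(w)$ and $w\,\sigma_x(w)$ lie in $\mathcal{Z}(\gg_x)^{\sigma_x}=\operatorname{Im}\eta_x$, hence they act on $\DS_x(M)$ by the scalars $\chi'(w)+\sigma_x(\chi')(w)$ and $\chi'(w)\,\sigma_x(\chi')(w)$ respectively (using $\sigma_x(\chi')(w)=\chi'(\sigma_x(w))$). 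These two symmetric-function scalar identities on $\DS_x(M)$ are precisely equivalent to $w$ satisfying the factored monic quadratic $(*)$.

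Part (1) follows from applying the same computation inside the ring $\mathcal{Z}(\gg_x)$: the condition $\eta_x^*(\chi_1')=\eta_x^*(\chi_2')$ is equivalent to $\chi_1'|_A=\chi_2'|_A$ for $A:=\mathcal{Z}(\gg_x)^{\sigma_x}$, and the symmetric-function identities yield $\chi_2'\bigl((w-\chi_1'(w))(w-\sigma_x(\chi_1')(w))\bigr)=0$, so $\chi_2'(w)\in\{\chi_1'(w),\sigma_x(\chi_1')(w)\}$ for each individual $w$. A short additivity argument—testing the pointwise alternative on a sum $w_1+w_2$ and ruling out the two cross possibilities—forces a uniform choice and gives $\chi_2'\in\{\chi_1',\sigma_x(\chi_1')\}$. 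Part (3) is then immediate: if $\sigma_x(\chi')=\chi'$, the identity $(*)$ collapses to $(w-\chi'(w))^2\DS_x(M)=0$ for all $w\in\mathcal{Z}(\gg_x)$, which is the defining condition of $\Mod_{\chi'}^2(\gg_x)$.

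For part (2), pick $w_0\in\mathcal{Z}(\gg_x)$ with $a:=\chi'(w_0)\neq b:=\sigma_x(\chi')(w_0)$. Specializing $(*)$ to $w_0$ gives a decomposition $V:=\DS_x(M)=V_a\oplus V_b$ into eigenspaces of $w_0$, stable under all of $\mathcal{Z}(\gg_x)$. To conclude $V_a\in\Mod_{\chi'}^1(\gg_x)$, fix $w\in\mathcal{Z}(\gg_x)$. If $\chi'(w)\neq\sigma_x(\chi')(w)$, then $(*)$ further decomposes $V_a$ into $\chi'(w)$- and $\sigma_x(\chi')(w)$-eigenspaces of $w$; the second must vanish, since any simple subquotient supported there would have central character $\sigma_x(\chi')$ by Proposition~\ref{maincch}(ii) combined with part (1), forcing $w_0$ to act on that subquotient by $b$ rather than $a$, contradicting $V_a\subseteq\ker(w_0-a)$. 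If $\chi'(w)=\sigma_x(\chi')(w)$, apply the previous case to $w+w_0$ (for which the characters do separate) and subtract $(w_0-a)V_a=0$ to conclude $(w-\chi'(w))V_a=0$. The symmetric argument handles $V_b$.

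The main obstacle is this last upgrade in (2): the factored annihilator in $(*)$ only yields generalized central character of order two on each $V_a,V_b$ a priori, and it is the separating trick—perturbing any $w$ by $w_0$ so as to always fall into the case where $\chi'$ and $\sigma_x(\chi')$ differ on the relevant element—together with the central-character compatibility of Proposition~\ref{maincch}(ii) that sharpens the bound to order one.
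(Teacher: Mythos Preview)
Your proof is correct and takes a genuinely different route from the paper's. The paper works structurally: it forms the quotient ring $A=\mathcal{Z}(\gg_x)/\mathcal{Z}(\gg_x)\eta_x(\Ker\chi)$, observes that $A^{\sigma_x}=\mathbb{C}$, writes $A=\mathbb{C}\oplus A_-$, and then runs a dichotomy---either $A_-=\mathbb{C}a$ with $a^2=1$ (two maximal ideals, and the $\pm1$-eigenspaces of $a$ give the splitting in (2)) or else $(A_-)^2=0$ (unique maximal ideal, giving (3) directly). You instead stay element-by-element with the quadratic identity $(*)$ and the symmetric-function trick on $w+\sigma_x(w),\,w\sigma_x(w)$, handling (2) by picking one separating element $w_0$ and perturbing. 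Your approach is more hands-on and entirely avoids analyzing the ring $A$; the paper's approach makes the dichotomy between (2) and (3) appear as a single structural fact about a commutative algebra. One small remark on your case (2): the appeal to simple subquotients via Proposition~\ref{maincch}(ii) tacitly uses that nonzero $\gg_x$-modules have simple subquotients (true by Noetherianity of $\mathcal{U}(\gg_x)$), but in fact your own perturbation trick already disposes of the first subcase without this detour: if $v\in V_a$ has $wv=\sigma_x(\chi')(w)v$, then applying $(*)$ to $w+w_0$ yields $(\sigma_x(\chi')(w)-\chi'(w))(a-b)v=0$, forcing $v=0$.
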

	\begin{proof}
		We set 
		$$\mathfrak{m}:=\Ker\chi,\ \ \ I:=\mathcal{Z}(\gg_x)\eta_x(\mathfrak{m}),\ \ 
		A:=\mathcal{Z}(\gg_x)/I$$
		and denote by $\psi$ the canonical map $\mathcal{Z}(\gg_x)\to A$.
		The algebra $A$ inherits the action of $\sigma_x$
		and $A^{\sigma_x}=\mathcal{Z}(\gg_x)^{\sigma_x}/\eta_x(\mathfrak{m})\cong\mathbb{C}$, so
		$A=\mathbb{C}\oplus A_-$, where
		$A_-:=\{a\in A|\ \sigma_x(a)=-a\}$.
		
		The central characters in $(\eta_x^*)^{-1}(\chi)$ correspond
		to the maximal ideals of $A$: for each $\chi''\in (\eta_x^*)^{-1}(\chi)$
		the ideal $\Ker\chi''$ is a maximal ideal of $\mathcal{Z}(\gg_x)$; this ideal
		contains $I$ and $\psi(\Ker\chi'')$ is a maximal ideal in $A$.
		For each $N\in\Mod_\chi^1 (\gg)$ one has $\mathfrak{m} N=0$, so $I\DS_x(N)=0$.
		Hence $\DS_x(N)$ has a structure of an $A$-module.
		
		If $A_-=\mathbb{C}a$ with  $a^2=1$, then 
		$A$ has two maximal ideals $\mathbb{C}(1\pm a)$ (one has
		$1-a=\sigma(1+a)$). Taking $\mathfrak{m}':=\psi^{-1}(\mathbb{C}(1+a))$ we get
		$(\eta_x^*)^{-1}(\chi)=\{\chi',\sigma_x(\chi')\}$, where
		$\Ker\chi'=\mathfrak{m}'$. 
		One has 
		$\DS_x(N)=N'_+\oplus N'_-$,
		where $N'_{\pm}=\{v\in \DS_x(N)|\ (a\pm 1)v=0\}$. Therefore $\mathfrak{m}' N'_{+}=0$
		and $\sigma(\mathfrak{m}') N'_{-}=0$, so $\DS_x(N)$ 
		lies in $\Mod_{\chi'}^1(\gg_x) + \Mod_{\sigma_x(\chi')}^1 (\gg_x)$.

		Consider the case when  $A_-\not=\mathbb{C}a$ with  $a^2=1$. For any $a_1,a_2\in A_-$ one has $a_1a_2\in A^{\sigma_x}=\mathbb{C}$. 
		If  $a_1a_2=1$ for some $a_1,a_2\in A_-$, then
		for each $a\in A_-$  one has $a_2a\in\mathbb{C}$, so $a=a_1a_2a\in\mathbb{C}a_1$
		that is $A_-\not=\mathbb{C}a_1$, a contradiction.
		Therefore $a_1a_2=0$ for all $a_1,a_2\in A_-$, so $(A_-)^2=0$ and
		$A_-$ is the unique maximal ideal in $A$. Then $(\eta_x^*)^{-1}(\chi)=\chi'$
		where $\Ker\chi'=\psi^{-1}(A_-)$. Since $(A_-)^2 \DS_x(N)=0$
		we have $\DS_x(\Mod_chi^1(\gg))\subset\Mod_{\chi'}^2(\gg_x)$.
	\end{proof}


	\section{Description of $\gg_x$ for classical Lie superalgebras $\gg$}
	
	In this section, we describe   $\gg_x$ and realize $\gg_x$ as a subalgebra of $\gg$ for classical Lie superalgebras.

	\subsection{Iso-sets and defect}\label{sec isoset}
	Now we assume that $\gg_0$ is a reductive Lie algebra and $\gg_1$ is
	a semisimple $\gg_0$-module. Such Lie superalgebras are called
	quasireductive.

	For a quasireductive Lie superalgebra $\gg$, we may choose a Cartan subalgebra $\ft\subseteq\gg_{0}$ and obtain roots $\Delta\subseteq\ft^*\setminus\{0\}$ by considering its adjoint action on $\gg$.  We have subsets $\Delta_{i}\subseteq\Delta$ for $i=0,1$ consisting of roots which are either even or odd.  In particular we have a root decomposition
	\begin{equation}
		\gg=\hh\oplus \bigoplus_{\alpha\in\Delta}\gg_{\alpha}.
		\notag\end{equation}
	where $\hh$ denotes the centralizer of $\ft$ in $\gg$.  
	We write each $a\in\gg_i$ (for $i=0,1$) in the form
	$$a=\sum_{\alpha\in\supp(a)} a_{\alpha},\ \ \text{ where }
	\ a_{\alpha}\in\gg_{\alpha}\setminus\{0\},\ \ \supp(a)\subset \Delta_i\cup\{0\}.$$
	
	We say that $A\subset \Delta_1$ is an {\em iso-set} if the elements of $A$ are linearly independent and if for each $\alpha,\beta\in \Delta_1\cap (A\cup (-A))$ one has $\alpha+\beta\not\in\Delta_0$.
	We call the maximal cardinality of an iso-set the {\it defect} of $\gg$.
	We let $\cS$ denote the set of iso-sets in $\Delta_1$.  The Weyl group $ W $ of $ \gg_{0} $ acts on $ \cS $ in the obvious
	way. Put $ \cS_{k}=\left\{{A}\in \cS \mid |{A}|=k\right\} $, with $ \cS_{0}=\left\{\varnothing\right\} $.

	\subsection{Basic classical Lie superalgebras}\label{KM_description}
	
	Suppose $\gg$ is a basic classical Lie superalgebra (see ~\ref{rem: classical LSA}). If $\gg\neq\fgl(m|n)$, then $\gg$ is a simple  Kac--Moody superalgebra (see \cite{K1,H}). The Lie superalgebra $\mathfrak{gl}(m|n)$ has as an ideal $\mathfrak{sl}(m|n)$, and when $m\neq n$, $\mathfrak{sl}(m|n)$ is simple and $ \mathfrak{gl}\left(m|n\right)\cong\mathfrak{sl}\left(m|n\right)\oplus{\CC}$. The Lie superalgebra $\mathfrak{gl}(n|n)$ has a unique  simple subquotient $\mathfrak{psl}(n|n):=\fsl(n|n)/\Span\{\Id\}$.
	
	We fix a Cartan subalgebra $ \hh \subset \gg $. Then $\hh$ coincides with a Cartan subalgebra $\ft$ of $ \gg_{0} $, and each root space $ \gg_{\alpha} $ is one dimensional. In this case, the parity of $ \alpha\in\Delta $ is by definition the parity of the root space $ \gg_{\alpha} $.

	A finite-dimensional Kac--Moody superalgebra  has a nondegenerate symmetric invariant bilinear form $ \left(\cdot,\cdot\right) $. This form is not necessarily positive definite, and some roots can be isotropic.  For a non-isotropic root $ \beta $, we denote by $ \beta^{\vee} $ the element of $ \ft $ such that
	$ \alpha\left(\beta^{\vee}\right)=\frac{2\left(\alpha,\beta\right)}{\left(\beta,\beta\right)} $.  For an isotropic root $\beta$, set $\beta^\vee\in\ft$ to be the element of $\ft$ corresponding to $\beta$ under the isomorphism $\ft\to\ft^*$ induced by the form.
	
	\begin{remark}
		The notion of defect was originally introduced in \cite{KW} for Kac--Moody superalgebras.  Finite-dimensional Kac--Moody superalgebras are quasireductive, and in this case, the notion of iso-set corresponds to the well-known notion of isotropic set: a set of mutually orthogonal
		linearly independent isotropic roots in $ \Delta_{1} $.  One can see that the defect of $ \gg $ in these cases is equal to the dimension of maximal isotropic subspace  in  $\Span_{\RR}\Delta$.
		
		For finite-dimensional Kac-Moody Lie superalgebras the defect has the following geometric interpretation: it is given by the dimension of the geometric quotient of $\gg_{1}$ by $G_0$.  In fact in these cases, $S(\gg_{1})^{G_0}$ is a polynomial algebra, and the number of generators is given by the defect.  For $\gg=\qq(n),\pp(n)$, $S(\gg_{1})^{G_0}$ is again a polynomial algebra, except the number of generators for $\qq(n)$ is $n$ while the number of generators of $\pp(n)$ is $\lfloor\frac{n}{2}\rfloor$.
		
		In \cite{BKN1}, a cohomological definition of support varieties was given using the relative $\Ext$ functor.  There, they define the defect of a $\gg$ to be the dimension of $\Ext_{\mathcal{F}(\gg)}^{\bullet}(\mathbb{C},\mathbb{C})\cong S(\gg_{1})^{G_0}$.
	\end{remark}

	\subsection{The Lie superalgebras $\pp(n)$ and $\qq(n)$}\label{sec pq}
	
	\subsubsection{$\pp(n)$} The periplectic Lie superalgebra $\pp(n)$ and the queer Lie superalgebra $\qq(n)$ are quasireductive Lie superalgebras and can be realized as subalgebras of $\mathfrak{gl}(n|n)$. 
	
	With respect to a suitable basis, the periplectic Lie superalgebra $\gg=\pp(n)$ consists of block matrices of the form
	$$
	\left(\begin{array}{c|r}
		A & B\\
		\hline
		C & -A^t
	\end{array}
	\right),
	$$
	where $B$ is symmetric, $C$ is skew-symmetric,  and $\ft:=\hh_0$ is the diagonal Cartan subalgebra of $\gg_0 \cong \mathfrak{gl}(n)$.
	Then $\gg$ has a $\ZZ$-grading 
	$ \gg=\gg^{1}\oplus\gg^0\oplus\gg^{-1} $ such that $ \gg^0=\gg_0 $, 
	$ \gg_{1}=\gg^1\oplus\gg^{-1}$, and corresponding sets of roots 
	$\Delta_0=\Delta(\gg^{0})=\{\varepsilon_i-\varepsilon_j\mid 1\leq i \neq j \leq n \}$,
	$\Delta(\gg^{-1})=\{-(\varepsilon_i+\varepsilon_j)\mid 1\leq i < j \leq n \}$, and
	$\Delta(\gg^{1})=\{\varepsilon_i+\varepsilon_j\mid 1\leq i \leq j \leq n \}$. 
	Imposing the additional condition that  $\operatorname{tr} A =0$ defines the Lie superalgebra $\mathfrak{\mathbf{s}p}(n)$, which is simple when $n \geq 3$; however, 
	$\mathfrak{\mathbf{s}p}(n)$ does not admit a nondegenerate (even or odd) invariant bilinear form. 
	
	\subsubsection{$\qq(n)$} With respect to an appropriate basis of $\mathbb{C}^{n|n}$, the queer Lie superalgebra $\gg=\qq(n)$ consists of block matrices of the form
	\begin{equation}\label{eq TAB}
		T_{A,B}:=\left(\begin{array}{c|r}
			A & B\\
			\hline
			B & A
		\end{array}
		\right),
	\end{equation}
	such that $\ft:=\hh_0$ is the diagonal Cartan subalgebra of $\gg_0 \cong \mathfrak{gl}(n)$.
	The set of roots for $\mathfrak{q}(n)$ is $\Delta=\{\pm(\varepsilon_i-\varepsilon_j) \mid 1\leq i < j\leq n \}$, and each root $\alpha\in\Delta$ is both even and odd since $\dim (\gg_\alpha)_{0}=\dim (\gg_\alpha)_{1}=1$. 
	Imposing the additional condition $\operatorname{tr} B = 0$ defines the Lie superalgebra $\mathfrak{sq}(n)$, and since $\operatorname{Id}\in \mathfrak{sq}(n)$ we can also define the  Lie superalgebra $\mathfrak{psq}(n) := \mathfrak{sq}(n)/ \langle\operatorname{Id} \rangle$, which is simple for $n\geq 3$.

	\subsection{Table of defects}

	The defect of a classical Lie superalgebra is given in the following table.~
	\renewcommand{\arraystretch}{1.4}
	$$
	\begin{array}{|c|c|}
		\hline 
		\gg & \textbf{Defect} \\ \hline \hline 
		\mathfrak{gl}\left(m|n\right) & \min\{m,n\}\\ \hline 
		\mathfrak{sl}\left(m|n\right),\ m\neq n &
		\min\{m,n\} \\ \hline 
		\mathfrak{osp}\left(m|2n\right) &  \min\{\lfloor m/2\rfloor,n\} \\ \hline 
		\pp(n) & n \\ \hline 
		\qq(n) & \lfloor n/2\rfloor\\ \hline 
		D\left(2|1;a\right) & 1\\\hline 
		F(4) & 1\\ \hline 
		G(3) & 1\\ \hline
	\end{array}
	$$
	\renewcommand{\arraystretch}{1}

	\subsection{Description and realization of $\gg_x$ in $\gg$}\label{g x in g}
	
	Suppose  $\gg$ is classical Lie superalgebra.
	Let $A=\left\{\beta_{1},\dots ,\beta_{k}\right\} \in \cS$ be non-empty, and take
	$x=x_{\beta_1}+\dots +x_{\beta_k}$ where each  $ x_{\beta_i}\in\gg_{\beta_{i}}$ is nonzero. If $\gg=\pp(n)$,  let $s$ denote the number of roots $\beta\in A$ of the form $2\varepsilon_j$.  Note that by Section~\ref{subsec orbits}, all elements of $X$ are $G_0$-conjugate to an element of this form.
	
	The following table describes $\gg_x$. 
	
	\renewcommand{\arraystretch}{1.4}
	$$
	\begin{array}{|c|c|}
		\hline 
		\gg & \gg_x \\ \hline \hline 
		\mathfrak{gl}\left(m|n\right) & \mathfrak{gl}\left(m-k|n-k\right)\\ \hline 
		\mathfrak{sl}\left(m|n\right),\ m\neq n &
		\mathfrak{sl}\left(m-k|n-k\right)\\ \hline 
		\mathfrak{osp}\left(m|2n\right) & \mathfrak{osp}\left(m-2k|2n-2k\right)\\ \hline
		\pp(n) & \pp(n-(2k-s))  \\ \hline 
		\qq(n) & \qq(n-2k)\\ \hline 
		D(2|1;a) & \CC\\\hline 
		F(4) & \mathfrak{sl}\left(3\right)\\ \hline 
		G(3) & \mathfrak{sl}\left(2\right)\\ \hline
	\end{array}
	$$
	\renewcommand{\arraystretch}{1}
	Note that in the last three rows the defect of $\gg$ is $1$, so $k=1$.

	The functor $\DS_x$ reduces the defect of $\gg$ by a non-negative integer which is called the {\em rank of $x$}, that is, $\rank x:= \defect \gg - \defect \gg_x$.

	\begin{definition}\label{rank_defn}
		Let $\gg$ be one of the Lie superalgebras listed in the above table, and let $x\in X$.  Then the rank of $x$ is as follows:
		\begin{itemize}
			\item if $x=0$, then $\rank x=0$;
			\item if $x\neq0$ and $\gg$ is an exceptional Lie superalgebra, then $\rank x=k=1$;
			\item if $\gg$ is not exceptional and $\gg\neq \pp(n)$, then $\rank x=k$;
			\item if $\gg=\pp(n)$, then $\rank x=2k-s$.
		\end{itemize}
	\end{definition}

	\begin{remark}
		Note that if $\gg\neq \pp(n)$ then $\rank x=k$, the size of $\mathcal S$.
	\end{remark}

	\begin{remark}
		For $\gg\ll(m|n)$, $\mathfrak{sl}(m|n)$ and $\pp(n)$, we observe that $\rank x$ is given by the rank of $x$ as a linear operator acting on the standard representation.  For $\mathfrak{osp}(m|2n)$ and $\qq(n)$, we have that $\rank x$ is half the rank of $x$ as linear operator in the standard representation.
	\end{remark}
	
	Let $\gg$ be a  classical Lie superalgebra with $\gg\neq \pp(n)$, and let $x\in X$.  We now explain how to realize $ \gg_{x} $ as a subalgebra of $\gg$, in such a way that $\gg^x = \gg_x\ltimes [x,\gg]$.  For $\pp(n)$ we will also have such an embedding, but the construction is more ad-hoc, so we state it separately. 
	
	Thus we assume $\gg\neq\pp(n)$, and as above, we let $ {A}=\left\{\beta_{1},\dots ,\beta_{k}\right\}\in\cS $ and $ x=x_{\beta_1}+\dots +x_{\beta_k} $ for some nonzero $ x_{\beta_i}\in(\gg_{\beta_{i}})_{1} $.  Let
	$ y=y_{\beta_1}+\dots +y_{\beta_k} $ for some nonzero $ y_{\beta_i}\in(\gg_{-\beta_{i}})_{1}$, and set $ h_{\beta_i}=\left[x_{\beta_i},y_{\beta_i}\right] $, $ h=\left[x,y\right] $. Clearly,
	$ h=h_{\beta_1}+\dots +h_{\beta_k} $, and $ h,x,y $ generate an $ \mathfrak{sl}\left(1|1\right) $-subalgebra in $ \gg $.  We choose the $y_{\beta_i}$ such that $h$ is generic, meaning that it satisfies
	\[
	\ker\ad_h=\ker\ad_{h_{\beta_1}}\cap\cdots\cap\ker h_{\beta_{k}}.
	\]
	We may decompose $\gg$ with respect to the adjoint action of $h$ giving
	$
	\gg=\oplus_{\mu}\gg^{\mu},
	$
	where
	$
	\gg^{\mu}=\left\{g\in\gg \mid \left[h,g\right]=\mu(h) g\right\}.
	$ 
	In particular, $\gg^0=\Ker \ad_h$, and this is in fact a decomposition of $\mathfrak{sl}(1|1)$-modules. 
	
	For each $\beta_i\in A$, set $ \hh_{\beta_i}=\left[(\gg_{\beta_i})_1,(\gg_{-\beta_i})_{1}\right] $.   Set
	\begin{equation}\label{A perp}
		A^\perp=\Ker\beta_1^\vee\cap\cdots\cap\Ker\beta_k^\vee\subset\hh^*.
	\end{equation}

	Define 
	$$ 
	\hh_{{A}}:=\hh_{\beta_{1}}\oplus\dots \oplus\hh_{\beta_{k}},
	\quad
	\hh_A^\perp:=\Ker\beta_1\cap\dots\cap\Ker\beta_k,
	\quad
	\Delta_x:={A}^{\perp}\cap(\Delta\backslash\left({A}\cup-{A}\right)).
	$$
	We have the following.   
	
	\begin{proposition} \label{lm202}\myLabel{lm202}\relax  Suppose $ \gg\neq\pp(n) $ is a classical Lie superalgebra, and let $A\in \cS$ with corresponding $x\in X$.   Then $\gg_{x}$ is isomorphic to the root subalgebra generated by $\{\gg_{\alpha}\}_{ \alpha\in\Delta_x}$ and a splitting of $\hh_x\cong \hh_{A}^{\perp}/\hh_{A}$ of  $\hh_{A}$,  and this splitting $\hh_x$ will  define a Cartan subalgebra of $\gg_x$.  If we identify $\gg_x$ with its image in $\gg$ we have  $\gg^x = \gg_x\ltimes [x,\gg]$.
	\end{proposition}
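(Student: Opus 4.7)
The plan is to analyze the adjoint action of $h=\sum_i h_{\beta_i}$ on $\gg$ and reduce the computation of $\gg_x$ to the zero weight space of $\ad h$. Since $A$ is an iso-set, the $k$ pairs $(x_{\beta_i},y_{\beta_i})$ span $k$ mutually commuting copies of $\mathfrak{sl}(1|1)$ in $\gg$, each with $h_{\beta_i}$ central, and in particular $\ad h$ commutes with $\ad x$; so $\ad x$ preserves each $h$-weight space $\gg^\mu$. On $\gg^\mu$ with $\mu\neq 0$, the graded commutator identity
$$
\ad x\circ \ad y + \ad y\circ \ad x = \ad[x,y] = \ad h
$$
acts by the nonzero scalar $\mu(h)$, so any $z\in\Ker\ad x\cap\gg^\mu$ satisfies $\mu(h) z=\ad x(\ad y\, z)\in \Im\ad x$. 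Hence $(\gg^\mu)_x=0$ for $\mu\neq 0$, and the genericity of $h$ (built into the choice of $y$) yields $\gg^0=\hh\oplus\bigoplus_{\alpha\in A^\perp\cap\Delta}\gg_\alpha$.

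On $\gg^0$ every $\ad h_{\beta_i}$ vanishes, so the $\ad x_{\beta_i}$ pairwise anticommute and square to zero. I would then establish the key root-theoretic lemma: for $\alpha\in\Delta_x$ and $i,j$, none of $\alpha+\beta_i$, $\beta_i+\beta_j$ (for $i\neq j$), or $2\beta_i$ is a root of $\gg$. This is immediate for the exceptional Lie superalgebras, where $k=1$, and can be verified case-by-case for $\mathfrak{gl}(m|n)$, $\mathfrak{osp}(m|2n)$, and $\qq(n)$. Granting this, the computation on root spaces is mechanical: $\ad x$ annihilates every $\gg_\alpha$ with $\alpha\in\Delta_x\cup A$; sends $\gg_{-\beta_i}=\CC\cdot y_{\beta_i}$ onto $\CC\cdot h_{\beta_i}\subset \hh_A$; and on $\hh$ has kernel $\hh_A^\perp$ and image $\bigoplus_i \CC\cdot x_{\beta_i}=\bigoplus_i\gg_{\beta_i}$. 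Collecting,
$$
\Ker\ad x\cap\gg^0=\hh_A^\perp\oplus\bigoplus_i\gg_{\beta_i}\oplus\bigoplus_{\alpha\in\Delta_x}\gg_\alpha,\qquad
\Im\ad x\cap\gg^0=\hh_A\oplus\bigoplus_i\gg_{\beta_i}.
$$
Since $\hh_A\subset\hh_A^\perp$ (the $\beta_i$ are mutually orthogonal isotropic roots), taking the quotient yields $\gg_x\cong \hh_A^\perp/\hh_A\oplus\bigoplus_{\alpha\in\Delta_x}\gg_\alpha$ as a super vector space.

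To realize $\gg_x$ inside $\gg$, choose any vector space splitting $\hh_x\subset\hh_A^\perp$ of the projection $\hh_A^\perp\twoheadrightarrow\hh_A^\perp/\hh_A$, and let $\tilde\gg_x$ be the sum of $\hh_x$ with the Lie subalgebra of $\gg$ generated by $\{\gg_\alpha:\alpha\in\Delta_x\}$. Closure of $\Delta_x$ under root addition within $\Delta$ (which follows from $\Delta_x\subset A^\perp$ together with the iso-set maximality argument excluding sums landing in $A\cup -A$) ensures that $\tilde\gg_x$ is a Lie subalgebra of $\gg^x$ with $\hh_x$ as its Cartan. Comparing with the weight decomposition above shows that the canonical projection $\gg^x\to\gg_x$ restricts to an isomorphism on $\tilde\gg_x$, and that $\gg^x=\tilde\gg_x\oplus[x,\gg]$ as vector spaces. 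Since $[x,\gg]\subset\gg^x$ is an ideal by Lemma~\ref{gx Mx}, this gives the semidirect product $\gg^x=\gg_x\ltimes[x,\gg]$.

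The principal obstacle is the root-theoretic claim that $\alpha+\beta_i\notin\Delta$ for $\alpha\in\Delta_x$. While this can be established by type-by-type inspection in each classical series, a uniform conceptual proof would be more satisfying and presumably proceeds by exploiting the maximality of $A$ as an iso-set inside the sub-root system $A^\perp\cap\Delta$, together with a length/parity argument constraining sums of orthogonal isotropic roots with other roots of $A^\perp$.
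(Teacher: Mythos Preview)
Your proof is correct and follows essentially the same strategy as the paper: reduce to the zero weight space $\gg^0$ of $\ad h$, compute $\Ker\ad x$ and $\Im\ad x$ there, and read off the quotient. The paper is terser, packaging your commutator-identity argument on $\gg^{\mu\neq 0}$ and your root-theoretic lemma on $\gg^0$ into the single phrase ``follows from the representation theory of $\mathfrak{sl}(1|1)$''; the same root fact you flag (that $(\alpha,\beta_i)=0$ with $\alpha\neq\pm\beta_i$ forces $\alpha\pm\beta_i\notin\Delta$) is asserted without proof later in the paper (proof of Theorem~\ref{th7}), so neither argument supplies the uniform conceptual justification you ask for.
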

	
	\begin{proof} First, note that there is an isomorphism
		\begin{equation}
			\gg_{x}\cong(\gg^{0}\cap \gg^x)/(\gg^{0}\cap\left[x,\gg\right]).
			\notag\end{equation}
		We observe that
		\begin{equation}
			\gg^{0}\cap \gg^x=\hh_{{A}}^{\perp}\oplus \bigoplus\limits_{\alpha\in{A}^{\perp}\cap(\Delta\backslash-{A})} \gg_{\alpha}\text{,  }\hspace{1cm}  \gg^{0}\cap\left[x,\gg\right]=\hh_{{A}}\oplus\gg_{\beta_{1}}\oplus\dots \oplus\gg_{\beta_{k}}.
			\notag\end{equation}
		The above equalities follow from the representation theory of $\mathfrak{sl}(1|1)$.  Now it is clear that one can choose $ \hh_{x} $ in such a way that 
		$ \gg_{x}=\left(\hh_{x}\oplus\oplus_{\alpha\in\Delta_x} \gg_{\alpha} \right) $ is a subalgebra of $\gg$.
	\end{proof}
	
	Separately, we have:
	\begin{proposition}\label{embedding p}
		Let $\gg=\pp(n)$ and choose $x\in X$ of rank $r$ arising from a subset $A\in\SS$.  Then $A$ lies in the span of $\epsilon_{i_1},\dots,\epsilon_{i_r}$ for a unique set of indices $I=\{i_1,\dots,i_r\}\subseteq[n]:=\{1,\dots,n\}$. Write $\pp(n-r)$ for the root subalgebra corresponding to the weights $\epsilon_i$ for $i\in[n]\setminus I$.  Then we have a natural isomorphism $\gg_x\cong \pp(n-r)$, and $\gg^x=\pp(n-r)\ltimes[x,\gg]$.
	\end{proposition}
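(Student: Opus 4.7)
The plan is to proceed in three steps: identify the index set $I$ via the iso-set condition, embed $\pp(n-r)$ inside $\gg^x$ in matrix coordinates, and verify the semidirect decomposition by a grade-by-grade matrix calculation.

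By the $G_0$-orbit discussion preceding this proposition, we may assume $A$ consists of positive odd roots of the form $2\epsilon_i$ or $\epsilon_i+\epsilon_j$ with $i<j$. First, the indices appearing in the elements of $A$ must be pairwise disjoint: if two distinct $\beta,\beta'\in A$ shared an index, then $\beta-\beta'$ would be an even root of the form $\pm(\epsilon_i-\epsilon_j)$, violating the iso-set condition on $A\cup(-A)$. Counting yields exactly $s+2(k-s)=r$ indices, uniquely determining $I$. After choosing coordinates so that $I=\{1,\ldots,r\}$ and setting $J:=[n]\setminus I$, the element $x$ is represented by $\begin{pmatrix}0 & B\\ 0 & 0\end{pmatrix}\in\gg^1$ with $B$ symmetric, supported on the $I\times I$-block, and invertible of rank $r$ there. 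The subalgebra $\pp(n-r)\subseteq\pp(n)$ of block matrices supported on the complementary $J\times J$-block then super-commutes with $x$, so $\pp(n-r)\subseteq\gg^x$.

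Using the $\ZZ$-grading $\gg=\gg^1\oplus\gg^0\oplus\gg^{-1}$, a direct matrix computation gives $[x,\gg^1]=0$, while $[x,\gg^{-1}]\subseteq\gg^0$ consists of matrices $BC$ for $C$ skew, and $[x,\gg^0]\subseteq\gg^1$ consists of $-(AB+BA^t)$ for $A\in\fgl(n)$. The intersection $\pp(n-r)\cap[x,\gg]=0$ is then immediate gradewise: elements of $\pp(n-r)$ are supported on $J\times J$, whereas short calculations show $(BC)|_{JJ}=0$ and $(AB+BA^t)|_{JJ}=0$. For the equality $\pp(n-r)+[x,\gg]=\gg^x$, gradings $\pm 1$ follow immediately from the explicit descriptions. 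The delicate case is grading $0$: the centralizer condition $AB+BA^t=0$ forces $A$ block-upper-triangular (i.e., $A_{JI}=0$) with $I\times I$-block $A_{II}$ satisfying the Lyapunov-type equation $A_{II}B|_I+B|_I A_{II}^t=0$; substituting $A_{II}=B|_I M$ converts this into $M+M^t=0$, so the solution space is exactly $B|_I\cdot\mathrm{Skew}(r)$, matching precisely the $I\times I$-image of $[x,\gg^{-1}]$. Hence $\gg^x=\pp(n-r)\oplus[x,\gg]$ as vector spaces.

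Since $\pp(n-r)$ is a subalgebra and $[x,\gg]$ is an ideal in $\gg^x$, the above is a semidirect decomposition $\gg^x=\pp(n-r)\ltimes[x,\gg]$, and the natural isomorphism $\gg_x\cong\pp(n-r)$ follows from the induced bijection $\pp(n-r)\hookrightarrow\gg^x\twoheadrightarrow\gg_x$. The main obstacle is the grading-$0$ calculation---identifying the Lyapunov solution space as $B|_I\cdot\mathrm{Skew}(r)$ and recognizing it as the $I\times I$-image of $[x,\gg^{-1}]$; the other gradings reduce to routine block-matrix algebra given the matrix realization of $\pp(n)$.
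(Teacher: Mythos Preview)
Your matrix computation for the case $x\in\gg^1$ is clean and correct, and since the paper's own proof is just ``Straightforward check,'' there is no specific argument to compare to. However, your reduction step ``we may assume $A$ consists of positive odd roots'' is not justified, and this leaves a genuine gap.

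The group $G_0=GL(n)$ acts on $\gg_1=\gg^1\oplus\gg^{-1}$ preserving the $\ZZ$-grading, so conjugation cannot move a root vector in $\gg^{-1}$ into $\gg^1$; nor is there any automorphism of $\pp(n)$ exchanging $\gg^1$ and $\gg^{-1}$, as $\dim\gg^1=\binom{n+1}{2}\neq\binom{n}{2}=\dim\gg^{-1}$. An iso-set $A\in\cS$ may well contain roots $-(\epsilon_i+\epsilon_j)\in\Delta(\gg^{-1})$ (indeed the orbit representatives in Proposition~\ref{p orbits} do), and then $x=x^++x^-$ has a nonzero component $x^-\in\gg^{-1}$. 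Your matrix form $\begin{pmatrix}0&B\\0&0\end{pmatrix}$ and every subsequent step (the description of $[x,\gg^{-1}]$, the Lyapunov analysis in degree~$0$, etc.) rely on $x^-=0$.

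The fix is routine but requires more bookkeeping: write $I=I^+\sqcup I^-$ according to whether the index comes from a root in $\gg^1$ or $\gg^{-1}$, so $x=\begin{pmatrix}0&B\\mathbb{C}&0\end{pmatrix}$ with $B$ symmetric invertible on $I^+\times I^+$ and $C$ skew invertible on $I^-\times I^-$. The same grade-by-grade analysis then goes through with blocks indexed by $I^+,I^-,J$; for instance, the degree-$0$ centralizer condition becomes the pair $AB+BA^t=0$ and $CA+A^tC=0$, which forces $A_{JI^+}=0$, $A_{I^-J}=0$, $A_{I^-I^+}=0$, together with a Lyapunov condition on $A_{I^+I^+}$ (symmetric form) and one on $A_{I^-I^-}$ (skew form). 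Verifying $\pp(n-r)\cap[x,\gg]=0$ is still immediate from the observation that every block of $[x,\gg]$ has vanishing $J\times J$ part, but the surjectivity $\gg^x=\pp(n-r)+[x,\gg]$ now has more pieces to match.
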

	\begin{proof}
		Straightforward check.
	\end{proof}
	
	\begin{remark}
		Propositions~\ref{lm202} and \ref{embedding p} have the following useful application:
		if $\ft$ acts diagonally on $N$ and 
		$\Omega(N)=\{\nu\in\ft^*|\ N_{\nu}\not=0\}$, then for $x$ as in the propositions one has
		\begin{equation}\label{Omega}
			\Omega(\DS_x(N))\subset (\Omega(\DS_x(N)))|_{\ft_x}.
		\end{equation}
		
	\end{remark}

	\begin{remark}
		By Theorem~\ref{th1} and Proposition~\ref{p orbits}, if $\gg$ is classical then all $G_0$-orbits on $X$ contain an element $x$ arising from an iso set $A\in\mathcal{S}$.  Thus Propositions~\ref{lm202} and \ref{embedding p} imply that our table in Section~\ref{g x in g} is accurate.
	\end{remark}
	

	\section{Geometry of $ X $ for classical Lie superalgebras}\label{sec geo}
	
	In this section, we study the $G_0$-orbits on $X$, and for $\gg\neq \pp(n)$, we prove an important bijection between the $G_0$-orbits on $X$ and the $W$-orbits on $\cS$. We also describe the orbits in the $\pp(n)$ case. Next, we study the  stabilizer and normalizer of $x$ in $G_0$.  Finally, we give a dimension formula for the $G_0$-orbits on $X$ for basic classical Lie superalgebras.
	
	\subsection{$G_0$-orbits on $X$}\label{subsec orbits}

	\begin{theorem}\label{th1}\myLabel{th1}\relax 
		Suppose $ \gg $ is a basic classical Lie superalgebra or $\gg=\qq(n)$. Then 
		there are finitely many $ G_{0} $-orbits on $ X $, and these orbits
		are in one-to-one correspondence with $ W $-orbits in $ \cS $
		via the map
		\begin{equation}\label{map Phi}
			\Phi:\cS \to X/G_{0}
		\end{equation} defined by
		$$ 
		{A}=\left\{\beta_{1},\dots ,\beta_{k}\right\}\mapsto G_{0}x , 
		$$
		where $ x=x_{\beta_1}+\dots +x_{\beta_k}$ for some nonzero $ x_{\beta_i}\in\gg_{\beta_{i}}$.
	\end{theorem}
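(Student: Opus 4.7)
The plan is to establish a bijection in four steps: well-definedness, $W$-invariance, surjectivity, and injectivity, with surjectivity being the main obstacle.

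First, I would check that the prescription actually lands in $X$ and descends to $\cS/W$. For an iso-set $A=\{\beta_1,\dots,\beta_k\}$, the roots $\beta_i$ are necessarily isotropic (otherwise $2\beta_i\in\Delta_0$, contradicting the iso-set condition), so $[x_{\beta_i},x_{\beta_i}]\in\gg_{2\beta_i}=0$; and for $i\ne j$, $\beta_i+\beta_j\notin\Delta_0\cup\{0\}$, so $[x_{\beta_i},x_{\beta_j}]=0$. Hence $[x,x]=0$. The choice of nonzero $x_{\beta_i}\in\gg_{\beta_i}$ is immaterial modulo the action of the maximal torus $T\subset G_0$, since the $\beta_i$ are linearly independent and each root space is one dimensional, so $T$ acts transitively on tuples of nonzero scalars. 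For the $W$-action, any $w\in W$ lifts to $\dot w\in N_{G_0}(T)$ with $\mathrm{Ad}_{\dot w}\gg_{\beta_i}=\gg_{w\beta_i}$, so $\dot w$ carries a representative of $\Phi(A)$ to one of $\Phi(wA)$.

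Second, for surjectivity, I would argue case-by-case using the concrete matrix realizations. For $\fgl(m|n)$ (and then $\fsl(m|n)$), the condition $[x,x]=0$ for $x=\begin{pmatrix}0&B\\C&0\end{pmatrix}$ is $BC=CB=0$, i.e.\ $x^2=0$ as an operator on $\mathbb{C}^{m|n}$; the Jordan normal form of a nilpotent operator of square zero, together with $GL(m)\times GL(n)$-conjugation, reduces $x$ to a sum $\sum E_{a_i,m+b_i}+E_{m+b_i,a_i}$ supported on an iso-set of roots $\veps_{a_i}-\delta_{b_i}$. For $\osp(m|2n)$ the same argument applies, with the bilinear form constraining the normal form to occur on a set of mutually orthogonal isotropic roots. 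For $\qq(n)$, any self-commuting odd $x=T_{0,B}$ satisfies $B^2=0$, so $GL(n)$-conjugation of $B$ yields a normal form supported on an iso-set $\{\veps_{a_i}-\veps_{b_i}\}$. For the exceptional algebras $D(2|1;a),F(4),G(3)$, which have defect one, every nonzero $x\in X$ lies in a single root space $\gg_\beta$ for some isotropic root $\beta$ (as the cone $X\setminus\{0\}$ is irreducible and an open $G_0$-orbit contains a root vector), and then finiteness modulo $W$ is immediate. The main obstacle is this case analysis, particularly handling the bilinear form in type osp so that the Jordan form can be realized within an iso-set; this typically requires choosing a hyperbolic basis adapted to $\ker x$.

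Third, for injectivity, I would read off the $W$-orbit of $A$ from intrinsic data of $x$ on the standard representation. The cardinality $|A|=\rank x$ is $G_0$-invariant, so iso-sets of different sizes yield non-conjugate $x$'s; within a fixed rank, two iso-sets $A$ and $A'$ giving $G_0$-conjugate $x$'s differ by the action of an element of $G_0$ stabilizing (a conjugate of) the torus generated by $\{h_{\beta_i}\}$, which is realized by $N_{G_0}(T)/T=W$. Combined with the surjectivity step, this yields the bijection.
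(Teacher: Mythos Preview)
Your well-definedness and $W$-invariance steps are fine and match the paper. The trouble is in the case analysis and in the injectivity argument, where real information is being lost.

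For $\fgl(m|n)$, the normal form you wrote, $\sum E_{a_i,m+b_i}+E_{m+b_i,a_i}$, does not square to zero: each summand pair gives $E_{a_i,a_i}+E_{m+b_i,m+b_i}$ in the anticommutator. More importantly, the $G_0$-orbit of $x=\begin{pmatrix}0&B\\C&0\end{pmatrix}$ is determined by the \emph{pair} $(\rank B,\rank C)$, not by $\rank x$ alone, and correspondingly an iso-set can contain $p$ roots of type $\varepsilon_a-\delta_b$ and $q$ roots of type $\delta_c-\varepsilon_d$, with the $W$-orbit determined by $(p,q)$. Two iso-sets of the same cardinality but different $(p,q)$ are not $W$-conjugate, yet your injectivity scheme would not distinguish them. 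A parallel subtlety occurs for $\osp(2l|2n)$ with $l\le n$: at maximal rank there are \emph{two} $G_0$-orbits (coming from the two connected components of the maximal isotropic Grassmannian in $\CC^{2l}$) and correspondingly two $W$-orbits in $\cS_l$; a rank-only invariant misses this.

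Your injectivity argument has a structural gap independent of these examples. If $g\in G_0$ carries $\sum x_{\beta_i}$ to $\sum x_{\beta'_j}$, there is no reason $g$ should normalize $T$ or any torus built from the $h_{\beta_i}$; the claim ``realized by $N_{G_0}(T)/T=W$'' is exactly what needs proof. The paper sidesteps this by not separating injectivity from surjectivity: in each case it gives an explicit parametrization of both $\cS/W$ and $X/G_0$ (by $(p,q)$ for $\fgl$, by rank with the extra $\mathbb{Z}_2$ at top rank for $\osp(2l|2n)$, by Jordan type of $B$ for $\qq(n)$, by a direct two-orbit computation for the exceptionals) and checks that $\Phi$ respects the labels. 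Your exceptional argument is also incomplete: irreducibility of $X\setminus\{0\}$ together with the existence of an open orbit does not by itself exclude smaller nonzero orbits; the paper simply computes that $X\setminus\{0\}$ is a single $G_0$-orbit.
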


	\begin{proof} 
		To see that $ \Phi\left({A}\right) $ does not depend on a choice of $ x_{\beta_i} $
		note that since $ \beta_{1},\dots ,\beta_{k} $ are linearly independent, for any other choice
		\begin{equation}
			x'=\Sigma x_{\beta_i}'=\Sigma c_{i}x_{\beta_i}
			\notag\end{equation}
		there is $ h\in\hh $ such that $ c_{i}=e^{\beta_{i}\left(h\right)} $ and therefore
		\begin{equation}
			x'=\exp \left(\operatorname{ad}_h\right)\left(x\right).
			\notag\end{equation}
		If $ {\text B}=w\left({A}\right) $ for some $ w\in W $, then clearly $ \Phi\left({\text B}\right) $ and $ \Phi\left({A}\right) $ belong to the same
		orbit. Therefore $ \Phi $ induces the map $ \overline{\Phi}:\cS/W \to X/G_{0} $. We check
		case by case that $ \overline{\Phi} $ is injective and surjective.
		
		If $ \gg $ is $ {\mathfrak s}{\mathfrak l}\left(m|n\right) $ or $ \gg{\mathfrak l}\left(n|n\right) $, $ \gg $ has a natural $ {\ZZ} $ grading
		$ \gg=\gg^{1}\oplus\gg^0\oplus\gg^{-1} $ such that $ \gg_{0}=\gg^0$, 
		$ \gg_{1}=\gg^1\oplus\gg^{-1}$. The orbits of $ W $ on $ \cS $ are
		enumerated by the pairs of numbers $ \left(p,q\right) $, where $ p=|{A}\cap\Delta\left(\gg^1\right)| $,
		$ q=|{A}\cap\Delta(\gg^{-1})| $. The orbits of $ G_{0} $ on $ X $ are enumerated by the same pairs of
		numbers $ \left(p,q\right) $ in the following way. If $ x=x^{+}+x^{-} $, where $ x^{\pm}\in\gg^{\left(\pm1\right)}$, then
		$ p=\operatorname{rank}\left(x^{+}\right) $, $ q=\operatorname{rank}\left(x^{-}\right) $. We can see by the construction of $ \overline{\Phi} $, that $ \overline{\Phi} $ maps
		$ \left(p,q\right) $-orbit on $ \cS $ to the $ \left(p,q\right) $-orbit on $ X $.
		
		Let $ \gg=\mathfrak{osp}\left(m|2n\right) $. If $ m=2l+1 $ or $ m=2l $ with $ l>n $, then the $ W $-orbits on $ \cS $ are
		in one-to-one correspondence with $ \left\{0,1,2,\dots ,\min \left(l,n\right)\right\} $. Namely, $ {A} $ and $ {\text B} $ are
		on the same orbit if they have the same number of elements. As it was shown
		in \cite{Gr2}, $ X $ can be identified with the set of all linear maps $ x:{\CC}^{m} \to
		{\CC}^{2n} $, such that $ \operatorname{Im} x $ is an isotropic subspace in $ {\CC}^{2n} $ and $ \operatorname{Im} x^{*} $ is an
		isotropic subspace in $ {\CC}^{m} $. Furthermore, $ x,y\in X $ belong to the same $ G_{0} $-orbit iff
		$ \operatorname{rank}\left(x\right)=\operatorname{rank}\left(y\right) $. One can see that rank $ \Phi\left({A}\right)=|{A}| $.
		
		Now let $ \gg=\mathfrak{osp}\left(2l|2n\right) $ where $ l\leq n $. If $ {A},{\text B}\in \cS $ and $ |{A}|=|{\text B}|<l $, then $ {A} $ and $ {\text B} $
		are on the same $ W $-orbit. In the same way if $ \operatorname{rank}\left(x\right)=\operatorname{rank}\left(y\right)<l $, then $ x $ and $ y $
		are on the same $ G_{0} $-orbit. However, the set of all $ x\in\gg_{1} $ of maximal rank
		splits into two orbits, since the Grassmannian of maximal isotropic subspaces
		in $ {\CC}^{2l} $ has two connected components. In the same way $ \cS_l $ splits into two
		$ W $-orbits. Hence in this case again $ \overline{\Phi} $ is a bijection.
		
		If $ \gg $ is one of exceptional Lie superalgebras $ D\left(2|1;a\right) $, $ G(3) $ or $ F(4)$, then
		the direct calculation shows that $ X $ has two $ G_{0} $-orbits: $ \left\{0\right\} $ and the orbit
		of a highest vector in $ \gg_{1} $. The set $ \cS $ also consists of two $ W $-orbits: $ \varnothing $ and
		the set of all isotropic roots in $ \Delta $.

		Finally, let $\gg=\qq(n)$. Then $\gg$ is isomorphic to the subalgebra of $\mathfrak{gl}(n|n)$ consisting of block matrices of the form $T_{A,B}$ in (\ref{eq TAB}) and 
		$X=\{T_{0,B}\mid B^2=0\}$. So $G_0$ is isomorphic to $GL(n)$ and acts by conjugation on $B$. Thus the $G_0$-orbits correspond to Jordan forms. 
		If for $r=0,1,\ldots,[\frac{n}{2}]$, we set $\cS_r:=\{\beta_{n-1-2i}\}_{i=0}^{r-1}$   and fix an element $x_r\in \cS_r$ with $\supp (x_r) = \cS_r$ ($x_0:=0$), then the elements $x_0,x_1,\ldots,x_{[\frac{n}{2}]}$ form a set of representatives for $G_0$-orbits in $X$.
	\end{proof}
	
	Theorem~\ref{th1} does not hold for $\pp(n)$; however, we have the following proposition, whose proof is an exercise in linear algebra which we omit.
	
	\begin{proposition}\label{p orbits}
		For $\gg=\pp(n)$, the $G_0$-orbits on $X$ are indexed by a pair or nonnegative integers $(r,s)$ such that $r+2s\leq n$.  An explicit representative of the orbit labeled by $(r,s)$ is given by
		\[
		x=x_{2\epsilon_1}+\dots+x_{2\epsilon_r}+x_{-\epsilon_{r+1}-\epsilon_{r+2}}+\dots+x_{-\epsilon_{r+2s-1}-\epsilon_{r+2s}},
		\] 
	\end{proposition}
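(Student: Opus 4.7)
The plan is to translate the problem into a concrete linear algebra classification using the matrix realization of $\pp(n)$.

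First I identify $X$ explicitly: an odd element $x\in\pp(n)_1$ is given by a pair $(B,C)$ with $B\in\operatorname{Sym}^2(\CC^n)$ and $C\in\Lambda^2(\CC^n)^*$, arranged as the $n\times n$ blocks of $x=\left(\begin{smallmatrix}0&B\\C&0\end{smallmatrix}\right)$. Squaring in $\mathfrak{gl}(n|n)$ gives $x^2=\operatorname{diag}(BC,CB)$, and since $C$ is skew and $B$ symmetric one has $(BC)^t=-CB$, so the single condition $BC=0$ is equivalent to $[x,x]=0$. Thus $X$ is the affine variety of pairs $(B,C)$ with $B$ symmetric, $C$ skew, and $BC=0$.

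Next I compute the $G_0=GL(n)$-action. Conjugation of $x$ by $\operatorname{diag}(g,(g^t)^{-1})$ yields
\[
g\cdot(B,C)=(gBg^t,\,(g^{-1})^{t}Cg^{-1}).
\]
I then classify orbits in two stages. In the first stage, I use the action $B\mapsto gBg^t$ to reduce $B$ to the normal form $B_r=\operatorname{diag}(I_r,0)$, where $r=\operatorname{rank}B$, since over $\CC$ symmetric bilinear forms are classified by rank. In the second stage, I compute the stabilizer $\operatorname{Stab}(B_r)\subseteq GL(n)$: a block computation shows it consists of matrices $g=\left(\begin{smallmatrix}a&b\\0&d\end{smallmatrix}\right)$ with $a\in O(r,\CC)$, $d\in GL(n-r,\CC)$, and $b$ arbitrary. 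The condition $B_rC=0$ forces $C=\left(\begin{smallmatrix}0&0\\0&C_3\end{smallmatrix}\right)$ with $C_3\in\Lambda^2(\CC^{n-r})^*$, and a direct matrix computation shows the stabilizer acts on $C_3$ by $C_3\mapsto d^{-t}C_3d^{-1}$ through the $GL(n-r)$ factor (the $a$ and $b$ blocks act trivially). Complex skew forms are classified by rank, necessarily even, so orbits on $C_3$ are labeled by $2s$ with $0\le 2s\le n-r$, i.e.\ $r+2s\le n$.

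Finally I match the canonical form with the stated representative. Reducing $C_3$ to the standard block-diagonal symplectic form with $s$ blocks $\left(\begin{smallmatrix}0&1\\-1&0\end{smallmatrix}\right)$ and writing everything in the root basis, one sees that $B$ contributes the root vectors $x_{2\veps_1}+\cdots+x_{2\veps_r}$ (weights of $\gg^1$) and $C$ contributes $x_{-\veps_{r+1}-\veps_{r+2}}+\cdots+x_{-\veps_{r+2s-1}-\veps_{r+2s}}$ (weights of $\gg^{-1}$), exactly as claimed. Distinct pairs $(r,s)$ give non-conjugate $x$ because $r=\operatorname{rank}B$ and $2s=\operatorname{rank}C$ are $GL(n)$-invariants of the orbit.

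The only place requiring any real care is the stabilizer computation in step two and verifying that the residual action on $C_3$ reduces precisely to the standard skew-form problem on $\CC^{n-r}$; everything else is a direct unpacking of the matrix model.
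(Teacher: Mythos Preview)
Your proof is correct and complete. The paper itself omits the proof entirely, stating only that it ``is an exercise in linear algebra which we omit''; your argument carries out precisely that exercise, reducing the orbit classification to the standard classification of symmetric and skew forms over $\CC$ by rank.
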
 
	
	\begin{remark} \label{rem393}\myLabel{rem393}\relax  Note that for a finite-dimensional Kac--Moody superalgebra $\gg$ the representation of $ G_{0} $ in $ \gg_{1} $
		is symplectic and multiplicity free (see \cite{Kn}). The cone $ X $ is the
		preimage of 0 under the moment map $ \gg_{1} \to \gg_{0}^{*} $.
	\end{remark}
	

		\subsection{The stabilizer and normalizer of $x$ in $G_0$}
		
		\begin{lemma}\label{stabilizer}
			Let $\gg$ be a basic classical Lie superalgebra.
			Let $x\in X$. The stabilizer $G_0^x$ of $x$ in $G_0$ is connected.
			Furthermore, $G_0^x$ is a semidirect product of a reductive group $K$ and the normal unipotents
			subgroup $U$ with Lie algebras $(\gg_x)_0$ and $[x,\gg_1]$, respectively.
		\end{lemma}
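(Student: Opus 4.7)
The plan is to identify $\mathrm{Lie}(G_0^x) = (\gg_0)^x$ explicitly as a Levi-plus-unipotent-radical decomposition, integrate the two summands to algebraic subgroups $K$ and $U$ of $G_0$, and then establish connectedness of $G_0^x$.

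By Proposition~\ref{lm202}, $\gg_x$ can be embedded as a subalgebra of $\gg$ inside $\gg^x$ so that $\gg^x = \gg_x \ltimes [x,\gg]$. Taking even parts yields a Lie-algebra decomposition
$$(\gg_0)^x \;=\; (\gg_x)_0 \,\oplus\, [x,\gg_1],$$
where $[x,\gg_1]$ is an ideal of $(\gg_0)^x$ (as the even part of the ideal $[x,\gg]\subset\gg^x$) and $(\gg_x)_0$ is the reductive even part of the basic classical Lie superalgebra $\gg_x$. I would then check that $[x,\gg_1]$ is a nilpotent ideal: after moving $x$ to standard form $x = x_{\beta_1}+\dots+x_{\beta_k}$ via Theorem~\ref{th1}, the space $[x,\gg_1]$ is a sum of root spaces $\gg_{\beta_i+\gamma}$ (for odd roots $\gamma$) together with the span $\hh_A=\bigoplus_i \CC h_{\beta_i}$. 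Choosing a generic $\lambda\in\ft$ with $\beta_i(\lambda)>0$ for all $i$, all these weights lie strictly on one side of the hyperplane $\{\lambda=0\}$ in $\ft^*$, so every element of $[x,\gg_1]$ is $\ad$-nilpotent on $\gg$; this also shows $[x,\gg_1]$ is normalized by $(\gg_x)_0$.

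Next, let $K\subseteq G_0$ be the connected subgroup integrating $(\gg_x)_0$ — this is reductive — and let $U\subseteq G_0$ be the connected unipotent subgroup integrating the nilpotent ideal $[x,\gg_1]$. Since $K$ normalizes $U$, the semidirect product $K\ltimes U$ is a connected subgroup of $G_0^x$ with Lie algebra equal to $(\gg_0)^x$, hence equals the identity component $(G_0^x)^\circ$.

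The remaining, and main, obstacle is to prove that $G_0^x$ is connected (so $G_0^x = K\ltimes U$). The cleanest strategy is via the fibration $G_0 \to G_0/G_0^x \cong G_0\cdot x$: since $G_0$ is simply connected and connected, the stabilizer $G_0^x$ is connected as soon as the orbit $G_0\cdot x$ is simply connected. Using Theorem~\ref{th1}, it suffices to verify this for one representative in each $G_0$-orbit on $X$. I would use the one-parameter subgroup $\lambda$ above to contract $x$ onto a distinguished representative and identify the orbit, up to an affine-space fibration, with an appropriate (partial) flag variety or an open cell therein; this can be done case by case using the explicit descriptions in the proof of Theorem~\ref{th1} (the image of $\Phi$ for each of $\fsl(m|n)$, $\osp$, the exceptionals, $\qq(n)$). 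For each orbit one sees that $G_0\cdot x$ is simply connected and the stabilizer equals the visible $K\ltimes U$. This case-by-case simple-connectedness verification is the technical heart of the proof; the rest is formal.
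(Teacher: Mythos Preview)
Your nilpotence argument for $[x,\gg_1]$ breaks down precisely at the piece you yourself singled out: the elements $h_{\beta_i}\in\hh_A$ have weight zero, so no choice of $\lambda\in\ft$ puts them ``strictly on one side of the hyperplane.'' In fact each $h_{\beta_i}$ lies in the Cartan and is semisimple, so $[x,\gg_1]$ is \emph{not} nilpotent in general---for $x=E_{13}$ in $\mathfrak{gl}(2|2)$ one computes $[x,\gg_1]=\CC(E_{11}+E_{33})\oplus\CC E_{12}\oplus\CC E_{43}$, which is solvable but not nilpotent since $[E_{11}+E_{33},E_{12}]=E_{12}$. Consequently the splitting $(\gg_0)^x=(\gg_x)_0\oplus[x,\gg_1]$ from Proposition~\ref{lm202} is \emph{not} the Levi decomposition of the stabilizer: the genuine reductive Levi $K$ is strictly larger than the group integrating $(\gg_x)_0$ (it acquires extra diagonally-embedded $GL$-factors), and the unipotent radical is correspondingly smaller than $[x,\gg_1]$. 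Your plan of integrating $(\gg_x)_0$ and $[x,\gg_1]$ separately to a reductive $K$ and a unipotent $U$ therefore cannot succeed as written; the second assertion of the lemma, read literally, already needs more care than Proposition~\ref{lm202} alone provides.

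The paper does not attempt to extract $K$ and $U$ from the splitting of Proposition~\ref{lm202}, nor does it argue via simple connectedness of the orbit (which is also delicate: the group $G_0=GL(m)\times GL(n)$ actually used for $\mathfrak{gl}(m|n)$ is not simply connected, so your fibration argument would need additional input there). Instead it writes down $G_0^x$ explicitly case by case. For $\mathfrak{gl}$ and $\mathfrak{osp}$ one introduces parabolic subgroups $P_1,P_2$ stabilizing the flags $\Im x^{\pm}\subset\Ker x^{\mp}$ in the two pieces of the standard representation, and exhibits $G_0^x$ as $K\ltimes U$ where $K$ is a diagonally-embedded product of $GL$, $SO$, $Sp$ factors inside the Levi of $P_1\times P_2$ and $U$ is the unipotent radical of $P_1\times P_2$; for the exceptional algebras, $x$ is a highest-weight vector in $\gg_1$ and $G_0^x$ is a codimension-one subgroup of the parabolic stabilizing the line $\CC x$. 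Connectedness is then read off directly from these explicit descriptions.
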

		\begin{proof}
			The second assertion follows from Proposition~\ref{lm202}. 
			To check the connectedness we use the explicit construction of orbits given in   the proof of Theorem~\ref{th1}.
			
			Let $\gg=\mathfrak{gl}(m|n)$. Then $G_0=GL(m)\times GL(n)$, consider the parabolic subgroups $P_1\subset GL(m)$ which stabilizes the flag
			$\Im x^+\subset\Ker x^-$ and $P_2\subset GL(n)$ which stabilizes the flag
			$\Im x^-\subset\Ker x^+$. The Levi subgroup of $K_1$ of $P_1$ is isomorphic to $GL(p)\times GL(q)\times GL(m-k)$ and the Levi subgroup of $K_2$ of $P_2$ is
			isomorphic to $GL(p)\times GL(q)\times GL(n-k)$. Let $K\simeq K_1\times K_2$ is the subgroup isomorphic to $GL(p)\times GL(q)\times GL(m-k)\times GL(n-k)$
			where $GL(p)$ and $GL(q)$ are embedded diagonally. Then $G_0^x=K\rtimes U$, where $U$ is the unipotent normal subgroup of $P_1\times P_2$.
			
			Let $\gg=\mathfrak{osp}(m|2n)$. Then $G_0=SO(m)\times SP(2n)$, consider the parabolic subgroups $P_1\subset SO(m)$ which stabilizes $\Im x^*$ 
			and $P_2\subset SP(n)$ which stabilizes
			$\Im x$. The Levi subgroup of $K_1$ of $P_1$ is isomorphic to $GL(k)\times SO(m-2k)$ and the Levi subgroup of $K_2$ of $P_2$ is
			isomorphic to $GL(k)\times SP(2n-2k)$. Let $K\simeq K_1\times K_2$ is the subgroup isomorphic to $GL(k)\times SO(m-2k)\times SP(2n-2k)$
			where $GL(k)$ is  embedded diagonally. Then $G_0^x=K\rtimes U$, where $U$ is the unipotent normal subgroup of $P_1\times P_2$.
			
			In all exceptional cases $x$ is a highest weight vector in $\gg_1$ and $G_0^x$ is a subgroup of codimension $1$ in the parabolic subgroup $P\subset G_0$,
			the latter is the stabilizer of $\CC x$ in the projectivization of $\gg_1$.
		\end{proof}

		\begin{remark} It follows from above proof that there exists a parabolic subgroup $P\subset G_0$ such that $G_0^x$ is a subgroup of $P$ and the maximal normal unipotent
			subgroup of $G_0^x$ is equal to that of $P$. 
		\end{remark}
		
		\begin{remark}
			For the $\qq(n)$ it remains true that $G_0^x$ is connected; this follows from a result of Springer and Steinberg,  (see Chpt. IV, Sec. 1.7 of \cite{SpSt}).  \end{remark}

		We write $N_0^x$ for the normalizer of $x$ in $G_0$.
		
		\begin{corollary}\label{normalizer} For $\gg$ basic classical and for $\gg=\qq(n)$,
			$N_0^x$ is connected.
		\end{corollary}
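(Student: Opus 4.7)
The plan is to leverage the preceding lemma (that $G_0^x$ is connected) by analyzing the short exact sequence
\[
1\to G_0^x\to N_0^x\xrightarrow{\lambda} \Lambda\to 1,
\]
where $\lambda:N_0^x\to\mathbb{C}^*$ is the character defined by $g\cdot x=\lambda(g)\,x$, and $\Lambda\subseteq\mathbb{C}^*$ is its image. An extension of a connected group by a connected group is connected, so it suffices to show $\Lambda$ is connected, i.e., that $\Lambda=\mathbb{C}^*$.

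Since $N_0^{gxg^{-1}}=gN_0^xg^{-1}$, connectedness is invariant under $G_0$-conjugation, so I would replace $x$ by a convenient representative of its orbit. By Theorem~\ref{th1} and Proposition~\ref{p orbits}, every $G_0$-orbit on $X$ meets the set of elements of the form $x=x_{\beta_1}+\dots+x_{\beta_k}$ with $x_{\beta_i}\in\gg_{\beta_i}\setminus\{0\}$ and with the $\beta_i$ linearly independent in $\hh^*$ (in the $\pp(n)$ case the explicit representative in Proposition~\ref{p orbits} is of this form).

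Now the key step: since the $\beta_i$ are linearly independent, I can pick $h\in\hh$ with $\beta_i(h)=1$ for every $i$. Then $[h,x]=x$, so the one-parameter subgroup $\{\exp(th)\}_{t\in\mathbb{C}}\subset G_0$ satisfies $\exp(th)\cdot x=e^t x$, and therefore lies in $N_0^x$ with $\lambda(\exp(th))=e^t$. Hence $\mathbb{C}^*\subseteq\Lambda\subseteq\mathbb{C}^*$, so $\Lambda=\mathbb{C}^*$ is connected. Combined with Lemma~\ref{stabilizer} (for basic classical $\gg$) and the remark immediately preceding the corollary (for $\qq(n)$), this gives connectedness of $N_0^x$.

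The only potential obstacle is ensuring that the chosen representative of the orbit really does have linearly independent supports; this is immediate from Theorem~\ref{th1} for basic classical $\gg$ and $\qq(n)$, and is also immediate from Proposition~\ref{p orbits} for $\pp(n)$ (though $\pp(n)$ is not the scope of this corollary as stated). No case analysis is required beyond this uniform argument.
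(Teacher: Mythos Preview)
Your proof is correct and follows essentially the same approach as the paper: both use the short exact sequence $1\to G_0^x\to N_0^x\to\mathbb{G}_m\to 1$ together with connectedness of $G_0^x$. The only difference is cosmetic: the paper establishes surjectivity (and splitting) of the scaling character by a ``case-by-case check'', whereas you give a uniform argument by conjugating $x$ to a sum of root vectors with linearly independent supports and choosing $h\in\ft$ with $\beta_i(h)=1$, which in fact also exhibits the splitting. One minor point of care: for $\qq(n)$ you should take $h\in\ft=\hh_0$ rather than $\hh$, so that $\exp(th)\in G_0$; this is automatic since the $\beta_i$ lie in $\ft^*$.
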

		
		\begin{proof}
			We have an exact sequence 
			\[
			1\to G_0^x\to N_0^x\xrightarrow{\alpha} \mathbb{G}_m\to 1,
			\]
			where  $\mathbb{G}_m$ is the one-dimensional torus and
			$\alpha(g)=(g\cdot x)/x$, where $g\cdot x$ is the action of $g$ on $x$.  By a case-by-case check, the map $\alpha$ is always surjective and split.  We may now use Lemma~\ref{stabilizer}.
		\end{proof}
		
		\begin{remark}
			For $\lambda\in\CC^\times$, we have an equality of functors $\DS_x=\DS_{\lambda x}$.  It follows that $N_0^x$ acts naturally on the functor $\DS_x$. We have shown that
			\[
			N_0^x=G_0^x\rtimes\mathbb{G}_m.
			\]
			For $\gg$ basic classical or $\qq(n)$, we have shown $G_0^x$ is connected, and thus its symmetries are encompassed in $(\gg_x)_0$.  It follows that the only additional symmetries we obtain in this fashion are from the extra action of $\mathbb{G}_m$.
		\end{remark}

		\begin{remark}
			We note that for the $\gg=\pp(n)$, Lemma~\ref{stabilizer} and Corollary~\ref{normalizer} are not true for all choices of $x$.  Issues arise due to the orthogonal group being disconnected and the lack of a splitting for $\alpha$ in general.
		\end{remark}

		\subsection{Dimension of the $ G_{0} $-orbits on $ X $}
		
		Throughout the rest of Section~\ref{sec geo}, we assume that $\gg$ is a basic classical Lie superalgebra.
		We recall the notation $ \Phi:\cS \to X/G_{0} $ introduced in 
		(\ref{map Phi}). 
		
		Using the explicit description of $ G_{0} $-orbits on $ X $ and the description of
		root systems, which can be found in \cite{K1}, one can check the following statements case by case. We omit this computation here. 
		
		\begin{lemma} \label{lm110}\myLabel{lm110}\relax  Let $ {A},{\text B}\in \cS. $
			\begin{enumerate}
				\item
				If $ \alpha\in\Delta $ is a linear combination of roots from $ {A} $, then $ \alpha\in{A}\cup-{A} $;
				\item
				If $ |{A}|\leq|{\text B}| $, then there exists $ w\in W $ such that $ w\left({A}\right)\subset{\text B}\cup-{\text B} $;
				\item
				$ \Phi\left({A}\right) $ lies in the closure of $ \Phi\left({\text B}\right) $ iff $ w\left({A}\right)\subset{\text B} $ for some $ w\in W $.
			\end{enumerate}
		\end{lemma}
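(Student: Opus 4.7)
Since the claim is a case-by-case statement about basic classical Lie superalgebras, the plan is to verify each of (1)--(3) in each of the types $\mathfrak{sl}(m|n)$, $\mathfrak{gl}(m|n)$, $\mathfrak{osp}(m|2n)$, $D(2|1;a)$, $F(4)$, $G(3)$ using the explicit root system data of \cite{K1} together with the description of $G_0$-orbits on $X$ obtained in the proof of Theorem~\ref{th1}. In particular, I would exploit the remark that for basic classical Lie superalgebras iso-sets are the same as sets of mutually orthogonal linearly independent isotropic roots, so in each type $A=\{\beta_1,\dots,\beta_k\}$ has a very rigid standard form (in type $A$, after reindexing, $\beta_l=\varepsilon_{i_l}-\delta_{j_l}$ with the $i_l$'s and $j_l$'s distinct; analogous shapes in the orthosymplectic and exceptional cases).

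For (1), I would simply examine the possible roots in $\Delta$ and note that any $\mathbb{Z}$-linear combination $\alpha=\sum c_l\beta_l$ of the $\beta_l$'s involves each $\varepsilon_{i_l}$ (or $\delta_{j_l}$) with a coefficient that cannot cancel against the others by linear independence; checking the admissible shapes of roots forces at most one $c_l$ to be nonzero and equal to $\pm 1$. The exceptional cases $D(2|1;a)$, $F(4)$, $G(3)$ (where the defect is $1$) are trivial.

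For (2), the key input is the classification of $W$-orbits on $\cS$ from the proof of Theorem~\ref{th1}: in type $A$, orbits are parameterized by the pair $(p,q)$ recording how many elements of $A$ lie in $\Delta(\gg^1)$ versus $\Delta(\gg^{-1})$; in $\mathfrak{osp}(m|2n)$ with $m=2l+1$ or $m=2l$, $l>n$, solely by $|A|$; and in $\mathfrak{osp}(2l|2n)$ with $l\leq n$ by $|A|$ together with one extra $\mathbb{Z}/2$ invariant at the top rank. In each case one checks directly that, up to sign flips realized by reflections of $W$, a subset of cardinality $|A|$ of $B\cup -B$ of the correct combinatorial type exists inside $B\cup -B$; then a further element of $W$ carries $A$ onto this subset.

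For (3), I would combine (2) with the orbit classification: in each type, $\Phi(A)\subseteq\overline{\Phi(B)}$ amounts to an inequality between ranks of linear operators (or the type-$(p,q)$ comparison in type $A$), which matches exactly the condition $w(A)\subset B$ for some $w\in W$ once one notes that the sign flips in (2) can be arranged so that the entire image $w(A)$ lies in $B$ and not merely in $B\cup -B$; here Lemma~\ref{stabilizer} guarantees that the requisite reflections are available inside $W$. The main obstacle will be the $\mathfrak{osp}(2l|2n)$ case with $l\leq n$: here the top-rank cone $\cS_l$ splits into two $W$-orbits corresponding to the two components of the Grassmannian of maximal isotropic subspaces in $\CC^{2l}$, and one has to be careful that the sign adjustment needed for (3) does not attempt to connect these two components. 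This is handled by observing that the closure of one such orbit does not contain the other, matching the $W$-theoretic obstruction on the iso-set side.
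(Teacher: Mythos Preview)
Your approach is exactly the one the paper indicates: the paper states just before Lemma~\ref{lm110} that ``using the explicit description of $G_0$-orbits on $X$ and the description of root systems, which can be found in \cite{K1}, one can check the following statements case by case. We omit this computation here.'' So there is no proof in the paper to compare against beyond this sentence, and your outline is precisely the case-by-case verification the authors had in mind, organized around the $W$-orbit classification from the proof of Theorem~\ref{th1}.

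One small point: your appeal to Lemma~\ref{stabilizer} in part~(3) is misplaced. That lemma concerns connectedness of the stabilizer $G_0^x$ and says nothing about which reflections lie in $W$. The sign adjustments you need (to pass from $w(A)\subset B\cup -B$ to $w(A)\subset B$ in the orthosymplectic cases) come directly from the structure of the Weyl group in those types---$W$ for $\mathfrak{osp}$ contains the relevant sign changes on the $\varepsilon_i$ and $\delta_j$. In type~$A$, as you correctly note, $W=S_m\times S_n$ contains no sign flips, and the closure relation is governed by the componentwise $(p,q)$ comparison rather than by any sign trick. So the argument for~(3) should simply invoke the explicit shape of $W$ in each type, not Lemma~\ref{stabilizer}.
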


		Recall  the definition of   $ {A}^{\perp} $ from (\ref{A perp}). In the basic classical case, $ {A}^{\perp} $ is the set of all weights orthogonal to $ {A} $ with respect to
		the standard form on $ \hh^{*} $.

		\begin{theorem} \label{th7}\myLabel{th7}\relax  Let $ {A}\in \cS $. Then $ \dim  \Phi\left({A}\right)=\frac{|\Delta_{1}\backslash{A}^{\perp}|}{2}+|{A}| $.
		\end{theorem}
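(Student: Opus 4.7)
The plan is to compute $\dim\Phi(A)=\dim[x,\gg_0]$, the dimension of the tangent space to the $G_0$-orbit at $x$, by first showing it equals $\dim[x,\gg_1]$ (so that $2\dim\Phi(A)=\dim[x,\gg]$), then relating $\dim[x,\gg]$ to $\dim\gg_x$, and finally using Proposition~\ref{lm202} together with an $\mathfrak{sl}(1|1)$-decomposition argument to match the claimed formula.

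First, I would use the nondegenerate even invariant bilinear form $(\cdot,\cdot)$ on $\gg$ (which exists for every basic classical Lie superalgebra, including $\mathfrak{gl}(n|n)$, where the supertrace form is nondegenerate on all of $\mathfrak{gl}$) to show $\dim[x,\gg_0]=\dim[x,\gg_1]$. Invariance gives $(\ad_x u,v)=-(u,\ad_x v)$ for $u\in\gg_0$, $v\in\gg_1$, which expresses that $\ker\ad_x|_{\gg_0}$ and $[x,\gg_1]$ are mutual orthogonal complements in $\gg_0$ with respect to the nondegenerate restriction of the form to $\gg_0$. Taking dimensions yields the equality, so $2\dim\Phi(A)=\dim[x,\gg]$. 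Combined with $\dim[x,\gg]=\tfrac12(\dim\gg-\dim\gg_x)$ (which follows from rank-nullity for $\ad_x$ on $\gg$ and the definition $\gg_x=\gg^x/[x,\gg]$), this gives $\dim\Phi(A)=\tfrac14(\dim\gg-\dim\gg_x)$.

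Next I would compute $\dim\gg_x$ using Proposition~\ref{lm202}: $\gg_x\cong\hh_x\oplus\bigoplus_{\alpha\in\Delta_x}\gg_\alpha$, where $\dim\hh_x=\dim\hh-2k$ (with $k=|A|$) and $\Delta_x=A^\perp\cap(\Delta\setminus(A\cup-A))$. Since the iso-set condition forces $A\cup-A\subseteq A^\perp$ and $|A\cup-A|=2k$, one obtains $\dim\gg-\dim\gg_x=|\Delta\setminus A^\perp|+4k$, so $\dim\Phi(A)=\tfrac{|\Delta\setminus A^\perp|}{4}+k$.

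The final, and most substantive, step is to show $|\Delta_0\setminus A^\perp|=|\Delta_1\setminus A^\perp|$, so that $|\Delta\setminus A^\perp|=2|\Delta_1\setminus A^\perp|$ and the formula matches. For this I would choose $y=\sum c_iy_{\beta_i}$ with generic coefficients so that $h=[x,y]=\sum c_i h_{\beta_i}$ acts on $\gg_\alpha$ by the scalar $\sum c_i(\alpha,\beta_i)$, which vanishes iff $\alpha\in A^\perp$. Hence $\gg^h=\hh\oplus\bigoplus_{\alpha\in\Delta\cap A^\perp}\gg_\alpha$, and $\bigoplus_{\alpha\in\Delta\setminus A^\perp}\gg_\alpha$ is a sum of nonzero $h$-eigenspaces. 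Viewing each such eigenspace as an $\mathfrak{sl}(1|1)$-module via the triple $(x,y,h)$, it is typical and hence decomposes into $2$-dimensional irreducible typical modules, each with one even and one odd basis vector, yielding the required even/odd balance. Substituting gives $\dim\Phi(A)=\tfrac{|\Delta_1\setminus A^\perp|}{2}+k$, as claimed. The main obstacle is the typicality/semisimplicity claim for $\mathfrak{sl}(1|1)$-modules with nonzero $h$-action, which is standard but needs care to state cleanly.
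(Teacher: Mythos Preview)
Your argument is correct and essentially the same as the paper's: both use the invariant form to obtain $\dim[x,\gg_0]=\dim[x,\gg_1]$ (this is exactly the paper's Lemma~\ref{lm71}, proved there via the odd form $\omega(u,v)=(x,[u,v])$) and the $\mathfrak{sl}(1|1)$-eigenspace decomposition under a generic $h=[x,y]$ to see that the nonzero-eigenvalue part has superdimension zero. The only difference is cosmetic---you route through $\dim\gg_x$ via Proposition~\ref{lm202} to avoid computing $\dim[\gg^0,x]$, whereas the paper computes $\dim[\gg^0,x]=2k$ directly; the underlying content is identical.
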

		
		\begin{proof} Let $ {A}=\left\{\beta_{1},\dots ,\beta_{k}\right\} $, $ x=x_{\beta_1}+\dots +x_{\beta_k} $ for some choice of $ x_{\beta_i}\in\gg_{\beta_{i}} $,
			$ y=y_{\beta_1}+\dots +y_{\beta_k} $ for some $ y_{\beta_i}\in\gg_{-\beta_{i}} $. Let $ h=\left[x,y\right] $, $ h_{\beta_i}=\left[x_{\beta_i},y_{\beta_i}\right] $. Clearly,
			$ h=h_{\beta_1}+\dots +h_{\beta_k} $ and $ h,x,y $ generate an $ \mathfrak{sl}\left(1|1\right) $-subalgebra in $ \gg $. As a module over this subalgebra
			$ \gg $ has a decomposition
			\begin{equation}
				\gg=\oplus_{\mu}\gg^{\mu},
				\notag\end{equation}
			where
			\begin{equation}
				\gg^{\mu}=\left\{g\in\gg \mid \left[h,g\right]=\mu(h) g\right\}.
				\notag\end{equation}
			Note that
			\begin{equation}
				\dim  \left[\gg,x\right]=\sum_{\mu}\dim  \left[\gg^{\mu},x\right],
				\notag\end{equation}
			and from the description of irreducible $ \mathfrak{sl}\left(1|1\right) $-modules for $ \mu\not=0 $
			\begin{equation}
				\dim  \left[\gg^{\mu},x\right]=\frac{\dim  \gg^{\mu}}{2}.
				\notag\end{equation}
			On the other hand, for $ \mu\not=0$,  $\sdim  \gg^{\mu}=0 $ and therefore
			\begin{equation}
				\dim  \gg^{\mu}=2 \dim  \gg_{1}^{\mu}.
				\notag\end{equation}
			Observe that for a generic choice of $ x_{\beta_{i}}\in\gg_{\beta_{i}} $, $ \gg_{\alpha}\subset\gg^{0} $ iff $ \left(\alpha,\beta_{i}\right)=0 $ for
			all $ i\leq k $. Indeed, for generic
			choice of $ x_{\beta_i} $ the condition $ \alpha\left(h\right)=0 $ implies $ \alpha\left(h_{\beta_i}\right)=0 $ for all $ i $, and
			therefore $ \left(\alpha,\beta_{i}\right)=0 $ for all $ i $. Hence
			\begin{equation}
				\oplus_{\mu\not=0}\gg_{1}^{\mu}=\oplus_{\alpha\in\Delta_{1}\backslash{A}^{\perp}}\gg_{\alpha}
				\notag\end{equation}
			and
			\begin{equation}
				\sum_{\mu\not=0} \dim  \left[\gg^{\mu},x\right]=\sum_{\mu\not=0} \dim  \gg_{1}^{\mu}=|\Delta_{1}\backslash{A}^{\perp}|.
				\notag\end{equation}
			To calculate $ \dim  \left[\gg^{0},x\right] $ note that
			\begin{equation}
				\gg^{0}=\hh\oplus\oplus_{\alpha\in\Delta\cap{A}^{\perp}}\gg_{\alpha}.
				\notag\end{equation}
			We claim that
			\begin{equation}
				\left[\gg^{0},x\right]=\oplus_{i=1}^{k}{\CC}h_{\beta_i}\oplus\oplus_{i=1}^{k}\gg_{\beta_{i}},
				\notag\end{equation}
			hence $ \dim  \left[\gg^{0},x\right]=2k $. Indeed, if $ \left(\alpha,\beta_{i}\right)=0,\alpha\not=\pm\beta_{i} $ then $ \alpha\pm\beta_{i}\notin\Delta $. Therefore
			$ \left[x,\gg_{\alpha}\right]=0 $ for any $ \alpha\in\Delta\cap{A}^{\perp},\alpha\not=-\beta_{i} $. Furthermore, $ \left[x,\gg_{-\beta_{i}}\right]={\CC}h_{\beta_i} $ and $ \left[x,\hh\right]=\oplus_{i=1}^{k}\gg_{\beta_{i}} $.
			Thus, we obtain
			\begin{equation}
				\dim  \left[\gg,x\right]=|\Delta_{1}\backslash{A}^{\perp}|+2k.
				\label{equ99}\end{equation}\myLabel{equ99,}\relax 
			Now the statement will follow from the lemma.
			
			\begin{lemma} \label{lm71}\myLabel{lm71}\relax  $\sdim \left[\gg,x\right]=0$.
			\end{lemma}
			
			\begin{proof} Define an odd skew-symmetric form $\omega$ on $ \gg $ by
				\begin{equation}
					\omega\left(y,z\right):=\left(x,\left[y,z\right]\right).
					\notag\end{equation}
				Obviously the kernel of $ \omega $ coincides with the centralizer $ \gg^x $. Thus, $ \omega $ induces a
				non-degenerate odd skew-symmetric form on the quotient $ \gg/ \gg^x $. Hence $ \sdim\gg/ \gg^x=0$.
				But
				$ \left[\gg,x\right]\cong\Pi\left(\gg/\gg^x \right) $, which implies the lemma.\end{proof}
			
			Now Lemma~\ref{lm71} implies $ \dim \left[\gg_{0},x\right]=\frac{1}{2} \dim \left[\gg,x\right] $. Since $ \dim  G_{0}x=\dim \left[\gg_{0},x\right] $, 
			Theorem~\ref{th7} follows from~\eqref{equ99}.
		\end{proof}

		Theorem~\ref{th7} has the following corollaries.
		
		\begin{corollary} \label{cor73}\myLabel{cor73}\relax  If $ |{A}|=|{B}| $, then $ \dim  \Phi\left({A}\right)= \dim  \Phi\left({B}\right)$.
		\end{corollary}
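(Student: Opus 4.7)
The plan is to combine the explicit dimension formula from Theorem~\ref{th7} with the $W$-orbit comparison in Lemma~\ref{lm110}(2). By Theorem~\ref{th7},
\[
\dim\Phi(A) = \frac{|\Delta_{1}\setminus A^{\perp}|}{2} + |A|,
\]
so assuming $|A|=|B|$, the goal reduces to showing $|\Delta_1 \cap A^{\perp}| = |\Delta_1 \cap B^{\perp}|$.

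To that end, I would first apply Lemma~\ref{lm110}(2) to produce $w \in W$ with $w(A) \subseteq B \cup -B$. Since $w(A)$ has $|B|$ linearly independent elements all contained in $B\cup -B$ (and $B\cup -B$ has $2|B|$ elements coming from $|B|$ linearly independent root directions), $w(A)$ and $B$ must span the same subspace of $\hh^*$. In particular, $(wA)^{\perp} = B^{\perp}$.

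Next, I would use that $W$ acts as isometries on $\hh^*$ (the form comes from an invariant pairing on $\gg_0$) and that $W$ permutes $\Delta_1$ (since $\gg_1$ is a $\gg_0$-module). From the first fact, $w(A^{\perp}) = (wA)^{\perp} = B^{\perp}$. From the second, $w$ restricts to a bijection
\[
\Delta_1 \cap A^{\perp} \;\longrightarrow\; \Delta_1 \cap w(A^{\perp}) \;=\; \Delta_1 \cap B^{\perp},
\]
so these sets have equal cardinality and the formula yields $\dim\Phi(A)=\dim\Phi(B)$.

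There is no real obstacle here; the corollary is essentially a bookkeeping consequence of Theorem~\ref{th7} and Lemma~\ref{lm110}(2), together with the elementary observation that the Weyl group preserves both the bilinear form on $\hh^*$ and the set of odd roots $\Delta_1$. The only point requiring minor care is verifying that $w(A)\subseteq B\cup -B$ with $|w(A)|=|B|$ forces $\Span w(A) = \Span B$, which follows from the linear independence built into the definition of iso-set.
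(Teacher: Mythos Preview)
Your proof is correct and follows essentially the same route as the paper: the corollary is a direct consequence of the dimension formula in Theorem~\ref{th7} together with Lemma~\ref{lm110}(2). The paper's one-line proof additionally cites Theorem~\ref{th1}, which can be read as invoking $\Phi(A)=\Phi(wA)$ at the orbit level rather than arguing via $W$-invariance of the form and of $\Delta_1$ as you do; the two viewpoints are interchangeable here, and your version spells out the linear-algebra step ($\Span w(A)=\Span B$) that the paper leaves implicit.
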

		\begin{proof} Follows from Theorem~\ref{th1} and Lemma~\ref{lm110} (2).\end{proof}

		\begin{corollary} \label{cor71}\myLabel{cor71}\relax  Let $ d $ be the defect of $ \gg $. Then the irreducible
			components of $ X $ are in bijection with $ W $-orbits on $ \cS_{d}:=\left\{{A}\in \cS \mid |{A}|=d\right\}  $. If all odd roots of $ \gg $
			are isotropic, then the dimension of each component is equal to $ \frac{\dim 
				\gg_{1}}{2}=\frac{|\Delta_{1}|}{2} $.
		\end{corollary}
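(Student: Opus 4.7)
The proof splits naturally into two parts, a structural statement and a dimension count, and both follow fairly directly from the machinery already assembled (Theorem~\ref{th1}, Theorem~\ref{th7}, and Lemma~\ref{lm110}).

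For the first part, I would argue as follows. By Theorem~\ref{th1} there are finitely many $G_0$-orbits on $X$, so $X$ is the finite union of the closures $\overline{\Phi(A)}$ as $A$ ranges over $\cS/W$. Each such closure is irreducible since $G_0$ is connected. The irreducible components are therefore exactly the maximal elements of this finite collection under inclusion. By Lemma~\ref{lm110}(3), $\overline{\Phi(A)}\subseteq\overline{\Phi(B)}$ iff $w(A)\subseteq B$ for some $w\in W$, and by Lemma~\ref{lm110}(2), if $|A|<|B|$ one can always find such a $w$. Hence the maximal orbit closures correspond precisely to $A$ of maximal cardinality, i.e. $|A|=d$, giving the claimed bijection with $W$-orbits on $\cS_d$.

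For the dimension statement, I would invoke Theorem~\ref{th7}: for $A\in\cS_d$,
\[
\dim\Phi(A)=\frac{|\Delta_1\setminus A^\perp|}{2}+|A|=\frac{|\Delta_1|-|\Delta_1\cap A^\perp|}{2}+d.
\]
So the goal reduces to showing that $|\Delta_1\cap A^\perp|=2d$, i.e.\ that $\Delta_1\cap A^\perp=A\cup(-A)$. One inclusion is automatic since elements of $A$ are mutually orthogonal isotropic roots. The main step is the reverse inclusion, which I would prove by contradiction: suppose $\alpha\in\Delta_1\cap A^\perp$ with $\alpha\notin\pm A$. By Lemma~\ref{lm110}(1), $\alpha$ cannot be a linear combination of elements of $A$, so $A\cup\{\alpha\}$ is linearly independent. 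Here I use the hypothesis that all odd roots are isotropic: for any $\beta\in A$ we have $(\alpha,\alpha)=(\beta,\beta)=(\alpha,\beta)=0$, so $(\alpha\pm\beta,\alpha\pm\beta)=0$, meaning $\alpha\pm\beta$, if a root, is isotropic and hence odd; in particular $\alpha\pm\beta\notin\Delta_0$. This shows $A\cup\{\alpha\}$ is an iso-set of size $d+1$, contradicting maximality of $d$.

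Combining these steps yields $\dim\Phi(A)=\frac{|\Delta_1|-2d}{2}+d=\frac{|\Delta_1|}{2}=\frac{\dim\gg_1}{2}$, and by Corollary~\ref{cor73} this value is the same for every $W$-orbit in $\cS_d$. I do not expect any serious obstacle: the only subtlety is making sure the isotropy hypothesis is used correctly to promote orthogonality of $\alpha$ and $A$ into the iso-set condition $\alpha\pm\beta\notin\Delta_0$, which is exactly where that hypothesis enters.
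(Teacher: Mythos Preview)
Your approach matches the paper's: first use Theorem~\ref{th1} and Lemma~\ref{lm110} to identify the irreducible components with $W$-orbits on $\cS_d$, then apply Theorem~\ref{th7} for the dimension. Your dimension argument is in fact more detailed than the paper's (which simply asserts that Theorem~\ref{th7} ``immediately implies'' the result), and your use of the isotropy hypothesis to show $\Delta_1\cap A^\perp=A\cup(-A)$ is correct.

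There is one small slip in the first part. You write that ``by Lemma~\ref{lm110}(2), if $|A|<|B|$ one can always find such a $w$,'' meaning a $w$ with $w(A)\subset B$. But Lemma~\ref{lm110}(2) only gives $w(A)\subset B\cup(-B)$, and this does not in general upgrade to $w(A)\subset B$: in $\mathfrak{gl}(m|n)$ the Weyl group $S_m\times S_n$ preserves $\Delta(\gg^1)$ and $\Delta(\gg^{-1})$ separately, so an iso-set $A\subset\Delta(\gg^1)$ can never be sent into a $B\subset\Delta(\gg^{-1})$ regardless of sizes. The fix is immediate and actually bypasses Lemma~\ref{lm110}(2) entirely: if $|A|<d$ then $A$ is not a maximal iso-set, so choose $B\in\cS$ with $A\subsetneq B$; then $w=e$ already witnesses $\Phi(A)\subset\overline{\Phi(B)}$ via Lemma~\ref{lm110}(3). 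With this adjustment your conclusion stands.
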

		
		\begin{proof} As follows from Theorem~\ref{th1} and Lemma~\ref{lm110} (3), each
			irreducible component is the closure of $ \Phi\left({A}\right) $ for a maximal $ {A}\in \cS $. By Lemma~\ref{lm110} $ \left(2\right) |{A}|=d $. Hence the first statement. Theorem~\ref{th7} immediately
			implies the statement about dimension.\end{proof}
		
		\begin{corollary} \label{cor72}\myLabel{cor72}\relax  If all odd roots of $ \gg $ are isotropic, then the
			codimension of $ \Phi\left({A}\right) $ in $ X $ equals $ \frac{|\Delta_{1}\cap{A}^{\perp}|}{2}-|{A}| $.
		\end{corollary}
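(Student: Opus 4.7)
The plan is to deduce Corollary~\ref{cor72} as a direct consequence of Theorem~\ref{th7} and Corollary~\ref{cor71}, both of which are already available.

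First I would note that, under the hypothesis that all odd roots of $\gg$ are isotropic, Corollary~\ref{cor71} gives
\[
\dim X \;=\; \frac{|\Delta_{1}|}{2},
\]
since $X$ is equidimensional with each irreducible component of this dimension, and $\Phi(A)$ is contained in some component whose closure has this dimension. Next, Theorem~\ref{th7} gives
\[
\dim \Phi(A) \;=\; \frac{|\Delta_{1}\setminus A^{\perp}|}{2} + |A|.
\]

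Then I would compute the codimension directly:
\[
\operatorname{codim}_{X}\Phi(A) \;=\; \dim X - \dim \Phi(A) \;=\; \frac{|\Delta_{1}|}{2} - \frac{|\Delta_{1}\setminus A^{\perp}|}{2} - |A|.
\]
Using the decomposition $\Delta_{1} = (\Delta_{1}\cap A^{\perp}) \sqcup (\Delta_{1}\setminus A^{\perp})$, the difference of cardinalities simplifies to $|\Delta_{1}\cap A^{\perp}|$, yielding
\[
\operatorname{codim}_{X}\Phi(A) \;=\; \frac{|\Delta_{1}\cap A^{\perp}|}{2} - |A|,
\]
as claimed.

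There is no real obstacle here; the content has already been done in Theorem~\ref{th7} and Corollary~\ref{cor71}, and the only small point worth being careful about is ensuring that $\Phi(A)$ is indeed contained in a component of $X$ of top dimension $|\Delta_{1}|/2$ (which follows from Lemma~\ref{lm110}(2)--(3): one can extend $A$ to a maximal iso-set $B$ of size $d$, and then $\Phi(A)$ lies in the closure of $\Phi(B)$, an irreducible component of $X$). After this observation, the corollary is a one-line subtraction.
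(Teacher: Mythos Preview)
Your proposal is correct and follows essentially the same approach as the paper: both compute $\operatorname{codim}\Phi(A)=\dim X-\dim\Phi(A)$ by plugging in the value $\dim X=|\Delta_1|/2$ from Corollary~\ref{cor71} and $\dim\Phi(A)=|\Delta_1\setminus A^{\perp}|/2+|A|$ from Theorem~\ref{th7}, then simplifying. Your extra remark about $\Phi(A)$ lying in a top-dimensional component via Lemma~\ref{lm110} is a harmless clarification the paper omits, since equidimensionality of $X$ already makes the subtraction unambiguous.
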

		
		\begin{proof} The codimension  of $ \Phi\left({A}\right) $ in $ X $ equals $ \dim  X-\dim  \Phi\left({A}\right) $. Using Theorem~\ref{th7} and
			Corollary~\ref{cor71} we obtain
			\begin{equation}
				\operatorname{codim} \Phi\left({A}\right)=\frac{|\Delta_{1}|-|\Delta_{1}\backslash{A}^{\perp}|}{2}-|{A}|=\frac{|\Delta_{1}\cap{A}^{\perp}|}{2}-|{A}|.
				\notag\end{equation}
		\end{proof}

		Recall that 
		$ \gg_{x}=\gg^x /\left[x,\gg\right] $ and $M_x$ is a $\gg_x$-module, see Lemma~\ref{gx Mx}.
		
		\begin{lemma} \label{lm201}\myLabel{lm201}\relax  Let $ {\mathfrak m}^{\perp} $ denote the
			orthogonal complement to $ {\mathfrak m} $ with respect to the invariant form on $ \gg $. Then
			$ \left[x,\gg\right]^{\perp}=\gg^x $.
		\end{lemma}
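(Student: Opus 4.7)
The plan is to use only two ingredients: the super-invariance of the bilinear form $(\cdot,\cdot)$ on $\gg$, and its nondegeneracy. Concretely, for any $y,z \in \gg$ one has
\[
([x,z], y) \;=\; \pm\,(z, [x,y]),
\]
where the sign is determined by the parities of the arguments (the key point being that the identity holds, regardless of which sign convention one takes, as a consequence of ad-invariance of the form).

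From this, I would argue as follows. A vector $y \in \gg$ lies in $[x,\gg]^{\perp}$ if and only if $([x,z], y) = 0$ for every $z \in \gg$. By the invariance formula above, this is equivalent to $(z, [x,y]) = 0$ for every $z \in \gg$. Since the form is nondegenerate on $\gg$ (this is precisely where we use that $\gg$ is basic classical and so carries a nondegenerate invariant form), the latter is equivalent to $[x,y] = 0$, i.e.\ to $y \in \ker \ad_x = \gg^x$. This gives exactly the asserted equality $[x,\gg]^{\perp} = \gg^x$.

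The only potential obstacle is bookkeeping of signs, since $x$ is odd and $(\cdot,\cdot)$ is a $\ZZ/2$-graded invariant form. But the super-invariance identity in the form $([a,b], c) = (a, [b,c])$ holds for all parities, so the argument goes through unchanged; in particular one never needs to move $x$ past anything whose parity is ambiguous. No case analysis on the type of $\gg$ is required; the argument is uniform across all basic classical Lie superalgebras (and would work for any Lie superalgebra equipped with a nondegenerate invariant form).
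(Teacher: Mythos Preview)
Your proof is correct and follows essentially the same approach as the paper: both arguments reduce the statement to the invariance identity $(u,[x,z])=([u,x],z)$ together with nondegeneracy of the form. The paper's proof is slightly terser (and includes an extraneous first displayed line), but the mathematical content is identical.
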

		
		\begin{proof} Let $ u\in \gg^x $, $ v\in\left[x,\gg\right] $. Then $ v=\left[x,z\right] $ and
			\begin{equation}
				\left[u,\left[x,z\right]\right]=\left(-1\right)^{p\left(u\right)}\left[x,\left[u,z\right]\right]\in\left[x,\gg\right].
				\notag\end{equation}
			Now the statement follows from the identity
			\begin{equation}
				\left(u,\left[x,z\right]\right)=\left(\left[u,x\right],z\right).
				\notag\end{equation}
		\end{proof}


		\section{Central characters and atypicality for  classical $\gg\neq\pp(n)$}

		Throughout this section, $\gg$ denotes a basic classical Lie superalgebra or $\gg=\qq(n)$.  We define the notion of atypicality for a central character, and see how it is affected by the $\DS$ functor.  Furthermore, we describe the associated variety of an irreducible module in terms of its degree of atypicality. 
		
		\subsection{The Weyl group and Weyl vector } 
		Let us fix a Borel subalgebra $ {\bb}\subset\gg $ by choosing a decomposition
		$ \Delta=\Delta^{+}\cup\Delta^{-} $. Note that in general this choice is not unique but our consideration will
		not depend on it. Later we will use different Borel subalgebras in some
		proofs. Set
		\begin{equation}
			\rho=\frac{1}{2}\sum_{\alpha\in\Delta_{0}^{+}}\alpha-\frac{1}{2}\sum_{\alpha\in\Delta_{1}^{+}}\alpha.
			\notag\end{equation}
		
		Define the shifted action of $ W $ on $ \hh^{*} $ by
		\begin{equation}
			w\cdot\lambda:=w\left(\lambda+\rho\right)-\rho.
			\notag\end{equation}
		Note that for $\gg=\qq(n)$, $\rho=0$, so there is no shift in the $W$-action.

		For $\gg$ basic classical, recall that in Section~\ref{KM_description} we defined for each $\alpha\in\Delta_{1}$ a coroot $\alpha^\vee\in\ft=\hh_{0}$.  In this section, for $\gg=\qq(n)$ we will denote by $\alpha^\vee$ a non-zero element of $[(\gg_{\alpha})_{1},(\gg_{-\alpha})_1]$.  Notice that in the basic classical cases we have $w{\alpha}^{\vee}=({w\alpha})^{\vee}$ for every $w\in W$ (under the non-shifted action); for $\gg=\qq(n)$ we impose this condition on the set of $\alpha^\vee$.  We say that an iso-set $A\subset \Delta_1$ is  orthogonal to
		$\mu$ if $\mu(\alpha^{\vee})=0$ for each $\alpha\in A$, and we write $A\subset\mu^\perp$ and $\mu\in A^\perp$.  Note that this is compatible with our definition of $A^\perp$ in (\ref{A perp}).

		\subsection{Central characters}

		Recall that $ Z(\gg) $ denotes the center of the universal enveloping algebra $\mathcal U\left(\gg\right) $. For $\lambda\in\ft^*$ and chosen, fixed Borel subalgebra $\mathfrak{b}$, we denote by $ M(\lambda)$ the Verma module of highest $\lambda$ and by $L(\lambda)$ the
		unique irreducible quotient of $ M(\lambda)$.
		We say that $ \lambda\in\ft^{*} $ is {\em dominant\/}
		if $ L(\lambda)$ is finite-dimensional. 
		One can see that any $ z\in Z(\gg) $ acts as a scalar $ \chi_{\lambda}\left(z\right) $ on $ M(\lambda)$ and $ L(\lambda)$. Therefore $ \lambda\in\ft^{*} $
		defines a central character $ \chi_{\lambda}:Z(\gg) \to {\CC} $.  We emphasize that $\chi_{\lambda}$ depends also on the choice of Borel subalgebra.
		For a central character $\chi$, let
		\begin{equation}
			\ft^*_{\chi}=\left\{\mu\in\ft^{*} \mid \chi_{\mu}=\chi\right\}.
			\notag\end{equation}
		For every $\lambda\in\ft^*$ set  
		
		\begin{equation}
			\cS_{\lambda}:=\left\{{A}\in \cS \mid {A}\subset\left(\lambda+\rho\right)^{\perp}\right\}.
			\notag\end{equation}
		
		\begin{lemma} \label{lm6}\myLabel{lm6}\relax  Let $ \chi=\chi_{\lambda} $, $ {A}\in \cS_\lambda $ be maximal. Then
			\begin{equation}
				\ft^*_{\chi}=\bigcup_{w\in W} w\cdot(\lambda+\Span A).
				\notag\end{equation}
		\end{lemma}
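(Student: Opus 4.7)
The plan is to deduce the lemma from the Harish--Chandra description of $Z(\gg)$ for basic classical Lie superalgebras (due to Kac) and for $\qq(n)$ (due to Sergeev).  After the $\rho$-shift, the Harish--Chandra homomorphism identifies $Z(\gg)$ with the subalgebra $I\subset S(\ft^*)^W$ of $W$-invariant polynomials satisfying the following \emph{supersymmetry} condition: if $\alpha\in\Delta_1$ is isotropic and $(\mu+\rho,\alpha)=0$, then $f(\mu+t\alpha)=f(\mu)$ for all $t\in\CC$.  Consequently, $\chi_\mu=\chi_\lambda$ if and only if $f(\mu)=f(\lambda)$ for every $f\in I$.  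This reformulation converts the lemma into a statement about the common zero locus of such invariants.

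First I would prove the inclusion $\bigcup_{w\in W} w\cdot(\lambda+\Span A)\subseteq\ft^*_\chi$.  Writing $\mu=w\cdot(\lambda+\sum_i t_i\beta_i)$ with $\beta_i\in A$, the $W$-invariance of $f\in I$ reduces the task to showing $f(\lambda+\sum_i t_i\beta_i)=f(\lambda)$.  Because $A$ is an iso-set, its elements are mutually $(\cdot,\cdot)$-orthogonal (for basic classical $\gg$ this follows from the definition together with the absence of $\beta_i+\beta_j\in\Delta_0$; for $\qq(n)$ it holds by the explicit root combinatorics).  Combined with $\beta_i\in(\lambda+\rho)^\perp$ for each $i$, this means the running weight $\lambda+\rho+\sum_{j<i} t_j\beta_j$ remains orthogonal to $\beta_i$, so the supersymmetry relation applies inductively at each step, yielding the equality.

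The main obstacle is the reverse inclusion.  The approach I would take is a dimension/structure argument: by the Harish--Chandra description, the variety $\ft^*_\chi$ is cut out by the $W$-invariant supersymmetric polynomials, and the maximality of $A$ guarantees that these polynomials collapse precisely along the $|A|$-dimensional directions $\Span A$ at $\lambda+\rho$.  One shows that $\ft^*_\chi$ has pure dimension $|A|=\atyp\lambda$ and that its irreducible components are exactly the affine subspaces $w(\lambda+\rho+\Span A)-\rho$, $w\in W$.  Equivalently, following Kac--Sergeev, every $\mu\in\ft^*_\chi$ can be connected to $\lambda$ by a sequence of $W$-reflections and translations $\mu'\mapsto \mu'+c\gamma$ by isotropic roots $\gamma$ orthogonal to the current $\mu'+\rho$; one then has to show that such a chain can always be straightened, using the iso-set pairwise orthogonality, into a single translation by an element of $\Span A$ after a final Weyl-group move.

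The technical heart is therefore the claim that any isotropic root $\gamma$ orthogonal to $\lambda+\rho$ lies in $W_\lambda\cdot\Span A$, where $W_\lambda$ is the stabilizer of $\lambda+\rho$ in $W$; this reduces to the transitivity (possibly only up to $W$, not $W_\lambda$) of Weyl actions on maximal iso-sets inside $(\lambda+\rho)^\perp$.  I expect this to require a short case-by-case verification across the families $\fgl(m|n)$, $\osp(m|2n)$, the exceptionals $D(2|1;a), F(4), G(3)$, and $\qq(n)$, using the explicit root data and the structure of the integral root subsystem at $\lambda+\rho$.  Once this combinatorial ingredient is in place, combining it with the supersymmetry argument of the forward direction closes the reverse inclusion and completes the proof.
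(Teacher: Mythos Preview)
Your proposal is correct and follows the same route as the paper: both rest on the Harish--Chandra description of $Z(\gg)$ formulated by Kac and proved by Sergeev and Gorelik, which the paper simply cites without further detail. Your sketch of the reverse inclusion is more elaborate than necessary, since the cited results already give the fibers of the Harish--Chandra map in exactly the form $\bigcup_{w\in W} w\cdot(\lambda+\Span A)$, so no additional ``straightening'' or case-by-case combinatorics is required once one invokes them.
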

		
		\begin{proof} This easily follows from the description of $ Z(\gg) $ formulated in
			\cite{K2} and proven in \cite{G1,Ser1}.\end{proof}
		
		We call $\lambda$ {\it regular} if there is a unique maximal $A\in \cS_\lambda$. For every $\chi$ there exists a regular $\lambda\in\ft_\chi^*$. 
		
		\subsection{Degree of atypicality}
		
		We define the degree of atypicality following \cite{KW}.  For a central character $ \chi $ set
		\begin{equation}
			\cS_{\chi}=\bigcup_{\lambda\in\ft_{\chi}^*}\cS_{\lambda}.
			\notag\end{equation}
		
		\begin{lemma} \label{lm5}\myLabel{lm5}\relax  There exists a number $ k $ such that $ \cS_{\chi}=\bigcup_{i\leq k}\cS_{i} $.
		\end{lemma}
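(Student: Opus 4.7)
The plan is to chase the definitions: the content of the lemma is the nontrivial inclusion $\bigcup_{i\leq k}\cS_i \subseteq \cS_\chi$, once $k$ is chosen suitably, because the other inclusion is just the definition of $k$.

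First, I would define $k$ to be the maximum cardinality of an element of $\cS_\chi$. This is well-defined because iso-sets in $\Delta_1$ have cardinality at most the defect of $\gg$, which is finite by the table in Section 4. This immediately gives $\cS_\chi\subseteq\bigcup_{i\leq k}\cS_i$.

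For the reverse inclusion, fix a maximal $A\in\cS_\chi$, so $|A|=k$, and choose $\lambda\in\ft^*_\chi$ with $A\subset(\lambda+\rho)^\perp$. Given any iso-set $B$ with $|B|\leq k$, I would apply Lemma~\ref{lm110}(2) to produce $w\in W$ such that $w(B)\subset A\cup(-A)$. Since $(\lambda+\rho)(\alpha^\vee)=0$ for every $\alpha\in A$, the same holds for every $\alpha\in A\cup(-A)$, and therefore $w(B)\subset(\lambda+\rho)^\perp$.

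Now I would transport this back to $B$ using the $W$-invariance of central characters. Set $\mu:=w^{-1}\cdot\lambda=w^{-1}(\lambda+\rho)-\rho$; then by Lemma~\ref{lm6} we have $\chi_\mu=\chi_\lambda=\chi$, so $\mu\in\ft^*_\chi$. For each $\beta\in B$, the identity
\[
(\mu+\rho)(\beta^\vee)=(w^{-1}(\lambda+\rho))(\beta^\vee)=(\lambda+\rho)\bigl((w\beta)^\vee\bigr)
\]
together with $w\beta\in(\lambda+\rho)^\perp$ shows $B\subset(\mu+\rho)^\perp$, i.e.~$B\in\cS_\mu\subseteq\cS_\chi$. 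This completes the inclusion.

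The only potentially delicate point is the transport step: I am using both that the relation $w\alpha^\vee=(w\alpha)^\vee$ holds (which is part of the setup of $\alpha^\vee$ earlier in the section, including in the $\qq(n)$ case where this is imposed) and that $B$ being an iso-set survives the translation argument, which it does trivially since we only use that $B$ itself lies in $\cS$. So no genuine obstacle arises; the lemma is essentially a packaging of Lemma~\ref{lm110}(2) together with Lemma~\ref{lm6}.
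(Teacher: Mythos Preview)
Your proof is correct and follows essentially the same route as the paper's. The paper phrases it as first observing (via Lemma~\ref{lm6}) that $\cS_\chi$ is $W$-invariant and closed under replacing roots by their negatives, and then invoking Lemma~\ref{lm110}(2); your explicit transport of $B$ via $\mu=w^{-1}\cdot\lambda$ is exactly what unpacking those two invariances amounts to.
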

		
		\begin{proof} It follows easily from Lemma~\ref{lm6} that $ \cS_{\chi} $ is $ W $-invariant.
			Furthermore, if $ {A}\in \cS_{\chi} $ and $ {A}' $ is obtained from $ {A} $ by multiplication of some
			roots in $ {A} $ by $ -1 $, then $ {A}'\in \cS_{\chi} $. Hence the statement follows from Lemma~\ref{lm110}
			(1) and (2), which also holds for $\qq(n)$.\end{proof}
		
		The number $ k $ is called the {\em degree of atypicality\/} of
		$ \chi $. It is clear
		that the degree of atypicality of $ \chi $ is not bigger than the defect of $ \gg $.  The degree of atypicality of a weight $ \lambda $ is by definition the degree of atypicality
		of $ \chi_{\lambda} $.  If $ k=0 $, then $ \chi $ is called {\em typical}.  We say a module is typical if it lies in $\bigoplus\limits_{\chi\text{ typical}}\Mod_\chi(\gg)$. 
		
		Let $ X_{k}=\Phi\left(\cS_{k}\right) $, $ \overline{X}_{k} $ denote the closure of $ X_{k} $. Lemma~\ref{lm110} $ \left(3\right) $ implies
		that
		\begin{equation}
			\overline{X}_{k}=\bigcup_{i=0}^{k}X_{i}.
			\notag\end{equation}

		\begin{theorem} \label{th2}\myLabel{th2}\relax  Let $ \gg $ be a basic classical Lie superalgebra or $\qq(n)$, and let $ M $ be a
			$ \gg $-module which admits central character $ \chi $, with degree of atypicality of $ \chi $
			equal to $ k $. Then we have $ X_M\subset\overline{X}_{k} $.
		\end{theorem}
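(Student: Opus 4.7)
The plan is to prove the contrapositive: if $\DS_x(M)\neq 0$, then $\rank(x)\leq \atyp(\chi)$. By $G_0$-invariance of $X_M$ (Lemma~\ref{lm1}) and the description of $G_0$-orbits from Theorem~\ref{th1}, we may assume $x=x_{\beta_1}+\cdots+x_{\beta_j}$ arises from an iso-set $A=\{\beta_1,\dots,\beta_j\}\in\cS_j$, so that $\rank(x)=j$ and the goal becomes $j\leq\atyp(\chi)$. The containment $X_M\subseteq\overline{X}_k=\bigcup_{i\le k}X_i$ follows once this is established.

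The first input is Proposition~\ref{maincch}(2): if $\DS_x(M)\ne 0$, any simple subquotient $L'\subseteq\DS_x(M)$ carries a central character $\chi'\in\Hom(Z(\gg_x),\CC)$ with $\eta_x^*(\chi')=\chi$, hence $\chi\in\Im(\eta_x^*)$. So it suffices to prove that every $\chi\in\Im(\eta_x^*)$ satisfies $\atyp(\chi)\geq j$. For this I would use the explicit embedding of $\gg_x$ inside $\gg$ provided by Proposition~\ref{lm202}, where $\hh_x$ is a splitting of $\hh_A^\perp/\hh_A$ and the roots are $\Delta_x=A^\perp\cap(\Delta\setminus(A\cup -A))$. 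Combined with the Harish-Chandra description of $Z(\gg)$ and $Z(\gg_x)$ from \cite{K2,Ser1,G1} underlying Lemma~\ref{lm6}, and the description of $\eta_x$ given in \cite{G3}, one obtains: if $\chi=\eta_x^*(\chi_{\lambda'})$, then $\chi=\chi_\lambda$ for some $\lambda\in\hh^*$ whose restriction to $\hh_x$ represents $\lambda'$ and whose restriction to $\hh_A$ vanishes. Choosing a Borel of $\gg$ in which the roots of $A$ are simple isotropic roots—available via the odd-reflection calculus for basic classical superalgebras and, analogously, via the root data of $\qq(n)$—we ensure $\rho(\beta_i^\vee)=0$ for each $i$. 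Since also $\lambda(\beta_i^\vee)=0$, this yields $A\subseteq(\lambda+\rho)^\perp$, i.e., $A\in\cS_\lambda\subseteq\cS_\chi$, so $\atyp(\chi)\geq |A|=j$ by definition.

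The main obstacle is the description of $\eta_x^*$ at the level of weights: matching the Harish-Chandra projection of $Z(\gg)$ with that of $Z(\gg_x)$ under the explicit embedding of Proposition~\ref{lm202}, so that lifting $\lambda'\in\hh_x^*$ to $\lambda\in\hh^*$ vanishing on $\hh_A$ really produces $\chi=\eta_x^*(\chi_{\lambda'})$. This rests on the references \cite{G3,Ser1,Ser2} and, alternatively, can be extracted from Proposition~\ref{image eta}. A secondary technicality is the Borel change that makes $\beta_1,\dots,\beta_j$ simple isotropic, so that $\rho(\beta_i^\vee)=0$; this is handled by iterated odd reflections in the basic classical setting and by direct inspection of the root data for $\qq(n)$, both already absorbed by the uniform statement of Lemma~\ref{lm6}.
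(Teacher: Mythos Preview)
Your strategy matches the paper's: reduce via $G_0$-orbits to $x$ coming from an iso-set $A$, use Proposition~\ref{maincch} to get $\chi\in\Im(\eta_x^*)$, and then show that any $\chi$ in the image has atypicality at least $|A|$ by lifting $\lambda'\in\ft_x^*$ to $\lambda\in\ft^*$ with $A\subset(\lambda+\rho)^\perp$, after choosing a Borel in which the $\beta_i$ are simple.

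The one substantive difference is how you justify $\eta_x^*(\chi_{\lambda'})=\chi_\lambda$. You invoke the Harish-Chandra level description of $\eta_x$ from \cite{G3} and Proposition~\ref{image eta}, and you flag this as ``the main obstacle''. The paper avoids this entirely by a direct module-theoretic argument (Lemma~\ref{lemchichi}): it simply checks that a highest weight vector $v_\lambda\in L_\gg(\lambda)$ satisfies $xv_\lambda=0$ and $v_\lambda\notin xL_\gg(\lambda)$ (the latter via $L_\gg(\lambda)_{\lambda-\beta_i}=0$), so that $L_{\gg_x}(\lambda')$ occurs in $\DS_x(L_\gg(\lambda))$ with multiplicity one. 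Then Proposition~\ref{maincch} immediately gives $\eta_x^*(\chi_{\lambda'})=\chi_\lambda$. This is more elementary and self-contained than matching Harish-Chandra projections, and as a bonus it yields the exact relation $\atyp\chi_{\lambda'}=\atyp\chi_\lambda-k$ rather than just an inequality. Your route is not wrong, but the paper's trick dissolves the obstacle you were worried about.
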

		The proof of Theorem~\ref{th2} will be given in Section~\ref{proof th2}.
		
		\begin{theorem} \label{th3}\myLabel{th3}\relax  Let $ \gg $ be a basic classical Lie superalgebra.
			For any integral dominant $ \lambda\in\ft^{*} $ with degree of atypicality $ k $, $ X_{L(\lambda)}=\overline{X}_{k} $.
		\end{theorem}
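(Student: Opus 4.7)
The plan is to combine Theorem~\ref{th2}, which gives $X_{L(\lambda)} \subseteq \overline{X}_k$, with the reverse inclusion $\overline{X}_k \subseteq X_{L(\lambda)}$. Since $X_{L(\lambda)}$ is closed and $G_0$-invariant by Lemma~\ref{lm1}, and $\overline{X}_k = X_0 \cup \cdots \cup X_k$, it suffices to prove $X_k \subseteq X_{L(\lambda)}$. By the bijection of Theorem~\ref{th1} this further reduces to producing, for every $A \in \cS_k$, a single element $x$ in the orbit $\Phi(A)$ with $L(\lambda)_x \neq 0$.

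Fix $A = \{\beta_1,\ldots,\beta_k\} \in \cS_k$. The key input is the choice of a Borel subalgebra $\mathfrak{b}'$ of $\gg$, containing the fixed Cartan, in which every root of $A$ becomes a simple (isotropic) root; for basic classical Lie superalgebras this can be achieved by a sequence of odd reflections. Let $\lambda'$ denote the $\mathfrak{b}'$-highest weight of $L(\lambda)$ and $\rho'$ the corresponding Weyl vector. Since the degree of atypicality is an invariant of the central character (i.e., independent of Borel), we still have $A \subseteq (\lambda'+\rho')^\perp$. Combined with the standard identity $(\rho',\alpha) = 0$ for every $\mathfrak{b}'$-simple isotropic root $\alpha$, this yields
\[
\lambda'(\alpha^{\vee}) = 0 \quad \text{for every } \alpha \in A.
\]

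Now take $x = x_{\beta_1} + \cdots + x_{\beta_k}$ with $0 \neq x_{\beta_i} \in \gg_{\beta_i}$, so $\Phi(A) = G_0\cdot x$. Because each $\beta_i$ is positive in $\mathfrak{b}'$, one has $x \cdot v_{\lambda'} = 0$; the task is to show $v_{\lambda'} \notin x L(\lambda)$. Here the simplicity of the $\beta_i$ in $\mathfrak{b}'$ is decisive: a PBW argument shows $L(\lambda)_{\lambda' - \beta_i}$ is at most one-dimensional and spanned by $f_{\beta_i} v_{\lambda'}$, and a direct computation gives
\[
x_{\beta_i} \cdot f_{\beta_i} v_{\lambda'} = [x_{\beta_i}, f_{\beta_i}] v_{\lambda'} = \lambda'(h_{\beta_i}) v_{\lambda'} = 0
\]
by the atypicality identity. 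Projecting any hypothetical equation $v_{\lambda'} = x w$ onto the $\lambda'$-weight space, only the components of $w$ in $\bigoplus_i L(\lambda)_{\lambda' - \beta_i}$ can contribute, and these contributions all vanish. Hence $v_{\lambda'}$ represents a nonzero class in $L(\lambda)_x$, so $x \in X_{L(\lambda)}$, completing the argument.

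The main obstacle is the first step of the second paragraph: guaranteeing existence of a Borel in which a prescribed iso-set of size $k$ consists of simple isotropic roots. This is straightforward for $\mathfrak{gl}(m|n)$, $\mathfrak{sl}(m|n)$, and the exceptional cases (where $k \le 1$), but the orthosymplectic case requires invoking the classification of Borel subalgebras and a careful case analysis of odd reflections, especially for iso-sets lying in different $W$-orbits of $\cS_k$.
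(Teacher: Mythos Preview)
There is a genuine gap. The sentence ``Since the degree of atypicality is an invariant of the central character \ldots\ we still have $A \subseteq (\lambda'+\rho')^\perp$'' is false for an \emph{arbitrary} $A\in\cS_k$: atypicality $k$ says only that \emph{some} iso-set of size $k$ lies in $(\lambda+\rho)^\perp$, not that the one you fixed does. Odd reflections preserve $\lambda+\rho=\lambda'+\rho'$, so changing the Borel cannot rescue the claim. For instance, in $\mathfrak{gl}(2|1)$ take $\lambda$ with $(\lambda+\rho,\varepsilon_1-\delta_1)=0$ but $(\lambda+\rho,\varepsilon_2-\delta_1)\ne 0$; then $A=\{\varepsilon_2-\delta_1\}\in\cS_1$ violates your orthogonality assertion. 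This matters because $\overline X_k$ is in general \emph{not} irreducible: for $\mathfrak{gl}(m|n)$ it has $k+1$ components, one for each $W$-orbit in $\cS_k$, so exhibiting $L(\lambda)_x\ne 0$ for a single $x\in X_k$ does not give $X_k\subset X_{L(\lambda)}$.

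Your highest-weight-vector computation is correct and is essentially Lemma~\ref{lemchichi} of the paper, but it only applies once one already knows $A\subset(\lambda+\rho)^\perp$. The step you are missing is to show that \emph{every} $W$-orbit in $\cS_k$ contains a representative orthogonal to $\lambda+\rho$ which can moreover be made simple. This can in fact be arranged case by case (starting from a fixed maximal iso-set in $(\lambda+\rho)^\perp$ and flipping signs to land in the desired $W$-orbit), but it is a separate nontrivial verification, logically independent of the ``main obstacle'' you flag at the end. Note finally that the paper itself gives no proof of Theorem~\ref{th3}: it cites~\cite{S2} for $\mathfrak{gl}(m|n)$ and $\mathfrak{osp}(m|2n)$ and~\cite{M} for the exceptional cases, where the arguments proceed by rather different methods (translation functors, block equivalences, superdimension) than a direct highest-weight computation.
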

		This theorem is proven in \cite{S2} for the Lie superalgebras $\mathfrak{osp}(m|2n)$ and $\mathfrak{gl}(m|n)$.
		For exceptional Lie superalgebras it is a consequence of results in \cite{M}.
		
		\begin{remark} Theorem~\ref{th3} is easy for typical $ \lambda $  since in this case $ L(\lambda)$ is projective.
		\end{remark}
		
		\begin{remark}
			Theorem~\ref{th3} fails for $\gg=\qq(n)$; indeed if we consider the irreducible $\qq(3)$-module $L=\mathfrak{psq}(3)$, we have $X_{L}=\{0\}$.  However $L$ has atypicality 1.
		\end{remark}
		
		\subsection{Proof of Theorem\protect ~\protect \ref{th2}}\label{proof th2}   We assume that $\gg$ is as in Theorem~\ref{th2}.  Recall that up to conjugacy, we may present $x\in X_k$ as $x=x_1+\dots+x_k$ where $x_i$ is a non-zero element of $(\gg_{\beta_i})_1$ for an odd root $\beta_i$.  Here  $A=\{\beta_1,\dots,\beta_k\}$ will be an iso-set.  We begin with a lemma.
		
		\begin{lemma}\label{lemchichi}
			For suitable choices of Borel subalgebras $\mathfrak{b}\subseteq\gg$ and $\mathfrak{b}_{x}\subseteq\gg_x$, for each $\lambda'\in\ft_x^*$ there exists $\lambda\in\ft^*$ such that
			\begin{itemize}
				\item
				$\lambda|_{\ft_x}=\lambda'$;
				\item $\atyp\lambda=\atyp\lambda'+k$;
				\item $[\DS_x(L_{\gg}(\lambda)):L_{\gg_x}(\lambda')]=1$.
			\end{itemize}
			In particular, $\eta^*(\chi_{\lambda'})=\chi_{\lambda}$ and thus $\atyp\chi_{\lambda'}=\atyp\eta^*(\chi_{\lambda'})-k$.
		\end{lemma}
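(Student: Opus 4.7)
The approach is to pick Borel subalgebras adapted to the iso-set $A=\{\beta_1,\dots,\beta_k\}$ determining $x$, lift $\lambda'$ to a weight $\lambda$ by a simple formula, and verify the multiplicity claim via a weight-space analysis combined with the Hoyt--Reif formula.

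I would first choose a Borel subalgebra $\bb\subseteq\gg$ so that $\beta_1,\dots,\beta_k$ are among its simple isotropic odd roots; this is possible by the standard theory of odd reflections for basic classical Lie superalgebras (with an analogous construction for $\qq(n)$). Under the realization $\gg_x\hookrightarrow\gg$ of Proposition~\ref{lm202}, this induces a Borel $\bb_x\subseteq\gg_x$ whose positive roots are $\Delta_x\cap\Delta^+$. Using the decomposition $\ft^*=\ft_x^*\oplus\Span(\beta_1,\dots,\beta_k)$ coming from the embedding $\ft_x\hookrightarrow\ft$, I extend $\lambda'$ to $\lambda\in\ft^*$ by setting $\lambda|_{\ft_x}=\lambda'$ and $(\lambda+\rho)(\beta_i^\vee)=0$ for each $i$ (using that $(\rho,\beta_i)=0$ when $\beta_i$ is simple isotropic). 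Then $A\subseteq(\lambda+\rho)^\perp$, and combining $A$ with a maximal iso-set realizing $\atyp\lambda'$ produces an iso-set of size $\atyp\lambda'+k$ in $(\lambda+\rho)^\perp$; for $\lambda'$ chosen generic in its central-character class (which by Lemma~\ref{lm6} suffices for the statement), this is maximal, giving $\atyp\lambda=\atyp\lambda'+k$.

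The main obstacle is showing $[\DS_x(L_\gg(\lambda)):L_{\gg_x}(\lambda')]=1$. Since $x$ lies in the nilradical of $\bb$, we have $x\cdot v_\lambda=0$ for the highest weight vector $v_\lambda\in L_\gg(\lambda)$. Because $(\lambda+\rho)(\beta_i^\vee)=0$ and $\beta_i$ is simple isotropic, $y_{\beta_i}v_\lambda$ is a singular vector in the Verma module $M(\lambda)$ and hence dies in $L_\gg(\lambda)$; by pairwise orthogonality of the $\beta_i$ and commutativity of the associated $\fsl(1|1)$-subalgebras, the only weight of $L_\gg(\lambda)$ of the form $\lambda-\sum_i c_i\beta_i$ with $c_i\geq 0$ is $\lambda$ itself. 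By Lemma~\ref{lm110}(1), the positive roots of $\gg$ whose restriction to $\ft_x$ vanishes are precisely $\{\beta_1,\dots,\beta_k\}$, so any weight $\mu$ of $L_\gg(\lambda)$ satisfying $\mu|_{\ft_x}=\lambda'$ must equal $\lambda$. Applying the Hoyt--Reif formula~(\ref{HR formula}), the $\lambda'$-weight space of $\DS_x(L_\gg(\lambda))$ is therefore one-dimensional of the same parity as $v_\lambda$. The image of $v_\lambda$ in $\DS_x(L_\gg(\lambda))$ is nonzero (as $v_\lambda\notin xL_\gg(\lambda)$ by the above weight analysis) and is a highest weight vector for $\bb_x$ of weight $\lambda'$, generating a copy of $L_{\gg_x}(\lambda')$; one-dimensionality of this weight space then accounts for every Jordan--H\"older contribution to it, forcing $[\DS_x(L_\gg(\lambda)):L_{\gg_x}(\lambda')]=1$ and $[\DS_x(L_\gg(\lambda)):\Pi L_{\gg_x}(\lambda')]=0$.

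Finally, the central-character compatibility $\eta^*(\chi_{\lambda'})=\chi_\lambda$ follows from Proposition~\ref{maincch}(2) applied to the simple subquotient $L_{\gg_x}(\lambda')$ of $\DS_x(L_\gg(\lambda))$, and the identity $\atyp\chi_{\lambda'}=\atyp\eta^*(\chi_{\lambda'})-k$ is a direct restatement of the second bullet via $\atyp\chi_\lambda=\atyp\lambda$.
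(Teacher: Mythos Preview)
Your overall strategy coincides with the paper's: pick a Borel in which the $\beta_i$ are simple, set $\lambda$ by $\lambda|_{\ft_x}=\lambda'$ and $(\lambda,\beta_i)=0$, and show that the highest weight vector $v_\lambda$ survives in $\DS_x(L_\gg(\lambda))$ because $L_\gg(\lambda)_{\lambda-\beta_i}=0$.  This is exactly the paper's argument and correctly yields $[\DS_x(L_\gg(\lambda)):L_{\gg_x}(\lambda')]\geq 1$, which is all that is actually used for the ``in particular'' conclusions via Proposition~\ref{maincch}.

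Your additional weight-count for the exact value $1$, however, has a genuine gap.  Lemma~\ref{lm110}(1) concerns roots lying in the \emph{linear span} of $A$; it does not say that the only positive roots restricting to zero on $\ft_x$ are the $\beta_i$.  The annihilator of $\ft_x$ in $\ft^*$ has dimension $2k$ and strictly contains $\Span A$, so other roots can land there.  For example, in $\gg\ll(3|3)$ with the alternating Borel and $A=\{\epsilon_1-\delta_1,\ \epsilon_3-\delta_3\}$ (both simple), the positive odd root $\delta_1-\epsilon_3$ restricts to zero on $\ft_x=\Span\{h_{\epsilon_2},h_{\delta_2}\}$ but is not in $A\cup(-A)$; thus $\lambda-(\delta_1-\epsilon_3)$ also restricts to $\lambda'$ and can be a weight of $L_\gg(\lambda)$.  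Consequently your one-dimensionality claim for the $\lambda'$-weight space does not follow, and the Hoyt--Reif formula (which in any case controls superdimension, not dimension) does not rescue it.  The paper's own proof does not attempt this sharpening either; it produces the nonzero class $\bar v_\lambda$ and moves on.  Separately, you skate over $\qq(n)$: there the condition $(\lambda,\beta_i)=0$ does not force $L_\gg(\lambda)_{\lambda-\beta_i}=0$, and the paper imposes the stronger requirement $(\lambda,\epsilon_{i_1})=(\lambda,\epsilon_{i_2})=0$ for $\beta_i=\epsilon_{i_1}-\epsilon_{i_2}$ and appeals to the representation theory of the $\qq(2)$-subalgebra rather than $\fsl(1|1)$.
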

		
		\begin{proof}
			
			We can always choose a Borel subalgebra $ \bb\subset\gg $
			such that $ \beta_{1},\dots ,\beta_{k} $ are simple roots. Note that in this case $(\rho,\beta_i)=0$ for all $i=1,\dots,k$.   Further recall from Proposition~\ref{lm202} that we may realize $\gg_x$ in $\gg$ such that $\hh_x$ will be a subalgebra of $\hh$, and $\ft_x:=(\hh_x)_0$ will lie in $\ker\beta_1\cap\cdots\cap\ker\beta_k$.  Moreover, $\gg_x$ will admit a natural Borel subalgebra $\bb_x\subseteq\bb$ containing $\hh_x$. 
			
			Let $\lambda\in\ft^*$ be a weight such that $\lambda|_{\ft_x}=\lambda'$ and $\lambda(\beta_i^\vee)=0$ for all $i$.  For $\qq(n)$ we strengthen our assumption on $\lambda$: we further require that $(\lambda,\epsilon_{i_1})=(\lambda,\epsilon_{i_2})=0$, where $\beta_i=\epsilon_{i_1}-\epsilon_{i_2}$, for all $i$. 
			
			Now to prove our statement with this choice of $\lambda$, we observe that a nonzero highest weight vector $v_{\lambda}\in L_{\gg}(\lambda)_{\lambda}$ satisfies $xv_{\lambda}=0$ and $v_{\lambda}\notin xL_{\gg}(\lambda)$.  The former statement is obvious because it is a highest weight vector.  For the latter statement, we show that $L_{\gg}(\lambda)_{\lambda-\beta_i}=0$ for all $i$, which clearly is sufficient.  In the basic classical case this follows from the fact that $(\lambda,\beta_i)=0$ and $\beta_i$ is simple.  In the $\qq(n)$ case, the statement follows from the representation theory of the $\qq(2)$-subalgebra associated to each simple root $\beta_i$, again using that the $\beta_i$ are simple.    
			
			Now to finish the proof of Lemma~\ref{lemchichi}, we observe that $\eta^*(\chi_{\lambda'})=\chi_{\lambda}$ by Proposition~\ref{maincch}, and that $\atyp \lambda'=\atyp\lambda-k$.
		\end{proof}

		Lemma~\ref{lemchichi} implies Theorem~\ref{th2} and the following. 
		
		\begin{theorem} \label{th209}\myLabel{th209}\relax  Let $M$ be a $\gg$-module that admits a central character with degree of
			atypicality $ k $, and $ x\in X_{k} $. Then $M_{x}$ is a typical module. In particular, if $M$ is semisimple over $\gg_{0}$ and $ M_{x} $ is finite dimensional, then $M_x$ semi-simple
			over $ \gg_{x} $. 
		\end{theorem}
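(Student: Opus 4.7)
The plan is to deduce the theorem from Proposition~\ref{maincch} and Lemma~\ref{lemchichi}, which are already available, and then to extract semisimplicity in the second part from the standard fact that the typical block of $\cF(\gg_x)$ is a semisimple subcategory.

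First I would apply Proposition~\ref{maincch}(ii): if $M$ admits generalized central character $\chi$, then every simple subquotient $L'$ of $M_x = \DS_x(M)$ admits a central character $\chi'$ of $\gg_x$ satisfying $\eta_x^*(\chi') = \chi$. Next I would invoke the consequence of Lemma~\ref{lemchichi} that
\[
\atyp \chi' \;=\; \atyp \eta_x^*(\chi') - k
\]
for every central character $\chi'$ of $\gg_x$. Substituting $\eta_x^*(\chi') = \chi$ and using $\atyp\chi = k$ gives $\atyp \chi' = 0$, so each $\chi'$ is typical. Hence every composition factor of $M_x$ has typical central character, i.e.\ $M_x$ is a typical module; this establishes the first assertion.

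For the second assertion, assume in addition that $M$ is semisimple over $\gg_0$ and that $M_x$ is finite-dimensional. By Proposition~\ref{lm202} we may realize $\gg_x$ as a subalgebra of $\gg$ with $\gg^x = \gg_x \ltimes [x,\gg]$, so that $(\gg_x)_0 \subseteq \gg_0$; concretely, $(\gg_x)_0$ is the Lie algebra of the reductive Levi factor $K$ of the stabilizer $G_0^x$ described in Lemma~\ref{stabilizer}. Restricting a finite-dimensional semisimple representation of the reductive group $G_0$ to the reductive subgroup $K$ preserves semisimplicity, so $M$, and hence its subquotient $M_x$, is semisimple over $(\gg_x)_0$; in particular $M_x \in \cF(\gg_x)$. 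Now the typical part of $\cF(\gg_x)$ is a semisimple abelian category: every finite-dimensional typical simple module is projective in $\cF(\gg_x)$, by Kac's theorem in the basic classical case and its analogue due to Penkov--Serganova for $\qq(n)$. Combined with the typicality of $M_x$ proved above, this forces $M_x$ to split as a direct sum of typical simples, so $M_x$ is semisimple over $\gg_x$.

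All of the real content is packaged inside Lemma~\ref{lemchichi}, whose proof in turn relies on the Harish-Chandra-type description of $\mathcal{Z}(\gg)$ from \cite{G1,K2,Ser1}; given that lemma the argument above is essentially a direct assembly of earlier results, the only mildly delicate point being the passage from $\gg_0$-semisimplicity to $(\gg_x)_0$-semisimplicity via the Levi structure of $G_0^x$ from Lemma~\ref{stabilizer}.
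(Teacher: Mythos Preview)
Your argument is correct and follows the paper's outline: the first assertion is exactly how the paper proceeds (it writes only ``Lemma~\ref{lemchichi} implies Theorem~\ref{th2} and the following''), and the second reduces to the semisimplicity of the typical part of $\cF(\gg_x)$. The one point of divergence is how you establish $(\gg_x)_0$-semisimplicity of $M_x$. The paper argues more directly: the centre of the reductive Lie algebra $(\gg_x)_0$ sits inside the Cartan $\ft_x\subseteq\ft$, whose action on $M_x$ is inherited from the diagonalizable action of $\ft$ on $M$; since $M_x$ is finite-dimensional, Weyl's theorem for the semisimple part of $(\gg_x)_0$ then gives the conclusion. Your route through the Levi decomposition of $G_0^x$ from Lemma~\ref{stabilizer} is a bit heavier and, as phrased, invokes restriction of a \emph{finite-dimensional} $G_0$-module, which $M$ is not assumed to be; also, Lemma~\ref{stabilizer} is only stated for basic classical $\gg$, so the $\qq(n)$ case is not covered. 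These are imprecisions rather than genuine gaps---the Cartan argument fixes both at once---but the paper's version is cleaner for exactly this reason.
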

		
		\begin{proof}  We only need to prove the last assertion. For this we use that $(\gg_x)_{0}$ is reductive, so it acts semisimply on $M_x$ if and only if its centre does.  But its centre lies in the even part of any Cartan subalgebra, whose action is induced by the action of the Cartan subalgebra of $\gg$ on $M$.  Thus the condition that $\gg_{0}$ acts semisimply on $M$, 
			along with the typicality of $M_x$, ensures the semisimplicity of $M_x$.
		\end{proof}
		
		Recall, from the notation of Section~\ref{g x in g}, the isomorphism $\hh_x\simeq\hh_A^\perp/\hh_A$, and set $\ft_x:=(\hh_x)_0$.  Then this isomorphism induces a canonical isomorphism of dual spaces $\ft_x^*\simeq A^\perp/\operatorname{span}A$. Consider the natural 
		projection $p_A: A^\perp\to \ft^*_x$. It follows immediately from Lemma~\ref{lm6} that $p_A(\lambda)=p_A(\nu)$ implies $\chi_\lambda=\chi_{\nu}$.
		Proposition~\ref{maincch} and Lemma~\ref{lemchichi} imply the following
		
		\begin{corollary}\label{corcchar} If $\lambda\in A^{\perp}$ then $\chi_\lambda=\eta^*(\chi_{p_A(\lambda)})$.
		\end{corollary}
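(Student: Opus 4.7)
The plan is to reduce the statement to Lemma~\ref{lemchichi} combined with Lemma~\ref{lm6}. First I would choose a Borel subalgebra $\bb \subseteq \gg$ with respect to which $\beta_1, \dots, \beta_k$ are simple isotropic roots (this is the standard ``iso-set to simple isotropics'' normalization, available both in the basic classical and $\qq(n)$ cases); then $(\rho, \beta_i) = 0$ for all $i$, so $\rho \in A^{\perp}$. Consequently, for any $\lambda \in A^{\perp}$ we have $\lambda + \rho \in A^{\perp}$, i.e.~$A \in \cS_{\lambda}$.

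Next I would invoke Lemma~\ref{lemchichi} with $\lambda' := p_A(\lambda) \in \ft_x^*$ to produce a companion weight $\tilde\lambda \in \ft^*$ satisfying $\tilde\lambda|_{\ft_x} = \lambda'$ and $\tilde\lambda(\beta_i^{\vee}) = 0$ for all $i$, together with the key identity
\[
\eta_x^*(\chi_{\lambda'}) = \chi_{\tilde\lambda}.
\]
By construction $\tilde\lambda \in A^{\perp}$ and $p_A(\tilde\lambda) = p_A(\lambda)$, so $\tilde\lambda - \lambda$ lies in $\Span A$ (viewed inside $A^{\perp}$, using the identification $\ft_x^* \cong A^{\perp}/\Span A$).

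Finally I would close the argument with Lemma~\ref{lm6}. Enlarge $A$ if necessary to a maximal iso-set $\widehat A \in \cS_{\lambda}$; then $\Span A \subseteq \Span \widehat A$, and Lemma~\ref{lm6} yields
\[
\tilde\lambda \in \lambda + \Span A \subseteq \lambda + \Span \widehat A \subseteq \ft^*_{\chi_\lambda}.
\]
Hence $\chi_{\tilde\lambda} = \chi_{\lambda}$, and combining this with the previous identity gives $\eta_x^*(\chi_{p_A(\lambda)}) = \chi_{\lambda}$, as required.

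The main point requiring care is the first step: ensuring the choice of Borel so that the iso-set $A$ consists of simple isotropic roots, which is what makes $\rho$ lie in $A^{\perp}$ and so makes the hypothesis of Lemma~\ref{lm6} applicable at $\lambda$ itself (rather than only after a shifted Weyl translate). Everything else is then essentially bookkeeping: Lemma~\ref{lemchichi} supplies the base case $\eta_x^*(\chi_{\lambda'}) = \chi_{\tilde\lambda}$ for the canonical choice of $\tilde\lambda$, and Lemma~\ref{lm6} lets us slide along $\Span A$ within $A^{\perp}$ at no cost to the central character, thereby transferring the base identity to arbitrary $\lambda \in A^{\perp}$.
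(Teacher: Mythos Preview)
Your proof is correct and follows essentially the same route as the paper: the paper states just before the corollary that Lemma~\ref{lm6} gives $\chi_\lambda=\chi_\nu$ whenever $p_A(\lambda)=p_A(\nu)$, and then asserts that Proposition~\ref{maincch} together with Lemma~\ref{lemchichi} yield the corollary. You have simply unpacked these two sentences, including the minor care (enlarging $A$ to a maximal $\widehat A\in\cS_\lambda$ and choosing the Borel so that $\rho\in A^\perp$) that the paper leaves implicit.
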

		
		\subsection{The preimage of $\eta^*$}\label{subsec_preimage_pullback_map}
		Now we will compute the preimage $(\eta^*)^{-1}(\chi)$ for any $\chi$, showing in particular it is always finite of size one or two.  Our description will use the involutions $\sigma_x$ described in Section~\ref{involutions section}. 
		
		Define the following subgroup of the Weyl group $W$:
		\begin{equation}
			W_A=\left\{w\in W \mid w\left({A}\right)\subset{A}\cup-{A}\right\}.
			\notag\end{equation}
		It is clear that $A^\perp$ and $\hh_A$ are $W_A$-stable. Hence $W_A$ acts in the natural way on $\hh_x$ and $\hh_x^*$. By $W_{\gg_x}$ we denote the Weyl group of $\gg_x$ viewed as a subalgebra of $\gg$.
		Obviously, $W_{\gg_x}\subset W_A$.
		\begin{lemma}\label{preimage} Fix $A\in \cS_k$. Let $\chi=\chi_{\lambda}$ be a central character of atypicality degree $k$ where $\lambda\in A^\perp$ is some
			regular weight. Then
			$$|(\eta^*)^{-1}(\chi)|=\frac{|W_A\cdot p_A(\lambda)|}{|W_{\gg_x}\cdot p_A(\lambda)|}.$$
		\end{lemma}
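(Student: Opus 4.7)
The plan is to combine Lemma~\ref{lemchichi} and Lemma~\ref{lm6} with Corollary~\ref{corcchar} to reduce the count to an orbit-counting problem. By Lemma~\ref{lemchichi}, $\atyp p_A(\lambda) = \atyp\lambda - k = 0$, so $\chi_{p_A(\lambda)}$ is typical in $\gg_x$; more generally, every character in $(\eta^*)^{-1}(\chi)$ is typical by Proposition~\ref{maincch}. Applying Lemma~\ref{lm6} to $\gg_x$, typical central characters of $\gg_x$ correspond bijectively to shifted $W_{\gg_x}$-orbits in $\ft_x^*$. Hence
\[
|(\eta^*)^{-1}(\chi)| = |S/W_{\gg_x}|, \quad\text{where } S = \{\mu'\in\ft_x^* : \eta^*(\chi_{\mu'}) = \chi\}.
\]
By Corollary~\ref{corcchar}, $S$ consists of those $\mu'$ whose lifts $\tilde\mu\in A^\perp$ satisfy $\chi_{\tilde\mu} = \chi_\lambda$.

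The heart of the argument is to identify $S = W_A\cdot p_A(\lambda)$. The inclusion $W_A\cdot p_A(\lambda)\subseteq S$ is routine: for $w\in W_A$, the element $w\cdot\lambda$ lies in $A^\perp$ (since $W_A$ preserves $A^\perp$ linearly and $\rho\in A^\perp$ for a Borel in which $A$ is simple), and $\chi_{w\cdot\lambda} = \chi_\lambda$ by $W$-invariance of central characters, so $p_A(w\cdot\lambda) = w\cdot p_A(\lambda)\in S$. For the reverse inclusion, given $\tilde\mu\in A^\perp$ with $\chi_{\tilde\mu} = \chi_\lambda$, Lemma~\ref{lm6} yields $w\in W$ and $v\in\Span A$ such that $\tilde\mu+\rho = w(\lambda+\rho+v)$. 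Both $\tilde\mu+\rho$ and $\lambda+\rho+v$ lie in $A^\perp$, so both $A$ and $w^{-1}A$ are iso-sets annihilating $\lambda+\rho+v$. Invoking regularity of $\lambda$, I would argue that $w^{-1}A\subseteq A\cup -A$, so $w\in W_A$, giving $p_A(\tilde\mu) = w\cdot p_A(\lambda)$ as desired.

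To finish, $W_{\gg_x}$ is normal in $W_A$: it is generated by the reflections $s_\alpha$ for $\alpha\in\Delta_x = A^\perp\cap(\Delta\setminus(A\cup -A))$, and $W_A$ preserves $\Delta_x$, so $ws_\alpha w^{-1} = s_{w\alpha}\in W_{\gg_x}$. Normality forces every $W_{\gg_x}$-orbit inside $W_A\cdot p_A(\lambda)$ to have the common size $|W_{\gg_x}\cdot p_A(\lambda)|$, yielding
\[
|(\eta^*)^{-1}(\chi)| = \frac{|W_A\cdot p_A(\lambda)|}{|W_{\gg_x}\cdot p_A(\lambda)|}.
\]

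The main obstacle I expect is the regularity step in the reverse inclusion: a priori the shift by $v\in\Span A$ could introduce additional iso-roots (outside $A^\perp$) annihilating $\lambda+\rho+v$, so forcing $w^{-1}A\subseteq A\cup -A$ requires care, perhaps via a genericity choice of $v$ or by exploiting that iso-roots outside $A^\perp$ would have to combine with roots in $\Span A$ to conspire with $\lambda+\rho$, contradicting the uniqueness of $A$ as the maximal iso-set in $\cS_\lambda$. As a fallback I could instead invoke Proposition~\ref{image eta} to identify $(\eta^*)^{-1}(\chi)$ with a $\langle\sigma_x\rangle$-orbit in $\operatorname{Spec}\mathcal{Z}(\gg_x)$, then verify case by case that $\sigma_x$ acts on $\ft_x^*$ as a generator of the coset $W_A/W_{\gg_x}$, matching the two counts.
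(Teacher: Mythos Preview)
Your approach is essentially the paper's: both identify $(\eta^*)^{-1}(\chi)$ with the set of $W_{\gg_x}$-orbits inside $W_A\cdot p_A(\lambda)$ and then count. The paper compresses this into the single assertion $(\hh^*_\chi)_{reg}\cap A^\perp = W_A\cdot\lambda + \Span A$ (from Lemma~\ref{lm6}) together with the observation that $p_A(\nu)$ and $p_A(\lambda)$ define the same $\gg_x$-character exactly when they lie in one $W_{\gg_x}$-orbit. Your normality argument for equal orbit sizes is a genuine addition that the paper omits; it is correct and worth keeping.

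Your flagged obstacle in the reverse inclusion is real but dissolves with one adjustment: do not take an arbitrary lift $\tilde\mu$ and try to use regularity of $\lambda$. Instead, for each $\mu'\in S$ choose a \emph{regular} lift $\tilde\mu\in p_A^{-1}(\mu')$. Then $A$ is the unique maximal iso-set in $\cS_{\tilde\mu}$ (it lies there because $\rho\in A^\perp$, and it is maximal since $|A|=k=\atyp\chi$). Transporting by $w^{-1}$ shows $w^{-1}A$ is the unique maximal iso-set orthogonal to $\lambda+\rho+v$; since $A$ is also a size-$k$ iso-set orthogonal to $\lambda+\rho+v$, uniqueness forces $w^{-1}A\subseteq A\cup(-A)$, i.e.\ $w\in W_A$. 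The existence of a regular lift follows from typicality of $\mu'$: if every lift were non-regular, some fixed size-$k$ iso-set $B\not\subseteq A\cup(-A)$ would be orthogonal to all of $p_A^{-1}(\mu')$, hence $B\subseteq A^\perp$, and any $\beta\in B\setminus(A\cup -A)$ would be an isotropic root of $\gg_x$ orthogonal to $\mu'+\rho_x$, contradicting $\atyp\mu'=0$. Your proposed fallback via $\sigma_x$ is exactly what the paper carries out in the subsequent Lemma~\ref{case}, so it is less an alternative proof of this lemma than a preview of how the lemma is applied.
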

		\begin{proof} Lemma~\ref{lm6} implies the following equality 
			$$(\hh^*_\chi)_{reg}\cap A^\perp=W_A\cdot \lambda+\Span A. $$
			The condition that $\eta^*(\chi_{p_A(\lambda)})=\eta^*(\chi_{p_A(\nu)})$ for $\lambda,\nu\in (\hh^*_\chi)_{reg}$ is equivalent to
			$p_A(\nu)\in W_{\gg_x} \cdot p_A(\lambda)$. Hence the statement.
		\end{proof}  
		\begin{lemma}\label{case}
			Consider the action homomorphism $a:W_A\to\operatorname{Aut}(\ft_x)$.  Let $k=\rank x$.  If $\sigma_x=\text{id}$ (i.e.,~$\gg=\mathfrak{gl}(m|n),\qq(n),\mathfrak{osp}(2m+1|2n),\,G(3)$, or $\mathfrak{osp}(2m|2n)$ with $k=m$)  then
			$a(W_A)=a(W_{\gg_x})$. If $\sigma_x\neq\text{Id}$ (i.e.,~$\gg=\mathfrak{osp}(2m|2n)$ with $k<m$, $D(2|1;a)$ or $F(4)$), then $[a(W_A):a(W_{\gg_x})]=2$, and we have
			\[
			a(W_A)=a(W_{\gg_x})\sqcup \sigma_xa(W_{\gg_x})
			\]
			where by abuse of notation we also write $\sigma_x$ for the involution induced on $\ft$ by $\sigma_x$.
		\end{lemma}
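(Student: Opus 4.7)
The plan is to argue case-by-case using the explicit description of $\gg_x$ from Section~\ref{g x in g} together with standard presentations of $W$ and $W_{\gg_x}$. In each case, $W_A$ acts on $\ft$ and preserves both $(\hh_A^\perp)_0$ and $(\hh_A)_0$, so induces an action on the quotient $\ft_x \cong (\hh_A^\perp)_0/(\hh_A)_0$; that is the map $a$. The subgroup $W_{\gg_x}\subseteq W_A$ arising from Proposition~\ref{lm202} acts on $\ft_x$ in its intrinsic way as a Weyl group. So the question reduces to asking: which elements of $W_A$ induce extra automorphisms of $\ft_x$ beyond those coming from $W_{\gg_x}$?

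First I would dispatch the cases with $\sigma_x=\mathrm{Id}$. For $\gg=\mathfrak{gl}(m|n)$ or $\fsl(m|n)$, write $A=\{\varepsilon_{i_1}-\delta_{j_1},\dots,\varepsilon_{i_k}-\delta_{j_k}\}$; then $W_A$ is generated by the stabilizer of $A$ as a set together with $W_{\gg_x}$, and the $A$-stabilizer acts trivially on $A^\perp/\Span A$. For $\qq(n)$ the analysis is identical (type~$A$ Weyl group). For $\mathfrak{osp}(2m+1|2n)$ and for $\mathfrak{osp}(2m|2n)$ with $k=m$, the full $B_{m-k}\times C_{n-k}$ (resp.\ $C_{n-k}$) Weyl group of $\gg_x$ already realizes every sign change on the remaining $\varepsilon$- and $\delta$-coordinates, so once again any element of $W_A$ acts on $\ft_x$ identically with some element of $W_{\gg_x}$. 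For $G(3)$ one has $\gg_x=\fsl(2)$ of defect zero, and a direct check using the explicit root system shows $a(W_A)=a(W_{\gg_x})$.

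Next I would handle the cases $\sigma_x\neq\mathrm{Id}$. For $D(2|1;a)$ one has $\gg_x=\CC z$ so $W_{\gg_x}=\{1\}$, while $W_A$ contains the longest element of $W$ acting as $-\mathrm{Id}$ on $\ft_x$, giving the extra $\ZZ_2$. For $F(4)$ with $\gg_x=\fsl(3)$, using the explicit description of the $F(4)$ root system one sees that the normalizer in $W$ of the one-element iso-set $A$ produces an involution on the $\fsl(3)$ root system which is \emph{not} inner — it is the Dynkin diagram flip — matching exactly the $\sigma_x$ described in Section~\ref{involutions section}. For $\mathfrak{osp}(2m|2n)$ with $k<m$, the Weyl group of $\gg=D_m\times C_n$ type contains an odd number of $\varepsilon$-sign changes, but the Weyl group of $\gg_x=D_{m-k}\times C_{n-k}$ does not; the ``missing'' sign change induces exactly the Dynkin diagram involution on the $D_{m-k}$ factor and fixes the $C_{n-k}$ factor, matching $\sigma_x$. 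In each of these cases, Remark~\ref{inv_rem} confirms that this is the correct generator.

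The hard part will be the exceptional cases, particularly $F(4)$: one must exhibit the outer involution of $\fsl(3)$ concretely as the image in $\Aut(\ft_x)$ of a genuine Weyl group element of $F(4)$ that preserves the iso-set $A$ up to sign, and then show that no additional element of $W_A$ induces further symmetries. This is combinatorial and requires a careful enumeration of the stabilizer of $A$ inside $W(F(4))$ acting on the three-dimensional Cartan of $\fsl(3)$; I would reduce to a computation inside the $W(F(4))$-action on $\ft$ using the realization of $F(4)$-roots from \cite{K1}. Once this is verified (and analogously the orthogonal-group sign change in the $\mathfrak{osp}(2m|2n)$ case), the index-$2$ statement and the disjoint-union decomposition follow directly.
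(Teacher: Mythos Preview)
Your case-by-case strategy is the same as the paper's, and the arguments for $\gg\ll(m|n)$ and $\qq(n)$ are essentially identical (the paper writes $W_A=S_k\times S_{m-k}\times S_{n-k}$, $\ker a=S_k$ for $\gg\ll$, and $W_A=S_2^k\times S_{n-2k}$, $\ker a=S_2^k$ for $\qq$, which is your observation in explicit form). The difference lies in how the remaining cases are handled.

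The paper's key simplification, which you do not use, is the a priori upper bound $a(W_A)\subseteq\operatorname{Aut}\Delta(\gg_x)$: any element of $W_A$ must permute the roots of $\gg_x$, hence acts on $\ft_x$ through the automorphism group of the root system. This turns the problem into looking up $\operatorname{Aut}\Delta(\gg_x)/W_{\gg_x}$ for each type. For $\mathfrak{osp}(2m+1|2n)$, $G(3)$, and $\mathfrak{osp}(2m|2n)$ with $k=m$, the quotient is trivial (types $B$, $C$, $A_1$ have no outer root-system automorphism), so $a(W_A)=a(W_{\gg_x})$ for free. For $\mathfrak{osp}(2m|2n)$ with $k<m$, $D(2|1;a)$, and $F(4)$, the quotient is $\langle\sigma_x\rangle\cong\mathbb{Z}_2$, so one only needs to exhibit a single element of $W_A$ realizing $\sigma_x$ to conclude. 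Your proposal to enumerate the full stabilizer of $A$ in $W(F(4))$ and rule out further symmetries by brute force is unnecessary once this upper bound is in place.

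One imprecision: in your $\mathfrak{osp}(2m|2n)$ paragraph you write that the Weyl group of type $D_m\times C_n$ ``contains an odd number of $\varepsilon$-sign changes''. It does not; $W(D_m)$ allows only \emph{even} sign-change patterns. The correct mechanism is that an element of $W_A$ can flip the sign of one of the $\varepsilon_i$ appearing in $A$ (paired with the corresponding $\delta_j$-flip, which is always allowed in $W(C_n)$) together with a single $\varepsilon$-flip among the remaining coordinates; the total $\varepsilon$-sign change is even, but its restriction to $\ft_x$ is a single sign change, i.e.\ the $D_{m-k}$ diagram involution. Your intuition is right but the sentence as written is false.
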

		\begin{proof} If  $\mathfrak{osp}(2m+1|2n)$ or $G(3)$, then $\gg_x=\mathfrak{osp}(2m+1-2k|2n-2k)$ or $\mathfrak{sl}_2$, respectively. In both cases the
			automorphism group of the root system $\Delta(\gg_x)$ coincides with the Weyl group $W_{\gg_x}$. Since $a(W_A)\subset\operatorname{Aut}\Delta(\gg_x)$, the statement
			follows. Similarly in the case $\gg=\mathfrak{osp}(2m|2n)$ and $k=m$, $\gg_x=\mathfrak{sp}(2(n-m))$ and $\operatorname{Aut}\Delta(\gg_x)=W_{\gg_x}$.
			
			For  $\gg=\mathfrak{gl}(m|n),\,\gg_x=\mathfrak{gl}(m-k|n-k)$, we get
			$$W_A=S_k\times S_{m-k}\times S_{n-k},\,\,W_{\gg_x}=S_{m-k}\times S_{n-k}$$ and $\ker a=S_k$.
			
			For $\gg=\qq(n)$, $\gg_x=\qq(n-2k)$ and we get
			\[
			W_A=S_2^{k}\times S_{n-2k}, \ \ W_{\gg_x}=S_{n-2k}
			\]
			and $\ker a=S_2^k$.
			
			If $\gg=\mathfrak{osp}(2m|2n)$, with $k<m$, $D(2|1;a)$ or $F(4)$, then $\gg_x=\mathfrak{osp}((2m-2k)|(2n-2k))$, $\mathfrak{o}(2)$ or $\mathfrak{sl}_3$.
			Then by direct computation $\operatorname{Aut}\Delta(\gg_x)/W_{\gg_x}\cong \langle\sigma_x\rangle$, where for $\gg=D(2|1;a)$ we set $\operatorname{Aut}\Delta(\gg_x)=\{\pm\text{Id}\}$.  Further, by direct computation $$a(W_A)=\operatorname{Aut}\Delta(\gg_x).$$
		\end{proof}
		\begin{theorem} \label{th210}\myLabel{th210}\relax  If $ \gg\not=\mathfrak{osp}\left(2l|2n\right) $, $F(4)$ or
			$ D(2|1;a) $, then $ \eta^{*} $ is injective.
			If $ \gg=\mathfrak{osp}\left(2l|2n\right),$ $F(4)$, or  $ D\left(2|1;a\right) $, then a preimage of $ \eta^{*} $
			has at most two elements.
		\end{theorem}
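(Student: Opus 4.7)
The plan is to derive Theorem~\ref{th210} as an immediate consequence of Lemma~\ref{preimage} and Lemma~\ref{case}. Observe first that $\pp(n)$ is absent from both cases of the statement, consistent with its trivial center.

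Fix $x\in X$ of rank $k$ arising from an iso-set $A\in\cS_k$, and let $\chi\in\Im(\eta_x^*)$ be a central character of atypicality $k$, with regular representative $\lambda\in\ft_\chi^*\cap A^\perp$. By Lemma~\ref{preimage},
\[
|(\eta_x^*)^{-1}(\chi)| \;=\; \frac{|W_A\cdot p_A(\lambda)|}{|W_{\gg_x}\cdot p_A(\lambda)|}.
\]
Since both orbits live in $\ft_x^*$, they coincide with the orbits of the images $a(W_A)$ and $a(W_{\gg_x})$ under the action homomorphism $a:W_A\to\operatorname{Aut}(\ft_x)$ of Lemma~\ref{case}. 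Thus the problem reduces to comparing these two subgroups of $\operatorname{Aut}(\ft_x)$.

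Now Lemma~\ref{case} gives exactly the dichotomy we need. For $\gg\in\{\fgl(m|n),\,\qq(n),\,\mathfrak{osp}(2m+1|2n),\,G(3)\}$ (and $\mathfrak{osp}(2m|2n)$ in the extremal case $k=m$), one has $a(W_A)=a(W_{\gg_x})$, so the two orbits are identical and the ratio equals $1$; in particular, $\eta_x^*$ is injective, which handles the first part. For the remaining exceptional families $\mathfrak{osp}(2m|2n)$ with $k<m$, $D(2|1;a)$, and $F(4)$, Lemma~\ref{case} supplies the coset decomposition $a(W_A)=a(W_{\gg_x})\sqcup\sigma_x\,a(W_{\gg_x})$, so
\[
W_A\cdot p_A(\lambda) \;=\; W_{\gg_x}\cdot p_A(\lambda)\;\cup\;\sigma_x\cdot W_{\gg_x}\cdot p_A(\lambda),
\]
a union of at most two $a(W_{\gg_x})$-orbits of equal size. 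Hence the ratio is at most $2$, as claimed.

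No individual step is technically demanding: the essential content is already compressed into Lemma~\ref{preimage} (an orbit-counting formula) and Lemma~\ref{case} (a case-by-case Weyl group index computation). The only delicate point I foresee is making sure the argument covers every $\chi\in\Im(\eta_x^*)$ and not merely those of atypicality exactly $k$; since by Lemma~\ref{lemchichi} elements of $\Im(\eta_x^*)$ can have atypicality up to $\defect\gg$, one must extend Lemma~\ref{preimage} by replacing $A$ with a larger iso-set adapted to $\chi$. This extension is routine because the group-theoretic input from Lemma~\ref{case} is unchanged, so the same index bound of $2$ applies uniformly.
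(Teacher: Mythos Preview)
Your treatment of the case $\operatorname{atyp}\chi = k$ is exactly the paper's: combine Lemma~\ref{preimage} with the index computation of Lemma~\ref{case}. The issue is your handling of the ``delicate point'' where $\operatorname{atyp}\chi = m > k$. Your suggestion to ``extend Lemma~\ref{preimage} by replacing $A$ with a larger iso-set'' does not work as stated: Lemma~\ref{preimage} is a statement about $W_A$, $p_A$, and $\gg_x$, all of which are tied to the fixed $A$ of size $k$. If you replace $A$ by $B\supset A$ with $|B|=m$, the formula you obtain counts $(\eta_z^*)^{-1}(\chi)$ for $z$ of rank $m$, not $(\eta_x^*)^{-1}(\chi)$. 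Nor is it routine to re-prove Lemma~\ref{preimage} with atypicality strictly exceeding $|A|$: its proof uses that $A$ is maximal in $\cS_\lambda$ for regular $\lambda$, which fails once $m>k$.

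The paper handles this case by a factorization argument rather than by extending the lemma. One embeds $A\subset B$ with $|B|=m$, sets $z=x+y$ with $y=\sum_{\beta\in B\setminus A}x_\beta$, and observes that $(\gg_x)_y=\gg_z$, so that $\eta_z^* = \eta_x^*\circ\eta_{z,x}^*$. The base case applies to $\eta_z^*$ (rank $m$, atypicality $m$) and to $\eta_{z,x}^*$ (rank $m-k$ in $\gg_x$, atypicality $m-k$). Since every $\chi'\in(\eta_x^*)^{-1}(\chi)$ has atypicality $m-k$ in $\gg_x$ (by Lemma~\ref{lemchichi}) and hence lies in $\Im\eta_{z,x}^*$, one gets $(\eta_x^*)^{-1}(\chi)=\eta_{z,x}^*\bigl((\eta_z^*)^{-1}(\chi)\bigr)$, so $|(\eta_x^*)^{-1}(\chi)|\le |(\eta_z^*)^{-1}(\chi)|\le 2$; in the injective cases the same factorization gives injectivity. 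This composition step is the missing idea in your outline.
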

		
		\begin{proof} In the case when $\rank (x)=|A|=k$ equals the atypicality degree of $\chi$, we have $(\eta^{*})^{-1}(\chi)$
			has at most two elements by Lemma~\ref{preimage} and Lemma~\ref{case}.
			If $k$ is less than the atypicality degree of $\chi$, then consider the embedding $A\subset B$ with $|B|$ equal to the degree of atypicality. Let
			$z=x+y$ with $y=\sum_{\beta\in B\backslash A}x_\beta$. Then we have $(\gg_x)_y=\gg_z$. The composed map
			$$\eta^{*}_z:\operatorname{Hom} \left(Z\left(\gg_{z}\right),{\CC}\right) \xrightarrow{\eta_{z,x}^{*}}
			\operatorname{Hom} \left(Z(\gg_x),{\CC}\right)\xrightarrow{\eta^{*}} \operatorname{Hom} \left(Z(\gg),{\CC}\right).$$
			Since the statement holds for $\eta_z^*$ and for $\eta_{z,x}^{*}$, it holds for $\eta^*$. 
		\end{proof}

		
		\section{Superdimensions and supercharacters for basic classical Lie superalgebras}\label{app sch}

		In this section, $\gg$ denotes a basic classical Lie superalgebra.   We explore connections between the superdimension and supercharacter of a $\gg$-module $M$ and of the corresponding associated variety $ X_M $.

		\subsection{Superdimensions}
		Recall that  $\sdim M :=\dim M_0 -\dim M_1$, and that by Lemma~\ref{lem sdim}, $ \sdim M= $  $ \sdim M_x $.  So we have the following.

		\begin{lemma} \label{lm106}\myLabel{lm106}\relax  If $ X_M\not=X $, then  $\sdim M=0 $. In particular, if a
			finite-dimensional  module $ M $ admits a central character whose degree of
			atypicality is less than the defect of $ \gg $, then $\sdim M=0 $.
		\end{lemma}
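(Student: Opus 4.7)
The plan is to reduce both assertions directly to Lemma~\ref{lem sdim}, which tells us that $\sdim M = \sdim M_x$ for every $x \in X$. The first statement is then essentially immediate: if $X_M \neq X$, I would pick any $x \in X \setminus X_M$. By the definition of $X_M$, this means $M_x = 0$, so Lemma~\ref{lem sdim} gives $\sdim M = \sdim M_x = 0$.

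For the ``in particular'' clause, the plan is to combine Theorem~\ref{th2} with the structure of the stratification $X = \bigcup_i X_i$. Suppose $M$ admits a central character $\chi$ with $\atyp \chi = k < d := \defect \gg$. Theorem~\ref{th2} gives $X_M \subseteq \overline{X}_k$, so it suffices to verify that $\overline{X}_k \neq X$ under the assumption $k < d$. For this I would invoke Corollary~\ref{cor71}, which identifies the top-dimensional pieces of $X$ with the closures of $\Phi(A)$ for $A \in \cS_d$, together with Lemma~\ref{lm110}(3), which shows that $\Phi(A) \subseteq \overline{\Phi(B)}$ forces a Weyl translate of $A$ to embed into $B$. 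Thus a rank-$d$ orbit cannot lie in $\overline{\Phi(B)}$ for any $B$ with $|B| \leq k < d$, which yields $\overline{X}_k \subsetneq X$. Then applying the first part of the lemma finishes the argument.

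Since both steps are direct consequences of results already in hand, I do not expect any genuine obstacle. The only mild point to check carefully is that $\overline{X}_k$ is a proper subset of $X$ when $k < d$; this is where Lemma~\ref{lm110}(3) is essential, since otherwise one might worry about orbit closures going ``upward.'' Apart from this observation, the proof is a two-line application of Lemma~\ref{lem sdim} and Theorem~\ref{th2}.
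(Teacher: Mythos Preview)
Your proposal is correct and follows essentially the same approach as the paper: the paper simply recalls that $\sdim M = \sdim M_x$ from Lemma~\ref{lem sdim} and states that the lemma follows, with the ``in particular'' clause implicitly relying on Theorem~\ref{th2}. Your only slight over-engineering is in showing $\overline{X}_k \subsetneq X$: rather than invoking Corollary~\ref{cor71}, you can use directly the equality $\overline{X}_k = \bigcup_{i\leq k} X_i$ (stated just before Theorem~\ref{th2}) together with the fact that $X_d \neq \varnothing$, so that $\overline{X}_k$ misses $X_d$ when $k<d$.
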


		\begin{remark} In fact, Serganova proved a stronger statement, namely the Kac--Wakimoto conjecture: a simple finite-dimensional module over a basic classical Lie superalgebra has nonzero superdimension if and only if it has maximal degree of atypicality (i.e.,~equal to the defect of $\gg$). For a proof, see \cite{S2}. A version of the Kac--Wakimoto conjecture for periplectic Lie superalgebras $\pp(n)$ was proven in \cite{ES3}. Meanwhile, for $\qq(n)$ it is known that $\sdim L=0$ for all nontrivial  finite-dimensional simple modules~$L$, see  \cite{Che}. \end{remark}
		
		\subsection{Supercharacters}
		For a finite-dimensional $\gg$-module $M$ with weight decomposition $M=\oplus_{\mu\in\hh^{*}}M^{\mu}$, the supercharacter of $M$ is defined to be
		\begin{equation}\label{def sch}
			\sch M=\sum_{\mu\in\hh^{*}}\left(\sdim\ M^{\mu}\right)e^{\mu}.
		\end{equation}
		The supercharacter $\sch M$ is a $ W $-invariant analytic function on $\hh$, so  we will also write it as
		$\sch_{M}\left(h\right)$, 
		for $ h\in\hh $. Then $\sch_{M}\left(h\right)=\operatorname{str}(e^h)$, and the Taylor
		series for $ \sch_{M} $ at $ h=0 $ is 
		\begin{equation}
			\sch_{M}\left(h\right)=\sum_{i=0}^{\infty}p_{i}\left(h\right),
			\notag\end{equation}
		where $ p_{i}\left(h\right) $ is a homogeneous polynomial of degree $ i $ on $ \hh $. The {\it order} of $ \sch_{M} $
		at zero is by definition the minimal $ i $ such that $ p_{i}\not\equiv 0 $.
		
		\begin{theorem} \label{th9}\myLabel{th9}\relax  Assume that all odd roots of $ \gg $ are isotropic. Let $ M $ be a
			finite-dimensional $ \gg $-module, $ s $ be the
			codimension of $ X_M $ in $ X $. The order of $ \sch_{M} $ at zero is not less than $ s $.
			Moreover, the polynomial $ p_{s}\left(h\right) $ in Taylor series for $ \sch_{M} $ is determined
			uniquely up to proportionality.
		\end{theorem}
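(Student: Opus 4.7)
The strategy is to translate the statement about the order of vanishing into an algebraic problem about a $W$-invariant vanishing ideal on $\hh$. Expand
\[
\sch_M(h)=\sum_{k\ge 0}p_k(h),\qquad p_k(h)=\tfrac{1}{k!}\operatorname{str}_M(h^k),
\]
so each $p_k$ is a $W$-invariant homogeneous polynomial of degree $k$ on $\hh$, and the order of $\sch_M$ at $0$ is the smallest $k$ with $p_k\neq 0$; the theorem asks that $p_k=0$ for $k<s$ and that the space of possible $p_s$ be one-dimensional.

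For any iso-set $A=\{\beta_1,\dots,\beta_j\}$ with associated $x\in\Phi(A)$, applying the Hoyt--Reif identity (\ref{HR formula}) to every splitting $\hh_A^{\perp}=\hh_x\oplus\hh_A$ yields $\sch_M(h+k)=\sch_{M_x}(h)$ for $h\in\hh_x$ and $k\in\hh_A$. Hence when $\Phi(A)\not\subseteq X_M$ (so that $M_x=0$ for generic $x\in\Phi(A)$), the function $\sch_M$ vanishes identically on $\hh_A^{\perp}$, and every $p_k$ lies in the ideal $I(\hh_A^{\perp})=(\beta_1,\dots,\beta_j)\subseteq S(\hh^{\ast})$. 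Letting $\mathcal{U}$ be the $W$-stable set of iso-sets with $\Phi(A)\not\subseteq X_M$ and $U:=\bigcup_{A\in\mathcal{U}}\hh_A^{\perp}$, the theorem reduces to the assertions
\[
I(U)^W_k=0 \text{ for } k<s, \qquad \dim I(U)^W_s\le 1.
\]

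To establish these I would fix a top-dimensional orbit $\Phi(A_0)\subseteq X_M$ -- so $s=\tfrac{1}{2}|\Delta_1\cap A_0^{\perp}|-|A_0|$ by Corollary~\ref{cor72} -- and consider $T_0:=\Delta_1^+\cap(A_0^{\perp}\setminus(A_0\cup-A_0))$, a set of exactly $s$ positive odd roots. By Lemma~\ref{lm110}(2), every iso-set of cardinality strictly greater than $|A_0|$ lies in $\mathcal{U}$; in particular, for each $\alpha\in T_0$ the iso-set $A_0\cup\{\alpha\}$ belongs to $\mathcal{U}$. I would then compute the intersections $\bigcap_{A\in\mathcal{U},\,|A|=|A_0|+1}\operatorname{span}(A)$ and their higher-symmetric analogues, and show that in degree $s$ they cut out a single $W$-orbit of monomials whose $W$-symmetrisation is proportional to a fixed polynomial, up to scalar. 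This both exhibits a non-zero element of $I(U)^W_s$ and excludes $W$-invariants of lower degree, so that $p_s$ is well-defined up to proportionality.

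The principal obstacle is this last combinatorial calculation. When $X_M$ is atypical, $\mathcal{U}$ contains no singleton iso-sets, so one cannot simply divide out linear forms; one has to work with intersections of spans of larger iso-sets. For instance, in the $\gg\ll(2|2)$ case with $X_M=\overline{X}_1$ one computes $\bigcap_{|A|=2}\operatorname{span}(A)=\CC(\epsilon_1+\epsilon_2-\delta_1-\delta_2)$, yielding the expected degree-one leading polynomial. The general verification has to be carried out type-by-type using Theorem~\ref{th1} and the tables of Sections~\ref{g x in g} and~\ref{sec geo}; a further subtlety, which one must address, is patching together the candidate leading polynomials when $X_M$ has several top-dimensional orbits, for which Lemma~\ref{lm110}(3) and the description of $W$-orbits in $\cS$ are the main tools.
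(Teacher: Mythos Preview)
Your setup is essentially the same as the paper's: you correctly reduce to showing that any $W$-invariant polynomial vanishing on $\hh_A^{\perp}$ for every iso-set $A$ of size $k+1$ (where $k$ is the maximal rank of an orbit in $X_M$) has degree at least $s$, and that the degree-$s$ piece is unique up to scalar. But you then miss the key idea that makes this work and instead propose an incomplete type-by-type computation.

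The paper's argument is uniform and much simpler than the route you sketch. Fix an iso-set $B=A_0$ of maximal size $k$ with $\Phi(B)\subseteq X_M$ and \emph{restrict} the first nonzero Taylor polynomial $p_i$ to $\hh_B^{\perp}$; call this $\bar p_i$. For every odd root $\alpha\in B^{\perp}$ with $\alpha\neq\pm\beta_j$, the set $B\cup\{\alpha\}$ is an iso-set of size $k+1$, so $p_i$ vanishes on $\hh_{B\cup\{\alpha\}}^{\perp}=\hh_B^{\perp}\cap\{\alpha=0\}$. Hence the linear form $\alpha$ divides $\bar p_i$ in $S((\hh_B^{\perp})^*)$. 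Counting these $\alpha$ gives exactly $\tfrac12|\Delta_1\cap B^{\perp}|-|B|=s$ independent linear factors, so $i\ge s$. For uniqueness, the same divisibility forces $\bar p = a\prod_{\alpha}\alpha$ and $\bar q = b\prod_{\alpha}\alpha$ for any two such degree-$s$ invariants; then $f=p-(a/b)q$ vanishes on $\hh_B^{\perp}$, hence satisfies the vanishing condition with $k$ replaced by $k-1$, forcing $\deg f>s$ and so $f=0$.

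Your difficulty --- ``when $X_M$ is atypical, $\mathcal U$ contains no singleton iso-sets, so one cannot simply divide out linear forms'' --- is exactly what the restriction trick sidesteps: after restricting to $\hh_B^{\perp}$, the relevant vanishing loci become genuine hyperplanes and you \emph{can} divide out linear forms. No intersection-of-spans bookkeeping or case analysis is needed, and the argument handles multiple top-dimensional orbits automatically since any single $B$ of size $k$ suffices.
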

		
		\begin{proof} The proof is based on the following lemma, the proof of this
			lemma is similar to the proof of Lemma~\ref{lem sdim} (6). We leave it to the reader.
			
			\begin{lemma} \label{lm153}\myLabel{lm153}\relax  Let $ x\in X $, $ h\in\hh_{0} $ and $ \left[h,x\right]=0 $. Then $ \operatorname{Ker} x $ and $ xM $ are
				$ h $-invariant and str$ _{M}e^h= $str$ _{M_{x}}e^h$.
			\end{lemma}
			
			Now we proceed to the proof of the Theorem~\ref{th9}. If $ X_M$ contains an irreducible component of $X$, the statement is trivial since $s=0$. Otherwise
			there exists $ k $ smaller than the defect of $ \gg $ such that
			\begin{equation}
				X_M\subset\cup_{{A}\in \cS\text{, }|{A}|\leq k}\Phi\left({A}\right).
				\notag\end{equation}
			Let $ {A}=\left\{\beta_{1},\dots ,\beta_{k+1}\right\}\in \cS $, $ x=x_{\beta_1}+\dots +x_{\beta_{k+1}} $ for some nonzero $ x_{\beta_i}\in\gg_{\beta_{i}} $. Then $ M_{x}=\left\{0\right\} $.
			If $ h\in\hh $ satisfies $ \beta_{1}\left(h\right)=\dots =\beta_{k+1}\left(h\right)=0 $, then $ \left[h,x\right]=0 $. Hence by Lemma~\ref{lm153}
			str$ _{M}e^h= $str$ _{M_{x}}e^h=0 $. This we have proved the following property
			\begin{equation}
				\sch_{M}\left(\hh_{{A}}^{\perp}\right)=0\text{ for all }{A}\in \cS,\ |{A}|=k+1.
				\label{equ29}\end{equation}\myLabel{equ29,}\relax 
			Let $ p_{i} $ be the
			first nonzero polynomial in the Taylor series for $ \sch_{M} $ at zero. Then $ p_{i} $ also
			satisfies~\eqref{equ29}. Let $ {\text B}=\left\{\alpha_{1},\dots ,\alpha_{k}\right\}\in \cS $ and $ \bar{p}_{i} $ be
			the restriction of $ p_{i} $ to $ \hh_{{\text B}}^{\perp} $. If $ \bar{p}_{i}\not=0 $, then degree of $ \bar{p}_{i} $ is $ i $.
			Since $ p_{i}\left(\hh_{{\text B}\cup\alpha}^{\perp}\right)=0 $ for any $ \alpha\not=\pm\alpha_{i} $, $ \alpha\in{\text B}^{\perp} $, then $ \alpha $ divides $ \bar{p}_{i} $.
			That gives the estimate on $ i $. Indeed, $ i $ is not less than the number of all
			possible $ \alpha $, i.e.,~$ \frac{|\Delta_{1}\cap{\text B}^{\perp}|}{2}-|{\text B}| $. By Corollary~\ref{cor72}
			the latter number is the codimension $ s $ of $ X_M $ in $ X $. Hence $ i\geq s $.
			
			To prove the second assertion we need to show
			that if two homogeneous $ W $-invariant polynomials $ p $ and $ q $ of degree $ s $ satisfy
			\eqref{equ29}, then $ p=cq $ for some $ c\in{\CC} $. After restriction to $ \hh_{{\text B}}^{\perp} $
			\begin{equation}
				\bar{p}=a \Pi_{\alpha\in\left(\Delta_1^{+}\cap{\text B}^{\perp}\right)\backslash\pm{\text B}} \alpha\text{, }\bar{q}=b \Pi_{\alpha\in\left(\Delta_1^{+}\cap{\text B}^{\perp}\right)\backslash\pm{\text B}} \alpha
				\notag\end{equation}
			for some constants $ a $ and $ b $. Therefore there exists $ f=p-cq $ such that
			$ f\left(\hh_{{\text B}}^{\perp}\right)=0 $. Thus, $ f $ satisfies~\eqref{equ29} for $ k $ instead of $ k+1 $. Then $ \operatorname{deg}f>s $, which
			implies $ f=0 $.\end{proof}

		

		\section{Reduced Grothendieck rings and $\ds_x$} \label{sec ds groth}
		We retain the notation of Section~\ref{sec:definitions}. 
		In this section we discuss  the homomorphism $\ds_x: \Gr_-(\gg)\rightarrow \Gr_-(\gg_x)$, for classical Lie superalgebras  $\gg$,
		where $\Gr_-(\gg)$ and $\Gr_-(\gg_x)$ stand for  the reduced Grothendieck rings of $\cF(\gg)$ and $\cF(\gg_x)$, respectively.  The study of $\ds_x$ was initiated by Hoyt and Reif in \cite{HR} for the basic classical Lie superalgebras.  We consider
		$\gg_x$ as a subalgebra of $\gg$ using the 
		splitting $\gg^x=\gg_x\ltimes [x,\gg]$ as in Propositions~\ref{lm202} and \ref{embedding p}.
		
		\subsection{Subcategories of $\cF(\gg)$ and the $DS$ functor}\label{subcats and DS}
		Before discussing $ds_x$, we describe certain subcategories of $\cF(\fg)$ and their relation to the $DS$ functor.  Let $\fg$ be one of basic classical superalgebras and $\Lambda_{\cF(\gg)}$ denote the abelian subgroup of $\ft^*$ consisting of weights of $M\in {\cF(\fg)}$. For any subset $\Lambda\subset\Lambda_{\cF(\fg)}$  we
		denote by $\cF^{\Lambda}(\fg)$ the full subcategory of $\cF(\fg)$ consisting of modules with weights in $\Lambda$.
		
		Let $G$ be an algebraic supergroup with Lie superalgebra  $\fg$. Then $G$ is determined by the lattice $\Lambda_G\subset \Lambda_{\cF(\gg)}$ and
		the category $\cF(G)$  of finite-dimensional representations of $G$ is equivalent to $\cF^{\Lambda_G}(\gg)$.
		
		If $x\in\gg_1$ is a self-commuting element and $G^x$ is the centralizer of $x$ in $G$ then the Lie algebra of $G^x$ is the kernel of $ad_x$.
		We denote by $G_x$ the quotient of $G^x$ such that $\operatorname{Lie}G_x=\gg_x$.
		It is clear that $\DS_x$ induces the functor $\cF(G)\to\cF(G_x)$.
		
		We denote by $G$ the particular supergroup for every basic classical superalgebra:
		\begin{enumerate}
			\item If $\gg=\mathfrak{gl}(m|n)$ we set $G:=GL(m|n)$, $\Lambda_G:=\displaystyle\sum_{i=1}^m\mathbb Z\varepsilon_i+\displaystyle\sum_{j=1}^n\mathbb Z\delta_j$.
			\item If $\gg=\mathfrak{sl}(m|n)$ we set $G:=SL(m|n)$, $\Lambda_G:=\Lambda_{GL(m|n)}\cap \fh^*$.
			\item If $\gg=\mathfrak{osp}(2m|2n)$ or $\mathfrak{osp}(2m+1|2n)$  we set $G:=SOSP(2m|2n)$ (respectively, $SOSP(2m+1|2n)$),
			$\Lambda_G:=\sum_{i=1}^m\mathbb Z\varepsilon_i+\sum_{j=1}^j\mathbb Z\delta_j$.
			\item If $\gg=\fp(n)$ we set $G:=P(n)$, $\Lambda_G:=\sum_{i=1}^n\mathbb Z\varepsilon_i$.
			\item If $\gg=\fq(n)$ we set $G:=Q(n)$, $\Lambda_G:=\sum_{i=1}^n\mathbb Z\varepsilon_i$.
			\item If $\gg$ is an exceptional Lie algebra of type $G(3)$ and $F(4)$ then $G$ is the adjoint group with $\Lambda_G$ being the root lattice. One has
			$\Lambda_{\cF(\gg)}=\Lambda_G$ for $G(3)$ and $\Lambda_{\cF(\gg)}/\Lambda_G=\mathbb{Z}_2$ for $F(4)$, see~\cite{M}.
			\item For $\gg=D(2,1,a)$ we consider the algebraic group $G$ with $\Lambda_{G}:=\Lambda_{\cF(\gg)}$.
		\end{enumerate}
		Next we set $\Lambda_{\gg}:=\Lambda_G$ in all cases when $\gg=[\gg,\gg]$. In the remaining cases we set
		$$\begin{array}{l}\Lambda_{\mathfrak{gl}(m|n)}:=\Lambda_{GL(m|n)}+\mathbb{C}\str\ \ \text{ where }\str:=\vareps_1+\dots+\vareps_m+\delta_1+\dots+\delta_n,\\
			\Lambda_{\mathfrak{p}(m)}:=\Lambda_{P(m)}+\mathbb{C}\str,\ \ \text{ where }\str:=
			\vareps_1+\dots+\vareps_m,\\
			\Lambda_{\fq(m)}:=\Lambda_{Q(m)}+\mathbb{Z}\frac{\str}{2}, \ \ \text{ where }\str:=
			\vareps_1+\dots+\vareps_m.
		\end{array}
		$$

		\subsubsection{}
		Consider the case when $\gg$ is non-exceptional. Take $x\not=0$ as in Prop. 4.5.
		Let $\Lambda'_\gg=\Lambda_{\cF(\gg)}\setminus \Lambda_\gg$. Then we have a decomposition
		\[
		\cF(\gg)=\cF^{\Lambda_g}(\gg)\oplus \cF^{\Lambda'_{\gg}}(\gg).
		\]
		Every module $M\in \cF^{\Lambda'_{\gg}}(\gg)$ is projective and hence $\DS_x(M)=0$. Furthermore, $\DS_x$ induces the functor
		$\cF^{\Lambda_{G}}(\gg)\to \cF^{\Lambda_{G_x}}(\gg_x)$.
		
		In the cases when $[\gg,\gg]\neq \gg$ we can be more precise. Namely, when $\gg_x\neq0$, $\DS_x$ restricts to the functor:
		\[
		\cF^{\Lambda_{G}+c \str}(\gg)\to \cF^{\Lambda_{G_x}+c \str}(\gg_x),
		\]
		where $c\in\mathbb C$ for $\mathfrak{gl}(m|n)$ and $\fp(n)$, $c=0,\frac{1}{2}$ for $\fq(m)$.
		To see this for $\mathfrak{gl}(m|n)$ and $\fp(n)$ we just note that every $M\in \cF^{\Lambda_{G}+c \str}$ can be obtained from  $M_0\in \cF(G)$ by tensoring with
		one dimensional character in $\chi_c\in(\gg/[\gg,\gg])^*$ and therefore
		it is suffices to compute $\DS_x$ in the case $c=0$ and then use
		$\DS_x(M\otimes \chi_c)=\DS_x(M)\otimes \chi_c$.
		The case of $\fq(n)$ is straightforward.

		\subsubsection{Exceptional algebras}\label{remG3}
		Take $x\not=0$. All such $x$ are conjugate by the adjoint action of $G_0$.
		
		For $D(2,1,a)$, $G_x=\mathbb C^*$. Therefore $\DS_x:\cF (D(2,1,a)\to \cF (\mathbb C^*)$.
		By [Germoni] for
		$\gg=D(2,1,a)$ with
		$a\not\in\mathbb{Q}$ all atypical modules 
		in $\cF(\gg)$ have zero central character. Using the filtration of projective modules obtained by Germoni, one can show (see \cite{G4}) that $\DS_x(L)$
		is a trivial $\mathbb C^*$-module
		for any simple atypical $\gg$-module $L$ and therefore for any $\gg$-module. In other words the image of $\DS_x$ lies in the category of
		vector superspaces equipped with the trivial action of $\mathbb C^*$.

		Combining the description of dominant weights  (see~\cite{M})
		and~(\ref{Omega}) we obtain the following results for $G(3)$ (with $\gg_x=\fsl_2$) and $F(4)$   (with $\gg_x=\fsl_3$) :
		\[
		\DS_x(\cF(G(3)))\subset \cF(PSL(2))\ \ \ \ \DS_x(\cF(F(4)))\subset \cF(PSL(3))
		\]
		where  $\cF(G)$ denotes the category of finite-dimensional representations of the algebraic group $G$. In fact, from \cite{G3} and \cite{M} we obtain that
		\[
		\DS_x(\cF(F(4)))\subset \cF(G''_x)
		\]
		where $G''_x$ is a non-connected algebraic group fitting into a non-split exact sequence
		\begin{equation}\label{F4}
			1\to PSL(3)\to G''_x\to \mathbb Z_2\to 1,\end{equation}
		compare to Remark~\ref{inv_rem}.

		\subsection{Properties of $\ds_x$}
		We now begin the discussion of $ds_x$ with general remarks which are valid for classical Lie superalgebras $\gg$ satisfying $\fh=\ft$, i.e., classical $\gg\neq\qq(n)$. (The case of general $\gg$ is considered in~\cite{GSS}).  As already noted, by Section~\ref{g x in g}, we have splittings
		$\gg^x=\gg_x\ltimes [x,\gg]$.  Further we have $\ft=\ft_x\oplus\ft'$, where $\ft_x\subseteq\gg_x$ is a Cartan subalgebra.

		In these cases the ring $\Gr_-(\gg)$ is spanned by the images of the simple finite-dimensional modules. Since these modules are highest weight modules, the map
		$[N]\mapsto [\Res^{\gg}_{\fh} N]$ gives an embedding of
		$\Gr_-(\gg)\to \Gr_-(\fh)$ and we identify $\Gr_-(\gg)$ with this ring.
		The image  is called the 
		{\em ring of supercharacters}, since $[\Res^{\gg}_{\fh} N]=\sch N$.
		By Lemma~\ref{ds restriction}
		$$\sdim \DS_x(N)_{\mu'}=\sum_{\mu\in\ft^*: \mu|_{\ft_x}=\mu'} \sdim N_{\mu}.$$
		for each $N\in\cF(\gg)$ and $\mu'\in \ft_x^*$.
		Thus $\ds_x$  written for the supercharacter rings takes the form
		\begin{equation}\label{xaxa}
			\ds_x\bigl(\sum_{\nu\in\ft^*} m_{\nu}e^{\nu}\bigr)=\sum_{\nu\in\ft^*} m_{\nu}e^{\nu|_{\ft_x}}.\end{equation}
		coincides with the restriction of the map $f\mapsto f|_{\ft_x}$.
		Using the representation theory of $\fsl(1|1)$ it is easy to see that 
		if $x$ can be embedded into an $\fsl(1|1)$-triple $x,y,\alpha^{\vee}$
		with $\alpha^{\vee}\in\ft$, then for 
		\begin{equation}\label{xaxaxa}
			\ds_x\bigl(\sum_{\nu\in\ft^*} m_{\nu}e^{\nu}\bigr)=\sum_{\nu\in\ft^*: \nu(\alpha^{\vee})=0} m_{\nu}e^{\nu|_{\ft_x}}.\end{equation}
		These formulas appeared in~\cite{HR};
		for $\fq(n)$ a similar formula is given in~\cite{GSS}.
		
		\subsubsection{}
		Now consider the case with $\gg$ from the list (\ref{rem: classical LSA}).
		By  Lemma~\ref{ds restriction} one has $\ds_x=\ds_y$ if $\gg_x=\gg_y$ as embedded subalgebras.
		For $x$ of rank $r$ 
		we denote the map $\ds_x: \Gr_-(\gg)\to \Gr_-(\gg_x)$ by $\ds^r$.
		By Lemma~\ref{ds composition} one has 
		$\ds^i=\ds^1\circ\ds^1\circ\ldots\circ \ds^1$
		if $\DS^1(\DS^1\ldots(\gg))=\DS^i(\gg)$. 
		By Lemma~\ref{lem:group ker} we have
		$\ker \ds_x=\ker\ds_y$ if $x,y$ are conjugated by an inner automorphism.
		Note that $\rank x=\rank y$ implies the existence of $x'\in X$ 
		such that $\gg_{x'}=\gg_y$ and $x,x'$ are conjugated by an inner automorphism.
		Hence the ideal $\ker\ds^r\subset \Gr_-(\gg)$ does not depend on the choice of $x$ of rank $r$.

		\subsection{The ring $\Gr_-(\gg)$}
		For $\gg\not=\fp(n),\fq(n)$,
		Sergeev and Veselov
		interpreted the supercharacter ring as a ring of functions admitting certain supersymmetry conditions, see~\cite{SerV}.
		For example, the supercharacter ring  for the category $\cF^\ZZ(\mathfrak{gl}(m|n))$ can be realized as
		$$
		\JJ\cong\left\{ f\in\ZZ\left[x_{1}^{\pm1},\ldots,x_{m}^{\pm1},y_{1}^{\pm1},\ldots,y_{n}^{\pm1}\right]^{S_{m}\times S_{n}}\,\middle\vert\, f\vert_{x_1=y_1=t} \text{ is independent of } t \right\},\label{eq:GL K_G}
		$$
		and in this case, if $\rank x=1$, then  $\ds_{x}\left(f\right)=f\vert_{x_1=y_1}$ (see~\cite{HR}).
		The supercharacter ring for $\fp(n)$ was described in~\cite{IRS} using an inductive argument with the help of $ds^2$. Using a similar method Reif described the 
		ring $\Gr_+(\fq_n)$  in~\cite{R}. Note that 
		for $\fq(n)$ the supercharacter of a finite-dimensional nontrivial simple $\fq(n)$-module is always zero \cite{Che}.

		\subsection{The image of $\ds_x$}
		Let $\gg$ be one of the superalgebras from the list (\ref{list1}). The categories
		$\cF^{\Lambda_G}(\gg)\cong\cF(G)$ were introduced in Section~\ref{subcats and DS}. 
		Let $\sigma_x\in \Aut(\gg_x)$ be the involution introduced in Section~\ref{involutions section}. Note that $\sigma_x=\Id$ except
		for the cases $D(2,1,a), F(4)$ and $\osp(2m|2n)$ with $\rank x<m$.
		We also denote by $\sigma_x$ the induced involution of the ring $\Gr_-(\gg_x)$.
		
		\begin{theorem}\label{thm2}
			Take $x\not=0$. 
			For  non-exceptional $\gg$  from the list (\ref{list1}),
			one has 
			$$\ds_x\bigl(\Gr_-(\cF(G)) \bigr)=\Gr_-(\cF(G_x))^{\sigma_x}.$$
			For  $\gg=D(2,1,a)$ with $a\in\mathbb{Q}$,
			$G(3)$ and $F(4)$ one has
			$$\ds_x\bigl(\Gr_-(\cF(G)) \bigr)=\Gr_-(\cF(G'_x))^{\sigma_x}$$
			where $G'_x=\mathbb{C}^*,PSL(2), PSL(3)$ for respectively. For
			$\gg=D(2,1,a)$ with $a\not\in\mathbb{Q}$ one has $\ds_x\bigl(\Gr_-(\cF(G)) \bigr)=
			\mathbb{Z}$.
			
			Recall that
			$\cF(\gg)$ is equivalent to $\cF(G)$ for $\gg=G(3)$ and $D(2,1,a)$.
			We have
			$$\ds_x(\Gr_-(\gg))=\left\{\begin{array}{ll}
				\Gr_-(\gg_x)^{\sigma_x} & \text{ for }  \mathfrak{gl}(m|n),\osp(m|n), \fp(n), F(4)\\
				\Gr_-(\cF(G_x))^{\sigma_x} & \text{ for } \fq(n).
			\end{array}\right.$$
		\end{theorem}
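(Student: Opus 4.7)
The plan is to verify two opposite inclusions: that $ds_x(\Gr_{-}(\cF(G)))$ lies in $\Gr_{-}(\cF(G_x))^{\sigma_x}$, and conversely that every $\sigma_x$-invariant element is in the image. The forward inclusion is a symmetry argument built on the explicit form of $ds_x$ as restriction of supercharacters, while the reverse proceeds by producing enough preimages via Lemma~\ref{lemchichi}. For the $\fq(n)$ case, where ordinary supercharacters of nontrivial simples vanish by \cite{Che} and $\hh\neq\ft$, the argument must be adapted and is carried out in \cite{GSS}; I focus on the remaining cases below and only indicate where the $\fq(n)$-adaptation enters.

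For the forward inclusion, I would apply formula (\ref{xaxa}) to write $ds_x[M]=(\sch M)|_{\ft_x}$ for each $M\in\cF(G)$. The supercharacter $\sch M$ is $W$-invariant, and the stabilizer $W_A\subseteq W$ preserves $A^\perp$ and $\hh_A$, so it descends to an action on $\ft_x^{*}$. By Lemma~\ref{case}, $a(W_A)$ is either $a(W_{\gg_x})$ or $a(W_{\gg_x})\cup\sigma_x\cdot a(W_{\gg_x})$. Since $(\sch M)|_{\ft_x}$ is automatically $W_{\gg_x}$-invariant and has just been shown to be $a(W_A)$-invariant, it is $\sigma_x$-invariant. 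For the exceptional cases where the target must be $\Gr_{-}(\cF(G'_x))^{\sigma_x}$ or $\Gr_{-}(\cF(G''_x))^{\sigma_x}$, one additionally uses the constraints on weights of $DS_x(M)$ coming from (\ref{Omega}) together with the description of dominant weights in \cite{M}, which pin the weights into the smaller lattice underlying $G'_x$ or $G''_x$.

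For the reverse inclusion, the key tool is Lemma~\ref{lemchichi}: given a dominant $\lambda'\in\ft_x^{*}$ of $\gg_x$, there exists a dominant $\lambda\in\ft^{*}$ of $\gg$ with $\lambda|_{\ft_x}=\lambda'$ and $[DS_x L_{\gg}(\lambda):L_{\gg_x}(\lambda')]=1$. By (\ref{Omega}) all weights of $DS_x L_{\gg}(\lambda)$ are restrictions of weights of $L_{\gg}(\lambda)$, so $\lambda'$ is maximal among them in the dominance order on $\ft_x^{*}$, and the remaining composition factors of $DS_x L_{\gg}(\lambda)$ have strictly smaller highest weights. A triangular inversion on this system (equivalently, Gram--Schmidt in the basis of simples) produces preimages for each $\sigma_x$-orbit sum $[L_{\gg_x}(\lambda')]+[\sigma_x\cdot L_{\gg_x}(\lambda')]$, or $[L_{\gg_x}(\lambda')]$ alone when $\sigma_x$ fixes the class. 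These orbit sums span $\Gr_{-}(\cF(G_x))^{\sigma_x}$ as a $\mathbb{Z}$-module, so surjectivity follows. One may alternatively reduce to $\rank x=1$ by Lemma~\ref{ds composition} and induct on rank, lifting $\sigma_x$-invariant elements one step at a time.

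The main obstacle is the case-by-case bookkeeping in the exceptional setting. For $D(2|1;a)$, $G(3)$, and $F(4)$ one must identify the image precisely as $\Gr_{-}(\cF(G'_x))^{\sigma_x}$, or merely as $\mathbb{Z}$ for $D(2|1;a)$ with $a\notin\mathbb{Q}$, which requires the analysis of \cite{M} and \cite{G4}. For $\fp(n)$, Proposition~\ref{embedding p} replaces Proposition~\ref{lm202} in the weight-lattice bookkeeping, and the explicit description of the $\fp(n)$-supercharacter ring from \cite{IRS} streamlines the triangularity step. Finally, $\fq(n)$ requires the separate treatment of \cite{GSS} for the reasons noted above.
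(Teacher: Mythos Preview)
Your forward inclusion is sound and matches the symmetry argument implicit in the paper's treatment. The reverse inclusion, however, rests on two claims that do not hold as stated.

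First, Lemma~\ref{lemchichi} does not produce a \emph{dominant} weight $\lambda$. Its proof fixes a Borel in which the $\beta_i$ are simple and chooses any $\lambda$ with $\lambda|_{\ft_x}=\lambda'$ and $\lambda(\beta_i^{\vee})=0$; nothing there forces $L_{\gg}(\lambda)$ to be finite-dimensional. These two conditions only determine $\lambda$ on $\hh_A^{\perp}$, and arranging dominance on a complement is a nontrivial case check that the lemma does not perform. So you cannot invoke it to manufacture elements of $\cF(G)$.

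Second, even granting a dominant $\lambda$, your triangularity step fails as written. Formula~(\ref{Omega}) gives that the weights of $\DS_x L_{\gg}(\lambda)$ are restrictions of weights $\mu\leq\lambda$, but restriction to $\ft_x$ does \emph{not} send the $\gg$-dominance order to the $\gg_x$-dominance order: a simple root of $\gg$ not in $A^{\perp}$ (for instance $\varepsilon_{m}-\varepsilon_{m+1}$ adjacent to an isotropic simple root in $\mathfrak{gl}(m|n)$) restricts to a weight that is not a nonnegative combination of simple roots of $\gg_x$. Hence the ``remaining composition factors have strictly smaller highest weights'' is unjustified, and the induction does not run.

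The paper does not attempt a uniform soft argument of this kind. It establishes surjectivity case by case: for basic classical $\gg\neq\fp(n),\fq(n)$ it cites \cite{HR}, for $\fp(n)$ with $x\in\gg^{-1}$ it cites \cite{IRS}, and for $\fq(n)$ it cites \cite{GSS}; in each of these the evaluation formula~(\ref{xaxaxa}) together with the explicit Sergeev--Veselov-type description of the supercharacter ring is the engine. The one case argued in the paper itself, $\fp(n)$ with $x$ of rank one, is Corollary~\ref{p case thm 8.1}, and that proof relies on the full computation of $\DS_x$ on simples via arc diagrams from \cite{ES2} to get an honest triangular system with a well-founded order. Your rank-reduction via Lemma~\ref{ds composition} is correct and is exactly what the paper uses to extend from rank one to general $x$, but the base case still needs one of these structural inputs rather than Lemma~\ref{lemchichi} alone.
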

		Note that for $\gg=F(4)$ the categories $\cF(G'_x)^{\sigma_x}$ and $\cF(G''_x)$ are equivalent, see (\ref{F4}). 
		For $\gg\not=\fq(n),\fp(n)$ the statement was established in~\cite{HR}
		and for $\fp(n)$ with
		$x\in\gg^{-1}$ the assertion was proven in~\cite{IRS};  the proofs are
		based on the evaluation~(\ref{xaxaxa}) (in these case
		$x$ can be embedded in an $\fsl(1|1)$-triple);
		for $\fq(n)$ the assertion is proven in \cite{GSS}.
		For $\fp(n)$ with $x$ of rank $1$ we prove the assertion in Corollary~\ref{p case thm 8.1};
		since $\ds^r=\ds^1\circ\ldots\circ\ds^1$, this implies the assertion
		for general $x$.
		
		\begin{remark}
			For a precise description of $ds_x(\Gr_-(\cF(\gg)))$ when $\gg=D(2,1;a)$ with $a\in\mathbb{Q}$, see Section~\ref{D21a}.
		\end{remark}
		
		\begin{remark}
			For $\qq(n)$-case $\ds_x(\Gr_-(\cF(\gg)))=\Gr_-(\cF(G))^{\sigma_x}$, but it can be  easily seen  that if $M\in\cF(\gg)$ then $\DS_xM$ need not have the structure of a $G$-module.
		\end{remark}
		
		\subsection{The kernel of $\ds_x$}
		\subsubsection{Notation}
		Take $\gg\not=\fp(n),\fq(n)$.
		For a fixed choice of negative roots $\Delta^{-}=\Delta_{0}^{-}\sqcup\Delta_{1}^{-}$,
		we denote the super Weyl denominator by $R=\frac{R_{0}}{R_{1}}$,
		where $R_{0}=\prod_{\alpha\in\Delta_{0}^{-}}\left(1-e^{\alpha}\right)$
		and $R_{1}=\prod_{\alpha\in\Delta_{1}^{-}}\left(1-e^{\alpha}\right)$.
		For $\lambda\in\ft^*$ set
		\[
		k\left(\lambda\right):=R^{-1}\cdot\sum_{w\in W}\left(-1\right)^{l\left(w\right)+p\left(w\left(\rho\right)-\rho\right)}
		e^{w\left(\lambda+\rho\right)-\rho},
		\]
		where $l\left(w\right)$
		denotes the length of $w$ as a product of simple reflections with
		respect to a set of simple roots for $\gg_{0}$. Let
		$P^+(\gg_0)$ be the set of dominant weights of $\gg_0$.  For $\gg=\mathfrak{gl}(m|n),\osp(2|2n)$ with the distinguished choice of simple
		roots, $k\left(\lambda\right)$ is the supercharacter of a Kac module 
		$K\left(\lambda\right)={\operatorname{Ind}_{\gg_{0}\oplus\gg^{1}}^{\gg}
			L_{\gg_0}\left(\lambda\right)}$ when $\lambda\in P^{+}(\gg_0)$ , and we have:
		\[
		k(\lambda)=\sch L_{\gg_0}(\lambda)\prod_{\alpha\in\Delta_{1}^{-}}\left(1-e^{\alpha}\right).
		\]
		However in Type II, $k\left(\lambda\right)$
		is a virtual supercharacter.
		
		For $\fp(n)$ we set
		$K(\lambda)={\operatorname{Ind}_{\gg_{0}\oplus\gg^{1}}^{\gg}
			L_{\gg_0}\left(\lambda\right)}$; then $K(\lambda)$ is a ``thin'' Kac module. For $\lambda\in P^+(\gg_0)$ the 
		expression
		$k(\lambda):=\sch K(\lambda)$ is given by the above formulas.
		Finally we need one more virtual supercharacter for $\pp(n)$ given by
		\[
		k'(\lambda):=k(\lambda)\sch S^{\bullet}\Pi(\mathbb{C}^n)^*
		\]
		Here $\mathbb{C}^n$ denotes the standard representation of $\gg\ll(n)$.  In coordinates $\epsilon_1,\dots,\epsilon_n$, this is given explicitly by
		\[
		k'(\lambda)=\sch L_{\gg_{0}}(\lambda)\prod\limits_{i=1}^n(1-e^{-\epsilon_i})\prod\limits_{\alpha\in\Delta_{-1}}(1-e^{\alpha}).
		\]
		
		Let $\rho_{iso}:=\frac{1}{2}\sum_{\alpha\in\Delta_{iso}^{+}}\alpha$.

		\begin{theorem}\label{thm3}
			Let $\ker_1\subset\ker_2\subset\ldots$ be the kernels for $\ds^r:\Gr_-(\gg)\to \Gr_-(\gg_x)$.
			\begin{enumerate}
				\item For $\gg\not=\fp(n),\fq(n),\mathfrak{gl}(1|1)$ the set
				$\{k(\lambda)|\ \lambda\in P^+(\gg_0)+\rho_{iso}\}$ forms a basis of $\ker_1$.
				\item For $\fp(n)$ with $n>1$ the set $\{k'(\lambda)|\lambda\in P^+(\gg_0)\}$ forms a basis of
				$\ker_1$.
				\item  For $\fp(n)$ with $n>2$ the supercharacters of the thin Kac modules form a basis of $\ker_2$. 
			\end{enumerate}
		\end{theorem}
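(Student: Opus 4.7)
The proof of Theorem~\ref{thm3} follows a uniform three-step pattern in each of the three parts: (i) show containment of the proposed generators in $\ker_r$, (ii) show linear independence, and (iii) show spanning. The common technical input is the fact that $\ds^r$ is a ring homomorphism (Lemma~\ref{tensor}) together with an explicit description of $\ds^r$ on the supercharacter ring. The key conceptual observation in each case is that the proposed generator has a factored form containing a linear factor that vanishes under $\ds^r$.

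\textbf{Containment.} For part (1), by Lemma~\ref{lem:group ker} we may choose a Borel subalgebra for which some isotropic positive root $\beta$ is simple and take $x\in\gg_{\beta}$ of rank one. Formula~(\ref{xaxaxa}) then describes $\ds^{1}$ on supercharacters as $\sum m_{\nu}e^{\nu}\mapsto\sum_{\nu(h_{\beta})=0}m_{\nu}e^{\nu|_{\ft_{x}}}$; in particular $\ds^{1}(1-e^{-\beta})=0$ since $\beta(h_{\beta})=0$ and $\beta|_{\ft_{x}}=0$. The shift by $\rho_{iso}$ ensures that $\lambda+\rho$ is regular for the relevant Weyl action, so that $k(\lambda)$ admits (up to sign) a factorization $k(\lambda)=R_{1}\cdot\sch L_{\gg_{0}}(\mu)$ for an appropriate $\mu$; since $R_{1}=\prod_{\alpha\in\Delta_{1}^{-}}(1-e^{\alpha})$ contains $(1-e^{-\beta})$, the ring-homomorphism property of $\ds^{1}$ forces $\ds^{1}(k(\lambda))=0$. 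For parts (2) and (3), Lemma~\ref{ds restriction} together with Proposition~\ref{embedding p} identifies $\ds^{r}$ with the restriction from $\fp(n)$ to $\fp(n-r)$, which on supercharacters sets $\epsilon_{i}=0$ for the indices $i$ of the iso-set associated with $x$. Choosing $x=x_{2\epsilon_{n}}$ in part (2), the factor $(1-e^{-\epsilon_{n}})$ of $k'(\lambda)$ vanishes. Choosing $x$ of rank 2 with $\gg_{x}=\fp(n-2)$ indexed by $\{3,\dots,n\}$ in part (3), the factor $(1-e^{-(\epsilon_{1}+\epsilon_{2})})$ of $k(\lambda)$ vanishes.

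\textbf{Linear independence} in each case follows from a leading-term argument: $\lambda$ occurs in $k(\lambda)$ (resp.\ $k'(\lambda)$) with multiplicity one and is the unique maximal weight appearing in a suitable dominance order, so distinct $\lambda$'s yield linearly independent elements of $\Gr_{-}(\gg)$.

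\textbf{Spanning and main obstacle.} The hard step is spanning, which I would establish by rank counting: Theorem~\ref{thm2} determines $\operatorname{Im}(\ds^{r})$ as a specific subring of $\Gr_{-}(\gg_x)$, so rank-nullity yields $\operatorname{rank}\ker_{r}$, and it remains to match this against the cardinality of the proposed index set. For part (1) one combines this with the fact that appropriately chosen virtual Kac module supercharacters furnish a basis of $\Gr_{-}(\gg)$. For part (3) one uses Lemma~\ref{ds composition} to write $\ds^{2}=\ds^{1}\circ\ds^{1}$ (for a compatible choice of rank-2 $x=x_1+x_2$), reducing the question for $\ker_{2}$ to the kernel descriptions of $\ds^{1}$ on $\Gr_{-}(\fp(n))$ (part (2)) and on $\Gr_{-}(\fp(n-1))$, together with the algebraic relation $k'(\lambda)=k(\lambda)\prod_{i}(1-e^{-\epsilon_{i}})$ which embeds $\ker_{1}$ into $\ker_{2}$. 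The main obstacle throughout is part (3): although each thin Kac module character manifestly lies in $\ker_{2}$ by the computation above, verifying that they span requires a careful inductive analysis of $\cF(\fp(n))$, which lacks a nondegenerate invariant form and whose module structure is complicated by the coexistence of thin and thick Kac modules.
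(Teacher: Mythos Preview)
Your containment and linear-independence arguments are sound, and for part (2) your choice of $x=x_{2\epsilon_n}$ with the factor $(1-e^{-\epsilon_n})$ matches the paper. However, your spanning argument via ``rank counting'' has a genuine gap: the groups $\Gr_-(\gg)$ and $\Gr_-(\gg_x)$ have infinite rank, so rank-nullity and ``matching cardinalities'' do not make sense without a filtration and a level-by-level argument, none of which you supply. Even granting such a filtration, you would need to know a priori a basis of $\Gr_-(\fp(n))$ compatible with $\ds^1$, which is precisely the hard content.

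The paper (which proves only part (2), citing \cite{HR} and \cite{IRS} for (1) and (3)) takes a different and sharper route for spanning. Given $f\in\ker_1$, it shows directly that $f$ is divisible by $R'=\prod_i(1-e^{-\epsilon_i})\prod_{\alpha\in\Delta_{-1}}(1-e^{\alpha})$. The key move you missed is the use of the inclusion $\ker_1\subset\ker_2$: from $\ds^1(f)=0$ with $x=x_{2\epsilon_n}$ one gets divisibility by $1-e^{-\epsilon_n}$, but then, since $f$ also lies in $\ker_2$, applying the same divisibility analysis with a rank-$2$ element $x\in\gg_{\epsilon_{n-1}+\epsilon_n}$ yields divisibility by $1-e^{-\epsilon_{n-1}-\epsilon_n}$. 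Now $W$-invariance of $f$ upgrades these two divisibilities to divisibility by the full product $R'$. Since $R'$ is itself $W$-invariant, the quotient $f/R'$ lies in $\mathbb{Z}[e^{\nu}]^W$, the character ring of $\mathfrak{gl}_n$, and hence is a $\mathbb{Z}$-combination of $\sch L_{\mathfrak{gl}_n}(\nu)$; multiplying back by $R'$ exhibits $f$ as a combination of the $k'(\nu)$.

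A smaller point on part (1): your factorization $k(\lambda)=R_1\cdot\sch L_{\gg_0}(\mu)$ is immediate for type I but needs more care in type II, where the extra parity sign $(-1)^{p(w\rho-\rho)}$ in the definition of $k(\lambda)$ means the alternating sum over $W$ is not literally a Weyl numerator.
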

		
		Hoyt and Reif proved (1) in~\cite{HR}, and (3) was proven in~\cite{IRS}. We will give a proof of (2) in Section~\ref{proofii}
		below.

		\begin{remark}\begin{itemize}
				\item The ring $\Gr_-(\mathfrak{p}(1))$ is
				the group ring of $\mathbb{C}$ and $\ds^1$ acts as $\ds^1(e^c)=1$.
				\item The ring $\Gr_-(\mathfrak{gl}(1|1))$ lies in the group ring of
				$\mathbb{C}\times\mathbb{C}$: this ring is spanned by the elements 
				$e^{(0,a)}$ and $e^{(b,a)}-e^{(b,a-1)}$ for $a,b\in\mathbb{C}$ with $b\not=0$.
				The map  $\ds^1$ is the restriction of the algebra map given by 
				$\ds^1(e^c)=1$ for any $c\in \mathbb{C}\times\mathbb{C}$.
				\item The kernel of $ds^1$ for $\gg=\qq(n)$ is computed in \cite{GSS}.  The result is in terms of an explicit basis of $\Gr_-(Q(n))$.
			\end{itemize}
		\end{remark}

		\begin{proof}[Proof of (ii)]\label{proofii}
			Retain the notation of Section~\ref{subcats and DS}.  Clearly, 
			$K_-(\lambda+c\str)=K_-(\lambda)\otimes \chi_c$, where $\chi_c$ is a one-dimensional
			$\fp_n$-module corresponding to $c\str$. Using Section~\ref{subcats and DS}, we can reduce the statement to
			$\cF^{\Lambda_G}(\gg)$.
			
			We utilize methods of~\cite{HR}.  Let $h_1,\ldots,h_n$ be the standard basis of $\ft$ (which is dual to
			$\vareps_1,\ldots,\vareps_n\in\ft^*$). 
			Take $x\in  \gg_{2\vareps_{n}}$
			and identify $\gg_x$ with the ``natural copy'' of $\fp(n-1)$ in $\fp(n)$;
			in this case $\ft_x$ is spanned by $h_1,\ldots,h_{n-1}$.  
			Take $f\in \Gr_-(P(n))$ such that 
			$\ds^1(f)=0$. 
			
			Identify $\ft_x^*$  with the span of
			$\vareps_1,\ldots,\vareps_{n-1}$ and write
			\[
			f=\sum m_{\nu} e^{\nu}=\sum_{\mu'\in \ft_x^*} f_{\mu'},\ \ \text{ where }
			f_{\mu'}:=e^{\mu'} \sum_{a\in\mathbb{C}} m_{\mu'+a\vareps_n} e^{a\vareps_n}.
			\]
			By~(\ref{xaxa}) $\ds^1(f)=0$ is equivalent to
			$\sum_{a\in\mathbb{C}} m_{\mu'+a\vareps_n}=0$ for each $\mu'\in\ft_x^*$.
			
			Therefore  $\sum_{a\in\mathbb{C}} m_{\mu'+a\vareps_n}=0$
			means that $f_{\mu'}$ is divisible by $1-e^{-\vareps_n}$. Hence
			$f$ is divisible by $1-e^{-\vareps_n}$.
			
			Recall that $\ker \ds^1\subset\ker \ds^2$, so  $\ds^2(f)=0$.
			Using the above argument for $x\in \gg_{\vareps_{n-1}+\vareps_n}$.
			we obtain that $f$ is divisible by $1-e^{-\vareps_{n-1}-\vareps_n}$.
			
			The restriction $\Res^{\gg}_{\gg_0}$ gives an embedding of the
			supercharacter ring of $\fp(n)$ to the
			supercharacter ring of $\fp(n)_0=\mathfrak{gl}_n$. In particular, 
			$f$ is $W$-invariant and thus $f$ is divisible by the element 
			\[
			R':=\prod\limits_{i=1}^n(1-e^{-\epsilon_i})\prod\limits_{\alpha\in\Delta_{-1}}(1-e^{\alpha})
			\]
			Since
			$R'$ is $W$-invariant one has
			$f=R' f'$ where $f'$ is a $W$-invariant element in 
			$\mathbb{Z}[e^{\nu},\nu\in\ft^*]$. 
			The ring $\mathbb{Z}[e^{\nu},\nu\in\ft^*]^W$ is the character ring
			of $\mathfrak{gl}_n$, so $f'$ can be written as
			$f'=\sum_j m_j \sch L_{\mathfrak{gl}_n}(\nu_i)$. This gives
			$f=\sum_j m_j  R'\sch L_{\mathfrak{gl}_n}(\nu_i)=\sum_j m_j k(\nu_i)$ as required.
			
			Finally, one can use a standard highest weight argument to show that the $k(\lambda)$ are linearly independent.
		\end{proof}
		
		\begin{remark} Take $n>0$.
			One has $\Lambda_G/\mathbb{Z}\Delta\cong\mathbb{Z}_2$. Writing
			$\Lambda_G=\mathbb{Z}\Delta+ (\vareps_1+\mathbb{Z}\Delta)$
			we have
			\[
			\cF(P(n))=\cF_0(\gg)\oplus \cF_1(\gg),\ \text{ where }
			\cF_0(\gg):=\cF^{\mathbb{Z}\Delta}(\gg),\ \ \cF_1(\gg):=\cF^{\mathbb{Z}\Delta+\vareps_1}(\gg).
			\]
			Thus we have 
			\[
			\Gr_-(\cF(P(n))=\Gr_-(\cF_0(\gg))\oplus\Gr_-(\cF_1(\gg))
			\]
			If $f\in \Gr_-(\cF(P(n))=\Gr_-(\cF_0(\gg))$  has $ds^1(f)=0$, then the above argument will imply that $f$ is divisible by $(1-e^{2\epsilon_n})$ and $(1-e^{\epsilon_{n-1}+\epsilon_n})$.  Applying $W$-invariance of $f$, we learn that it is the subspace of $\Gr_-(\cF(P(n))$ spanned by the supercharacters  $k_+(\lambda)$, i.e., the supercharacters of thick Kac modules $\Ind_{\gg\oplus\gg^{-1}}^{\gg}L_{\gg_0}(\lambda)$.
			
			However thick Kac modules do not span the kernel of $ds^1$; for instance when $n=2$, we have 
			\[
			\ds^1([L(\epsilon_1)]-[\mathbb{C}_{\str}]+[\mathbb{C}_{-\str}])=0.
			\]
			However one can show (using evaluation arguments) that $[L(\epsilon_1)]-[\mathbb{C}_{\str}]+[\mathbb{C}_{-\str}]$ is not in the span of supercharacters of thick Kac modules.
			
		\end{remark}



		\section{The $\DS$ functor and $\mathfrak{sl}(\infty)$-modules}\label{sec DS sl}
		
		In this section, we discuss a connection between the $\DS$ functor  and  $\mathfrak{sl}(\infty)$-modules arising from  $\mathfrak{gl}(m|n)$-representation theory, which was discovered and studied by Hoyt,  Penkov and Serganova in \cite{HPS}. We will recall some basic facts for $\mathfrak{sl}(\infty)$, and refer the reader to the book ``Classical Lie algebras at infinity''   by Penkov and Hoyt for an in-depth treatment of the Lie algebra $\mathfrak{sl}(\infty)$ and other locally finite Lie algebras \cite{PH}.

		In the pioneering paper  \cite{B}, Brundan showed that the complexification of the Grothendieck group for the categories $\cF (GL(m|n))$ and the integral BGG category $\mathcal{O}^\ZZ_{m|n}$ inherit a natural $\mathfrak{sl}(\infty)$-module structure from the action of translation functors $E_i,F_i$. This action and general categorification methods were used by Brundan, Losev and Webster in \cite{BLW} to develop Kazhdan-Lusztig theory for $\mathfrak{gl}(m|n)$.
		
		Now since the $\DS$ functor commutes with translation functors,  the induced homomorphism $\ds$  of  reduced Grothendieck groups is, in fact, a homomorphism of $\mathfrak{sl}(\infty)$-modules  \cite{HPS}. This homomorphism $\ds$ was used in  \cite{HPS} to help obtain a description of the $\mathfrak{sl}(\infty)$-module structure of the reduced Grothendieck groups for both of the categories $\cF^{\ZZ}_{m|n}$ and $\mathcal{O}^\ZZ_{m|n}$ of integral $\mathfrak{gl}(m|n)$-modules.

		\subsection{The Lie algebra $\mathfrak{sl}(\infty)$}
		
		The Lie algebra $\mathfrak{gl}\left(\infty\right)$ can be defined by taking countable-dimensional vector spaces $\bV,\bV_{*}$  with bases  $\{ v_{i}\} _{i\in\ZZ},\{ v_{j}^{*}\} _{j\in\ZZ}$, and letting $\mathfrak{gl}\left(\infty\right)=\bV\otimes \bV_{*}$ with bracket  (extended linearly) given by
		$$
		[v_{i}\otimes v_{j}^*,v_{k}\otimes v_{l}^*]=\langle v_{k},v_{j}^*\rangle v_{i}\otimes v_{l}^*-\langle v_{i}, v_{l}^*\rangle v_{k}\otimes v_{j}^*,
		$$
		where  $\langle\cdot,\cdot\rangle:\bV\otimes \bV_{*}\rightarrow\CC$ is the nondegenerate pairing defined by
		$\langle v_i,v_j^*\rangle=\delta_{ij}$.

		We can identify $\mathfrak{gl}(\infty)$ with the
		space of infinite matrices $\left(a_{ij}\right)_{i,j\in\ZZ}$
		which have only finitely many nonzero entries, using the correspondence $v_{i}\otimes v_{j}^{*}\mapsto E_{ij}$, where $E_{ij}$ is the matrix with $1$ in the $i,j$-position and zeros elsewhere.
		Under this identification, $\langle\cdot,\cdot\rangle$ is the trace map on $\mathfrak{gl}(\infty)$, and the kernel of $\langle\cdot,\cdot\rangle$ is the Lie algebra $\mathfrak{sl}\left(\infty\right)$.
		The center of $\mathfrak{gl}(\infty)$ is trivial, and the following exact sequence does not split:
		$$
		0\to\mathfrak{sl}(\infty)\to\mathfrak{gl}(\infty)\to\CC\to 0.
		$$
		The Lie algebra
		$\mathfrak{sl}\left(\infty\right)$ is generated by the elements $e_i: = E_{i, i+1}$, $f_i:=E_{i+1,i}$ for $i\in\ZZ$. 
		
		We can realize $\mathfrak{sl}(\infty)$ as a direct limit of finite-dimensional Lie algebras $\underrightarrow{\lim}\ \mathfrak{sl}\left(n\right)$, that is,
		$\mathfrak{sl}(\infty)$ is  isomorphic to a union $\bigcup_{n\in\ZZ_{\geq 2}} \mathfrak{sl}(n)$, of nested Lie algebras
		$$
		\mathfrak{sl}(2)\subset\mathfrak{sl}(3)\subset\cdots\subset\mathfrak{sl}(n)\subset\mathfrak{sl}(n+1)\subset\cdots.
		$$
		The Lie algebra obtained from this union is independent, up to isomorphism, of the choice of the  inclusions $\mathfrak{sl}(n)\hookrightarrow\mathfrak{sl}(n+1)$.

		\subsection{Modules over $\mathfrak{sl}(\infty)$ }
		
		The modules  $\bV,\bV_{*}$ are the defining representations of $\mathfrak{sl}(\infty)$, and for $p,q>0$, the tensor modules $\bV^{\otimes p}\otimes \bV_{*}^{\otimes q}$, $p,q\in\ZZ_{\geq 0}$ are not semisimple. 
		Schur-Weyl duality for $\mathfrak{sl}(\infty)$ implies that the module $\bV^{\otimes p}\otimes \bV_{*}^{\otimes q}$ decomposes as
		$$
		\bV^{\otimes p}\otimes \bV_{*}^{\otimes q}=\bigoplus_{|\boldsymbol{\lambda}|=p,|{\boldsymbol{\mu}}|=q}
		(\mathbb{S}_{\boldsymbol{\lambda}}(\bV)\otimes \mathbb{S}_{\boldsymbol{{\boldsymbol{\mu}}}}(\bV_*))\otimes (Y_{\boldsymbol{\lambda}}\otimes Y_{{\boldsymbol{\mu}}}),
		$$
		where $Y_{\boldsymbol{\lambda}}$ and $Y_{\boldsymbol{\mu}}$ are irreducible $S_p$- and $S_q$-modules,  $\mathbb{S}_{\boldsymbol{{\boldsymbol{\lambda}}}}$ denotes the Schur functor corresponding to the Young diagram  $\boldsymbol{\boldsymbol{\lambda}}$, and $|\boldsymbol{\boldsymbol{\lambda}}|$ is the size of $\boldsymbol{\boldsymbol{\lambda}}$.
		
		The $\mathfrak{sl}(\infty)$-modules  $\mathbb{S}_{\boldsymbol{\lambda}}(\bV)\otimes \mathbb{S}_{{\boldsymbol{\mu}}}(\bV_*)$ are indecomposable, and their socle filtration was described by Penkov and Styrkas in \cite{PStyr}. We recall that the {\em socle} of a module $\bM$, denoted $\soc\bM$,
		is the largest semisimple submodule of $\bM$, and that the {\em socle filtration} of $\bM$ is defined inductively by $\soc^0 \bM:=\soc\bM$ and $\soc^i \bM:=p_i^{-1}(\soc (\bM/(\soc^{i-1}\bM)))$, where $p_i:\bM\to
		\bM/(\soc^{i-1} \bM)$ is the natural projection. We denote the layers of the socle filtration by $\overline{\soc}^i \bM :=\soc^i \bM /\soc^{i-1} \bM $.
		From Thm. 2.3 of \cite{PStyr}, we have that the layers of $\mathbb{S}_{\boldsymbol{\lambda}}(\bV)\otimes \mathbb{S}_{{\boldsymbol{\mu}}}(\bV_*)$ are
		$$
		\overline{\soc}^k(\mathbb{S}_{\boldsymbol{\lambda}}(\bV)\otimes \mathbb{S}_{{\boldsymbol{\mu}}}(\bV_*))\cong\bigoplus_{\boldsymbol{\lambda'},\boldsymbol{\mu'}, |{\boldsymbol{\gamma}}|=k} N^{\boldsymbol{\lambda}}_{\boldsymbol{\lambda}',{\boldsymbol{\gamma}}}N^{\boldsymbol{\mu}}_{{\boldsymbol{\mu}}',{\boldsymbol{\gamma}}}\bV^{\boldsymbol{\boldsymbol{\lambda}}',{\boldsymbol{\mu}}'}
		$$
		where
		$N^{\boldsymbol{\lambda}}_{\boldsymbol{\lambda}',{\boldsymbol{\gamma}}}$ are the standard Littlewood-Richardson coefficients. In particular, the indecomposable module $\mathbb{S}_{\boldsymbol{\lambda}}(\bV)\otimes \mathbb{S}_{{\boldsymbol{\mu}}}(\bV_*)$ has a simple socle, denoted by $\bV^{\boldsymbol{\lambda},\boldsymbol{\mu}}$.
		For example, the layers of  $\Lambda^{m}\bV\otimes\Lambda^{n}\bV_*$ are given by $\overline{\soc}^i (\Lambda^{m}\bV\otimes\Lambda^{n}\bV_*)\cong \bV^{\left(m-i\right)^{\perp},\left(n-i\right)^{\perp}}$,  where $\perp$ indicates the conjugate Young diagram.
		
		An $\mathfrak{sl}(\infty)$-module is called a {\em tensor module} if it is isomorphic to a submodule of a finite direct sum of modules of the form $\bV^{\otimes p_i}\otimes \bV_{*}^{\otimes q_i}$ for $p_i,q_i \in\ZZ_{\geq 0}$.
		The category of tensor modules  $\mathbb{T}_{\mathfrak{sl}(\infty)}$  is by definition the full subcategory of $\mathfrak{sl}(\infty)\textrm{-mod}$  consisting of tensor modules \cite{DPS}.
		The modules $\bV^{\otimes p}\otimes \bV_{*}^{\otimes q}$, $p,q\in\ZZ_{\geq 0}$ are injective in  $\mathbb{T}_{\mathfrak{sl}(\infty)}$. Moreover, every indecomposable injective object of $\mathbb{T}_{\mathfrak{sl}(\infty)}$ is isomorphic to an indecomposable direct summand of  $\bV^{\otimes p}\otimes \bV_{*}^{\otimes q}$  for some $p,q\in\ZZ_{\geq 0}$, which means, it is isomorphic to
		$\mathbb{S}_{\boldsymbol{\lambda}}(\bV)\otimes \mathbb{S}_{\boldsymbol{\mu}}(\bV_*)$ for some $\boldsymbol{\lambda},\boldsymbol{\mu}$
		\cite{DPS}.
		
		\subsection{Representation theory of $\mathfrak{gl}(m|n)$}

		Let $\mathcal{O}_{m|n}$ denote the category of $\ZZ_{2}$-graded
		modules over $\mathfrak{gl}(m|n)$ which when restricted to $\mathfrak{gl}(m|n)_{0}$,
		belong to the BGG category $\mathcal{O}_{\mathfrak{gl}(m|n)_{0}}$  (see \cite{Mu}, Sec. 8.2.3).
		This category depends only on a choice of Borel subalgebra for  $\mathfrak{gl}(m|n)_{0}$, and not for $\mathfrak{gl}(m|n)$. We denote by $\mathcal{O}_{m|n}^{\ZZ}$ the Serre subcategory of $\mathcal{O}_{m|n}$
		consisting of modules with integral weights. Any simple object in $\mathcal{O}_{m|n}^{\ZZ}$ is isomorphic
		to $L(\lambda)$ for some integral weight  $\lambda$ (for a fixed Borel subalgebra of $\mathfrak{gl}(m|n)$). The objects of the category $\mathcal{O}_{m|n}^{\ZZ}$ have finite length. 
		We denote by  $\cF^{\ZZ}_{m|n}$ the Serre subcategory of $\mathcal{O}_{m|n}^{\ZZ}$
		consisting of finite-dimensional modules. (Note that $\cF^{\ZZ}_{m|n}=\cF^{\Lambda_G}(\mathfrak{gl}(m|n))$ as defined in Section \ref{subcats and DS}.)   Each simple object of   $\cF^{\ZZ}_{m|n}$ is isomorphic to $L(\lambda)$ for some dominant integral weight $\lambda$.

		We define the translation functors $\mathrm{E}_i,\mathrm{F}_i$  on the category $\mathcal{O}_{m|n}^{\ZZ}$ as follows.
		Let $X_j,Y_j$ be a pair of $\ZZ_2$-homogeneous dual bases of $\mathfrak{gl}(m|n)$ with respect to the $\mathfrak{gl}(m|n)$-invariant form $\operatorname{str}(XY)$. 
		For a pair of $\mathfrak{gl}\left(m|n\right)$-modules $V,W$, we define the Casimir operator $\Omega\in\operatorname{End}_{\mathfrak{gl}\left(m|n\right)}(V\otimes W)$ on homogeneous vectors by setting
		$$
		\Omega(v\otimes w):=\sum_j(-1)^{p(X_j)(p(v)+1)}X_j v\otimes Y_j w,
		$$
		where $p(\cdot)$ denotes the parity function.
		Let $U,U^*$ be the defining  $\mathfrak{gl}\left(m|n\right)$-modules. Then for every $M\in\mathcal{O}_{m|n}^{\ZZ}$, we let $\mathrm{E}_i(M)$
		(respectively, $\mathrm{F}_i(M)$) be the
		generalized eigenspace of $\Omega$ in $M\otimes U^*$ (respectively, $M\otimes U$) with eigenvalue~$i$.
		Then, as it follows from \cite{BLW}, the functor $\cdot\otimes U^*$ (respectively, $\cdot\otimes U$) decomposes into the direct sum of functors $\oplus_{i\in\ZZ}\mathrm{E}_i(\cdot)$ (respectively,
		$\oplus_{i\in\ZZ}\mathrm{F}_i(\cdot)$). Moreover, the functors $\mathrm{E}_i$ and $\mathrm{F}_i$ are adjoint functors on $\mathcal{O}_{m|n}^{\ZZ}$.

		\subsection{Grothendieck groups and the $\mathfrak{sl}(\infty)$-modules $\bK_{m|n}$, $\bJ_{m|n}$}
		
		We let $\bK_{m|n}$ (respectively, $\bJ_{m|n}$) denote the complexification of the reduced Grothendieck group of  $\mathcal{O}_{m|n}^{\ZZ}$ (respectively, of $\cF^{\ZZ}_{m|n}$), that is, 
		$$\bK_{m|n}:= \Gr_-(\mathcal{O}_{m|n}^{\ZZ})\otimes_{\ZZ}\CC,\hspace{1cm} \bJ_{m|n}:= \Gr_-(\cF^{\ZZ}_{m|n})\otimes_{\ZZ}\CC.$$
		We will denote by $e_i,f_i$ the linear operators that the translation functors $\mathrm{E}_i,\mathrm{F}_i$ induce on $\bK_{m|n}$ and  $\bJ_{m|n}$.
		Brundan showed in \cite{B} that if we identify  $e_i,f_i$ with the Chevalley generators  $E_{i,i+1}$,  $E_{i+1,i}$ of $\mathfrak{sl}(\infty)$, then we obtain  an $\mathfrak{sl}(\infty)$-module structure on $\bJ_{m|n}$ and $\bK_{m|n}$.
		
		Let $\bT_{m|n}\subset \bK_{m|n}$ denote the subspace generated by the classes $[M(\lambda)]$ of all Verma modules $M(\lambda)$ for $\lambda\in\Phi$. Let furthermore $\bW_{m|n}\subset\bJ_{m|n}$ denote the subspace generated by the classes $[K(\lambda)]$ of all Kac modules $K(\lambda)$ for $\lambda\in\Phi^+$.
		Then $\bW_{m|n}$ and $\bT_{m|n}$    are   $\mathfrak{sl}\left(\infty\right)$-modules under the action defined above, and $\bW_{m|n}\cong \Lambda^{m}\bV\otimes\Lambda^{n}\bV_*$ and $\bT_{m|n}\cong \bV^{\otimes m}\otimes \bV^{\otimes n}_*$  \cite{B}. The modules $\bT_{m,n}$ and $\bW_{m|n}$ are injective in the category $\mathbb{T}_{\mathfrak{sl}(\infty)}$, and $\bW_{m|n}$ is an indecomposable summand of  $\bT_{m|n}$.  Now let $\bP_{m,n}:= \Gr_-(\mathcal{P}_{m|n})\otimes_{\ZZ}\CC$ (respectively, $\bQ_{m|n}:= \Gr_-(\mathcal{Q}_{m|n})\otimes_{\ZZ}\CC$), where $\mathcal P_{m|n}$ (respectively, $\mathcal{Q}_{m|n}$) is the semisimple subcategory of $\mathcal{O}_{m|n}^{\ZZ}$  (respectively, of $\cF^{\ZZ}_{m|n}$) consisting of projective modules.  Then we have
		$\soc\bK_{m,n}=\soc\bT_{m,n}=\bP_{m,n}$ and $\soc\bJ_{m,n}=\soc\bW_{m,n}=\bQ_{m,n}$ \cite{HPS,CS}. Consequently, 
		$\bT_{m|n}$ (respectively, $\bW_{m|n}$) is the maximal submodule of $\bK_{m|n}$ (respectively, of $\bJ_{m|n}$) lying in the category $\mathbb{T}_{\mathfrak{sl}(\infty)}$ and in particular, $\bK_{m,n}$ and $\bJ_{m|n}$ are not objects of  $\mathbb{T}_{\mathfrak{sl}(\infty)}$. A new category $\mathbb{T}_{\mathfrak{sl}(\infty),2}$ of $\mathfrak{sl}(\infty)$-modules was introduced in \cite{HPS}  (wherein it is denoted $\mathbb{T}_{\gg,\kk}$) for which $\bK_{m,n}$ and $\bJ_{m|n}$  are injective objects.

		\subsection{The $\DS$ functor on $\mathcal{O}^\ZZ_{m|n}$}
		Let $X$ be the associated variety for  $\mathfrak{gl}(m|n)$, and let $x\in X_{k}=\Phi\left(\cS_{k}\right) $. By Prop. 33 of \cite{HPS}, the restriction of the functor $\DS_x$  to $\mathcal{O}_{m|n}$  is a well-defined functor to
		$\mathcal{O}_{m-k|n-k}$,
		and it follows that the further restriction  to $\mathcal O^{\ZZ}_{m|n}$  gives a well-defined functor
		$$\DS_x:\mathcal O^{\ZZ}_{m|n}\to \mathcal O^{\ZZ}_{m-k|n-k}.$$
		Using the naturality of the Casimir (see \cite{HPS} and \cite{GS}) and that $\DS_x$ is a tensor functor, we obtain that it commutes with translation functors. 
		
		The following propositions are proven in \cite{HPS}.
		
		\begin{proposition} The map $\ds_x:\bK_{m|n}\rightarrow\bK_{m-k|n-k}$ is a homomorphism of $\mathfrak{sl}(\infty)$-modules, and so is its restriction $\ds_x:\bJ_{m|n}\rightarrow\bJ_{m-k|n-k}$.
		\end{proposition}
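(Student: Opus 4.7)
The overall plan is as follows: since $\ds_x$ is already known to be a $\mathbb{C}$-linear homomorphism between the complexified reduced Grothendieck groups (by the general construction in Section~\ref{rem ds induce} together with the fact just noted that $\DS_x$ restricts to a functor $\mathcal{O}^{\ZZ}_{m|n}\to\mathcal{O}^{\ZZ}_{m-k|n-k}$), the task reduces to verifying that $\ds_x$ intertwines the actions of a generating set of $\mathfrak{sl}(\infty)$. Brundan's identification equips $\bK_{m|n}$ and $\bJ_{m|n}$ with their $\mathfrak{sl}(\infty)$-module structures by identifying the Chevalley generators $e_i$, $f_i$ with the operators on Grothendieck groups induced by the translation functors $\mathrm{E}_i$, $\mathrm{F}_i$. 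Thus the whole task amounts to showing that for each $M\in\mathcal{O}^{\ZZ}_{m|n}$ and each $i\in\ZZ$,
\[
\ds_x([\mathrm{E}_i M])=\mathrm{E}_i'\cdot\ds_x([M]),\qquad \ds_x([\mathrm{F}_i M])=\mathrm{F}_i'\cdot\ds_x([M]),
\]
where $\mathrm{E}_i'$, $\mathrm{F}_i'$ denote the analogous translation functors for $\mathfrak{gl}(m-k|n-k)$.

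First I would invoke the statement just preceding the proposition: $\DS_x$ commutes with the translation functors, i.e.\ there are natural isomorphisms $\DS_x\circ\mathrm{E}_i\cong\mathrm{E}_i'\circ\DS_x$ and $\DS_x\circ\mathrm{F}_i\cong\mathrm{F}_i'\circ\DS_x$ on $\mathcal{O}^{\ZZ}_{m|n}$. Passing to classes in the reduced Grothendieck group yields both displayed identities immediately, and hence $\ds_x$ respects the action of every Chevalley generator. Since these generate $\mathfrak{sl}(\infty)$, this completes the argument for $\bK_{m|n}$. The statement for the restriction to $\bJ_{m|n}$ follows identically, since $\DS_x$ sends $\cF^{\ZZ}_{m|n}$ into $\cF^{\ZZ}_{m-k|n-k}$ and the commutation with $\mathrm{E}_i$, $\mathrm{F}_i$ restricts along this inclusion.

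The main obstacle, which I would regard as granted by the cited references \cite{HPS,GS}, is the commutation $\DS_x\circ\mathrm{E}_i\cong\mathrm{E}_i'\circ\DS_x$ itself. This splits naturally into two parts. The easier part is that $\DS_x$ commutes with tensoring by the defining module $U$ and its dual $U^*$: since $\DS_x$ is a symmetric monoidal functor by Lemma~\ref{tensor}, and using the explicit realization of $\gg_x\subset\gg$ from Proposition~\ref{lm202} one identifies $\DS_x(U)$ and $\DS_x(U^*)$ with the defining $\mathfrak{gl}(m-k|n-k)$-module $U'$ and its dual $(U')^*$. The harder part is the naturality of the Casimir: under the resulting isomorphism $\DS_x(M\otimes U^*)\cong \DS_x(M)\otimes (U')^*$, the endomorphism induced by the Casimir $\Omega$ of $\mathfrak{gl}(m|n)$ must correspond to the Casimir $\Omega'$ of $\mathfrak{gl}(m-k|n-k)$ acting on $\DS_x(M)\otimes(U')^*$. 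Once this is in hand, the generalized $\Omega$-eigenspace with eigenvalue $i$ is carried by $\DS_x$ onto the generalized $\Omega'$-eigenspace with the same eigenvalue, which yields $\DS_x(\mathrm{E}_i M)\cong \mathrm{E}_i'\DS_x(M)$, and the argument for $\mathrm{F}_i$ is word-for-word the same.
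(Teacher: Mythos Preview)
Your argument is correct and follows precisely the approach indicated in the paper: the paper does not give a self-contained proof but simply records (immediately before the proposition) that naturality of the Casimir together with the tensor functoriality of $\DS_x$ yields $\DS_x\circ\mathrm{E}_i\cong\mathrm{E}_i'\circ\DS_x$ and $\DS_x\circ\mathrm{F}_i\cong\mathrm{F}_i'\circ\DS_x$, and then defers the details to \cite{HPS}. Your proposal spells out exactly this reasoning---passing to reduced Grothendieck groups after commuting $\DS_x$ with the translation functors---so there is nothing to add.
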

		
		The map $\ds_x:\bJ_{m|n}\rightarrow\bJ_{m-k|n-k}$ depends only on $k=|S|$ and not on $x$, so we will simply denote it by $\ds$ when $k=1$. (Note that this does not hold for $\bK_{m|n}$.)
		
		The next proposition follows from Theorem~\ref{thm3} (1).
		
		\begin{proposition}
			The kernel of  $\ds:\bJ_{m|n}\rightarrow\bJ_{m-1|n-1}$ is $$\Ker \ds =\bW_{m|n}.$$
		\end{proposition}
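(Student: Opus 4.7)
The plan is to deduce this from Theorem~\ref{thm3}(1), applied to the basic classical Lie superalgebra $\gg = \mathfrak{gl}(m|n)$ (which falls into the first case of that theorem, since it is not $\fp(n)$, $\fq(n)$, nor $\mathfrak{gl}(1|1)$). The theorem tells us that $\ker_1$, i.e.\ the kernel of $\ds^1 : \Gr_-(\gg) \to \Gr_-(\gg_x)$, has the virtual Kac characters $\{k(\lambda) : \lambda \in P^+(\gg_0) + \rho_{iso}\}$ as a basis. Since we are working in the finite-dimensional integral setting $\cF^{\ZZ}_{m|n}$, passing to the complexified reduced Grothendieck group $\bJ_{m|n}$ and restricting to $\ds : \bJ_{m|n} \to \bJ_{m-1|n-1}$ gives the kernel a description as the complex span of the same virtual Kac characters.

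The second step is to identify the virtual characters $k(\lambda)$ appearing in this basis with supercharacters of honest Kac modules. For $\gg = \mathfrak{gl}(m|n)$ with the distinguished Borel, the formula
\[
k(\lambda) = \sch L_{\gg_0}(\lambda) \prod_{\alpha \in \Delta_1^-}(1-e^{\alpha})
\]
recorded in Section~\ref{sec ds groth} coincides with $\sch K(\lambda)$ whenever $\lambda \in P^+(\gg_0)$, i.e.\ whenever $L_{\gg_0}(\lambda)$ is finite dimensional. One must check that, after the $\rho_{iso}$-shift and upon restricting to weights that give rise to finite-dimensional Kac modules (equivalently, dominant integral weights $\lambda \in \Phi^+$ for $\gg$), the indexing set in Theorem~\ref{thm3}(1) matches the indexing set $\Phi^+$ used in the definition of $\bW_{m|n}$. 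This is a straightforward bookkeeping computation given the distinguished system of simple roots of $\mathfrak{gl}(m|n)$.

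Once these identifications are in place, the conclusion is immediate: the basis of $\ker \ds$ inside $\bJ_{m|n}$ consists exactly of the classes $[K(\lambda)]$ for $\lambda \in \Phi^+$, and by definition this is a basis of $\bW_{m|n}$. Hence $\ker \ds = \bW_{m|n}$.

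The only real obstacle is the indexing reconciliation in step two: making sure that the $\rho_{iso}$-shift in Theorem~\ref{thm3}(1) genuinely lines up with the dominant-weight parameterization of Kac modules in $\cF^{\ZZ}_{m|n}$, and that there are no ``boundary'' contributions from weights $\lambda \in P^+(\gg_0) + \rho_{iso}$ that do not correspond to finite-dimensional Kac modules (which would be accounted for by the virtual nature of $k(\lambda)$ in general but must drop out of $\bJ_{m|n}$). Verifying this is routine but worth stating explicitly, since everything else is a direct consequence of the previously established theorem.
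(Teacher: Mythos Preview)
Your proposal is correct and matches the paper's approach exactly: the paper simply states that this proposition follows from Theorem~\ref{thm3}(1), and you have spelled out precisely how that deduction goes. Your discussion of the $\rho_{iso}$-shift and the identification of $k(\lambda)$ with $\sch K(\lambda)$ for $\mathfrak{gl}(m|n)$ is more explicit than the paper bothers to be, but it is the right bookkeeping and is indeed routine.
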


		The following result is from
		\cite{HPS}, Prop.~43.
		
		\begin{proposition}
			Fix a nonzero $x\in\gg_{\delta_j-\varepsilon_i}$, and denote by $ds_{i,j}:\bK_{m|n}\to \bK_{m-1|n-1}$
			the $\mathfrak{sl}(\infty)$-module homomorphism $ds_x$.
			We have $$\bigcap_{i,j}\Ker ds_{i,j}=\bT_{m|n}.$$
		\end{proposition}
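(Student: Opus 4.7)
The plan is to prove the two inclusions separately.

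For $\bT_{m|n}\subseteq\bigcap_{i,j}\ker ds_{i,j}$, I would argue that $DS_x(M(\lambda))=0$ for every Verma module $M(\lambda)$ and every nonzero $x\in\gg_{\delta_j-\varepsilon_i}$. The key observation is that, with respect to the distinguished Borel subalgebra, $\mathfrak{gl}(m|n)$ admits the $\ZZ$-grading $\gg=\gg^{-1}\oplus\gg^0\oplus\gg^1$ in which $\gg^{-1}$ is the abelian purely odd subalgebra spanned by the root spaces $\gg_{\delta_j-\varepsilon_i}$. In particular $x$ lies in $\gg^{-1}$. By PBW, $M(\lambda)\cong U(\gg^{-1})\otimes M_0(\lambda)$ as $\gg^{-1}$-modules, where $M_0(\lambda)$ is the $\gg_0$-Verma of highest weight $\lambda$ and $\gg^{-1}$ acts on the first factor by left multiplication. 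Since $\gg^{-1}$ is abelian and purely odd, $U(\gg^{-1})=\Lambda(\gg^{-1})$. Extending $x$ to a basis of $\gg^{-1}$ gives $\Lambda(\gg^{-1})\cong\Lambda(\CC x)\otimes\Lambda(\text{complement})$, and $\Lambda(\CC x)\cong\CC[x]/(x^2)$ is free over itself. Hence $M(\lambda)$ is free as a $\CC[x]/(x^2)$-module, so $DS_x(M(\lambda))=\Ker x_M/xM=0$.

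For the reverse inclusion I would work at the level of formal supercharacters. Extending the supercharacter map to $\bK_{m|n}$ gives an injection $\sch\colon\bK_{m|n}\hookrightarrow$ (suitable completion of $\ZZ[e^\nu]$), and for a Verma module
\[
\sch M(\lambda)\;=\;\frac{e^\lambda\prod_{k,l}(1-e^{\delta_l-\varepsilon_k})}{\prod_{\alpha\in\Delta_0^+}(1-e^{-\alpha})}.
\]
Using formula~\eqref{xaxaxa}, the condition $ds_{i,j}[M]=0$ translates to the vanishing of $\sch M\cdot R_0$ along the hyperplane $\{\nu(\alpha^\vee)=0\}$ where $\alpha=\delta_j-\varepsilon_i$. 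Since the numerator $N_M:=\sch M\cdot R_0$ lies in a $W$-invariant (in the super sense) subring and all roots $\delta_j-\varepsilon_i$ form a single orbit under $W=S_m\times S_n$, vanishing on one hyperplane propagates to all of them; hence $N_M$ is divisible by $\prod_{i,j}(1-e^{\delta_j-\varepsilon_i})$. Writing $N_M=\prod_{i,j}(1-e^{\delta_j-\varepsilon_i})\cdot f$ for some $W$-invariant $f$, and comparing with the numerators $N_{M(\lambda)}=e^\lambda\prod_{i,j}(1-e^{\delta_j-\varepsilon_i})$, one expresses $f$ as a $W$-invariant sum of exponentials and hence $[M]$ as a linear combination of Verma classes, that is $[M]\in\bT_{m|n}$.

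The main obstacle is the second step. The difficulty is making the supercharacter argument rigorous in the setting of $\mathcal O^{\ZZ}_{m|n}$, where $\sch M$ is an infinite formal series; one must work in an appropriate completion in which multiplication by the Weyl denominator, divisibility, and the operation of ``restriction to a hyperplane'' are all well-defined. A complementary issue is the careful translation of the functorial statement $ds_{i,j}[M]=0$ into an honest divisibility assertion on $N_M$, using the $\mathfrak{sl}(1|1)$-triple description of $x$ and the $W$-equivariance of the supercharacter; once this is set up, the orbit argument and comparison with Verma numerators yield membership in $\bT_{m|n}$ essentially mechanically, in close analogy with the proof of Theorem~\ref{thm3}(2) for $\pp(n)$.
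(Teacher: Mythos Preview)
Your argument for $\bT_{m|n}\subseteq\bigcap_{i,j}\ker ds_{i,j}$ is fine: since $\gg^{-1}$ is an abelian ideal of $\nn^-$, PBW with the $\gg^{-1}$--factor on the left shows $M(\lambda)$ is free over $\Lambda(\gg^{-1})$, hence over $\CC[x]/(x^2)$, so $DS_xM(\lambda)=0$.

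The reverse inclusion, however, has a genuine gap, and it is not merely a matter of working carefully in a completion. Your plan rests on the implication
\[
ds_{i,j}[M]=0\ \Longrightarrow\ (1-e^{\delta_j-\varepsilon_i})\ \text{divides}\ N_M=R_0\cdot\sch M,
\]
which you justify by invoking~\eqref{xaxaxa}. But~\eqref{xaxaxa}, and the underlying Lemma~\ref{ds restriction}, are statements about $\Fin(\gg)$: their proof uses $ds_x[M]=[\Res^{\gg}_{\gg_x}M]$, and for $M\in\mathcal O^{\ZZ}_{m|n}$ the restriction $\Res^{\gg}_{\gg_x}M$ is not an object of $\mathcal O^{\ZZ}_{m-1|n-1}$ (it has infinite length over $\gg_x$), so the identity simply does not hold in that Grothendieck group. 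Concretely, the right--hand side of~\eqref{xaxaxa} is a sum of $m_\nu$ over an infinite $\beta$--string of weights, and for a Verma or a simple in $\mathcal O$ this sum does not converge. Thus there is no formula expressing $ds_{i,j}[M]$ as an evaluation of $\sch M$ (or of $N_M$) on a hyperplane, and the bridge from the functorial hypothesis to a divisibility statement on $N_M$ is missing. In the finite--dimensional case (Theorem~\ref{thm3}(1)) this bridge is supplied by the $\mathfrak{sl}(1|1)$--decomposition: strings with nonzero $h$--eigenvalue contribute zero automatically, and~\eqref{xaxaxa} handles the remaining ones. For $\mathcal O$ the individual string sums of $\sch M$ are not even defined, and passing to the Laurent polynomial $N_M$ severs the direct link to $DS_x$. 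A separate, smaller point: the $W$--propagation step is superfluous, since the hypothesis already gives $ds_{i,j}[M]=0$ for \emph{every} pair $(i,j)$.

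The paper does not prove this proposition here; it cites \cite{HPS}, Prop.~43. The argument there is not a supercharacter/divisibility argument in the style of Theorem~\ref{thm3}, precisely because of the obstruction above; it uses instead the $\mathfrak{sl}(\infty)$--module structure of $\bK_{m|n}$ and the identification of $\bT_{m|n}$ as its maximal tensor submodule.
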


		\subsection{The socle filtration}\
		
		Here is a description of the $\mathfrak{sl}(\infty)$-module $\bJ_{m|n}$  (see \cite{HPS}, Cor.~29).
		
		\begin{theorem} The module $\bJ_{m|n}$ is an injective hull of the simple module $\bQ_{m|n}$,  and the socle filtration of $\bJ_{m|n}$ has layers
			\[
			\overline{\soc}^i \bJ_{m|n}\cong\left(\bV^{\left(m-i\right)^{\perp}\left(n-i\right)^{\perp}}\right)^{\oplus (i+1)}.
			\]
		\end{theorem}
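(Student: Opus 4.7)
The plan is to prove the result by induction on $\min(m,n)$, using the short exact sequence of $\mathfrak{sl}(\infty)$-modules coming from the Duflo--Serganova functor, together with the socle filtration of $\bW_{m|n}$ furnished by the Penkov--Styrkas description, and culminating in an identification of $\bJ_{m|n}$ with the injective hull of $\bQ_{m|n}$ in the category $\mathbb{T}_{\mathfrak{sl}(\infty),2}$ of \cite{HPS}.

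\emph{Base case.} For $\min(m,n)=0$, the superalgebra $\mathfrak{gl}(m|0)=\mathfrak{gl}(m)$ is reductive, so $\cF_{m|0}^{\ZZ}$ is semisimple and $\bJ_{m|0}=\bQ_{m|0}$, matching the claim trivially.

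\emph{Inductive step.} Fix any rank-$1$ element $x\in X$. By Theorem~\ref{thm2} (noting $\sigma_x=\Id$ for $\gg=\mathfrak{gl}(m|n)$) together with the proposition identifying $\ker(ds)$ as $\bW_{m|n}$, and using Proposition~\ref{maincch} to confirm that the image of $ds$ lies in $\bJ_{m-1|n-1}$, one obtains a short exact sequence of $\mathfrak{sl}(\infty)$-modules
\[
0\to \bW_{m|n}\to \bJ_{m|n}\xrightarrow{ds}\bJ_{m-1|n-1}\to 0.
\]
By Penkov--Styrkas, $\overline{\soc}^i\bW_{m|n}\cong \bV^{(m-i)^\perp,(n-i)^\perp}$ with multiplicity one, while by induction $\overline{\soc}^i\bJ_{m-1|n-1}\cong(\bV^{(m-1-i)^\perp,(n-1-i)^\perp})^{\oplus(i+1)}$.

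\emph{Assembling filtrations.} I would define
\[
F^i:=\soc^i\bW_{m|n}+ds^{-1}\bigl(\soc^{i-1}\bJ_{m-1|n-1}\bigr),\qquad \soc^{-1}:=0,
\]
and diagram-chase in the short exact sequence to obtain
\[
0\to \overline{\soc}^i\bW_{m|n}\to F^i/F^{i-1}\to \overline{\soc}^{i-1}\bJ_{m-1|n-1}\to 0,
\]
so that the Jordan--H\"older constituents of $F^i/F^{i-1}$ are $(\bV^{(m-i)^\perp,(n-i)^\perp})^{\oplus(i+1)}$. To conclude, I must show that $F^\bullet$ is in fact the socle filtration of $\bJ_{m|n}$, i.e.\ that each $F^i/F^{i-1}$ is semisimple and equal to $\soc(\bJ_{m|n}/F^{i-1})$.

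\emph{Main obstacle.} The crux is the last identification: ruling out that an additional simple subobject lifts from deeper inside $\bJ_{m-1|n-1}$ and appears one layer earlier than predicted. The cleanest route, and the one I would adopt, is to first establish that $\bJ_{m|n}$ is injective in $\mathbb{T}_{\mathfrak{sl}(\infty),2}$: both $\bW_{m|n}$ (in $\mathbb{T}_{\mathfrak{sl}(\infty)}$) and, by induction, $\bJ_{m-1|n-1}$ are injective in the appropriate categories, and injectivity should propagate across the above extension via an $\Ext^1$-vanishing in $\mathbb{T}_{\mathfrak{sl}(\infty),2}$. Once injectivity is known, the simple socle $\bQ_{m|n}$ identifies $\bJ_{m|n}$ as the injective hull of $\bV^{m^\perp,n^\perp}$, and the socle filtration formula for injective hulls in $\mathbb{T}_{\mathfrak{sl}(\infty),2}$ (an analogue in that larger category of the Penkov--Styrkas theorem) produces precisely the claimed layers. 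The surjectivity of $ds$ onto $\bJ_{m-1|n-1}$ used above is an immediate consequence of Theorem~\ref{thm2} since $\sigma_x=\Id$; this is routine but must be checked to set up the induction.
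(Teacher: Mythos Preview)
The paper does not give a proof of this statement; it is quoted as Corollary~29 of \cite{HPS}. So there is no ``paper's own proof'' to compare against, and your proposal must be assessed on its own.

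Your inductive set-up is sound: the short exact sequence $0\to\bW_{m|n}\to\bJ_{m|n}\to\bJ_{m-1|n-1}\to 0$ does follow from the results cited, and your filtration $F^\bullet$ has the correct composition factors in each layer. You also correctly isolate the genuine obstacle, namely showing that $F^\bullet$ is the socle filtration and not merely a filtration with the right graded pieces.

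However, your proposed resolution of that obstacle has a real gap. You write that ``injectivity should propagate across the above extension via an $\Ext^1$-vanishing,'' but this is not a general fact, and the ingredients you list do not suffice. You only know that $\bW_{m|n}$ is injective in the smaller category $\mathbb{T}_{\mathfrak{sl}(\infty)}$, not in $\mathbb{T}_{\mathfrak{sl}(\infty),2}$; indeed $\bW_{m|n}$ cannot be injective in the larger category, since otherwise the sequence would split and $\soc\bJ_{m|n}$ would contain $\bQ_{m|n}\oplus\bQ_{m-1|n-1}$, contradicting the cited fact that $\soc\bJ_{m|n}=\bQ_{m|n}$. So no naive homological argument will extract injectivity of $\bJ_{m|n}$ from the sequence alone. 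Moreover, your final step invokes a ``socle filtration formula for injective hulls in $\mathbb{T}_{\mathfrak{sl}(\infty),2}$'' as if it were known independently; but establishing such a formula (and the structure of indecomposable injectives in that category) is precisely the substantive content of \cite{HPS} that underlies the corollary. Without that input your argument is circular: you would be assuming the description of injective hulls in order to identify $\bJ_{m|n}$ as one. The honest route is to cite \cite{HPS} for the structure theory of $\mathbb{T}_{\mathfrak{sl}(\infty),2}$ and for the injectivity of $\bJ_{m|n}$, after which your short exact sequence and composition-factor count become a consistency check rather than a proof.
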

		
		For a proof of the following theorem, see
		\cite{HPS}, Thm.~24.
		
		\begin{theorem} The $\mathfrak{sl}(\infty)$-module $\bK_{m|n}$ is an injective hull in the category  $\mathbb{T}_{\mathfrak{sl}(\infty),2}$ of the semisimple module $\bP_{m|n}$. Furthermore, there is an isomorphism
			$$
			\bK_{m|n}\cong\bigoplus_{|\boldsymbol{\lambda}|=m,|\boldsymbol{\mu}|=n}\bI^{\boldsymbol{\lambda},\boldsymbol{\mu}}\otimes(Y_{\boldsymbol{\lambda}}\otimes Y_{{\boldsymbol{\mu}}})
			$$
			where $Y_{\boldsymbol{\lambda}}$ and $Y_{\boldsymbol{\mu}}$ are irreducible modules over $S_m$ and $S_n$, respectively, and $\bI^{\boldsymbol{\lambda},\boldsymbol{\mu}}$ is an injective hull of the simple module $\bV^{\boldsymbol{\lambda},\boldsymbol{\mu}}$ in $\mathbb{T}_{\mathfrak{sl}(\infty),2}$. The layers of the socle filtration of $\bK_{m|n}$  are given by
			$$
			\overline{\soc}^k  \bK_{m|n}\cong\bigoplus_{|\boldsymbol{\lambda}|=m,|\boldsymbol{\mu}|=n}(\overline{\soc}^k \bI^{\boldsymbol{\lambda},\boldsymbol{\mu}})^{\oplus (\dim Y_{\boldsymbol{\lambda}} \dim Y_{{\boldsymbol{\mu}}})}
			$$
			where 
			$$\overline{\soc}^k \bI^{\boldsymbol{\lambda},{\boldsymbol{\mu}}}\cong
			\bigoplus_{\boldsymbol{\lambda'},\boldsymbol{\mu'}}
			\bigoplus_{|{\boldsymbol{\gamma}}_1|+|{\boldsymbol{\gamma}}_2|=k}
			N^{\boldsymbol{\lambda}}_{{\boldsymbol{\gamma}}_1,{\boldsymbol{\gamma}}_2,\boldsymbol{\lambda}'}N^{{\boldsymbol{\mu}}}_{{\boldsymbol{\gamma}}_1,{\boldsymbol{\gamma}}_2,{\boldsymbol{\mu}}'}\bV^{\boldsymbol{\lambda}',{\boldsymbol{\mu}}'}.$$
		\end{theorem}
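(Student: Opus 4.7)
The plan is to combine three ingredients: a commuting $S_m\times S_n$-action on $\bK_{m|n}$ giving a Schur--Weyl-type decomposition, the previously established identification $\soc\bK_{m|n}=\bP_{m|n}$, and injectivity of $\bK_{m|n}$ in the category $\mathbb{T}_{\mathfrak{sl}(\infty),2}$ of \cite{HPS}.

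First I would construct commuting $S_m\times S_n$-actions on $\bK_{m|n}$ whose restrictions to $\bT_{m|n}$ recover the usual Schur--Weyl actions. The translation functors $\mathrm{E}_i$ and $\mathrm{F}_i$ are direct summands of $\cdot\otimes U^*$ and $\cdot\otimes U$ respectively, and iteration yields compositions like $\cdot\otimes U^{\otimes n}\otimes(U^*)^{\otimes m}$ on which $S_m\times S_n$ acts by tensor-factor permutations; these permutations commute with the Casimir-eigenspace projections, so they induce commuting $S_m\times S_n$-actions on $\bK_{m|n}$ at the level of the Grothendieck group. Isotypic decomposition under $S_m\times S_n$ then yields
$$\bK_{m|n}\cong\bigoplus_{|\boldsymbol{\lambda}|=m,|\boldsymbol{\mu}|=n}\bI^{\boldsymbol{\lambda},\boldsymbol{\mu}}\otimes(Y_{\boldsymbol{\lambda}}\otimes Y_{\boldsymbol{\mu}}),$$
where $\bI^{\boldsymbol{\lambda},\boldsymbol{\mu}}$ is the $(Y_{\boldsymbol{\lambda}}\otimes Y_{\boldsymbol{\mu}})$-isotypic $\mathfrak{sl}(\infty)$-component. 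Taking isotypic parts of $\soc\bK_{m|n}=\bP_{m|n}\cong\bigoplus\bV^{\boldsymbol{\lambda},\boldsymbol{\mu}}\otimes(Y_{\boldsymbol{\lambda}}\otimes Y_{\boldsymbol{\mu}})$ identifies $\soc\bI^{\boldsymbol{\lambda},\boldsymbol{\mu}}$ with the simple tensor module $\bV^{\boldsymbol{\lambda},\boldsymbol{\mu}}$.

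Next I would establish that $\bK_{m|n}$ is injective in $\mathbb{T}_{\mathfrak{sl}(\infty),2}$; combined with the simplicity of $\soc\bI^{\boldsymbol{\lambda},\boldsymbol{\mu}}$, this immediately identifies each summand $\bI^{\boldsymbol{\lambda},\boldsymbol{\mu}}$ as an injective hull of $\bV^{\boldsymbol{\lambda},\boldsymbol{\mu}}$ in $\mathbb{T}_{\mathfrak{sl}(\infty),2}$. Injectivity should follow from an $\Ext^1$-vanishing argument inside $\mathbb{T}_{\mathfrak{sl}(\infty),2}$, exploiting the previous proposition that $\bigcap_{i,j}\ker ds_{i,j}=\bT_{m|n}$, so that the ``non-tensor'' part of $\bK_{m|n}$ is controlled by the maps $ds_{i,j}:\bK_{m|n}\to\bK_{m-1|n-1}$, permitting an induction on $m+n$ with base case $\bK_{0|0}=\CC$.

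Finally, the socle-filtration formula for $\bI^{\boldsymbol{\lambda},\boldsymbol{\mu}}$ should be obtained inductively in $k$ using the maps $ds_{i,j}$. These maps are compatible with the $S_m\times S_n$-action (the action on the target factors through the natural $S_{m-1}\times S_{n-1}\hookrightarrow S_m\times S_n$), and therefore induce $\mathfrak{sl}(\infty)$-maps between isotypic components whose Littlewood--Richardson content can be read off from restriction rules for symmetric groups. The resulting double Littlewood--Richardson combinatorics---where $\boldsymbol{\gamma}_1$ tracks partitions removed from the $\bV$-type and $\boldsymbol{\gamma}_2$ from the $\bV_*$-type---parallel but enlarge the Penkov--Styrkas calculation for $\mathbb{S}_{\boldsymbol{\lambda}}(\bV)\otimes\mathbb{S}_{\boldsymbol{\mu}}(\bV_*)\subset\bT_{m|n}$, which only sees the ``diagonal'' case $|\boldsymbol{\gamma}_1|=|\boldsymbol{\gamma}_2|$ after identification $\boldsymbol{\gamma}_1=\boldsymbol{\gamma}_2=\boldsymbol{\gamma}$. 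The main obstacle will be establishing injectivity of $\bK_{m|n}$ in $\mathbb{T}_{\mathfrak{sl}(\infty),2}$, which requires a fine understanding of $\Ext$-vanishing and the socle structure of that category, as well as the careful bookkeeping in the double Littlewood--Richardson computation.
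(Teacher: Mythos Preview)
The paper does not contain a proof of this theorem; immediately before the statement it says ``For a proof of the following theorem, see \cite{HPS}, Thm.~24.'' This is a survey paper, and the result is quoted from \cite{HPS} without argument, so there is no proof here to compare your proposal against.

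That said, a brief comment on your outline: the construction of the $S_m\times S_n$-action on $\bK_{m|n}$ as you describe it is not quite right. You propose obtaining it by permuting tensor factors in iterated translation functors $\cdot\otimes U^{\otimes n}\otimes(U^*)^{\otimes m}$, but $\bK_{m|n}$ is the complexified reduced Grothendieck group of $\mathcal{O}^{\ZZ}_{m|n}$, not something obtained by applying translation functors to a fixed starting object; there is no canonical way to realize every class in $\bK_{m|n}$ as the image of a single module under such a composite. The isomorphism $\bT_{m|n}\cong\bV^{\otimes m}\otimes\bV_*^{\otimes n}$ does give an $S_m\times S_n$-action on the submodule $\bT_{m|n}$, but extending this to all of $\bK_{m|n}$ requires a separate argument. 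Your remaining steps (identifying the socle, establishing injectivity in $\mathbb{T}_{\mathfrak{sl}(\infty),2}$, and computing the socle layers via the maps $ds_{i,j}$) are in the right spirit and do reflect the structure of the argument in \cite{HPS}, but you correctly identify that injectivity in $\mathbb{T}_{\mathfrak{sl}(\infty),2}$ is the main technical obstacle, and this is where the bulk of the work in \cite{HPS} lies.
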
\

		
		\section{Projectivity criteria for quasireductive Lie superalgebras}\label{sec proj} 
		In this section, we assume that $\gg$ is a quasireductive Lie superalgebra, that is, $\gg_{0}$ is reductive and acts semisimply on $\gg_1$.
		We discuss to what extent the associated varieties for Lie superalgebras can be used to detect projectivity in the category of finite-dimensional $\gg$-modules.

		\subsection{Projectivity and the associated variety}
		
		Let $ \cF(\gg) $ be the category of finite-dimensional $ \gg $-modules which are semisimple over
		$ \gg_{0} $. The latter condition is automatic if $\gg_{0} $ is semisimple. The
		category $\cF(\gg)$ has enough projective modules and injective modules.
		By duality, every injective object is projective and vice
		versa. Moreover, every indecomposable projective module is a direct
		summand of
		$\operatorname{Ind}^\gg_{\gg_0}L$ for some simple $\gg_0$-module $L$.
		We say that a subalgebra $\kk\subset\gg$ is a quasireductive  subalgebra if $\kk_0$ is reductive
		and $\gg$ is a semisimple $\kk_0$-module.
		
		The following fact is useful.
		\begin{proposition}\label{resproj} Let $\kk$ be a quasireductive subalgebra of $\gg$.
			If $P$ is projective in $\cF(\gg)$ then
			$\operatorname{Res}_{\kk} P$ is projective in $\cF(\kk)$.
		\end{proposition}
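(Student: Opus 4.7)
The plan is as follows. Since every indecomposable projective in $\cF(\gg)$ is a direct summand of $\Ind_{\gg_0}^{\gg}L$ for some finite-dimensional $\gg_0$-module $L$, and restriction preserves direct summands, it suffices to prove that $\Res_{\kk}^{\gg}\Ind_{\gg_0}^{\gg}L$ is projective in $\cF(\kk)$ for every such $L$. I will establish this by constructing a natural isomorphism
\[
\Res_{\kk}^{\gg}\Ind_{\gg_0}^{\gg}L \;\cong\; \Ind_{\kk_0}^{\kk}\bigl(\Lambda(\mathfrak{m}_1)\otimes \Res_{\kk_0}^{\gg_0}L\bigr),
\]
where $\mathfrak{m}_1$ is a $\kk_0$-invariant complement to $\kk_1$ in $\gg_1$, which exists because $\gg_1$ is $\kk_0$-semisimple by the quasireductivity hypothesis.

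Given such an isomorphism the result follows immediately: the right-hand side is of the form $\Ind_{\kk_0}^{\kk}M$ for a $\kk_0$-module $M$, which is automatically semisimple (hence projective in $\cF(\kk_0)$), and induction $\Ind_{\kk_0}^{\kk}$, being left adjoint to the exact functor $\Res_{\kk_0}^{\kk}$, sends projectives to projectives.

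To construct the isomorphism, I would proceed via Frobenius reciprocity. PBW identifies $\Ind_{\gg_0}^{\gg}L$ with $\Lambda(\gg_1) \otimes L$ as $\gg_0$-modules under the diagonal action, and the $\kk_0$-module factorization $\Lambda(\gg_1) \cong \Lambda(\kk_1) \otimes \Lambda(\mathfrak{m}_1)$ produces a $\kk_0$-equivariant embedding $M := \Lambda(\mathfrak{m}_1) \otimes L \hookrightarrow \Res_{\kk_0}\Ind_{\gg_0}^{\gg}L$. By Frobenius reciprocity this extends uniquely to a $\kk$-module map $\phi \colon \Ind_{\kk_0}^{\kk}M \to \Res_{\kk}\Ind_{\gg_0}^{\gg}L$. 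Both modules have the same dimension $2^{\dim \gg_1}\cdot\dim L$, so it suffices to verify that $\phi$ is surjective.

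The main obstacle is precisely this surjectivity, which requires the right choice of PBW ordering. Order a basis of $\gg$ so that $\gg_0$-basis elements come first, then $\kk_1$-basis elements, and finally $\mathfrak{m}_1$-basis elements. With respect to this ordering, a product $k_{j_1}\cdots k_{j_r}$ of distinct $\kk_1$-basis elements acts on $\omega \otimes l \in \Lambda(\mathfrak{m}_1) \otimes L$ by producing exactly $(k_{j_1}\wedge\cdots\wedge k_{j_r}\wedge\omega) \otimes l$, with no lower-order corrections, simply because $k_{j_1}\cdots k_{j_r}\cdot \omega$ is already in PBW-ordered form inside $U(\gg)$. Hence the image of $\phi$ contains $\Lambda(\kk_1) \wedge \Lambda(\mathfrak{m}_1) \otimes L = \Lambda(\gg_1) \otimes L$, which gives surjectivity and completes the proof.
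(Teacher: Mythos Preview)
Your proof is correct and follows essentially the same route as the paper: reduce to $\Res_\kk\Ind_{\gg_0}^\gg L$, split $\gg_1=\kk_1\oplus\mathfrak{m}_1$ as $\kk_0$-modules, and use Frobenius reciprocity to identify $\Res_\kk\Ind_{\gg_0}^\gg L\cong\Ind_{\kk_0}^\kk(\Lambda(\mathfrak{m}_1)\otimes L)$. The paper states this isomorphism without further justification (writing $S^\bullet(\gg_1/\kk_1)$ for your $\Lambda(\mathfrak{m}_1)$), whereas you supply the extra detail of checking surjectivity via a PBW ordering with $\kk_1$ before $\mathfrak{m}_1$; this is a sound way to fill in what the paper leaves implicit.
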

		\begin{proof} If $P$ is projective then it is a direct summand of the
			induced module $\operatorname{Ind}^\gg_{\gg_0}N$
			for some semisimple 
			$\gg_0$-module $N$.
			Furthermore, we have an isomorphism of $\gg_0$-modules:
			$$\operatorname{Ind}^\gg_{\gg_0}N\simeq N\otimes S^\bullet(\gg_1)$$
			and an isomorphism of $\kk_0$-modules
			$$\gg_1=\kk_1\oplus(\gg_1/\kk_1).$$
			By Frobenius reciprocity the homomorphism of $\kk_0$-modules $$N\otimes S^\bullet(\gg_1/\kk_1)\to\operatorname{Ind}^\gg_{\gg_0}N$$
			induces an isomorphism
			$$\operatorname{Ind}^\kk_{\kk_0}(N\otimes S^\bullet(\gg_1/\kk_1))\simeq \operatorname{Res}_{\kk}\operatorname{Ind}^\gg_{\gg_0}N.$$
			We obtain that $\operatorname{Res}_{\kk}P$ is a direct summand of some module induced from a
			semisimple $\kk_0$-module. Therefore $P$ is projective
			in $\cF(\kk)$.
		\end{proof}
		
		We can now give another proof of Lemma~\ref{proj_lemma}.
		\begin{theorem} \label{th142}\myLabel{th142}\relax  Suppose $\gg$ is quasireductive.
			If $ M\in\cF(\gg) $ is projective, then $ X_M=\left\{0\right\} $.
		\end{theorem}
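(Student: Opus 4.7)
\medskip

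The plan is to give an alternative argument using Proposition~\ref{resproj}, rather than the induction-and-Lemma~\ref{lm2}(1) route of Lemma~\ref{proj_lemma}. The key observation is that for any nonzero $x \in X$, the one-dimensional odd subspace $\CC x \subset \gg$ is an abelian subsuperalgebra whose even part is zero, so it is trivially a quasireductive subalgebra of $\gg$ in the sense introduced just above.

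First, I would apply Proposition~\ref{resproj} with $\kk = \CC x$: it yields that $\Res_{\CC x} M$ is projective in $\cF(\CC x)$. Second, I would analyze the category $\cF(\CC x)$ directly. Any finite-dimensional $\CC x$-module decomposes, as a $\CC x$-module, into a direct sum of copies of the trivial one-dimensional module $\CC$ and copies of the free rank-one module $\CC[x]/(x^2)$ (of dimension $(1|1)$). Only the free summands are projective, since the trivial module is not a summand of any free module over the exterior algebra $\CC[x]/(x^2)$. Hence $\Res_{\CC x} M$ projective is equivalent to $M$ being a free $\CC x$-module.

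Third, freeness of $M$ over $\CC x$ translates exactly to $\Ker x_M = x M$, i.e.\ $M_x = 0$. Thus no nonzero $x \in X$ lies in $X_M$, which gives $X_M \subseteq \{0\}$. Combined with the fact that $0 \in X_M$ whenever $M \neq 0$ (trivially from the definition $M_0 = M$), we conclude $X_M = \{0\}$ provided $M$ is nonzero; the case $M = 0$ is vacuous.

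No step here is a real obstacle: the classification of finite-dimensional modules over $\CC[x]/(x^2)$ is elementary, and the serious content has been pushed into Proposition~\ref{resproj}, whose proof via the Frobenius reciprocity isomorphism $\Ind_{\kk_0}^{\kk}\bigl(N \otimes S^{\bullet}(\gg_1/\kk_1)\bigr) \simeq \Res_{\kk}\Ind_{\gg_0}^{\gg} N$ is already established. The only mildly subtle point to phrase carefully is the statement ``projective in $\cF(\CC x)$ iff free,'' and this is immediate once one notes that $\cF(\CC x)$ is just the category of finite-dimensional modules over the $(0|1)$-dimensional exterior algebra.
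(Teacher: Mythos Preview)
Your proposal is correct and is essentially the paper's own proof: the paper also takes $\kk=\CC x$ (written there as the subalgebra generated by $x$), invokes Proposition~\ref{resproj} to get projectivity of $\Res_{\CC x}M$, and concludes $M_x=0$. You have merely spelled out in more detail the step ``projective over $\CC[x]/(x^2)$ implies free implies $M_x=0$,'' which the paper compresses into a single clause.
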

		\begin{proof}
			Let $x\in X$ be nonzero, and consider the subalgebra $\kk=k\langle x\rangle$ generated by $x$.  Since $\kk$ is quasireductive, $\Res_{\kk}M$ is projective over $\kk$, which implies that $M_x=0$.
		\end{proof}
		\subsection{Criteria for type I Lie superalgebras}
		
		In this section we prove that for certain quasireductive Lie superalgebras the converse of Theorem~\ref{th142} holds.
		We start with the following.
		\begin{lemma}\label{lmcom} Let $\gg$ be quasireductive and $[\gg_1,\gg_1]=0$. If $X_M=\{0\}$ then $M$ is projective in $\cF(\gg)$.
		\end{lemma}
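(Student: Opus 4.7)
The plan is to use the hypothesis to show $M$ is free as a module over the exterior algebra $U(\gg_{1})=\Lambda(\gg_{1})$, and then transfer this freeness to projectivity in $\cF(\gg)$ using the semidirect product structure $\gg=\gg_{0}\ltimes\gg_{1}$ coming from $[\gg_{1},\gg_{1}]=0$. Since every odd element is self-commuting one has $X=\gg_{1}$, so the hypothesis $X_{M}=\{0\}$ reads: $\ker x_{M}=xM$ for every nonzero $x\in\gg_{1}$.

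The key step is the following exterior-algebra analogue of Dade's lemma: a finite-dimensional $\Lambda(V)$-module $N$ with $N_{x}=\ker x_{N}/xN=0$ for every nonzero $x\in V$ must be free. I would prove this by induction on $\dim V$. The case $\dim V=1$ is immediate from the classification of indecomposables over $\CC[x]/(x^{2})$, since the condition $N_{x}=0$ rules out the trivial summands. For the inductive step, fix a nonzero $x\in V$; freeness over $\CC[x]/(x^{2})$ gives a vector-space decomposition $N=xN\oplus N_{0}$, and one studies $N/xN$ as a module over $\Lambda(V/\CC x)$. The point is to verify that the vanishing $N_{y}=0$ for $y\in V$ descends to $(N/xN)_{\bar y}=0$ for every nonzero $\bar y\in V/\CC x$, which one handles by working inside each two-dimensional subspace $\CC x+\CC y\subset V$ and reducing to the $\dim V=2$ case. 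Alternatively, one may pass to the Koszul-dual picture $\Ext^{*}_{\Lambda(V)}(\CC,\CC)=S(V^{*})$, under which the rank locus $\{x\in V:N_{x}\neq 0\}$ is identified with the support in $\operatorname{Spec} S(V^{*})$ of the finitely generated graded $S(V^{*})$-module $\Ext^{*}_{\Lambda(V)}(N,\CC)$; empty support forces this $\Ext$-module to vanish in positive degree, so $N$ is projective, and projective equals free over the local ring $\Lambda(V)$. This Dade-type lemma is the main obstacle in the proof.

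Once $M$ is known to be free over $\Lambda(\gg_{1})$, the remainder is routine. Using that $\gg_{0}$ is reductive and $M$ is semisimple over $\gg_{0}$, choose a $\gg_{0}$-stable complement $V\subset M$ to $\gg_{1}M$, so that $V\to M/\gg_{1}M$ is a $\gg_{0}$-isomorphism. The inclusion $V\hookrightarrow M$ extends by Frobenius reciprocity to a $\gg$-module map
\[
\phi:\Ind_{\gg_{0}}^{\gg}V\longrightarrow M.
\]
The PBW isomorphism $U(\gg)\cong\Lambda(\gg_{1})\otimes U(\gg_{0})$ identifies the source with the free $\Lambda(\gg_{1})$-module $\Lambda(\gg_{1})\otimes V$ of rank $\dim V$, while freeness of $M$ and the equality $\dim(M/\gg_{1}M)=\dim V$ make the target free of the same rank. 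By construction $\phi$ is an isomorphism modulo the maximal ideal $\gg_{1}\Lambda(\gg_{1})$, so Nakayama's lemma for the local ring $\Lambda(\gg_{1})$ shows $\phi$ is surjective, and a dimension count over $\CC$ then forces $\phi$ to be an isomorphism. Hence $M\cong\Ind_{\gg_{0}}^{\gg}V$, which is projective in $\cF(\gg)$ because $V$ is a finite-dimensional semisimple $\gg_{0}$-module.
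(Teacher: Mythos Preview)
Your proposal is correct and follows essentially the same two-step approach as the paper: first show that $X_M=\{0\}$ forces $M$ to be free over $\mathcal{U}(\gg_1)\cong\Lambda(\gg_1)$, then use a $\gg_0$-splitting of $M\twoheadrightarrow M/\gg_1 M$ to identify $M$ with $\Ind_{\gg_0}^{\gg}(M/\gg_1 M)$. The only difference is in presentation of the first step: the paper simply invokes the result of Avramov--Buchweitz on support varieties over complete intersections, whereas you sketch the Dade-type argument directly (your Koszul-dual alternative is precisely the mechanism behind that citation).
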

		\begin{proof} Since $\mathcal{U}(\gg_1)$ is isomorphic to the exterior algebra $\Lambda(\gg_1)$ we have that $X_M=\{0\}$ implies that $M$ is free over $\mathcal{U}(\gg_1)$, see \cite{AB}. Then an embedding of $\gg_0$-modules $M/\gg_1M\hookrightarrow M$ induces an isomorphism $\Ind^\gg_{\gg_0}(M/{\gg_1}M)\simeq M$. Therefore $M$ is projective.
		\end{proof}
		\begin{theorem}\label{thtype1} Assume that $\gg_0$ is reductive and there exists an element $h$ in the center of $\gg_0$ such that $\ad_h$ acts diagonally on
			$\gg_1$ with nonzero real eigenvalues. If $X_M=\{0\}$ then $M$ is projective in $\cF(\gg)$.
		\end{theorem}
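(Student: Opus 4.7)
My plan is to use $h$ to split $\gg_1$ into two abelian odd subspaces, apply Lemma~\ref{lmcom} on each side to obtain projectivity after restriction, and then leverage the resulting type I structure of $\gg$ to pass back to projectivity over $\gg$ itself.

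Since $h$ is central in $\gg_0$ and $\ad_h$ acts on $\gg_1$ with nonzero real eigenvalues, we split $\gg_1=\gg_1^+\oplus\gg_1^-$ according to the sign of the eigenvalue. Because $[\gg_1,\gg_1]\subseteq\gg_0$, on which $\ad_h$ acts by zero, whereas the bracket of two elements of $\gg_1^+$ (resp.\ $\gg_1^-$) would have to lie in a positive (resp.\ negative) eigenspace of $\ad_h$, we conclude $[\gg_1^+,\gg_1^+]=[\gg_1^-,\gg_1^-]=0$. Setting $\kk^\pm:=\gg_0\oplus\gg_1^\pm$, we therefore have two quasireductive subalgebras of $\gg$ with abelian odd parts. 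Every nonzero $x\in\gg_1^+$ lies in $X$ since $[x,x]=0$, so the hypothesis $X_M=\{0\}$ forces $M_x=0$; hence the associated variety of $M|_{\kk^+}$ is trivial, and Lemma~\ref{lmcom} yields that $M|_{\kk^+}$ is projective in $\cF(\kk^+)$. Symmetrically, $M|_{\kk^-}$ is projective in $\cF(\kk^-)$.

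The main obstacle is concluding from this pair of restriction-level projectivities that $M$ is projective as a $\gg$-module. Here the plan is to exploit the type I structure $\gg=\kk^+\oplus\gg_1^-=\kk^-\oplus\gg_1^+$ (as vector spaces, with $\gg_1^\mp$ abelian). The counit $\epsilon\colon\Ind^{\gg}_{\kk^+}M|_{\kk^+}\to M$ is surjective, and since induction is left adjoint to the exact functor $\Res^{\gg}_{\kk^+}$ it sends projectives to projectives; thus $\Ind^{\gg}_{\kk^+}M|_{\kk^+}$ is projective in $\cF(\gg)$. Starting from the observation that the kernel of $\epsilon$ is a direct summand of $(\Ind^{\gg}_{\kk^+}M|_{\kk^+})|_{\kk^+}$ as a $\kk^+$-module (because $M|_{\kk^+}$ is projective, and hence injective, in $\cF(\kk^+)$), one iteratively builds a finite projective resolution of $M$ in $\cF(\gg)$ whose length is bounded by $\dim\gg_1^-$.

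To convert this finite projective resolution into actual projectivity of $M$, one uses that $\cF(\gg)$ is a Frobenius category. The super Frobenius reciprocity identification $\Coind^{\gg}_{\kk^\pm}(-)\cong\Ind^{\gg}_{\kk^\pm}(-\otimes\Lambda^{\mathrm{top}}\gg_1^\mp)$ shows that induction from either parabolic sends projectives to objects that are simultaneously projective and injective; in particular, every term of the resolution constructed above is injective, so the top term splits off the term just below it and an induction on length collapses the resolution, leaving $M$ itself projective. The principal technical work lies in verifying the super Frobenius identification in our setting and in confirming that the inductive splitting-off argument terminates cleanly; symmetry between $\kk^+$ and $\kk^-$ (giving both a finite projective and a finite injective resolution) is the key device that lets us carry this through.
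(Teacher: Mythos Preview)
Your initial setup matches the paper's: you correctly split $\gg_1$ by the sign of the $h$-eigenvalue, observe that $\gg_1^\pm$ are abelian, and apply Lemma~\ref{lmcom} to conclude that $M|_{\kk^\pm}$ is projective in $\cF(\kk^\pm)$. The Frobenius reasoning at the end is also sound: in $\cF(\gg)$ projectives and injectives coincide, so a finite projective resolution would indeed collapse to give projectivity of $M$.

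The gap is the middle step. Your iterative construction does \emph{not} produce a finite resolution. With $d=\dim\gg_1^-$ one has $\dim P_i=2^d\dim K_i$ and hence $\dim K_{i+1}=\dim P_i-\dim K_i=(2^d-1)\dim K_i$, so $\dim K_i=(2^d-1)^i\dim M$ never reaches zero. More conceptually, the only property you carry along the iteration is that $K_i|_{\kk^+}$ is projective in $\cF(\kk^+)$, which already held for $K_0=M$; nothing new forces any $K_i$ to become projective in $\cF(\gg)$. In fact, since $\cF(\gg)$ is Frobenius, having finite projective dimension is \emph{equivalent} to being projective, so asserting a finite projective resolution is asserting the conclusion. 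The symmetry you invoke between $\kk^+$ and $\kk^-$ does not help for the same reason: a finite injective coresolution is the same datum as a finite projective resolution here.

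The paper avoids resolving $M$ altogether. From projectivity over $\kk^+$ (equivalently, freeness over $\mathcal{U}(\gg_1^+)$) it builds a finite filtration of $M$ with subquotients $K^-(L_i)=\Ind^\gg_{\kk^+}L_i$: take the top $h$-eigenspace $L_1$, note $\gg_1^+L_1=0$, embed $K^-(L_1)\hookrightarrow M$, and iterate on the quotient. Dually $M^*$ has a filtration by $K^+(N_j)=\Ind^\gg_{\kk^-}N_j$. Then $M\otimes M^*$ inherits a filtration whose layers are $K^-(L_i)\otimes K^+(N_j)\cong\Ind^\gg_{\gg_0}(L_i\otimes N_j)$, which are projective; hence $M\otimes M^*$ is projective. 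Finally, in a rigid tensor category $M$ is a direct summand of $M\otimes M^*\otimes M$, so $M$ is projective. The idea you are missing is this pairing of the two opposite Kac filtrations through the tensor product, which manufactures genuine projectives without ever needing a resolution of $M$.
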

		\begin{proof} Write down $\gg=\gg_+\oplus\gg_0\oplus\gg_-$, where
			$\gg_{+}$ (respectively, $\gg_{-}$)
			denote the span of $\ad_h$-eigenvectors with positive (respectively, negative)
			eigenvalues. Since $\gg_{\pm}$ are purely odd subalgebras, they are commutative, hence $\pp_+:=\gg_0\oplus\gg_+$ and $\pp_-:=\gg_0\oplus\gg_-$ are
			subalgebras satisfying the condition of Lemma~\ref{lmcom}. In particular, if $X_M=\{0\}$, then $M$ is projective in  $\cF(\pp_\pm)$.
			For a  $\gg_0$-module $L$ set
			$K^{\pm}(L):=\Ind^\gg_{\pp_\pm}L$. We claim that there exists a finite filtration
			$$0=M_0\subset M_1\subset\dots\subset M_k$$
			such that $M_i/M_{i-1}\simeq K^-(L_i)$. Indeed, let $L_1$ be $h$-eigenspace with maximal eigenvalue. Then $\gg_+L_1=0$ and we have an embedding $K^-(L_1)\subset M$.
			The quotient $M/K^-(L_1)$ is again free over $\mathcal{U}(\gg_+)$ and projective in $\cF(\pp_+)$. Hence we can finish the proof by induction on dimension of $M$.
			Similarly $M^*$ has a finite filtration with quotients isomorphic to $K^+(N_j)$. Therefore $M\otimes M^*$ has a filtration with quotients isomorphic to
			$K^+(L_i)\otimes K^-(N_j)\simeq\Ind^\gg_{\gg_0}(L_i\otimes N_j)$. In other words  $M\otimes M^*$ has a filtration by projective modules. Therefore  $M\otimes M^*$  is projective
			in $\cF(\gg)$.
			Then $M\otimes M^*\otimes M$ is also projective. In any symmetric monoidal rigid category $M$ is a direct summand of $M\otimes M^*\otimes M$.
			Therefore $M$ is projective.
		\end{proof}
		\begin{corollary}\label{type1} Theorem~\ref{thtype1} holds for $\mathfrak{gl}(m|n)$, $\mathfrak{sl}(m|n)$, $m\neq n$, $\mathfrak{osp}(2|2n)$ and $\pp(n)$.
		\end{corollary}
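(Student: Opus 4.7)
The strategy is to verify the hypotheses of Theorem~\ref{thtype1} directly for each entry in the list by exhibiting an explicit element $h\in Z(\gg_0)$ with $\ad_h$ diagonalizable on $\gg_1$ and nonzero real eigenvalues. In each case $\gg_0$ is visibly reductive, and the needed $h$ is simply the grading derivation of the Type~I $\mathbb{Z}$-grading $\gg=\gg^{-1}\oplus\gg^{0}\oplus\gg^{1}$ that each of these superalgebras carries (i.e., the grading is inner and the grading element lies in $Z(\gg_0)$). What remains is a direct check, case by case.

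For $\mathfrak{gl}(m|n)$, take $h=0\oplus I_{n}\in\mathfrak{gl}(m)\oplus\mathfrak{gl}(n)=\gg_0$: $h$ is central in $\gg_0$, and $\ad_h$ acts as $\mp 1$ on the upper-right and lower-left odd blocks. For $\mathfrak{sl}(m|n)$ with $m\neq n$, the centre of $\gg_0=\mathfrak{sl}(m)\oplus\mathfrak{sl}(n)\oplus\mathbb{C}$ is one-dimensional, spanned by $h=nI_{m}\oplus mI_{n}$; this element has vanishing supertrace and thus lies in $\mathfrak{sl}(m|n)$, and $\ad_h$ acts as $\pm(n-m)\neq 0$ on the two odd blocks. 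For $\mathfrak{osp}(2|2n)$, the centre of $\gg_0=\mathfrak{so}(2)\oplus\mathfrak{sp}(2n)$ is $\mathfrak{so}(2)$, which acts through the defining two-dimensional representation on the first factor of $\gg_{1}\cong\mathbb{C}^{2}\otimes\mathbb{C}^{2n}$; choosing a generator whose eigenvalues on $\mathbb{C}^{2}$ are $\pm 1$ produces the required $h$. Finally, for $\pp(n)$, take $h$ to correspond to $I_{n}\in\mathfrak{gl}(n)=\gg_0$ in the realization of Section~\ref{sec pq}; since odd matrices in $\pp(n)$ are of the form $\left(\begin{smallmatrix}0 & B\\ C & 0\end{smallmatrix}\right)$ with $B$ symmetric and $C$ skew-symmetric, a direct computation shows that $\ad_h$ acts as $\pm 2$ on $\gg^{\pm 1}$.

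Applying Theorem~\ref{thtype1} in each case completes the proof. There is no serious obstacle: the corollary is a case-by-case verification whose only content is the existence of the grading element in $Z(\gg_0)$, which is precisely what it means for these superalgebras to be of Type~I. The remaining entries of the list~(\ref{list1}) fail this test (for instance, for $\mathfrak{sl}(n|n)$ the natural central candidate becomes the scalar matrix, which kills $\gg_1$), and this is why they are excluded from the statement.
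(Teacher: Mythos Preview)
Your proof is correct and is precisely the intended argument: the paper states the corollary without proof, since it follows immediately once one observes that each listed superalgebra is of Type~I, i.e., admits a central $h\in\gg_0$ grading $\gg_1$ with nonzero real eigenvalues. Your case-by-case exhibition of such an $h$ (the grading element of the $\mathbb{Z}$-grading in each case) is exactly the verification that makes this explicit, and all the eigenvalue computations are right.
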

		\begin{remark}\label{counterexample} Let
			$\gg=\mathfrak{sl}(1|1)$. Then it is easy to construct a
			$\gg$-module $M$ such that
			$X_M=0$ and $M$ is not projective. Recall that $\gg_1$ has a basis $\{x,y\}$ and $\gg_0=\CC z$ 
			with $[x,y]=z$, $[z,x]=[z,y]=0$. Then $X=\CC x\cup\CC y$.
			Let $M=\CC^{1|1}$, $z$ acts trivially on $M$, and both $x$ and $y$ act via the same matrix
			$\left(\begin{matrix}0&1\\0&0\end{matrix}\right)$. Clearly $M$ is
			not projective. Note that the $\mathfrak{sl}(1|1)$-module $M$ is not the restriction of a $\mathfrak{gl}(1|1)$-module.
		\end{remark}

		\subsection{Rank varieties}\label{section ss support}  
		From reductivity, we have a well-defined notion of semisimple elements in $\gg_0$, so the following definition makes sense.
		\begin{definition}
			We set
			\[
			\gg_1^{hom}:=\{x\in\gg_{1}:[x,x]\text{ is semisimple}\}.
			\]
			We refer to elements of $\gg_1^{hom}$ as homological elements.
		\end{definition}
		\begin{remark}\label{odd ss elem comment}
			Clearly we have $X\subseteq\gg_1^{hom}$.  Further $\gg_1^{hom}$ is $G_0$-stable, just like $X$.  However $\gg_1^{hom}$ is no longer closed in $\gg_1$, and its geometric structure is much more complicated.
		\end{remark}
		
		Let $x\in\gg_1^{hom}$ and write $h=[x,x]$.  Then for $M$ in $\cF(\gg)$, if we consider $M^h$, the fixed points of $h$ on $M$, it is $x$-stable and further $x$ defines a square-zero endomorphism on it.  Thus we may define
		\[
		M_x:=(\ker x|_{M^h})/(\Im x|_{M^h}).
		\]
		This defines a functor which we continue to call $DS_x$, the Duflo--Serganova functor for the element $x$.  Note that the Duflo--Serganova functor as we defined it in Section~\ref{sec:definitions} 
		comes from the case when $h=0$. 
		
		\begin{remark}
			It is easy to check that Lemma~\ref{gx Mx} and Lemma~\ref{tensor} hold for this generalization of the $DS$ functor.  
		\end{remark}
		
		The following space was considered in \cite{ES4}.
		\begin{definition}
			Let $M$ be in $\cF(\gg)$ and define the rank variety of $M$ to be
			\[
			X_M^{rk}:=\{x\in\gg_1^{hom}:M_{x}\neq0\}
			\]
		\end{definition}
		Again we have that $X_M^{rk}\subseteq\gg_1^{hom}$ is $G_0$-stable; however as is hinted in Remark~\ref{odd ss elem comment}, the geometric structure of $X_M^{rk}$ can be quite complicated.  
		
		We note that rank varieties share many of the same properties as associated varieties; in particular, all properties from Lemma~\ref{lm2} continue to hold.  In particular, we can use an analogous proof as in Theorem~\ref{th142} to show that:
		
		\begin{proposition}
			Let $P$ be projective in $\cF(\gg)$.  Then $X_{P}^{rk}=\{0\}$.
		\end{proposition}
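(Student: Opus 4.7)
The plan is to emulate the proof of Theorem~\ref{th142}, replacing the purely odd one-dimensional subsuperalgebra generated by $x$ with a two-dimensional quasireductive subsuperalgebra adapted to the possibly nonzero element $h := [x,x]$. For a nonzero $x \in \gg_1^{hom}$ set $\kk := \mathbb{C}h + \mathbb{C}x \subseteq \gg$. Applying the super Jacobi identity to the odd triple $(x,x,x)$ gives $[x,[x,x]] = 0$, whence $[h,x] = 0$; together with $[x,x] = h$ this shows that $\kk$ is a subsuperalgebra with $\kk_0 = \mathbb{C}h$ and $\kk_1 = \mathbb{C}x$. Moreover $\kk$ is quasireductive in $\gg$: $\kk_0$ is abelian (hence reductive), and since $h$ is semisimple in $\gg_0$ and $\gg_1$ is semisimple over $\gg_0$, the element $h$ acts semisimply on all of $\gg$.

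By Proposition~\ref{resproj}, $\Res_\kk P$ is projective in $\cF(\kk)$. It therefore suffices to show that every projective $M \in \cF(\kk)$ satisfies $M_x = 0$. Such an $M$ is a direct summand of $\Ind_{\kk_0}^\kk N$ for some finite-dimensional semisimple $\kk_0$-module $N$. Using the relation $2x^2 = h$ in $U(\kk)$, one identifies $U(\kk) = \mathbb{C}[h] \oplus x\,\mathbb{C}[h]$ as a $U(\kk_0)$-module, giving $\Ind_{\kk_0}^\kk N = N \oplus xN$ with $x$-action $v \mapsto xv$ and $xv \mapsto (h/2)\,v$. Since $[h,x] = 0$ the $h$-grading is preserved; the $h$-fixed subspace is $N^0 \oplus xN^0$, on which $\Ker x = \Im x = xN^0$, so $(\Ind_{\kk_0}^\kk N)_x = 0$. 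Additivity of $DS_x$ (extended in the natural way to $\gg_1^{hom}$, as noted in the remark preceding the proposition) yields $M_x = 0$ for every direct summand, and hence $(\Res_\kk P)_x = P_x = 0$.

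I expect the main obstacle to be the structural analysis of induced modules over $\kk$ and the attendant bookkeeping for $DS_x$ in the extended setting, namely its compatibility with restriction and with direct summands. The super Jacobi step and the quasireductivity of $\kk$ are routine; once the induced-module calculation is in place the argument runs strictly parallel to Theorem~\ref{th142}.
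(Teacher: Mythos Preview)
Your proposal is correct and follows essentially the same approach as the paper. The paper simply asserts that one can use an argument analogous to Theorem~\ref{th142}; you carry this out explicitly by forming the quasireductive subalgebra $\kk=\mathbb{C}h+\mathbb{C}x$, invoking Proposition~\ref{resproj}, and then verifying by a direct computation on $\Ind_{\kk_0}^{\kk}N$ that every projective in $\cF(\kk)$ has vanishing $DS_x$.
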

		
		We make the following conjecture:
		
		\begin{conjecture}\label{ss support conj}
			Let $\gg$ be quasireductive and suppose that $M$ is in $\cF(\gg)$ with $X_{M}^{rk}=\{0\}$.  Then $M$ is projective.
		\end{conjecture}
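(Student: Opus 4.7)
The plan is to generalize the argument of Theorem~\ref{thtype1} from the type I setting to general quasireductive $\gg$. As in the proof of that theorem, one first reduces to showing that $M \otimes M^*$ is projective: $M$ is a direct summand of $M \otimes M^* \otimes M$ in the symmetric monoidal rigid category $\cF(\gg)$, and the analogue of Lemma~\ref{lm2}(4) for rank varieties, namely $X^{rk}_{M \otimes N} = X^{rk}_M \cap X^{rk}_N$, shows the hypothesis is preserved under tensoring. So one may assume $M$ is self-dual.

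For each nonzero $x \in \gg_1^{hom}$ set $h := [x,x]$. A short super-Jacobi computation shows $[h,x] = 0$, so $\mathfrak{a}_x := \CC h \oplus \CC x$ is a two-dimensional quasireductive subsuperalgebra of $\gg$. Decomposing $\Res_{\mathfrak{a}_x} M$ into $h$-eigenspaces $M_\lambda$ and observing that $x$ acts invertibly on every $M_\lambda$ with $\lambda \neq 0$ (since $x^2$ acts there as a nonzero scalar), one checks directly that $\Res_{\mathfrak{a}_x} M$ is projective in $\cF(\mathfrak{a}_x)$ precisely when $(\Res_{\mathfrak{a}_x} M)_x = 0$. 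Thus the hypothesis $X_M^{rk} = \{0\}$ is equivalent to projectivity of $\Res_{\mathfrak{a}_x} M$ in $\cF(\mathfrak{a}_x)$ for every nonzero homological $x$.

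The heart of the argument is to promote this family of local projectivity statements to global projectivity over $\gg$. One natural route is an inductive reduction: try to find a semisimple element $t \in \gg_0$ whose weight decomposition on $\gg_1$ yields purely odd abelian subalgebras $\gg_\pm$ that, together with an application of Lemma~\ref{lmcom}, allow a filtration argument modeled on the proof of Theorem~\ref{thtype1}. A more ambitious route, modeled on Friedlander--Parshall--Suslin theory, is to establish a detection theorem: projectivity of $M$ over $\gg$ should follow from projectivity of its restrictions to a sufficiently large family of ``elementary'' homological subsuperalgebras of $\gg$, together with a super-analogue of Quillen stratification for the cone $\gg_1^{hom}$.

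The principal obstacle, and the reason the statement remains a conjecture, is that for quasireductive $\gg$ outside type I (notably $\qq(n)$ and $\mathfrak{osp}(2m+1|2n)$) there is no central grading element in $\gg_0$ acting on $\gg_1$ with nonzero real eigenvalues, so the $\pp_\pm$-filtration of Theorem~\ref{thtype1} is simply not available globally. At the same time, as observed in Remark~\ref{odd ss elem comment}, the variety $\gg_1^{hom}$ is no longer closed and its $G_0$-orbit geometry is considerably more subtle than that of the self-commuting cone $X$, so no super-analogue of Quillen stratification is currently in hand to glue local projectivity on $\{\mathfrak{a}_x\}$ into projectivity over $\gg$. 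Overcoming this gluing problem, presumably through a finer understanding of the $G_0$-equivariant geometry of $\gg_1^{hom}$ and its interaction with support-theoretic invariants, appears to be the essential remaining difficulty.
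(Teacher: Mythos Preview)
The paper does not prove this statement either: it is recorded as a conjecture, with a remark that a proof is forthcoming in separate work. So there is no argument in the paper to compare your proposal against.

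Your write-up is an honest outline rather than a proof, and you say so yourself. The two preliminary observations are correct: the reduction to $M\otimes M^*$ (hence to self-dual $M$) via the retract $M\hookrightarrow M\otimes M^*\otimes M$ is valid, and your reformulation of $X_M^{rk}=\{0\}$ as projectivity of $\Res_{\mathfrak a_x}M$ over the two-dimensional superalgebra $\mathfrak a_x=\CC h\oplus\CC x$ with $h=[x,x]$ is also correct (the Jacobi identity gives $[h,x]=0$, and on the zero $h$-eigenspace $x$ squares to zero while on nonzero eigenspaces $x$ is invertible). But neither step advances the argument: the self-dual reduction was useful in Theorem~\ref{thtype1} only because the grading produced filtrations by $K^{\pm}(L)$, and that mechanism is exactly what is missing here; and the $\mathfrak a_x$-reformulation simply restates the hypothesis without providing any gluing. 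As you correctly diagnose, the genuine content of the conjecture is a detection theorem in the spirit of Friedlander--Parshall--Suslin, asserting that projectivity in $\cF(\gg)$ is detected on the family $\{\mathfrak a_x\}_{x\in\gg_1^{hom}}$, and no such result is available in the paper. So your proposal names the gap accurately but does not close it.
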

		A proof of this conjecture is currently forthcoming, and is being worked on by the third and fourth authors with other collaborators. 
		
		\begin{example}
			Consider the example given in Remark~\ref{counterexample}.  For $\gg=\mathfrak{sl}(1|1)$ we have $\gg_1^{hom}=\gg_1$.  Clearly for the module $M$ considered there, $X_M^{rk}=\{c(x-y):c\in\CC\}$.  
		\end{example}

		
		\section{Localization of the $\DS$ functor}\label{app cohom} 
		
		In this section, we associate to every finite-dimensional $\gg$-module a vector bundle on $X$ with a square-zero $\mathcal{O}_X$-module endomorphism, which interpolates the actions of the elements of $X$.  We relate the cohomology of this operator to the associated variety of $M$, and apply it to a cohomology computation for $\mathfrak{gl}(m|n)$.

		\subsection{Localization}
		Let $\gg$ be a finite-dimensional Lie superalgebra, and let $M$ be a $\gg$-module. Let $ {\mathcal O}_{X} $ denote the structure sheaf of $ X $. Then $ {\mathcal O}_{X}\otimes M $ is the sheaf of
		sections of the trivial vector bundle with fiber isomorphic to  $ M $. Let $ \partial:{\mathcal O}_{X}\otimes M \to
		{\mathcal O}_{X}\otimes M $ be the map defined by
		\begin{equation}
			\partial\varphi\left(x\right)=x\varphi\left(x\right)
			\notag\end{equation}
		for any $ x\in X $, $ \varphi\in{\mathcal O}_{X}\otimes M $. Clearly
		$ \partial^{2}=0 $ and the cohomology
		$ {\mathcal M} $ of $ \partial $ is a
		quasi-coherent sheaf on $ X $. If $ M $ is finite-dimensional, then $ {\mathcal M} $ is coherent.
		
		For any $ x\in X $ denote by $ {\mathcal O}_{x} $ the local ring at $ x $, by $ {\mathcal I}_{x} $ the maximal ideal.
		Let $ \tilde{\mathcal M}_{x} $ be the cohomology of
		$ \partial:{\mathcal O}_{x}\otimes M \to {\mathcal O}_{x}\otimes M $ and $\mathcal M_x:= \tilde{\mathcal M}_{x} /{\mathcal I}_{x}\tilde{\mathcal M}_{x} $.
		The evaluation
		map $ j_{x}:{\mathcal O}_{x}\otimes M \to M $ satisfies $ j_{x}\circ\partial=x\circ j_{x} $. Hence we have the maps
		\begin{equation}
			j_{x}:\operatorname{Ker} \partial \to\operatorname{Ker} x\text{, }j_{x}:\operatorname{Im} \partial \to xM.
			\notag\end{equation}
		The  embedding $M\hookrightarrow {\mathcal O}_{x}\otimes M$ ensures the surjectivity of the latter map. Thus, $ j_{x} $ induces
		the map $ \bar{j}_{x}\colon \tilde{\mathcal M}_{x} \to M_{x} $, and
		$\operatorname{Im} \bar{j}_{x}\cong{\mathcal M}_{x} $.
		
		\begin{remark} It is easy to see that ${\mathcal M}_{x}$ is a  $(\gg_x)_0$-module and $\bar j_x$ is a homomorphism of $(\gg_x)_0$-modules.
		\end{remark}
		\begin{lemma} Let $ M $ be a finite-dimensional $ \gg $-module.
			\begin{enumerate}
				\item The support of $ {\mathcal M} $ is
				contained in
				$ X_M $.
				\item The map $ \bar{j}_{x} $ is surjective for a generic point $ x\in X $. In particular,
				if $ X_M=X $, then $ \operatorname{supp} {\mathcal M}=X $.
			\end{enumerate}
		\end{lemma}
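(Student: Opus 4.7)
The plan is to exploit upper semicontinuity of the rank of a morphism of vector bundles, together with the fact that on the locus where the rank is locally constant, the formation of kernel, image, and cokernel commutes with passage to fibers.

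For (1), I would view $\partial$ as an endomorphism of the trivial vector bundle $\mathcal{O}_X\otimes M$ whose value at a point $x\in X$ is the operator $x_M:M\to M$. As observed in the proof of Lemma~\ref{lm1}, the condition $M_x=0$ is equivalent to $\operatorname{rank}x_M$ being maximal, namely $\operatorname{rank}x_M=\dim M_0=\dim M_1$. If $\dim M_0\neq\dim M_1$ this never happens, so $X_M=X$ and the claim is trivial. Otherwise the complement $X\setminus X_M$ is precisely the open locus where $\partial$ attains its maximal rank; on this open set, $\Ker\partial$ and $\Im\partial$ are locally free subbundles of equal rank, so their quotient $\mathcal{M}$ vanishes there.

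For (2), upper semicontinuity of $x\mapsto\operatorname{rank}\partial_x$ implies that its minimum value on each irreducible component of $X$ is achieved on a dense open subset; let $V\subseteq X$ denote this dense open subset. On $V$, both $\Ker\partial$ and $\Im\partial$ are locally free subbundles of $\mathcal{O}_V\otimes M$, so the natural map $\tilde{\mathcal{M}}_x/\mathcal{I}_x\tilde{\mathcal{M}}_x\to M_x$ is an isomorphism for every $x\in V$, which (using the identification $\operatorname{Im}\bar{j}_x\cong\mathcal{M}_x$ already recorded just before the lemma) forces $\bar{j}_x$ to be surjective. For the ``in particular'' part, assume $X_M=X$; then $M_x\neq 0$ for every $x\in V$, so $\mathcal{M}_x\cong M_x\neq 0$ and hence $V\subseteq\operatorname{supp}\mathcal{M}$. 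Since $V$ is dense in $X$ and $\operatorname{supp}\mathcal{M}$ is closed and, by (1), contained in $X_M=X$, we conclude $\operatorname{supp}\mathcal{M}=X$.

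The main technical step is the base change assertion on $V$: for $x\in V$ one must check that the natural map $\tilde{\mathcal{M}}_x/\mathcal{I}_x\tilde{\mathcal{M}}_x\to M_x$ is an isomorphism. This follows from the standard fact that a morphism of locally free sheaves with locally constant rank has locally-direct-summand kernel and image, so cohomology commutes with evaluation at a point. I do not expect a serious obstacle here, but it is the place where care is needed.
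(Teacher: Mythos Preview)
Your overall strategy is sound and close to the paper's, but there is a slip in (2): the rank of a morphism of trivial bundles is \emph{lower} semicontinuous (the sets $\{x:\operatorname{rank}\partial_x\le r\}$ are cut out by minors), so it is the \emph{maximum} value of $\operatorname{rank}\partial_x$, not the minimum, that is attained on a dense open set of each component. Equivalently, $\dim M_x=\dim M-2\operatorname{rank}x_M$ is upper semicontinuous, so its minimum is attained on a dense open set. Once you correct this, your argument goes through unchanged: on that open set the rank is constant, $\Ker\partial$ and $\Im\partial$ are subbundles, and fiberwise cohomology computes $\mathcal{M}_x$.

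For comparison, the paper's proof of (1) does not use the subbundle language; instead it observes that $M_x=0$ means there is a contracting homotopy $h_x\in\End_{\CC}(M)$ with $x\circ h_x+h_x\circ x=\mathrm{id}_M$, and then the condition that $\partial\circ h+h\circ\partial$ be invertible is open, so $\mathcal{M}$ vanishes on a neighborhood of $x$. This avoids the case split on $\dim M_0$ versus $\dim M_1$. For (2) the paper does exactly what you do, phrased as: at a smooth point $x$ where $\dim M_x$ is minimal, any $m\in\Ker x_M$ extends to a local section $\varphi$ of $\Ker\partial$ with $\varphi(x)=m$, which is precisely the statement that $\Ker\partial$ is a subbundle near such $x$. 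So your approach and the paper's coincide for (2) and differ only cosmetically for (1).
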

		
		\begin{proof} First, we will show that for any $ x\in X\backslash X_M $ there exists a
			neighborhood $ U $ of $ x $ such that $ {\mathcal M}\left(U\right)=0 $. Indeed, there exists 
			$ h_{x}\in\End_{\CC}(M)$ such that $ x\circ h_{x}+h_{x}\circ x=id_M. $ Therefore in some neighborhood $ U $ of $ x $ there
			exists an $\mathcal O(U)$-morphism 
			$ h:{\mathcal O}\left(U\right)\otimes M \to {\mathcal O}\left(U\right)\otimes M $ such that
			$\partial\circ h+h\circ\partial$ is invertible and $h(x)=h_x$. Hence
			the cohomology of $\partial:{\mathcal O}\left(U\right)\otimes M \to
			{\mathcal O}\left(U\right)\otimes M $ are trivial.
			In other words, $ {\mathcal M}\left(U\right)=0 $.
			Thus, $ x $ does not belong to the support of $ {\mathcal M} $ and we have obtained that $\operatorname{supp}{\mathcal M}\subset X_M$.
			
			To prove (2) let $ x\in X $ be a non-singular point such
			that $ \dim M_x $ is minimal. Let $ m\in\operatorname{Ker} x_M$. Then there exists some neighborhood $ U $ of $ x $ and
			$ \varphi\in{\mathcal O}\left(U\right)\otimes M $ such that
			$ \partial\varphi=0 $ and $ \varphi\left(x\right)=m $. By definition
			$ \varphi\in{\mathcal M}_{x} $ and $ \bar{j}_{x}\left(\varphi\right)=m $.\end{proof}
		
		\begin{corollary}  Let $ x\in X $ be a generic point, then in some neighborhood $ U $
			of $ x $, the sheaf $ {\mathcal M}_{U} $ coincides with the sheaf of sections of a vector bundle
			with fiber $ M_{x} $.
		\end{corollary}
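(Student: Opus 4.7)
The plan is to combine the two parts of the preceding lemma with the standard semicontinuity of fibers of a coherent sheaf. First, I note that $\mathcal{M}$ is coherent since $M$ is finite-dimensional (so $\mathcal{O}_X \otimes M$ is coherent and $\mathcal{M}$ is the cohomology of an $\mathcal{O}_X$-linear square-zero endomorphism). Next, I recall that the function $x\mapsto \dim M_x = \dim M - 2\operatorname{rank}(x_M)$ is upper semi-continuous on $X$ because the rank of $x_M$ is lower semi-continuous in $x$. Hence the locus $V\subseteq X$ where $\dim M_x$ attains its minimum $m_0$ is a nonempty Zariski-open subset of $X$.

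Now pick $x_0 \in V$ lying in the smooth locus of $X$; by part (2) of the previous lemma (whose argument is valid at every such point, not just one), the map $\bar{j}_x \colon \tilde{\mathcal{M}}_x \to M_x$ is surjective for every $x\in V\cap X^{\mathrm{sm}}$. Combining this with the identification $\operatorname{Im} \bar{j}_x\cong \mathcal{M}_x$ from the lemma, we obtain an isomorphism $\mathcal{M}_x \cong M_x$ for every $x$ in this open subset. In particular $\dim_{k(x)} \mathcal{M}_x = m_0$ is constant there.

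To finish, I would restrict to an open affine neighborhood $U$ of $x_0$ contained in $V \cap X^{\mathrm{sm}}$ and small enough that $U$ is contained in a single irreducible component of $X$; on such $U$ the scheme structure is reduced and irreducible (in characteristic $0$, the smooth locus is automatically reduced). The standard criterion for coherent sheaves on a reduced Noetherian scheme then says that a coherent sheaf with locally constant fiber dimension is locally free. Applying this to $\mathcal{M}|_U$ yields a locally free sheaf of rank $m_0 = \dim M_x$, i.e., the sheaf of sections of a vector bundle on $U$ whose fiber at any $x\in U$ is canonically identified with $M_x$ via $\bar{j}_x$.

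The only subtle point, and the main potential obstacle, is the local-freeness step: one needs to know that $X$ is reduced at a generic (smooth) point of the component containing $x_0$, so that constancy of fiber dimension implies local freeness. This is ensured by restricting to the smooth locus in characteristic $0$; alternatively, one can bypass the reducedness issue by arguing directly that $\tilde{\mathcal{M}}_{x_0}$ is a free $\mathcal{O}_{x_0}$-module of rank $m_0$, using Nakayama's lemma together with the equality $\dim_{k(x_0)}(\tilde{\mathcal{M}}_{x_0}/\mathcal{I}_{x_0}\tilde{\mathcal{M}}_{x_0}) = m_0$ and the observation that the generic rank of $\mathcal{M}$ on the component is at most $m_0$ (since fiber dimension is upper semi-continuous and $x_0$ already realizes the minimum).
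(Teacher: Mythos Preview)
Your argument is correct and is precisely the standard expansion of what the paper leaves implicit: the corollary is stated without proof, as an immediate consequence of the preceding lemma together with the fact that a coherent sheaf with constant fibre dimension on a reduced scheme is locally free. Your care in restricting to the smooth locus (so as to guarantee reducedness) and in noting that the proof of part~(2) of the lemma actually applies at \emph{every} non-singular point where $\dim M_x$ is minimal are exactly the points one must check.
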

		
		Let $ X_M\not=X $. Then $ {\mathcal M} $ is the extension by zero of the sheaf $ {\mathcal M}_{X_M} $ and $ {\mathcal M}_{X_M} $ locally is the sheaf of
		sections of a
		vector bundle with fiber $ \bar{j}_{x} ({\mathcal M}_x)$ for a generic $ x\in X_M $. Note that
		$\bar{j}_{x} ({\mathcal M}_x)\subset M_{x} $, but usually this is a
		strict embedding, as one can see from the following example.
		
		\begin{example}  Let $ \gg=\mathfrak{gl}\left(1|1\right)$ and $ M $ be the standard
			$\gg $-module. Then $$X=\left\{\left (\begin{matrix}0&u\\v&0\end{matrix}\right )\mid uv=0\right\}.$$
			Therefore $ X_M=\left\{0\right\} $, but a simple computation shows that $\mathcal M=0$, and in particular, the support of $\mathcal M$ is empty.
		\end{example}
		
		For $x\in X$, let $G_0^x$ denote the stabilizer of $x$ in $G_0$.
		The following statement illustrates a geometric meaning of $(\gg_x)_1$.
		\begin{proposition} Let $x\in X$. Then the $G_0$-vector bundle
			$G_0\times_{G_0^x}(\gg_x)_1$ is canonically isomorphic to the normal bundle to $G_0x$ in $X$.
		\end{proposition}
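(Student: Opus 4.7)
The plan is to identify the fiber of the normal bundle $N:=N_{G_0x/X}$ at the base point $x$ with $(\gg_x)_1$ as a $G_0^x$-module, and then invoke the standard equivalence between $G_0$-equivariant vector bundles on the homogeneous space $G_0x\cong G_0/G_0^x$ and representations of the stabilizer $G_0^x$, under which the associated bundle $G_0\times_{G_0^x}V$ corresponds to $V$ as the fiber at the base point. Note that $G_0^x$ acts on $(\gg_x)_1$ since $\Ad_g$ preserves both $\gg^x$ and $[x,\gg]$ whenever $\Ad_g x=x$, so the right hand side is a well-defined $G_0$-equivariant vector bundle.

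First I would compute the tangent space to $X$ at $x$. Viewing $X\subseteq\gg_1$ as the scheme-theoretic zero locus of the quadratic moment map $\mu:\gg_1\to\gg_0$, $\mu(y):=\tfrac{1}{2}[y,y]$, the differential at $x$ is $d\mu_x(v)=[x,v]$, so the tangent space at $x$ is $\ker(\ad_x|_{\gg_1})=(\gg^x)_1$. Next, since $G_0x\cong G_0/G_0^x$ and $\operatorname{Lie}(G_0^x)=\gg_0^x$ (the centralizer of $x$ in $\gg_0$), the differential of the orbit map $g\mapsto \Ad_g x$ identifies the tangent space to the orbit at $x$ with $\gg_0/\gg_0^x\cong[\gg_0,x]$, which sits canonically inside $(\gg^x)_1$. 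Passing to the quotient yields
\[
N_x\;=\;(\gg^x)_1\,\big/\,[\gg_0,x].
\]
Because $[x,\gg_0]\subseteq\gg_1$ and $[x,\gg_1]\subseteq\gg_0$, the odd part of $[x,\gg]$ is exactly $[\gg_0,x]$, so
\[
N_x\;=\;(\gg^x)_1\,\big/\,([x,\gg])_1\;=\;\bigl(\gg^x/[x,\gg]\bigr)_1\;=\;(\gg_x)_1,
\]
and this identification is manifestly $G_0^x$-equivariant. Together with the homogeneous-space equivalence, this yields the desired canonical isomorphism $N\cong G_0\times_{G_0^x}(\gg_x)_1$.

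The main subtlety will be the status of $T_xX$ when $X$ is singular, where one must ensure that the scheme-theoretic calculation above produces the correct locally free sheaf realising the normal bundle along the orbit. This is handled by observing that the entire construction is $G_0$-equivariant: the dimension of the Zariski tangent space to $X$ is constant along $G_0x$, and the quotient sheaf $N_{G_0x/X}$ is therefore a genuine $G_0$-equivariant vector bundle on $G_0x$ whose fiber at $x$ is canonically $(\gg_x)_1$ by the argument above. A secondary point to verify, which is standard but needs to be noted, is that $\operatorname{Lie}(G_0^x)=\gg_0^x$; this follows from the smoothness of the action map and the identification of the kernel of $d\mu_x$ restricted to $\gg_0$ with $\gg_0^x$.
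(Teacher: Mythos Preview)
Your proposal is correct and follows essentially the same route as the paper: compute $T_xX=(\ker\ad_x)_1$ via the differential of $y\mapsto[y,y]$ (the paper does this via the $\varepsilon$-expansion $[x+\varepsilon y,x+\varepsilon y]=0$), identify $T_x(G_0x)=[\gg_0,x]=([x,\gg])_1$, take the quotient to obtain $(\gg_x)_1$, and then globalize by $G_0$-equivariance. Your write-up is more explicit about the $G_0^x$-equivariance and the scheme-theoretic subtleties, but the underlying argument is the same.
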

		\begin{proof} First, we compute the tangent space $T_xX$. The condition $$[x+\epsilon y,x+\epsilon y]=0\mod\epsilon^2$$ is equivalent to $y\in\Ker\,\ad_x$.
			Therefore $T_xX\cong (\Ker\,\ad_x)_1$. On the other hand, the tangent space $T_x(G_0x)$ to the orbit is canonically isomorphic to $[\gg_0,x]=(\Im \,\ad_x)_1$.
			Hence the normal space to $G_0 x$ in $X$ at the point $x$ is isomorphic to $(\gg_x)_1$.  Using $G_0$-action we obtain
			$$\mathcal N_{G_0 x}X\cong G_0\times_{G_0^x}(\gg_x)_1.$$
		\end{proof}
		
		\subsection{A special $G_0$-invariant subset for basic classical Lie superalgebras}
		
		For this subsection, let $\gg$  be a basic classical Lie superalgebra.

		Let $ x\in\overline{X}_{k} $ and
		\begin{equation}
			Y_{x}:=\left\{y\in\left(\gg_{x}\right)_{1} \mid \left[y,y\right]=0\right\}.
			\notag\end{equation}
		Then
		\begin{equation}
			x+Y_{x}\subset X
			\label{equ83}\end{equation}\myLabel{equ83,}\relax 
		
		The following is a consequence of Theorem~\ref{th1} and Lemma~\ref{lm110}.
		
		\begin{corollary}\label{cor:trans}   Let $x\in X$  and denote by $X'\subset X$ the union of all $G_0$-orbits $O$ such that $x\in\overline O$.
			Then $X'=G_0(x+Y_{x})$,
		\end{corollary}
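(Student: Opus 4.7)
The plan is to deduce the corollary from Theorem~\ref{th1} together with the closure criterion of Lemma~\ref{lm110}(3). First I would fix an iso-set $A=\{\beta_1,\ldots,\beta_k\}\in\cS$ with $x\in\Phi(A)$, and a concrete representative $x=x_{\beta_1}+\cdots+x_{\beta_k}$ with $x_{\beta_i}\in\gg_{\beta_i}$. By Proposition~\ref{lm202} I realize $\gg_x$ as a subalgebra of $\gg$ whose odd roots form $A^\perp\cap(\Delta_1\setminus(A\cup-A))$; a direct check then identifies iso-sets of $\gg_x$, via $C\mapsto A\cup C$, with iso-sets of $\gg$ containing $A$. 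Since the equality to be proved is manifestly $G_0$-equivariant in $x$, we are free to choose a convenient representative of $\Phi(A)$.

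For the inclusion $X'\subset G_0(x+Y_x)$, take an orbit $O=\Phi(B)$ with $x\in\overline O$. By Lemma~\ref{lm110}(3) there exists $w\in W$ with $w(A)\subset B$; using the $G_0$-equivariance I may replace $A$ by $w(A)$ and assume $A\subset B$. Setting $C:=B\setminus A$, one sees $C$ is an iso-set of $\gg_x$, and $y:=\sum_{\gamma\in C}x_\gamma$ (for nonzero $x_\gamma\in\gg_\gamma$) lies in $Y_x$ with $x+y=\sum_{\beta\in B}x_\beta\in\Phi(B)=O$, proving $O\subset G_0(x+Y_x)$.

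For the inclusion $G_0(x+Y_x)\subset X'$, fix $y\in Y_x$. In the exceptional cases $D(2|1;a),F(4),G(3)$ one has $(\gg_x)_1=0$ and there is nothing to prove; otherwise $\gg_x$ is again a basic classical Lie superalgebra (per the table in Section~\ref{g x in g}), so Theorem~\ref{th1} applied inside $\gg_x$ supplies an iso-set $C$ of $\gg_x$ and an element of the adjoint group of $(\gg_x)_0$ conjugating $y$ to $\sum_{\gamma\in C}y_\gamma$. I realize this conjugation by an element of $G_0^x$ via the Levi factor $K$ of Lemma~\ref{stabilizer}; since $G_0^x$ fixes $x$, the $G_0$-orbit of $x+y$ agrees with that of $x+\sum_{\gamma\in C}y_\gamma$, a representative of $\Phi(A\cup C)$. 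Because $A\subset A\cup C$, Lemma~\ref{lm110}(3) delivers $x\in\overline{\Phi(A\cup C)}$, so $x+y\in X'$.

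The main obstacle I anticipate is the step in the second inclusion identifying the internal $(\gg_x)_0$-conjugation produced by Theorem~\ref{th1} with a conjugation inside $G_0$ that fixes $x$. This reduces to recognizing the reductive Levi factor $K$ in the decomposition $G_0^x=K\ltimes U$ of Lemma~\ref{stabilizer} as a connected group with Lie algebra $(\gg_x)_0$, so that its orbits on $(\gg_x)_1$ coincide with those of the connected adjoint group of $(\gg_x)_0$ — a clean but case-dependent verification from the explicit $K$ given for each basic classical family in that lemma.
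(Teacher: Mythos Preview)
Your approach is correct and is exactly the one the paper indicates: the paper simply records that the corollary ``is a consequence of Theorem~\ref{th1} and Lemma~\ref{lm110}'' and gives no further detail, so your write-up is a faithful unpacking of that sentence. One small phrasing fix in your first inclusion: since $x$ (and hence $A$) is fixed throughout while $O$ varies, you should not ``replace $A$ by $w(A)$'' but rather replace $B$ by $w^{-1}(B)$, which represents the same orbit $O=\Phi(B)=\Phi(w^{-1}B)$ and yields $A\subset w^{-1}(B)$ directly; the rest of the argument then goes through unchanged.
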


		\begin{lemma}\label{compstab} Let $x\in X$. There exists a subgroup $Q\subset G_0$ satisfying the following properties
			\begin{enumerate}
				\item $G_0^{x+y}\cap Q=\{1\}$ for any $y\in Y_x$,
				\item $Q (G_0^x)$ is Zariski dense in $G_0$. 
			\end{enumerate}
		\end{lemma}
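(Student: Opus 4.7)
The plan is to construct $Q$ from the Levi decomposition of the parabolic containing $G_0^x$ afforded by Lemma~\ref{stabilizer} and the remark following it.  By those results, there is a parabolic subgroup $P\subseteq G_0$ with $G_0^x\subseteq P$ such that the unipotent radical $U$ of $P$ coincides with the maximal normal unipotent subgroup of $G_0^x$, and $G_0^x=K\ltimes U$ where $K$ is a reductive subgroup of a Levi complement $L$ of $P$, with $\operatorname{Lie}(K)=(\gg_x)_{0}$.  Let $P^-$ be the opposite parabolic, with unipotent radical $U^-$.

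The construction is as follows.  First, choose a closed subgroup $R\subseteq L$ such that the product map $R\times K\to L$ has open dense image with finite fibers; such an $R$ can be built case-by-case using the explicit description of $K\subseteq L$ recorded in the proof of Lemma~\ref{stabilizer}.  For instance, if $\gg=\mathfrak{gl}(m|n)$ and $x$ has rank $(p,q)$, then $L\cong L_1\times L_2$ and $K$ embeds diagonally via the pair of $GL(p)$ and the pair of $GL(q)$ factors of $L_1\times L_2$, so one may take $R\cong GL(p)\times GL(q)$ sitting inside a single side of the product.  Similar explicit choices work for $\mathfrak{osp}(m|2n)$ and $\qq(n)$; for the exceptional cases $D(2|1;a)$, $G(3)$, $F(4)$ one has $(\gg_x)_{1}=0$, so $Y_x=\{0\}$ and only transversality to $G_0^x$ itself needs to be arranged, which is elementary.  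Set $Q:=U^-\cdot R$; since $R\subseteq L$ normalizes $U^-$, this is a closed subgroup.  Condition (2) is then immediate: $Q\cdot G_0^x=U^-RKU$ contains the open Bruhat cell $U^-LU$ relative to $P$, and the latter is Zariski dense in $G_0$.

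For condition (1), suppose $q=u^-r\in Q\cap G_0^{x+y}$ with $u^-\in U^-$, $r\in R$, and $y\in Y_x$.  Fix a cocharacter $\lambda\colon\mathbb{G}_m\to G_0$ defining $P$; it induces a $\mathbb{Z}$-grading $\gg=\bigoplus_i\gg^{(i)}$ with $\mathfrak{u}^-=\gg_0^{(<0)}$, $\mathfrak{l}=\gg_0^{(0)}$, $\mathfrak{u}=\gg_0^{(>0)}$, which also grades $\gg_1$.  Using the embedding $\gg_x\subseteq L$ from Proposition~\ref{lm202} and an appropriate choice of $\lambda$, one arranges that $x+y\in\gg_1^{(\geq 0)}$ for every $y\in Y_x$.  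Projecting the equation $u^-r(x+y)=x+y$ onto the negatively graded part of $\gg_1$ and extracting the leading term shows that the lowest-degree component of $\log u^-$ centralizes $x+y$; iterating, this reduces to checking $\mathfrak{u}^-\cap\gg_0^{x+y}=0$, whence $u^-=1$ by connectedness of $U^-$.  The remaining equation $r(x+y)=x+y$ then reduces to the freeness of the $R$-action on $x+Y_x$, which we arrange in the choice of $R$.

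The main obstacle is the last sentence: case-by-case, one must verify both that $R$ can be chosen to act freely on $x+Y_x$ (a family of points rather than a single orbit) and that $\mathfrak{u}^-\cap\gg_0^{x+y}=0$ for every $y\in Y_x$.  These are linear-algebraic checks in the matrix realizations of Sections~\ref{KM_description} and \ref{sec pq}, but they must be performed uniformly in $y$, which is the subtle point.  For the exceptional algebras $Y_x=\{0\}$ and the problem collapses; for $\mathfrak{gl}(m|n)$, $\mathfrak{osp}(m|2n)$, $\qq(n)$ the verification follows from direct block-matrix manipulations in the spirit of the proof of Lemma~\ref{stabilizer}.
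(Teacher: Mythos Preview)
Your overall strategy (build $Q=U^-\!\cdot R$ from the parabolic $P\supseteq G_0^x$) is reasonable and agrees with the paper's construction in the exceptional case, where the paper takes $Q=\CC^*\ltimes U^-$.  For the classical series, however, the paper does \emph{not} use the parabolic picture at all: it writes the standard module as $V=xV'\oplus V_x\oplus V'$ and defines $Q$ as the subgroup of $G_0$ acting by the identity on $V'$, preserving $V'\oplus V_x$, and acting by the identity on $V_x$ modulo $V'$.  This block-triangular description makes both conditions a direct linear-algebra check uniformly in $y\in Y_x$, and it produces a genuinely different subgroup from your $U^-R$ already for $\mathfrak{gl}(2|2)$ with $x=E_{13}$.

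The substantive gap in your argument is the bootstrapping step for condition~(1).  Even granting that one can choose $\lambda$ so that $x+y\in\gg_1^{(\ge 0)}$ for every $y\in Y_x$, the equation $u^-r(x+y)=x+y$ couples $u^-$ and $r$: since $r\in L$ preserves the grading, it can alter the nonnegatively graded components of $x+y$, and $u^-$ can then compensate.  Projecting to the negatively graded part therefore does \emph{not} isolate a condition of the form $[n_{-d},x+y]=0$; the leading negative term involves $r(x+y)$, not $x+y$.  Nor can one bypass this by arguing $G_0^{x+y}\subseteq P$: already for $\mathfrak{gl}(2|2)$ with $x=E_{13}$ and $y=E_{24}$ one has $G_0^{x+y}=\{(g,g):g\in GL(2)\}$, which is not contained in $P$ (upper-triangular $\times$ lower-triangular), so $U^-\cap P=\{1\}$ is of no help.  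In that example your $Q$ does satisfy $Q\cap G_0^{x+y}=\{1\}$, but only by a direct computation, not by the filtration argument you outline.  Thus the verification of (1) for all $y\in Y_x$ remains genuinely case-by-case, which is exactly what the paper's explicit block description is designed to make transparent.
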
 
		\begin{proof} First, we check the statement for classical $\gg$. We denote by $V$ the defining representation of $\gg$. 
			Then for some subspace $V'\subset V$  we have a decomposition $V=xV'\oplus V_x\oplus V'$, and we may assume that $V'$, $xV'$ are isotropic subspaces and orthogonal to $V_x$ in the orthosymplectic case.
			We set 
			$$Q:=\{g\in G_0\mid g|_{W}=\operatorname{id}_{V'},\, g(V_x)\subset W\oplus V_x,\, g|_{V_x}\equiv \operatorname{id}_{V_x}\mod V'\}.$$ 
			
			Now let $\gg$ be exceptional and $x\neq 0$. Then $G_0^x$ is a subgroup of codimension $1$ in some parabolic $P$ with maximal normal unipotent subgroup $U$. We set 
			$Q=\CC^*\rtimes U^{-}$ where $U^-$ is the opposite (complementary) to $U$ and $\CC^*$ be a one-parameter subgroup in the maximal torus of $G_0$
			which acts freely on $\CC^*x$.
		\end{proof}

		\begin{lemma}\label{fiber} Let $N:=M_x$ and $\mathcal N_0$ denote the fiber at $0$ of the sheaf $\mathcal N$ on $Y_x$. Then there exists an injective morphism
			$\mathcal N_0\hookrightarrow\mathcal M_x$. 
		\end{lemma}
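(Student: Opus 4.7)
The plan is to construct an injection $\mathcal{N}_0 \hookrightarrow \mathcal{M}_x$ by lifting a formal cocycle on $Y_x$ representing an element of $\mathcal{N}_0$ to a formal cocycle on $X$ near $x$, and checking that evaluation at $x$ recovers the starting element. The construction proceeds in two steps: a lift from $Y_x$ to the slice $x+Y_x$, and a further extension from the slice to a neighborhood of $x$ in $X$ via a transverse group action.

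First, given $n\in\mathcal{N}_0$ represented by $\psi\in\tilde{\mathcal{N}}_0\subset\mathcal{O}_{Y_x,0}\otimes N$ (so $y\psi(y)=0$ in $N$ and $\psi(0)=n$), I construct a formal section $\tilde{\psi}\in\mathcal{O}_{Y_x,0}\otimes M$ satisfying $(x+y)\tilde{\psi}(y)=0$ in $M$ with $\tilde{\psi}(0)\in\ker x$ a lift of $n$. Since $x+Y_x\subset X$, we have $(x+y)^2=\tfrac{1}{2}[x+y,x+y]=0$, so $(x+y)$ acts as a differential on $\mathcal{O}_{Y_x,0}\otimes M$. Writing $\tilde{\psi}=\sum_{j\geq 0}\tilde{\psi}^{(j)}$ by homogeneous $y$-degree, the cocycle equation becomes $x\tilde{\psi}^{(j)}(y)=-y\tilde{\psi}^{(j-1)}(y)$, which I solve inductively on $j$, starting from a chosen lift $\tilde{\psi}^{(0)}\in\ker x$ of $n$. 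The obstruction at step $j$ is that $y\tilde{\psi}^{(j-1)}(y)$ must lie in $xM$; using $xy+yx=0$ one computes $x\cdot y\tilde{\psi}^{(j-1)}=-y\cdot x\tilde{\psi}^{(j-1)}=y^{2}\tilde{\psi}^{(j-2)}$, which vanishes because $y^{2}=\tfrac{1}{2}[y,y]=0$ in $\mathcal{O}_{Y_x}\otimes U(\gg)$ by the defining relations of $Y_x$. Thus the obstruction lies in $\ker x$; it is then pushed into $xM$ by adjusting $\tilde{\psi}^{(j-1)}$ within its $\ker x$-coset, using the cocycle condition $y\psi(y)=0$ in $N$ to cancel the surviving class in $N=\ker x/xM$.

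Second, I extend $\tilde{\psi}$ from the slice $x+Y_x$ to a cocycle $\varphi\in\tilde{\mathcal{M}}_x$ on a formal neighborhood of $x$ in $X$ by $Q$-equivariance. By Lemma~\ref{compstab} there exists $Q\subseteq G_0$ with $G_0^{x+y}\cap Q=\{1\}$ for $y\in Y_x$ near $0$ and $Q\cdot G_0^x$ dense in $G_0$, so the action map $\mu\colon Q\times(x+Y_x)\to X$, $(q,z)\mapsto q\cdot z$, is injective near $(1,x)$ and is a local isomorphism of formal completions by a dimension count. Define $\varphi(q(x+y)):=q\cdot\tilde{\psi}(y)$. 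For $z=q(x+y)$, $G_0$-equivariance of the $\gg$-action on $M$ gives $z\cdot\varphi(z)=\operatorname{Ad}_q(x+y)\cdot q\tilde{\psi}(y)=q\cdot(x+y)\tilde{\psi}(y)=0$, so $\varphi$ is a cocycle. Its evaluation $\varphi(x)=\tilde{\psi}(0)$ represents $n$, so $\bar{j}_x([\varphi])=n$ and the assignment $n\mapsto[\varphi]$ defines the desired map $\mathcal{N}_0\to\mathcal{M}_x$. Injectivity is immediate because the composition $\mathcal{N}_0\to\mathcal{M}_x\xrightarrow{\bar{j}_x} N$ coincides with the injection $\bar{j}_0\colon\mathcal{N}_0\hookrightarrow N$.

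The main obstacle is the first step, specifically the inductive coordination ensuring that at every $y$-degree $j$ one has $y\tilde{\psi}^{(j-1)}\in xM$ rather than merely $\ker x$. Intrinsically, this is the assertion that the spectral sequence $E_2=\tilde{\mathcal{N}}_0\Rightarrow H^{\bullet}(\mathcal{O}_{Y_x,0}\otimes M,\,x+y)$ degenerates on the classes relevant to $\mathcal{N}_0$; the bookkeeping required to verify this by an explicit choice of lifts is the most delicate part of the argument.
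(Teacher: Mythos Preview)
Your two-step plan mirrors the paper's approach: the paper also uses the identification $\mathcal{U}\cong Q\times Y_x$ coming from Lemma~\ref{compstab} (your Step~2) and then asserts a map $p^*\colon\mathcal{N}\to\mathcal{M}(\mathcal{U})$ (your Step~1). So the architecture is right, and Step~2 is fine.

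The gap is in Step~1, and you have in fact flagged it yourself. Your inductive scheme does not close for $j\geq 2$. The partial lift $\tilde{\psi}^{(j-1)}$ is \emph{not} $\ker x$-valued (it satisfies $x\tilde{\psi}^{(j-1)}=-y\tilde{\psi}^{(j-2)}\neq 0$ in general), so the phrase ``adjusting $\tilde{\psi}^{(j-1)}$ within its $\ker x$-coset'' has no meaning as written. What you \emph{may} do is add an element $\eta\in S^{j-1}\otimes\ker x$, which changes the obstruction class $[y\tilde{\psi}^{(j-1)}]\in S^{j}\otimes N$ by $y[\eta]$; hence the true obstruction lives in $S^{j}\otimes N\big/\,y(S^{j-1}\otimes N)$. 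The cocycle condition for $\psi$ gives $y\psi^{(j-1)}=0$, which only shows that the ``main term'' $[y\hat\psi^{(j-1)}]$ vanishes for a $\ker x$-valued lift $\hat\psi^{(j-1)}$ of $\psi^{(j-1)}$. It says nothing about the contribution $[y\xi^{(j-1)}]$ coming from the correction $\xi^{(j-1)}=\tilde\psi^{(j-1)}-\hat\psi^{(j-1)}$, and you give no argument that this residual class lies in $y(S^{j-1}\otimes N)$. In spectral-sequence language: you have identified $E_1$ and $d_1$ correctly, but you have not ruled out the higher differentials $d_2,d_3,\ldots$; calling this ``bookkeeping'' undersells what is missing---it is exactly the content of the lemma.

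To be fair, the paper's one-line proof is extremely compressed at the same point: the injection $p^*\colon\mathcal{N}\to\mathcal{M}(\mathcal{U})$ is asserted rather than constructed, and the passage from $N$-valued to $M$-valued cocycles is not explained there either. But as it stands your Step~1 is a plan, not a proof.
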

		\begin{proof} The action map $Q\times (x+Y_x)\to X$ defines an isomorphism $a:\mathcal U\to Q\times Y_x$ for some Zariski dense open $\mathcal U\subset G'$. 
			Denote by $p$ the composition of $a$ with the projection. Then $p^*:\mathcal N\to \mathcal M(\mathcal U)$ is injective and hence induces an injection of fibers.
		\end{proof}
		
		\begin{lemma}\label{lem:closure} Let $x\in X$ and $K$ is the algebraic subgroup of $G_0$ with the Lie algebra $(\gg_x)_0$. For every $x'\in X'$ we have 
			$G_0x'\cap (x+Y_x)=K(x+y)$ for some $y\in Y_x$. Thus we have a
			bijection between $G_0$-orbits in $X'$ and $K$-orbits in $Y_x$.\end{lemma}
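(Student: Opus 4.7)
The plan has three steps: setting up the $K$-action and surjectivity, then proving injectivity.

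First, $K\subseteq G_0^x$ has Lie algebra $(\gg_x)_0$, so it fixes $x$ and acts on $\gg_x$ (hence on $(\gg_x)_1$, hence on $Y_x$) via the adjoint action of $G_0$ on $\gg$. This gives the action $k\cdot(x+y) = x+ky$ on $x+Y_x$. By Corollary~\ref{cor:trans}, $X' = G_0(x+Y_x)$, so the assignment $K\cdot y \mapsto G_0\cdot(x+y)$ from $K$-orbits on $Y_x$ to $G_0$-orbits on $X'$ is well-defined and surjective.

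The heart of the proof is injectivity: if $g(x+y_1) = x+y_2$ for some $g\in G_0$ and $y_1,y_2\in Y_x$, then $y_2\in Ky_1$. The plan is an infinitesimal argument. At each point $x+y\in x+Y_x$, with $T_yY_x = \{z\in(\gg_x)_1 : [z,y]=0\}$, the identity to establish is
\[
[\gg_0,\,x+y]\cap T_yY_x \;=\; [(\gg_x)_0,\,y].
\]
The direction $\supseteq$ is elementary: for $A\in(\gg_x)_0\subseteq\gg^x$, $[A,x]=0$ and so $[A,x+y]=[A,y]\in(\gg_x)_1$; the super-Jacobi identity together with $[y,y]=0$ yields $[y,[A,y]]=0$, placing $[A,y]$ in $T_yY_x$.

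For the $\subseteq$ direction, decompose $\gg_0 = (\gg_x)_0 \oplus [x,\gg_1] \oplus V$ using Lemma~\ref{stabilizer}, where $V$ is a complement to $(\gg^x)_0 = (\gg_x)_0\oplus[x,\gg_1]$; the map $V\to[x,\gg_0]$, $A\mapsto[A,x]$, is an isomorphism (its kernel is trivial, and $[x,(\gg^x)_0]=0$ by super-Jacobi). Writing $A = A_0+A_1+A_2$ accordingly, demanding $[A,x+y]\in T_yY_x$ forces the $[x,\gg_0]$-component of $[A,x+y]$ to vanish; this constrains $A_2$ in terms of $A_1$ and $y$, and the remaining $(\gg_x)_1$-component must be shown to equal $[A_0',y]$ for some $A_0'\in(\gg_x)_0$. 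This final step uses the explicit realization of $\gg_x$ as a root subalgebra of $\gg$ from Proposition~\ref{lm202}.

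Once the tangent identity holds at every $x+y\in x+Y_x$, the $K$-orbit $K(x+y)$ has the same tangent space as $G_0(x+y)\cap(x+Y_x)$ at each of its points; since $K$ is connected, $K(x+y)$ is therefore open in the locally closed intersection. Using Lemma~\ref{compstab}(2), which provides a transversal $Q$ with $QG_0^x$ Zariski-dense in $G_0$, a standard closure/irreducibility argument upgrades this openness to equality, yielding $G_0(x+y)\cap(x+Y_x) = K(x+y)$ and hence the desired bijection between $G_0$-orbits on $X'$ and $K$-orbits on $Y_x$. The main obstacle is the $\subseteq$ direction of the tangent identity, which requires careful bracket computations leveraging the root-space structure of $\gg_x\subseteq\gg$.
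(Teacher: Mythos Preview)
Your approach is genuinely different from the paper's, and much harder than necessary. The paper's proof is purely combinatorial and takes three sentences: writing $x=\sum_{\alpha\in A}x_\alpha$, it invokes Theorem~\ref{th1} for both $\gg$ and $\gg_x$ to identify $G_0$-orbits in $X'$ with $\cS'/W$ (where $\cS'=\{B\in\cS\mid A\subset B\}$) and $K$-orbits in $Y_x$ with $\cS_{\gg_x}/W_{\gg_x}$; then the map $B\mapsto B\setminus A$ visibly induces a bijection $\cS'/W\to\cS_{\gg_x}/W_{\gg_x}$. No infinitesimal geometry is needed at all.

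Your proposal, by contrast, has two real gaps. First, the $\subseteq$ direction of the tangent identity $[\gg_0,x+y]\cap T_yY_x=[(\gg_x)_0,y]$ is asserted but not established; you yourself flag it as ``the main obstacle'' and defer it to ``careful bracket computations,'' which is not a proof. There is no a priori reason the $(\gg_x)_1$-component of $[A_1+A_2,x+y]$ should always lie in $[(\gg_x)_0,y]$, and making this precise really does require the case-by-case root-space analysis you allude to---at which point you have reinvented Theorem~\ref{th1} locally. Second, even granting the tangent identity, your passage from ``$K(x+y)$ is open in $G_0(x+y)\cap(x+Y_x)$'' to equality is underspecified: you need the intersection to be irreducible (or at least that distinct $K$-orbits cannot both be open in it), and the appeal to Lemma~\ref{compstab} does not obviously supply this. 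Since the paper's combinatorial argument via Theorem~\ref{th1} bypasses both issues in one stroke, I would abandon the infinitesimal route here.
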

		\begin{proof} Let $x=\sum_{\alpha\in A}x_\alpha$ and $\cS'=\{B\in \cS\mid A\subset B\}$. Then $G_0$-orbits in $X$ are in bijection with $\cS'/W$ and $K$-orbits in $Y_x$
			are in bijection with $\cS_{\gg_x}/W_{\gg_x}$, where $\cS_{\gg_x}$ and $W_{\gg_x}$ are analogues of $\cS$ and $W$ for $\gg_x$. The map $\cS'\to \cS_{\gg_x}$ defined
			by $B\to B\backslash A$ induces the bijection $\cS'/W\to \cS_{\gg_x}/W_{\gg_x}$. Hence the statement.
		\end{proof}

		\subsection{Application to cohomology of finite-dimensional $ \mathfrak{gl}\left(m|n\right) $-modules} For the rest of this section $ \gg=\mathfrak{gl}\left(m|n\right) $. Recall the grading $\gg=\gg^1\oplus\gg^0\oplus\gg^{-1}$ and observe that the
		abelian subalgebra $\gg^1$ is an irreducible component of $X$. We can identify $\gg^1$ with $\Hom_{\CC}(\CC^n,\CC^m)$.
		Then
		$$\gg^1_k:=X_k\cap\gg^1=\{\varphi\in \Hom_{\CC}(\CC^n,\CC^m) \mid \rank\varphi=k\},$$
		$\gg^1_k$ is a single $G_0$-orbit.
		
		Let $M$ be a $\gg$-module. The restriction $\mathcal M^h$ of  $\mathcal M$ to $\gg^1$
		is given by the cohomology
		\begin{equation}
			\partial:{\mathcal O}_{\gg^1}\otimes M \to {\mathcal O}_{\gg^1}\otimes M,
			\notag\end{equation}
		where $ \partial $ is the same as for the sheaf $ {\mathcal M} $. The complex of global section equipped with the standard grading 
		$$\dots\to S^p(\gg^1)^*\otimes M\to\dots\to (\gg^1)^*\otimes M\to M\to 0$$
		is nothing else but the Koszul complex computing the cohomology $H^\bullet(\gg^1,M)$. These cohomology groups are important since they are used in the Kazhdan--Lusztig 
		theory for $\cF(GL(m|n))$,  \cite{S1}, \cite{B}. The sheaf $\mathcal M^h$ can be considered as the localization of $H^\bullet(\gg^1,M)$ in the sense
		of Beilinson--Bernstein. It is clear that 
		\begin{equation}\label{eqsup}
			\operatorname{supp}{\mathcal M}^h\subset X_M\cap\gg^1.
		\end{equation}
		
		\begin{lemma}\label{typcoh} If $M$ admits a typical central character, then $\operatorname{supp}{\mathcal M}^h=\{0\}$ and the fiber of $\mathcal M^h$ at 
			$0$ equals $H^0(\gg^1,M)$.
		\end{lemma}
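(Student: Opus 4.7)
The plan is to prove the two claims separately: the support statement uses the typicality hypothesis via Theorem~\ref{th2}, while the fiber identification is a direct calculation on cocycles that needs no typicality.

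For the support, I would simply observe that typicality of the central character means the degree of atypicality is $0$, so Theorem~\ref{th2} forces $X_M \subseteq \overline{X}_0 = \{0\}$, and then \eqref{eqsup} gives $\operatorname{supp}\mathcal{M}^h \subseteq X_M\cap\gg^1 \subseteq \{0\}$.

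For the fiber, I would invoke the general identification $\mathcal{M}_0 \cong \operatorname{Im}\bar{j}_0$ from the discussion just before \eqref{eqsup}, and then compute $\operatorname{Im}\bar{j}_0$ by hand. Expand any germ $\varphi \in \mathcal{O}_{\gg^1,0}\otimes M$ in polynomial degree as $\varphi = \sum_{p\geq 0}\varphi_p$ with $\varphi_p \in S^p((\gg^1)^*)\otimes M$. Since $\partial$ increases polynomial degree by exactly one, the cocycle condition $\partial\varphi=0$ decouples into $\partial\varphi_p=0$ for every $p$, and the lowest-order equation $\partial\varphi_0 = \sum_i e_i^*\otimes(e_i\cdot\varphi_0)=0$ (for a basis $\{e_i\}$ of $\gg^1$ with dual basis $\{e_i^*\}$) is precisely the condition $\varphi_0 \in M^{\gg^1}$. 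Conversely, the constant cocycle $\varphi\equiv m$ realizes every $m \in M^{\gg^1}$. Since $j_0(\partial\psi) = 0\cdot\psi(0) = 0$ we have $j_0(\operatorname{Im}\partial)=0$, and therefore $\operatorname{Im}\bar{j}_0 = j_0(\ker\partial) = M^{\gg^1} = H^0(\gg^1,M)$.

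The main step I expect to need some care is not any individual calculation but rather the identification $\mathcal{M}_0 = \operatorname{Im}\bar{j}_0$ itself, which amounts to a syzygy-type statement $\mathcal{I}_0(\mathcal{O}_{\gg^1,0}\otimes M) \cap \ker\partial \subseteq \mathcal{I}_0\ker\partial + \operatorname{Im}\partial$. I would take this from the paper's preceding general discussion of $\bar{j}_x$ rather than reprove it, since it is asserted there as a general fact about the map.
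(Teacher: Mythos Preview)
Your support argument via Theorem~\ref{th2} and \eqref{eqsup} is correct. The paper takes a different route and handles both claims with one stroke: a typical finite-dimensional module is free over $\mathcal{U}(\gg^1)=\Lambda(\gg^1)$, and Koszul duality between $\Lambda(\gg^1)$ and $S((\gg^1)^*)$ then makes the complex $\bigl(S^\bullet((\gg^1)^*)\otimes M,\partial\bigr)$ exact in positive degrees. Hence $H^{>0}(\gg^1,M)=0$, so the graded $S((\gg^1)^*)$-module of global sections of $\mathcal{M}^h$ is $H^0$ sitting in degree $0$; this gives both $\operatorname{supp}\mathcal{M}^h=\{0\}$ and the fiber at $0$ simultaneously.

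Your fiber argument, by contrast, has a genuine gap. The step you single out, $\mathcal{M}^h_0\cong\operatorname{Im}\bar{j}_0$, is \emph{not} a general fact, and the paper's sentence to that effect is at best imprecise. Take the standard representation $M=\CC^{1|2}$ of $\mathfrak{gl}(1|2)$ (an atypical module): with coordinates $s,t$ on $\gg^1$ the matrix of $\partial$ is $\left(\begin{smallmatrix}0&s&t\\0&0&0\\0&0&0\end{smallmatrix}\right)$, and the syzygy cocycle $s\otimes m_3 - t\otimes m_2$ represents a nonzero class in the fiber $\mathcal{M}^h_0$ that evaluates to $0$ at the origin. One finds $\dim\mathcal{M}^h_0=2$ while $\operatorname{Im}\bar{j}_0=H^0(\gg^1,M)$ is one-dimensional; your proposed inclusion $\mathcal{I}_0(\mathcal{O}_{\gg^1,0}\otimes M)\cap\ker\partial\subseteq\mathcal{I}_0\ker\partial+\operatorname{Im}\partial$ fails exactly on that class. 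So your assertion that the fiber identification ``needs no typicality'' is false, and you cannot import it as a black box. What typicality buys, via freeness and the Koszul argument, is precisely the vanishing $H^{>0}=0$ that kills such extra classes and forces the fiber to be $H^0$.
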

		\begin{proof} Follows from the fact that $M$ is a free $\mathcal{U}(\gg^1)$-module and Koszul duality between $\mathcal{U}(\gg^1)$ and $S(\gg^1)$.
		\end{proof}

		\begin{theorem} \label{th99}\myLabel{th99}\relax  Let $ M $ be an irreducible finite-dimensional $\gg$-module with
			atypicality degree $ k $. Then $ \operatorname{supp}
			{\mathcal M}^h=\overline{X}_{k}\cap\gg^1$.
		\end{theorem}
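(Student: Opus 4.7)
The inclusion $\supp\mathcal M^h\subset\overline X_k\cap\gg^1$ is immediate from~(\ref{eqsup}) combined with Theorem~\ref{th3}. For the reverse inclusion I note that $\supp\mathcal M^h$ is Zariski closed and $G_0$-stable, while the $G_0$-orbits on $\gg^1$ are the rank strata $\gg^1_j$, so that $\overline X_k\cap\gg^1=\{x\in\gg^1:\rank x\leq k\}=\overline{\gg^1_k}$. It therefore suffices to exhibit a single $x_0\in\gg^1_k$ with $\mathcal M^h_{x_0}\neq 0$.

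Fix such an $x_0\in\gg^1_k$. By Proposition~\ref{lm202} I may identify $\gg_{x_0}\cong\mathfrak{gl}(m-k|n-k)$ with a graded subalgebra of $\gg$, so that $(\gg_{x_0})^1\subset\gg^1$. Since $\gg^1$ is abelian, $\gg^1\subset\gg^{x_0}$, and the splitting of Proposition~\ref{lm202} restricts on grade $1$ to a direct sum decomposition $\gg^1=[\gg_0,x_0]\oplus(\gg_{x_0})^1$. Thus $x_0+(\gg_{x_0})^1$ is an affine slice transverse to the orbit $G_0\cdot x_0$ in $\gg^1$ at $x_0$. I now adapt the argument of Lemma~\ref{fiber} to this restricted setting: taking the subgroup $Q\subset G_0$ as in Lemma~\ref{compstab} (which for $\mathfrak{gl}(m|n)$ preserves $\gg^1$ by inspection of the explicit description in the proof of Lemma~\ref{stabilizer}), the action map
\[
Q\times\bigl(x_0+(\gg_{x_0})^1\bigr)\longrightarrow\gg^1
\]
is an isomorphism onto a Zariski-open neighborhood $\mathcal U$ of $x_0$ in $\gg^1$. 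Pulling back $\mathcal M^h|_{\mathcal U}$ along this trivialization, the same cocycle-lifting construction as in the proof of Lemma~\ref{fiber} produces an injection $\mathcal N^h_0\hookrightarrow\mathcal M^h_{x_0}$, where $\mathcal N^h$ is the analogous sheaf on $(\gg_{x_0})^1$ associated to the $\gg_{x_0}$-module $N:=\DS_{x_0}(M)$.

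It remains to show $\mathcal N^h_0\neq 0$. By Theorem~\ref{th3} we have $x_0\in X_M$, so $N\neq 0$. Since $\rank x_0=k=\atyp M$, Lemma~\ref{lemchichi} ensures that $N$ is a typical $\gg_{x_0}$-module, hence semisimple and isomorphic to a direct sum of Kac modules $K(\mu)=\Ind_{(\gg_{x_0})_0\oplus(\gg_{x_0})^1}^{\gg_{x_0}}L_{(\gg_{x_0})_0}(\mu)$. Lemma~\ref{typcoh} applied to $\gg_{x_0}$ and $N$ identifies $\mathcal N^h_0$ with $H^0((\gg_{x_0})^1,N)$, which contains the nonzero $(\gg_{x_0})^1$-invariant subspace $\bigoplus_\mu L_{(\gg_{x_0})_0}(\mu)$ coming directly from the induction construction. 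Hence $\mathcal N^h_0\neq 0$, and the theorem follows.

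The principal obstacle is the geometric step — verifying that the proof of Lemma~\ref{fiber} carries over verbatim when the full self-commuting cone $X$ is replaced by the component $\gg^1$ and $Y_{x_0}$ by its sub-slice $(\gg_{x_0})^1$. This is largely bookkeeping, but requires checking that $Q$ from Lemma~\ref{compstab} can be chosen inside the stabilizer of $\gg^1\subset\gg_1$, which is transparent from the explicit matrix description of $Q$ in the $\mathfrak{gl}(m|n)$ case. All remaining inputs — typicality of $\DS_{x_0}(M)$, the classification of typical finite-dimensional modules, and the nonvanishing of Kac-module $\gg^1$-invariants — are routine consequences of results already in the paper.
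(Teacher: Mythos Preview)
Your proof is correct and follows essentially the same approach as the paper's, which reads in its entirety: ``The inclusion $\operatorname{supp}\mathcal M^h\subset\overline X_k\cap\gg^1$ follows from Theorem~\ref{th3} and (\ref{eqsup}). To prove the equality consider $x\in\gg^1_k$. The fiber $\mathcal M^h_x\neq 0$ by Lemma~\ref{typcoh} and Lemma~\ref{fiber}.'' You have correctly identified and filled in the gap in this terse argument: Lemma~\ref{fiber} as stated concerns the sheaf $\mathcal M$ on the full cone $X$ and the slice $Y_x$, whereas what is needed here is the analogous statement for $\mathcal M^h$ on $\gg^1$ with slice $(\gg_{x_0})^1$; your verification that the construction of $Q$ in Lemma~\ref{compstab} restricts to $\gg^1$ and that $\gg^1=[\gg_0,x_0]\oplus(\gg_{x_0})^1$ is exactly the bookkeeping the paper leaves implicit.
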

		\begin{proof} The inclusion $ \operatorname{supp}{\mathcal M}^h\subset\overline{X}_{k}\cap\gg^1$ follows from Theorem~\ref{th3} and (\ref{eqsup}).
			To prove the equality consider $x\in \gg_k^1$. The fiber $\mathcal M^h_x\neq 0$ by Lemma~\ref{typcoh} and Lemma~\ref{fiber}.
		\end{proof} 
		
		Consider the Hilbert--Poincare series
		$$H_M(t):=\sum_{i=1}^\infty \dim H^i(\gg^1,M)t^i.$$
		The Hilbert--Serre Theorem and Theorem~\ref{th99} imply
		
		\begin{corollary}  Let $ M $ be an irreducible finite-dimensional $\gg$-module with
			atypicality degree $ k $. Then
			$$H_M(t)=\frac{q(t)}{(1-t)^{k(m+n-k)}}$$
			for some polynomial $q(t)$.
		\end{corollary}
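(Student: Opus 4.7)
The plan is to combine Theorem~\ref{th99} with a standard Hilbert--Serre argument applied to the coherent sheaf $\mathcal M^h$ on $\gg^1$. The Koszul complex
\[
\dots\to S^p((\gg^1)^*)\otimes M\to\dots\to (\gg^1)^*\otimes M\to M\to 0
\]
computes $H^\bullet(\gg^1,M)$, and the grading by $p$ is precisely the grading used to define $H_M(t)$. First I would repackage this complex as the complex of global sections of the sheaf ${\mathcal O}_{\gg^1}\otimes M$ equipped with the differential $\partial$ from the start of Section~\ref{app cohom}. The cohomology sheaf of this complex, by definition, is $\mathcal M^h$; its global sections (with their natural $\CC[\gg^1]$-module structure coming from multiplication in the Koszul complex) give $H^\bullet(\gg^1,M)$ as a finitely generated graded module over $\CC[\gg^1]=S((\gg^1)^*)$. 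Finite generation is immediate from the fact that the Koszul complex is a bounded complex of finitely generated free $\CC[\gg^1]$-modules once one tensors with the Koszul resolution picture.

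Next I would invoke Theorem~\ref{th99} to identify
\[
\operatorname{supp}\mathcal M^h=\overline{X}_k\cap \gg^1.
\]
Under the identification $\gg^1\cong\Hom_{\CC}(\CC^n,\CC^m)$, the subset $\overline{X}_k\cap\gg^1$ is exactly the classical determinantal variety $\{\varphi\mid \rank\varphi\leq k\}$ of $m\times n$ matrices of rank at most $k$. This variety is well known to be irreducible of dimension $k(m+n-k)$; I would cite this as a standard fact. In particular the Krull dimension of the support of the coherent sheaf $\mathcal M^h$ is $k(m+n-k)$.

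Finally, the Hilbert--Serre theorem says that for a finitely generated graded module $N$ over the polynomial ring $\CC[\gg^1]$ (with its standard grading), the Hilbert series has the shape
\[
H_N(t)=\frac{q(t)}{(1-t)^{d}},
\]
where $d$ equals the Krull dimension of $\operatorname{supp}\widetilde N$ and $q(t)\in\ZZ[t]$. Applying this to $N=H^\bullet(\gg^1,M)$ and using the dimension computation above yields the stated formula with $d=k(m+n-k)$.

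The one subtlety worth verifying carefully (and which I would single out as the main technical point) is that the grading on the Koszul complex, which governs $H_M(t)$, agrees with the internal grading on $\CC[\gg^1]$ used in the Hilbert--Serre statement, so that the resulting pole order really is the dimension of $\operatorname{supp}\mathcal M^h$ and not some shifted or reindexed version. Once this compatibility is checked (it follows from the standard Koszul/Beilinson--Bernstein localization picture already invoked in this section), the corollary follows immediately.
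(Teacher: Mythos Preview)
Your proposal is correct and follows exactly the same line as the paper's own argument: invoke Theorem~\ref{th99} to identify $\operatorname{supp}\mathcal M^h$ with the determinantal variety $\overline{X}_k\cap\gg^1$, note that its dimension is $k(m+n-k)$, and apply the Hilbert--Serre theorem to the finitely generated graded $\CC[\gg^1]$-module $H^\bullet(\gg^1,M)$. The paper simply records the dimension formula $\dim\gg^1_k=k(m+n-k)$ and leaves the rest implicit; you have spelled out the affine/Koszul bookkeeping that makes the Hilbert--Serre step legitimate, but there is no difference in approach.
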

		\begin{proof} The degree in the denominator equals $\dim\gg^1_k=k(m+n-k)$.
		\end{proof}


		\section{The $\DS$ functor on simple modules}\label{action of DS}
		
		In this section we discuss what is known about the action of $\DS_x$ on simple modules for classical Lie superalgebras.  Serganova originally conjectured that these functors are semisimple when $\gg$ is basic classical, meaning that they takes semisimple modules to semisimple modules.  Following the work of \cite{HsW} and \cite{GH1} this is now a theorem.  For $\pp(n)$ these functors are known not to be semisimple; however by the work of \cite{ES2}, the composition factors of $\DS_x(L)$ for $x$ of rank one when $L$ is a simple module.  
		
		The case of $\qq(n)$ was studied in \cite{GSh}; in this case, it is interesting to study $DS_x$ on simple modules for all $x\in\gg_{1}^{rk}$ (see Section \ref{section ss support}), not only for those $x$ with $[x,x]=0$. Note however that $\gg_{1}^{rk}$ has infinitely many $G_0$-orbits.  In \cite{GSh}, $DS_x$ is computed on simple modules and shown to be semisimple for $x$ of rank 1.  In general it is expected that $DS_x$ will be always be semisimple.   We will not discuss this case here any further.

		\subsection{General results} We begin with a general statement. Recall that if $N$ is a $\gg$-module and $L$ is a simple $\gg$-module, we write $[N:L]_{non}$ for the ungraded Jordan-H\"older multiplicity of $L$ in $N$, meaning for the number of times both $L$ and $\Pi L$ appear as Jordan-H\"older factors of $N$. 
		
		The following result is a compilation of results from \cite{HsW}, \cite{GH1}, \cite{M}, and \cite{ES2}.  
		\begin{theorem}\label{gen thm DS simples}
			Let $\gg$ be one of the Lie superalgebras $\gg\ll(m|n),\mathfrak{osp}(m|2n),\pp(n)$, or a simple exceptional Lie superalgebra.  Let $L$ be a simple $\gg$-module, $x$ a rank one odd root vector (see Definition~\ref{rank_defn}), and $L'$ a simple $\gg_x$-module.  
			\begin{enumerate}
				\item If $\gg\neq \pp(n)$, then $\DS_x(L)$ is a semisimple $\gg_x$-module.
				\item We have $[\DS_x(L):L']_{non}\leq 2$; if $\gg=\gg\ll(m|n),\pp(n)$ then $\DS_x(L)$ is multiplicity free.
				\item (Purity) If $\gg$ is basic classical ($\gg\neq\pp(n)$) then we have
				\[
				[\DS_x(L):L'][\DS_x(L):\Pi L']=0.
				\]
				\item For $\gg=\gg\ll(m|n),\mathfrak{osp}(m|2n)$, and $\pp(n)$, the composition factors of $\DS_x(L)$ are determined by removing maximal arcs from the arc diagram associated to $L$ (see the subsections below for explanations on the arc diagrams for each case).
			\end{enumerate}
		\end{theorem}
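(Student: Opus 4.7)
The plan is to address each assertion separately and proceed case-by-case through the families $\gg\ll(m|n), \osp(m|2n), \pp(n)$, and the exceptional ones. The unifying machinery rests on three ingredients: central-character control via Proposition~\ref{maincch} and Proposition~\ref{thetabasic}; the iterative decomposition $\ds^k = \ds^1 \circ \cdots \circ \ds^1$ from Lemma~\ref{ds composition}, which reduces everything to rank-one elements; and a combinatorial parameterization of maximally atypical blocks by weight (arc) diagrams in which the action of $\DS_x$ becomes transparent. I would treat (1), (2), (3) by central-character plus involution arguments, and treat (4) by a reduction to Kac modules followed by a combinatorial arc-removal rule.

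For (1), (2), and (3) in the basic classical setting, Proposition~\ref{thetabasic} forces $\DS_x(\Mod^1_\chi(\gg))$ to split as at most a two-part sum $\Mod^1_{\chi'}(\gg_x) \oplus \Mod^1_{\sigma_x(\chi')}(\gg_x)$ of $\sigma_x$-paired central character components; each component has atypicality strictly less than that of $L$ by Lemma~\ref{lemchichi}. The purity statement $[\DS_x(L):L']\cdot[\DS_x(L):\Pi L']=0$ and the multiplicity-two bound then reduce to showing that within each fixed central character at most one simple appears, which one extracts from the weight-diagram model (Brundan--Stroppel type diagrams for $\gg\ll(m|n)$, analogous diagrams for $\osp$ as developed in \cite{GH1}). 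Semisimplicity is automatic when $\rank x = \atyp L$ by Theorem~\ref{th209}, and in general it follows by induction on atypicality via Lemma~\ref{ds composition}, using that each single application of $\ds^1$ lowers atypicality by one and lands in a semisimple image.

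For (4), the combinatorial description, the strategy proceeds in three steps. First, compute $\DS_x$ on Kac modules $K(\lambda)$ (thin or thick for $\pp(n)$), using Lemma~\ref{lm2}(1) together with the splitting $\gg^x = \gg_x \ltimes [x,\gg]$ from Proposition~\ref{lm202} (or Proposition~\ref{embedding p} for $\pp(n)$); the outcome, up to parity and an explicit shift, is a Kac module for $\gg_x$ with highest weight $\lambda|_{\ft_x}$. Second, resolve $L(\lambda)$ in $\Gr_-(\gg)$ as a $\ZZ$-combination of Kac classes indexed by a Kazhdan--Lusztig-style cell whose combinatorics is encoded by the arcs of $\lambda$. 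Third, check that arcs whose endpoints pair off under the restriction $\ft \to \ft_x$ are killed by $\ds_x$ (they become degenerate in the Kac expansion), while arcs with an endpoint in the kernel survive; the surviving maximal arcs index the simple constituents.

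The main obstacle is the combinatorial step in the $\pp(n)$ case, where $\DS_x$ is not semisimple (Theorem~\ref{gen thm DS simples}(1) excludes this family) and the Grothendieck class alone does not determine the module. Following \cite{ES2}, one must track actual extensions: combine the explicit Kac-module computation with a controlled vanishing of the obstructing Ext-classes under $\DS_x$ to read off a Jordan--H\"older filtration. The exceptional superalgebras $D(2|1;a), F(4), G(3)$ are treated in \cite{M} directly, exploiting that $\gg_x$ has very low rank ($\CC$, $\fsl_3$, or $\fsl_2$, respectively), so each atypical block is small enough to analyze by hand; these direct computations also verify both the semisimplicity and the multiplicity bound in those cases.
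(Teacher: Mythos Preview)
Your proposal has two concrete gaps that prevent it from going through.

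\textbf{First, the Kac-module step in part (4) is wrong.} You write that $\DS_x(K(\lambda))$ is, up to parity and shift, a Kac module for $\gg_x$. In fact $\DS_x(K(\lambda))=0$ for every nonzero $x$: a Kac module is induced from $\gg_0$ (or from $\gg_0\oplus\gg^{\pm1}$), so Lemma~\ref{lm2}(1) applies and $X_{K(\lambda)}=\{0\}$. This is exactly why the $k(\lambda)$ span $\ker \ds^1$ in Theorem~\ref{thm3}. Consequently, resolving $[L(\lambda)]$ as a finite $\ZZ$-combination of Kac classes and applying $\ds_x$ termwise gives $0$ formally, which tells you nothing. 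The actual computation in \cite{HsW} and \cite{GH1} does not go through Kac modules; it uses translation functors and a careful analysis of highest-weight structure (in \cite{HsW}, via mixed tensors and the Brundan--Stroppel diagram algebra; in \cite{GH1}, via an explicit study of the socle of $\DS_x(L)$).

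\textbf{Second, your semisimplicity argument for (1) does not work.} Theorem~\ref{th209} gives semisimplicity only when $\rank x=\atyp L$; here $\rank x=1$, so you handle only atypicality-one simples. Your inductive step invokes Lemma~\ref{ds composition}, but that lemma is a statement about the map $\ds_x$ on reduced Grothendieck groups, not about the functor $\DS_x$: the identity $\ds^k=\ds^1\circ\cdots\circ\ds^1$ does not imply $\DS_{x_k}\cong\DS^1\circ\cdots\circ\DS^1$ as functors, and in any case does not let you pass semisimplicity from low atypicality to high. Relatedly, the central-character argument via Proposition~\ref{thetabasic} bounds only the set of \emph{central characters} occurring in $\DS_x(L)$, not the number of simples: a single atypical central character of $\gg_x$ indexes an entire block with infinitely many simples, so you cannot extract either the multiplicity bound in (2) or purity in (3) from it. The paper's route to purity (Remark~\ref{purity_rmk}) is different: one constructs a parity function $\operatorname{dex}$ on dominant weights, compatible under $\DS_x$, which fixes the parity of every composition factor a priori. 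Semisimplicity itself is the deepest part of the theorem and is proved in \cite{HsW}, \cite{GH1} by methods specific to each family; the paper records it here as a citation rather than reproving it.
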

		
		\begin{remark}
			In (2) of Theorem~\ref{gen thm DS simples} the multiplicity bound states that $[\DS_xL:L']_{non}\leq1$ when $\gg$ is of type I, i.e.,~when it has a $\ZZ$-grading (i.e.,~$\gg\ll(m|n),\mathfrak{osp}(2|2n)$, or $\pp(n)$).  
			
			The proof of these bounds (and the rest of the results above) are still case-dependent; general proofs are unknown but would be of great interest.
		\end{remark}
		
		\begin{remark}\label{purity_rmk}
			There is an elegant explanation of the purity property, i.e., part (3) of Theorem~\ref{gen thm DS simples}, which is explained by Gorelik in \cite{G4}.  For $\gg$ basic classical there exists a semisimple subcategory $\mathcal{C}(\gg)$ of the category of finite-dimensional modules such that for any simple module $L$ of $\gg$, exactly one of $L$ or $\Pi L$ lies in $\mathcal{C}(\gg)$.  These semisimple subcategories can be chosen so that if $L$ lies in $\mathcal{C}(\gg)$ then $\DS_xL$ lies in $\mathcal{C}(\gg_x)$, which of course implies part (3) of Theorem~\ref{gen thm DS simples}.     
			
			Further, it is possible to choose these semisimple subcategories so that when $\gg$ is reductive (i.e.,~$\gg=\gg_{0}$ or $\gg=\mathfrak{osp}(1|2n)$), $\mathcal{C}(\gg)$ contains all simple modules of positive superdimension.  Using this and the fact that the $\DS$ functor preserves superdimension, one can obtain combinatorial formulas given by a sum of non-negative numbers for the superdimension of any simple module.  This was done in \cite{HsW} for $\gg\ll(m|n)$.  For $\pp(n)$ the superdimension was computed in \cite{ES2}.
		\end{remark}
		
		Before explaining part (4) of Theorem~\ref{gen thm DS simples} and beginning the discussion of arc diagrams, we state a result from which we compute the value of any $\DS$ functor for $\gg=\gg\ll(m|n),\mathfrak{osp}(m|2n)$ on any simple module.
		
		Let $x_r\in X$ be a rank $r$ element of the associated variety of $\gg$.  Write $\DS^1$ for the functor obtained by applying $\DS_x$ for a rank one vector $x$.  
		\begin{theorem}[See \cite{HsW}, \cite{GH1}]
			Let $\gg=\gg\ll(m|n)$ or $\mathfrak{osp}(m|2n)$.  For a simple $\gg$-module $L$ we have an isomorphism of $\DS_{x_r}(\gg)\cong  \DS^1(\DS^1(\cdots \DS^1(\gg)\cdots ))$-modules:
			$$
			\DS_{x_r}(L)\cong \DS^1(\DS^1(\cdots \DS^1(L)\cdots )).
			$$
		\end{theorem}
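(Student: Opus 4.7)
The plan is to proceed by induction on $r$, reducing $\DS_{x_r}$ to an iterated composition of rank-one functors $\DS^1$. Using Theorem~\ref{th1} and Proposition~\ref{lm202}, first decompose $x_r = x_1 + \dots + x_r$ where each $x_i = x_{\beta_i}$ is a root vector for mutually orthogonal isotropic roots $\beta_i$, with the $x_i$ pairwise commuting. Set $y_k := x_1 + \dots + x_k$; by the tables in Section~\ref{g x in g}, each $\gg_{y_k}$ is classical of the same type as $\gg$ (namely $\gg\ll(m-k|n-k)$ or $\mathfrak{osp}(m-2k|2n-2k)$), with $x_{k+1}$ naturally realized as a rank-one self-commuting odd element inside $\gg_{y_k}$. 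The base case $r=1$ is tautological, so the inductive step reduces to establishing the ``associativity'' isomorphism $\DS_{y_{k+1}}(L) \cong \DS_{x_{k+1}}(\DS_{y_k}(L))$ of $\gg_{y_{k+1}}$-modules for each simple $L$.

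For this key step I would combine three ingredients. First, a \emph{supercharacter identity}: by iterating the Hoyt--Reif formula (\ref{HR formula}), both sides have supercharacter equal to $\sch L|_{\ft_{y_{k+1}}}$, since $\ft_{y_{k+1}} \subseteq \ft_{y_k} \subseteq \ft$. Second, \emph{semisimplicity} of both sides: the right-hand side is semisimple by iterating Theorem~\ref{gen thm DS simples}(1) (each $\DS^1$ sends a semisimple finite-dimensional module to a semisimple one), while semisimplicity of the left-hand side follows from the arc-diagram analyses of \cite{HsW, GH1} for $\gg\ll(m|n)$ and $\mathfrak{osp}(m|2n)$. Third, the \emph{arc-diagram description} (Theorem~\ref{gen thm DS simples}(4)) gives an explicit combinatorial formula for composition factors and parities of $\DS^1$ applied to any simple module; purity for both sides then follows from Theorem~\ref{gen thm DS simples}(3).

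The main obstacle is matching composition factors \emph{together with their parities} between the two sides. Iterated $\DS^1$ corresponds to removing maximal arcs one at a time from the arc diagram of $L$, while $\DS_{y_{k+1}}$ effectively removes $k+1$ arcs at once; the combinatorial claim is that these procedures yield the same multiset of composition factors with identical parity assignments, regardless of the order in which arcs are removed. This commutativity is the technical heart of the argument and is precisely what the detailed computations in \cite{HsW} and \cite{GH1} establish. Once this matching is verified, supercharacter equality together with purity and semisimplicity forces the desired module isomorphism (a pure semisimple module over $\gg\ll$ or $\mathfrak{osp}$ is determined up to isomorphism by its supercharacter, since distinct simple modules there have linearly independent supercharacters), completing both the induction and the theorem.
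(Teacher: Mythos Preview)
The paper does not give its own proof of this statement; it is quoted as a result from \cite{HsW} and \cite{GH1}, so there is no in-paper argument to compare against. I can therefore only assess your sketch on its own merits.

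Your outline has a circularity that needs to be confronted. You invoke \cite{HsW,GH1} for the semisimplicity of the left-hand side $\DS_{x_r}(L)$, but that is precisely (part of) what those papers establish, and the theorem you are proving is their result. Theorem~\ref{gen thm DS simples}(1) as stated in this survey is only for rank-one $x$; the extension to higher rank is \emph{deduced from} the isomorphism you are trying to prove, not an input to it. Without an independent argument that $\DS_{x_r}(L)$ is semisimple (and pure), the supercharacter identity cannot be upgraded to a module isomorphism: equality in $\Gr_-$ does not control extensions.

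There is also some redundancy in your strategy. If you already have the combinatorial matching of composition factors with parities (your ``main obstacle''), then together with semisimplicity of both sides you are done; the supercharacter argument is unnecessary. Conversely, if you have supercharacter equality plus semisimplicity plus purity on both sides, you are done without any arc-diagram matching (your final paragraph is correct that a pure semisimple module is determined by its supercharacter, via linear independence of highest-weight characters). Either route works, but both hinge on the same missing ingredient: semisimplicity of $\DS_{x_r}(L)$ for $r>1$. In \cite{HsW} and \cite{GH1} this is obtained by direct structural analysis (filtrations coming from the action of the individual $x_{\beta_i}$, together with the explicit block descriptions), not by assuming the iteration and comparing supercharacters. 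Your sketch would become a genuine proof if you replaced the citation for left-hand-side semisimplicity with such an argument.
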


		

		


		We will now discuss arc diagrams and part (4) of Theorem~\ref{gen thm DS simples}, after which we will explain the case of the exceptional superalgebras.
		
		\begin{remark}
			As we will see, the calculus of arc diagrams below will explain how to compute $DS^1$ on simple modules with integral weights.  By Section~\ref{subcats and DS}, this is enough to compute $DS^1$ on all of $\cF(\gg)$.
		\end{remark}
		
		\subsection{An overview of arc diagrams} 
		
		We now begin the explanation of part (4) of Theorem~\ref{gen thm DS simples}, which will consume the rest of this section.  For the classical series $\gg\ll(m|n),\mathfrak{osp}(m|2n)$, and $\pp(n)$, there is a remarkable thread which links the computations of the composition factors of $\DS_xL$ for a simple module $L$, namely arc diagrams. (For $\mathfrak{gl}(m|n)$ and $\pp(n)$ these diagrams have also been called cap/cup diagrams, in \cite{HsW} for $\mathfrak{gl}(m|n)$ and in \cite{ES2} for $\pp(n)$; we have changed the name for the sake of consistency.)   These arc diagrams are defined individually for each superalgebra and provide a combinatorial tool to study this question.
		
		We summarize the situation as follows.  For each of the four Lie superalgebras listed above, we explain a procedure which associates to each simple module $L$ an arc diagram, which consists of symbols lying on (half)-integer points on the real line, along with arcs which connect them.  These arcs sometimes are nested within one another, giving rise to the notion of maximal arcs, those which do not lie beneath another arc.  Then, as is stated in Theorem~\ref{gen thm DS simples}, the composition factors of $\DS_xL$ are given by the simple modules whose associated arc diagram is obtained by removing one maximal arc from the arc diagram of $L$.  As will be seen, the procedure for defining arc diagrams is different for each superalgebra.
		
		The idea of using arc diagrams to study the representations of Lie superalgebras goes back to the work of Brundan and Stroppel, where they realized the category $\operatorname{Rep} GL(m|n)$ as a certain diagram algebra of Khovanov type (see \cite{BS}).  Their arc diagrams are, notation aside, in essence the same as what will define for $\gg\ll(m|n)$ below. 
		
		For the orthosymplectic supergroup, Gruson and Serganova used arc diagrams for `tailess' dominant weights in \cite{GrS2}.  More recently, Ehrig and Stroppel have done similar work on realizing $\operatorname{Rep} OSp(m|2n)$ as a certain diagram algebra, (see \cite{EhSt}).  Their diagram algebra is related to type D Khovanov algebras; however, their arc diagrams differ from those used in \cite{GH1} to study the action of $\DS_x$ on simple modules.  A dictionary to go between them is described in Appendix A of \cite{GH1}.

		\begin{remark}
			There is an interesting link between arc diagrams and the computations of character formulas for $\gg\ll(m|n)$ and $\mathfrak{osp}(m|2n)$ (see \cite{GH2}) as well as for $\qq(n)$ (see \cite{SuZh}).  A similar connection is expected for $\pp(n)$ as well.  
		\end{remark}

		We now begin our case by case explanations of arc diagrams. We will write $\Lambda_{m|n}$ for the free $\ZZ$-module with basis $\epsilon_1,\dots,\epsilon_m,\delta_1,\dots,\delta_n$ which will be used for $\gg=\mathfrak{gl}(m|n)$, $\mathfrak{osp}(2m|2n)$, and $\mathfrak{osp}(2m+1|2n)$.  For these superalgebras we define a parity homomorphism $p:\Lambda_{\gg}\to\mathbb{Z}_2=\{0,1\}$ by $p(\epsilon_i)=0$ and $p(\delta_j)=1$ for all $i,j$, and extending linearly. For $\gg=\pp(n)$ we will use that lattice $\Lambda_{n}$, which is the free $\mathbb{Z}$-module with basis $\epsilon_1,\dots,\epsilon_n$.   
		
		\subsection{$\gg\ll(m|n)$ case} 
		
		The $\gg=\gg\ll(m|n)$ case is due to \cite{HsW}, and we refer the reader there for full details and more in-depth results.
		
		We take the Borel subalgebra corresponding to the simple roots
		\[
		\epsilon_1-\epsilon_2,\dots,\epsilon_m-\delta_1,\dots,\delta_{n-1}-\delta_n.
		\]
		Let 
		\[
		\rho=-\epsilon_2-2\epsilon_3-\dots-(m-1)\epsilon_m+
		(m-1)\delta_1+(m-2)\delta_2+\dots+(m-n)\delta_n.
		\]
		We identify $\lambda\in\Lambda_{m|n}$ with the $(m|n)$-tuple of integers
		\[
		(a_1,\dots,a_m|b_1,\dots,b_n)
		\]
		where 
		\[
		\lambda+\rho=a_1\epsilon_1+\dots+a_m\epsilon_m-b_1\delta_1-\dots-b_n\delta_n.
		\]
		
		We write $\Lambda^+(\gg\ll(m|n))$ for the set of dominant weights in $\Lambda_{m|n}$ with respect to this Borel.  Then $\lambda$ is dominant if and only if $a_1>\dots>a_m$  and $b_1>\dots>b_n$.
		
		\subsubsection{Weight and arc diagrams} Write $I_{<}(\lambda)=\{a_1,\dots,a_m\}$ and $I_{>}(\lambda)=\{b_1,\dots,b_n\}$.  Then define the weight diagram associated to $\lambda$ to be the following labelling $f_{\lambda}:\ZZ\to\{\times,\circ,<,>\}$:  
		\[
		f_{\lambda}(k)=\left\{ \begin{array}{@{}rl@{}}
			\times & k\in I_{<}(\lambda)\cap I_{>}(\lambda);\\
			\circ & k\notin I_{<}(\lambda)\cup I_{>}(\lambda);\\
			< & k\in I_{<}(\lambda)\setminus (I_{>}(\lambda)\cap I_{<}(\lambda)) ;\\
			> & k\in I_{>}(\lambda)\setminus (I_{<}(\lambda)\cap I_{>}(\lambda));\\
		\end{array}\right.
		\]
		The correspondence $\lambda\mapsto f_{\lambda}$ defines a bijection between $\Lambda^+(\gg\ll(m|n))$ and the labelings of $\ZZ$ by the appropriate number of the symbols $\times,\circ,<,>$.
		
		\begin{remark}
			It is not hard to check that the atypicality of a dominant weight $\lambda$ is equal to the number of symbols $\times$ in its weight diagram.
		\end{remark}
		Given a weight diagram $f_{\lambda}$ we associate an arc diagram via the following inductive construction.  Connect an arc between $i<j$ if:
		\begin{enumerate}
			\item $f(i)=\times$, $f(j)=\circ$; and
			\item for all $k$ satisfying $i<k<j$ and $f_{\lambda}(k)=\circ$, $k$ already lies on a previously drawn arc.
		\end{enumerate}
		In other words, if $f(i)=\times$, $f(j)=\circ$, and for all $i<k<j$ we have $f(k)\notin\{\times,\circ\}$, then we connect $i$ and $j$ by an arc.  Then we continue drawing arcs inductively according to the above procedure.

		\begin{example}
			For $\gg=\gg\ll(n|n)$, the trivial weight $\lambda=0$ has the following weight diagram:
			$$
			\begin{tikzpicture}
				\draw (0,0) -- (3.5,0);
				\draw (0.5,0) node[cross=4pt] {};
				\draw (1,0) node[cross=4pt] {};
				\filldraw[black] (1.75,0) circle (1pt); 
				\filldraw[black] (2,0) circle (1pt); 
				\filldraw[black] (2.25,0) circle (1pt); 
				\draw (3,0) node[cross=4pt] {};
				\draw [decorate,decoration={brace,amplitude=15pt, mirror}] (0.25,-0.25) -- (3.25,-0.25) node [black,midway,yshift=-0.8cm] {\footnotesize
					$n$}; 
			\end{tikzpicture}
			$$
			The arc diagram is given by
			$$
			\begin{tikzpicture}
				\draw (0,0) -- (6.5,0);
				\draw (0.5,0) node[cross=4pt] {};
				\draw (1,0) node[cross=4pt] {};
				\filldraw[black] (1.75,0) circle (1pt); 
				\filldraw[black] (2,0) circle (1pt); 
				\filldraw[black] (2.25,0) circle (1pt); 
				\draw (3,0) node[cross=4pt] {};
				\draw (3.5,0) circle (3pt);
				\draw (3,0) .. controls (3.125,0.6) and (3.375,0.6) .. (3.5,0);
				\filldraw[black] (4.25,0) circle (1pt); 
				\filldraw[black] (4.5,0) circle (1pt); 
				\filldraw[black] (4.75,0) circle (1pt); 
				\draw (5.5,0) circle (3pt);
				\draw (1,0) .. controls (2.5,2.25) and (4,2.25) .. (5.5,0);
				\draw (6,0) circle (3pt);
				\draw (0.5,0) .. controls (2.25,3) and (4.25,3) .. (6,0);
			\end{tikzpicture}
			$$
			Clearly $\DS_x\CC=\CC$, and from the diagram we also see that when removing the only maximal arc we obtain the arc diagram of the trivial module for $\gg\ll(n-1|n-1)$.
		\end{example}
		
		\begin{example}\label{gl ds example}
			For $\gg\ll(6|7)$ consider the weight 
			\[
			\lambda=3\epsilon_1+3\epsilon_2+2\epsilon_3+\epsilon_4+\epsilon_5-2\delta_2-2\delta_3-2\delta_4-3\delta_5-3\delta_6-6\delta_7
			\]
			Its weight diagram looks as follows:
			$$
			\begin{tikzpicture}
				\draw (-3,0) -- (3,0);
				\draw (-2.5,0) node[cross=4pt] {};
				\draw (-2,0) circle (3pt);
				\draw (-1.5,0) node[label=center:{\large $<$}] {};
				\draw (-1,0) node[cross=4pt] {};
				\draw (-0.5,0) node[label=center:{\large $>$}] {};
				\draw (0,0) node[cross=4pt] {};
				\draw (0.5,0) circle (3pt);
				\draw (1,0) node[cross=4pt] {};
				\draw (1.5,0) node[cross=4pt] {};
				\draw (2,0) circle (3pt);
				\draw (2.5,0) node[label=center:{\large $>$}] {};
			\end{tikzpicture}
			$$
			Its arc diagram is given by:
			$$
			\begin{tikzpicture}
				\draw (-3,0) -- (4,0);
				\draw (-2.5,0) node[cross=4pt] {};
				\draw (-2,0) circle (3pt);
				\draw (-2.5,0) .. controls (-2.375,0.75) and (-2.125,0.75) .. (-2,0);
				\draw (-1.5,0) node[label=center:{\large $<$}] {};
				\draw (-1,0) node[cross=4pt] {};
				\draw (-0.5,0) node[label=center:{\large $>$}] {};
				\draw (0,0) node[cross=4pt] {};
				\draw (0.5,0) circle (3pt);
				\draw (0,0) .. controls (0.125,0.75) and (0.375,0.75) .. (0.5,0);
				\draw (1,0) node[cross=4pt] {};
				\draw (1.5,0) node[cross=4pt] {};
				\draw (2,0) circle (3pt);
				\draw (1.5,0) .. controls (1.625,0.75) and (1.875,0.75) .. (2,0);
				\draw (2.5,0) node[label=center:{\large $>$}] {};
				\draw (3,0) circle (3pt);
				\draw (1,0) .. controls (1.5,1.5) and (2.5,1.5) .. (3,0);
				\draw (3.5,0) circle (3pt);
				\draw (-1,0) .. controls (0.25,2.5) and (2.25,2.5) .. (3.5,0);
			\end{tikzpicture}
			$$
		\end{example}
		
		\subsubsection{$\operatorname{dex}$ and simple modules} For $\lambda\in\Lambda^+(\gg\ll(m|n))$, in order to properly specify the parity of $L(\lambda)$ we need to briefly explain the equivalences of blocks for $\gg\ll(m|n)$.  Namely, every block of atypicality $k$ for $\gg\ll(m|n)$ is equivalent to the principal block of $\gg\ll(k|k)$.  
		
		This equivalence defines a correspondence on simple modules, and thus on dominant weights, which we denote by $\lambda\mapsto \overline{\lambda}$, and it works as follows.
		
		In the weight diagram of $\lambda$, move all core symbols (i$.$e$.$ $>,<$) to the right of the symbols $\times$ by simply swapping adjacent symbols one at a time.  This pictorial procedure corresponds to applying translation functors between different blocks of the same atypicality.  After moving all core symbols to the right, we simply remove them from the diagram, leaving us with a diagram only with the symbols $\times$, and thus it will correspond to a dominant weight $\overline{\lambda}$ in the principal block of $\gg\ll(k|k)$.
		
		For example, for the simple module of Example~\ref{gl ds example}, the atypicality is 5 and the weight diagram of $\overline{\lambda}$ is given by
		$$
		\begin{tikzpicture}
			\draw (-3,0) -- (1,0);
			\draw (-2.5,0) node[cross=4pt] {};
			\draw (-2,0) circle (3pt);
			\draw (-1.5,0) node[cross=4pt] {};
			\draw (-1,0) node[cross=4pt] {};
			\draw (-0.5,0) circle (3pt);
			\draw (0,0) node[cross=4pt] {};
			\draw (0.5,0) node[cross=4pt] {};
		\end{tikzpicture}
		$$

		\begin{remark}\label{remark_eq_cat_ds}
			The equivalence of categories here described commutes with the application of $\DS$, and thus it in fact suffices to understand how $\DS$ acts on the principal block of $\gg\ll(k|k)$, although we will explain the general case for $\gg\ll(m|n)$.  However for $\mathfrak{osp}(m|2n)$ we will use this principal and thus only explain in full how $\DS$ acts on the principal blocks of certain superalgebras. \end{remark}
		
		\begin{definition}
			For $\lambda\in\Lambda^+(\gg\ll(m|n))$, we define 
			\[
			\operatorname{dex}\lambda:=p(\overline{\lambda}).
			\]
			Then we set $L(\lambda)$ to be the simple module of highest weight $\lambda$ such that the parity of the highest weight vector is $\operatorname{dex}\lambda$.
		\end{definition}
		
		\begin{example}
			Consider an integer multiple of the Berezinian weight of $\gg\ll(n|n)$, that is, for $k\in\mathbb{Z}$, 
			\[
			\lambda=k(\epsilon_1+\dots+\epsilon_n-\delta_1-\dots-\delta_n).
			\]
			Its weight diagram is a translation of the weight diagram of the trivial module.  
			We have $\operatorname{dex}(\lambda)=kn$ mod $2$.
		\end{example}
		
		\begin{example}
			For $\lambda$ as in Example~\ref{gl ds example}, we have $\operatorname{dex}\lambda=1$.
		\end{example}

		\begin{theorem}[\cite{HsW}] 
			For $\lambda\in\Lambda^+(\gg\ll(m|n))$, we have
			\[
			\DS_xL(\lambda)=\bigoplus_i\Pi^{n_i}L(\lambda_i)
			\]
			where $\lambda_i$ are the weights which correspond to the arc diagrams obtained by removing a single maximal arc from the arc diagram of $\lambda$, and $n_i=\operatorname{dex}\lambda-\operatorname{dex}\lambda_i$.  
		\end{theorem}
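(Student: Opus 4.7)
The plan is to reduce the general statement to the principal block, compute characters via the Hoyt--Reif formula, and then identify the result combinatorially using the arc diagram calculus. I outline this in three stages.

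\medskip

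\noindent\textbf{Stage 1 (Reduction to the principal block).} Using the equivalence of blocks alluded to in Remark~\ref{remark_eq_cat_ds}, I would reduce to the case where $\lambda$ lies in the principal block of $\mathfrak{gl}(k|k)$, so that its weight diagram consists solely of $\times$ symbols (with $k=\atyp\lambda$). Since $\DS_x$ is a tensor functor (Lemma~\ref{tensor}) it commutes with translation functors (which are given by tensoring with the standard representation and its dual, followed by projection onto a block). One checks that the equivalence of blocks is implemented by a chain of such translations, so the chain lifts through $\DS_x$ to an equivalence between $\DS_x$-images living in blocks of $\gg_x=\mathfrak{gl}(m-1|n-1)$ of the same atypicality. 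This step also ensures that removing a maximal arc commutes with passing to $\overline{\lambda}$ (on the diagrammatic side, the $<,>$ symbols are untouched by the arc-removal procedure), so the combinatorics is preserved.

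\medskip

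\noindent\textbf{Stage 2 (Character of $\DS_xL(\lambda)$ via Hoyt--Reif).} Next I would apply formula~(\ref{xaxaxa}): any rank-one $x\in X$ embeds into an $\mathfrak{sl}(1|1)$-triple $(x,y,\alpha^\vee)$ with $\alpha^\vee\in\ft$, so
\[
\sch\DS_x(L(\lambda))=\sum_{\nu\in\ft^*:\,\nu(\alpha^\vee)=0}(\sdim L(\lambda)_{\nu})\,e^{\nu|_{\ft_x}}.
\]
For $L(\lambda)$ in the principal block, insert the Brundan/Su--Zhang character formula expressing $\sch L(\lambda)$ as a signed sum of virtual Kac characters $k(\mu)$ indexed by the cap/arc diagram of $\lambda$. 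The restriction of $k(\mu)$ to $\ft_x$ is governed by the Weyl-type denominator identity: the factor $\prod_{\beta\in\Delta_1^-}(1-e^\beta)$ either kills the contribution (when a relevant odd root restricts nontrivially on $\alpha^\vee$ in a way that makes $k(\mu)|_{\ft_x}=0$) or collapses to the corresponding denominator for $\gg_x$, yielding a Kac character $k(\mu')$ for $\mathfrak{gl}(m-1|n-1)$.

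\medskip

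\noindent\textbf{Stage 3 (Matching with arc removal).} The core of the argument is to identify the surviving sum with $\sum_i \sch L(\lambda_i)$, where $\lambda_i$ ranges over the weights whose arc diagrams are obtained by removing a single maximal arc from that of $\lambda$. Pictorially: restriction to $\ft_x$ collapses one pair of adjacent columns (corresponding to $\alpha=\epsilon_i-\delta_j$) that are joined by a maximal arc. The KL-style coefficients $d_{\mu,\lambda}(1)$ telescope under this collapse precisely so that each maximal arc removal produces one simple summand. Once the character identity
\[
\sch\DS_xL(\lambda)=\sum_i(-1)^{n_i}\sch L(\lambda_i)
\]
is established (with $n_i$ determined by the change in the parity of the highest weight vector, i.e., $n_i=\operatorname{dex}\lambda-\operatorname{dex}\lambda_i$), semisimplicity and purity of $\DS_xL(\lambda)$ from parts (1) and (3) of Theorem~\ref{gen thm DS simples} upgrade the character identity to the claimed direct sum decomposition, with each sign $(-1)^{n_i}$ realized by a parity shift $\Pi^{n_i}$.

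\medskip

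The principal obstacle is Stage~3: making the combinatorics of restricting signed Kac-character sums to $\ft_x$ line up with the seemingly unrelated recipe of ``remove a maximal arc.'' This is where nontrivial input from Brundan's Kazhdan--Lusztig theory for $\mathfrak{gl}(m|n)$ enters -- one must show that the decomposition matrix, restricted to a principal block and then passed through the collapsing map on weight diagrams, matches the arc-removal combinatorics exactly. Stages~1 and~2, by contrast, are essentially formal consequences of tensor-functoriality of $\DS_x$ and of (\ref{xaxaxa}).
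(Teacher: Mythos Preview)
The paper does not give its own proof; the theorem is cited from \cite{HsW}. Your outline has a genuine circularity: in Stage~3 you invoke semisimplicity and purity of $\DS_xL(\lambda)$ (parts~(1) and~(3) of Theorem~\ref{gen thm DS simples}) to upgrade the supercharacter identity to a module isomorphism, but for $\mathfrak{gl}(m|n)$ those statements are themselves results of \cite{HsW}, proved together with the explicit decomposition rather than independently of it. Stages~1--3 (even granting the combinatorial identification you correctly flag as the ``principal obstacle'') determine only the image $\ds_x[L(\lambda)]$ in the reduced Grothendieck ring $\Gr_-(\gg_x)$. The supercharacter is blind to non-split extensions and to pairs of composition factors $L',\Pi L'$ that cancel in $\Gr_-$, so without a separate argument for semisimplicity and purity you cannot conclude anything about $\DS_xL(\lambda)$ as a module.

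In \cite{HsW} semisimplicity is an output, not an input. The reduction to the principal block via translation functors (your Stage~1) does appear there, but the core of their argument is a direct inductive analysis of how $\DS_x$ interacts with Kac-module filtrations and the Loewy structure of indecomposables in the block, establishing simultaneously that the image of a simple is semisimple and identifying its constituents. That step cannot be replaced by a supercharacter calculation.
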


		\begin{remark}
			For $\gg\ll(m|n)$ there are two conjugacy classes of rank one odd root vectors, but as is explained in \cite{HsW} the action of the corresponding Duflo--Serganova functors on simple modules is the same up to isomorphism.
		\end{remark}
		
		\begin{example}
			
			For $\gg\ll(6|7)$ consider the weight introduced in Example~\ref{gl ds example}.   We recall its arc diagram is given by:
			$$
			\begin{tikzpicture}
				\draw (-3,0) -- (4,0);
				\draw (-2.5,0) node[cross=4pt] {};
				\draw (-2,0) circle (3pt);
				\draw (-2.5,0) .. controls (-2.375,0.75) and (-2.125,0.75) .. (-2,0);
				\draw (-1.5,0) node[label=center:{\large $<$}] {};
				\draw (-1,0) node[cross=4pt] {};
				\draw (-0.5,0) node[label=center:{\large $>$}] {};
				\draw (0,0) node[cross=4pt] {};
				\draw (0.5,0) circle (3pt);
				\draw (0,0) .. controls (0.125,0.75) and (0.375,0.75) .. (0.5,0);
				\draw (1,0) node[cross=4pt] {};
				\draw (1.5,0) node[cross=4pt] {};
				\draw (2,0) circle (3pt);
				\draw (1.5,0) .. controls (1.625,0.75) and (1.875,0.75) .. (2,0);
				\draw (2.5,0) node[label=center:{\large $>$}] {};
				\draw (3,0) circle (3pt);
				\draw (1,0) .. controls (1.5,1.5) and (2.5,1.5) .. (3,0);
				\draw (3.5,0) circle (3pt);
				\draw (-1,0) .. controls (0.25,2.5) and (2.25,2.5) .. (3.5,0);
			\end{tikzpicture}
			$$
			To apply $\DS_x$ to $L(\lambda)$ we remove the maximal arcs from the diagram to obtain two new arc diagrams:
			
			$$
			\begin{tikzpicture}
				\draw (-3,0) -- (4,0);
				\draw (-2.5,0) circle (3pt);
				\draw (-2,0) circle (3pt);
				\draw (-1.5,0) node[label=center:{\large $<$}] {};
				\draw (-1,0) node[cross=4pt] {};
				\draw (-0.5,0) node[label=center:{\large $>$}] {};
				\draw (0,0) node[cross=4pt] {};
				\draw (0.5,0) circle (3pt);
				\draw (0,0) .. controls (0.125,0.75) and (0.375,0.75) .. (0.5,0);
				\draw (1,0) node[cross=4pt] {};
				\draw (1.5,0) node[cross=4pt] {};
				\draw (2,0) circle (3pt);
				\draw (1.5,0) .. controls (1.625,0.75) and (1.875,0.75) .. (2,0);
				\draw (2.5,0) node[label=center:{\large $>$}] {};
				\draw (3,0) circle (3pt);
				\draw (1,0) .. controls (1.5,1.5) and (2.5,1.5) .. (3,0);
				\draw (3.5,0) circle (3pt);
				\draw (-1,0) .. controls (0.25,2.5) and (2.25,2.5) .. (3.5,0);
			\end{tikzpicture}
			$$
			which corresponds to the weight $\lambda_1=3\epsilon_1+3\epsilon_2+2\epsilon_3+\epsilon_4+\epsilon_5-2\delta_1-2\delta_2-2\delta_3-3\delta_4-3\delta_5-4\delta_6$, which has $\operatorname{dex}\lambda_1=0$.
			$$
			\begin{tikzpicture}
				\draw (-3,0) -- (4,0);
				\draw (-2.5,0) node[cross=4pt] {};
				\draw (-2,0) circle (3pt);
				\draw (-2.5,0) .. controls (-2.375,0.75) and (-2.125,0.75) .. (-2,0);
				\draw (-1.5,0) node[label=center:{\large $<$}] {};
				\draw (-1,0) circle (3pt);
				\draw (-0.5,0) node[label=center:{\large $>$}] {};
				\draw (0,0) node[cross=4pt] {};
				\draw (0.5,0) circle (3pt);
				\draw (0,0) .. controls (0.125,0.75) and (0.375,0.75) .. (0.5,0);
				\draw (1,0) node[cross=4pt] {};
				\draw (1.5,0) node[cross=4pt] {};
				\draw (2,0) circle (3pt);
				\draw (1.5,0) .. controls (1.625,0.75) and (1.875,0.75) .. (2,0);
				\draw (2.5,0) node[label=center:{\large $>$}] {};
				\draw (3,0) circle (3pt);
				\draw (1,0) .. controls (1.5,1.5) and (2.5,1.5) .. (3,0);
				\draw (3.5,0) circle (3pt);
			\end{tikzpicture}
			$$
			which corresponds to the weight $\lambda_2=3\epsilon_1+3\epsilon_2+2\epsilon_3-\epsilon_5+\delta_1-2\delta_2-2\delta_3-3\delta_4-3\delta_5-6\delta_6$, which has $\operatorname{dex}\lambda_2=0$  Thus we have 
			\[
			\DS_xL(\lambda)=L(\lambda_1)\oplus L(\lambda_2).
			\]
		\end{example}

		\subsection{$\mathfrak{osp}(m|2n)$ case}

		
		For a full explanation of the $\mathfrak{osp}$ case with many examples, see \cite{GH1}.  Below we closely follow the treatment given in \cite{G4}. 
		
		We have the following equivalences of categories that are obtained via a functor which respects the action of $\DS$.
		
		\begin{itemize}
			\item A block of atypicality $k$ for $\mathfrak{osp}(2m+1|2n)$ is equivalent to the principal block for $\mathfrak{osp}(2k+1|2k)$. 
			\item For $\mathfrak{osp}(2m|2n)$ with $m,n>0$, a block of atypicality $k$ is equivalent to the principal block of either $\mathfrak{osp}(2k|2k)$ or $\mathfrak{osp}(2k+2|2k)$. 
		\end{itemize}  
		In this way we obtain that every block for $\mathfrak{osp}(m|2n)$ is equivalent to the principal block of $\mathfrak{osp}(2k+t|2k)$ for some $k$ and some $t=0,1,$ or $2$, and so it suffices to understand how $\DS_x$ acts on modules in these blocks.  We will deal with these three cases individually, and refer to them according to the value of $t$.
		
		\begin{remark}
			There are a number of parallels between the principal blocks of $\mathfrak{osp}(2k+1|2k)$ and $\mathfrak{osp}(2k+2|2k)$.  In particular in \cite{GH1} they find an explicit bijection $\tau$ between simple modules such that it respects the action of the $\DS$ functor, meaning we have an equality of multiplicity numbers $[\DS_x(\tau(L)):\tau(L)]=[\DS_xL:L]$. 
			
			It is important open question whether there is an equivalence of categories between these principal blocks, and in particular if there is one which commutes with the $\DS$ functor.
		\end{remark}
		
		\subsubsection{}\label{mix}  The weight lattice of $\mathfrak{osp}(2k+t|2k)$ is given by $\Lambda_{k+\ell|k}$, where we set $\ell=0$ for $t=0,1$ and $\ell=1$ for $t=2$. We fix triangular decompositions
		corresponding to the ``mixed'' bases:
		$$\Sigma:=\left\{\begin{array}{ll}
			\varepsilon_1-\delta_{1},\delta_1-\varepsilon_2,\ldots,\varepsilon_{k}-\delta_{k},
			\delta_k\ & \text{ for }\mathfrak{osp}(2k+1|2k)\\
			\delta_{1}-\varepsilon_1,\varepsilon_1-\delta_{2},\ldots,\varepsilon_{k-1}-\delta_{k},
			\delta_k\pm\varepsilon_k & \text{ for } \mathfrak{osp}(2k|2k)\\
			\varepsilon_{1}-\delta_1,\delta_1-\varepsilon_{2},\ldots,\varepsilon_{k}-\delta_k,\delta_k\pm\varepsilon_{k+1} & \text{ for }
			\mathfrak{osp}(2k+2|2k).\\
		\end{array}\right.$$
		We have $\rho=0$ for $t=0,2$ and $\rho=\displaystyle\frac{1}{2}\sum_{i=1}^k(\delta_i-\varepsilon_i)$ for $t=1$. 
		
		\subsubsection{Highest weights in the principal block}\label{lambda+rho}
		
		For $\lambda\in\Lambda_{k+\ell|k}$ we set
		$$a_i:=-(\lambda|\delta_i)$$.
		Write $\Lambda^0(\mathfrak{osp}(2k+t|2k))$ for the dominant weights of $\mathfrak{osp}(2k+t|2k)$ which lie in the principal block.  By~\cite{GrS1},  $\lambda\in\Lambda^0(\mathfrak{osp}(2k+t|2k))$ if and only if $a_1,\ldots,a_k$
		are non-negative integers with $a_{i+1}>a_i$ or $a_i=a_{i+1}=0$, and
		
		$$\lambda+\rho=\left\{
		\begin{array}{lll}
			\sum_{i=1}^{k-1} a_i(\varepsilon_i+\delta_i)+a_k(\delta_k+\xi \varepsilon_k) &\text{ for } & t=0\\
			\sum_{i=1}^{k} a_i(\varepsilon_i+\delta_i)&\text{ for } & t=2\\
			\sum_{i=1}^{s-1} (a_i+\frac{1}{2})
			(\varepsilon_i+\delta_i)+\frac{1}{2}(\delta_s+\xi  \varepsilon_s)+\sum_{i=s+1}^{k} \frac{1}{2}
			(\delta_i-\varepsilon_i)&\text{ for } & t=1\end{array}
		\right.$$
		
		for $\xi\in\{\pm 1\}$. For $t=1$ we have $1\leq s\leq k+1$
		and $a_s=a_{s+1}=\ldots=a_k=0$ if  $s\leq k$ (for $s=k+1$ we have
		$\lambda+\rho=\sum_{i=1}^{k} (a_i+\frac{1}{2})
		(\varepsilon_i+\delta_i)$).

		\subsubsection{Weight diagrams}\label{wtdiag}
		Take $\lambda\in\Lambda^0(\mathfrak{osp}(2k+t|2k))$ and define $a_i$ for $i=1,\ldots,k$ as above.
		We assign to $\lambda$ a weight diagram $f_{\lambda}$, which is
		a number line with one or several symbols drawn at each position with non-negative integral coordinate:
		
		\begin{itemize}
			\item we put the sign $\times$ at each  position with the coordinate $a_i$;
			\item for $t=2$ we add $>$ at the zero position;
			\item we add the ``empty symbol'' $\circ$ to all empty positions.
		\end{itemize}
		
		For $t\neq2$ a weight $\lambda\in\Lambda^0(\mathfrak{osp}(2k+t|2k))$ is not uniquely determined by the weight diagram constructed by the above procedure. Therefore, for  $t=0$ with $a_k\neq0$ 
		and for $t=1$ with $s\leq k$, we write the 
		sign of $\xi$ before the diagram  ($+$ if $\xi=1$ and $-$ if $\xi=-1$).
		
		Notice that each position with a nonzero coordinate contains either $\times$ or $\circ$. For $t=0,1$ 
		the zero position is occupied either by $\circ$ or by
		several symbols $\times$; we
		write this as $\times^i$ for $i\geq 0$. Similarly, for
		$t=2$ the zero position is occupied by $\overset{\times^i}{>}$ with $i\geq 0$.
		
		\begin{remark}
			The weight diagrams we have defined are essentially the same as those defined in \cite{GrS1}, except that when $t=1$ we shift by $-1/2$.
		\end{remark}
		
		\subsubsection{Examples}
		The weight diagram of $0$ is 
		$$
		\begin{tikzpicture}
			\draw (0,0) -- (2.5,0);
			\draw (0.5,0.07) node[label=center:{\large $\times^k$}] {};
			\draw (1,0)  circle (3pt);
			\filldraw[black] (1.5,0) circle (1pt); 
			\filldraw[black] (1.75,0) circle (1pt); 
			\filldraw[black] (2,0) circle (1pt); \end{tikzpicture}
		$$
		where the three small $\bullet$s together are an ellipsis, indicating the diagram continues with $\circ$s.  We omit these in future diagrams.  For $t=0$, 
		$$
		\begin{tikzpicture}
			\draw (-0.5,0) node[label=center:{\large $-$}] {};
			\draw (0,0) -- (2,0);
			\draw (0.5,0.07) node[label=center:{\large $\times^k$}] {};
			\draw (1,0)  circle (3pt);
			\draw (1.5,0)  circle (3pt);
		\end{tikzpicture}
		$$
		for $t=1$, and 
		$$
		\begin{tikzpicture}
			\draw (0,0) -- (2,0);
			\draw (0.5,0.24) node[label=center:{\large $\overset{\times^k}{>}$}] {};
			\draw (1,0)  circle (3pt);
			\draw (1.5,0)  circle (3pt);
		\end{tikzpicture}
		$$
		for $t=2$.
		
		The diagram 
		$$
		\begin{tikzpicture}
			\draw (-0.5,0) node[label=center:{\large +}] {};
			\draw (0,0) -- (2,0);
			\draw (0.5,0.07) node[label=center:{\large $\times^k$}] {};
			\draw (1,0)  circle (3pt);
			\draw (1.5,0)  circle(3pt);
		\end{tikzpicture}
		$$
		corresponds to $\mathfrak{osp}(2k+1|2k)$-weight $\lambda=\varepsilon_1$.
		
		The diagram 
		$$
		\begin{tikzpicture}
			\draw (-0.5,0) node[label=center:{\large +}] {};
			\draw (0,0) -- (2,0);
			\draw (0.5,0) circle (3pt);
			\draw (1,0)  node[cross=4pt] {};
			\draw (1.5,0)  node[cross=4pt] {};
		\end{tikzpicture}
		$$
		corresponds to the $\mathfrak{osp}(4|4)$-weight $\lambda=\lambda+\rho=(\varepsilon_2+\delta_2)+2(\varepsilon_1+\delta_1)$.

		The empty diagram correspond to $\mathfrak{osp}(0|0)=\mathfrak{osp}(1|0)=0$; the diagram
		$>$ 
		corresponds to the weight $0$  for $\mathfrak{osp}(2|0)=\CC$.
		
		\subsubsection{}\label{diag}
		The definition of weight diagrams defines a one to one correspondence between dominant weights for $\mathfrak{osp}(2k+t|2k)$ and (sometime signed) weight diagrams with $n$ $\times$ symbols and one symbol $>$ if $t=2$, satisfying certain conditions.
		
		For  $t=0$ (respectively, $t=1$) a  diagram $f_{\lambda}$ in  has a sign
		if and only if $f_{\lambda}(0)=\circ$ (respectively,  $f_{\lambda}(0)\not=\circ$).
		
		
		



		\subsubsection{Arc diagrams}\label{subsec_arc_diag_osp}
		
		We associate an arc diagram to each weight diagram constructed according to the following steps:
		\begin{enumerate}
			\item For $0<i<j$ with $f_{\lambda}(i)=\times$ and $f_{\lambda}(j)=\circ$, connect an arc from $i$ to $j$ if for all $k$ with $i<k<j$ and $f_{\lambda}(k)$, $k$ already lies on an arc.  
			\item If there is at least one $\times$ at 0, order them from top to bottom.  If $t=0,1$ then draw a single arc from the bottom $\times$ to the nearest position with $\circ$.  If $t=2$, draw two arcs emanating from the bottom $\times$ to the two nearest positions with $\circ$ not already lying on an arc.  Then for any $t$, for each $\times$ at 0 above the bottom one (and working from bottom to top), draw two arcs from the $\times$ to the two nearest positions $\circ$ not already on an arc.
		\end{enumerate}
		
		In what follows we refer to the arcs (either one or two) which lie on a single $\times$ as just an arc.  For example, consider the arc diagram associated to the weight $\lambda=9\epsilon_1+8\epsilon_2+4\epsilon_3+\epsilon_4+8\delta_1+7\delta_2+3\delta_3$ for $\mathfrak{osp}(11|10)$; in this case $\lambda+\rho=(8+1/2)(\epsilon_1+\delta_1)+(7+1/2)(\epsilon_2+\delta_2)+(3+1/2)(\epsilon_3+\delta_3)+1/2(\epsilon_4+\delta_4)+1/2(\delta_5-\epsilon_5)$:
		$$
		\begin{tikzpicture}
			\draw (-0.5,0) node[label=center:{\large +}] {};
			\draw (0,0) -- (5.5,0);
			\draw (0.5,0) node[label=center:{\large $\times$}] {};
			\draw (0.5,0.5) node[label=center:{\large $\times$}] {};
			\draw (1,0)  circle (3pt);
			\draw (0.5,0) .. controls (0.625,0.4) and (0.875,0.4) .. (1,0);
			\draw (1.5,0)  circle(3pt);
			\draw (0.5,0.5) .. controls (0.75, 0.75) and (1,0.5) .. (1.5,0);
			\draw (2,0) node[label=center:{\large $\times$}] {};
			\draw (2.5,0) circle (3pt);
			\draw (2,0) .. controls (2.125,0.4) and (2.375,0.4) .. (2.5,0);
			\draw (3,0) circle (3pt);
			\draw (0.5,0.5) .. controls (1.25,1.25) and (2.25,0.75) .. (3,0);
			\draw (3,0) circle (3pt);
			\draw (3.5,0) circle (3pt);
			\draw (4,0) node[label=center:{\large $\times$}] {};
			\draw (4.5,0) node[label=center:{\large $\times$}] {};
			\draw (5,0) circle (3pt);
			\draw (4.5,0) .. controls (4.625,0.4) and (4.875,0.4) .. (5,0);
			\draw (5.5,0) circle (3pt);
			\draw (4,0) .. controls (4.5,1) and (5,1) .. (5.5,0);
		\end{tikzpicture}
		$$
		There are two maximal arcs in the above diagram: one which consists of the two arcs emanating from the top $\times$ at 0.  If we remove this top arc we obtain the diagram corresponding to the weight $\lambda_1=9\epsilon_1+8\epsilon_2+4\epsilon_3+\epsilon_4+8\delta_1+7\delta_2+3\delta_3$ for $\mathfrak{osp}(9|8)$:
		$$
		\begin{tikzpicture}
			\draw (-0.5,0) node[label=center:{\large +}] {};
			\draw (0,0) -- (5.5,0);
			\draw (0.5,0) node[label=center:{\large $\times$}] {};
			\draw (1,0)  circle (3pt);
			\draw (0.5,0) .. controls (0.625,0.4) and (0.875,0.4) .. (1,0);
			\draw (1.5,0)  circle(3pt);
			\draw (2,0) node[label=center:{\large $\times$}] {};
			\draw (2.5,0) circle (3pt);
			\draw (2,0) .. controls (2.125,0.4) and (2.375,0.4) .. (2.5,0);
			\draw (3,0) circle (3pt);
			\draw (3.5,0) circle (3pt);
			\draw (4,0) node[label=center:{\large $\times$}] {};
			\draw (4.5,0) node[label=center:{\large $\times$}] {};
			\draw (5,0) circle (3pt);
			\draw (4.5,0) .. controls (4.625,0.4) and (4.875,0.4) .. (5,0);
			\draw (5.5,0) circle (3pt);
			\draw (4,0) .. controls (4.5,1) and (5,1) .. (5.5,0);
		\end{tikzpicture}
		$$
		The other maximal arc is the one emanating from the $\times$ lying at 7.  If we remove it, we obtain the diagram corresponding to the weigh $\lambda_2=9\epsilon_1+4\epsilon_2+\epsilon_3+8\delta_1+3\delta_2$ of $\mathfrak{osp}(9|8)$:
		$$
		\begin{tikzpicture}
			\draw (-0.5,0) node[label=center:{\large +}] {};
			\draw (0,0) -- (6,0);
			\draw (0.5,0) node[label=center:{\large $\times$}] {};
			\draw (0.5,0.5) node[label=center:{\large $\times$}] {};
			\draw (1,0)  circle (3pt);
			\draw (0.5,0) .. controls (0.625,0.4) and (0.875,0.4) .. (1,0);
			\draw (1.5,0)  circle(3pt);
			\draw (0.5,0.5) .. controls (0.75, 0.75) and (1,0.5) .. (1.5,0);
			\draw (2,0) node[label=center:{\large $\times$}] {};
			\draw (2.5,0) circle (3pt);
			\draw (2,0) .. controls (2.125,0.4) and (2.375,0.4) .. (2.5,0);
			\draw (3,0) circle (3pt);
			\draw (0.5,0.5) .. controls (1.25,1.25) and (2.25,0.75) .. (3,0); 
			\draw (3.5,0) circle (3pt);
			\draw (4,0) circle (3pt);
			\draw (4.5,0) node[label=center:{\large $\times$}] {};
			\draw (5,0) circle (3pt);
			\draw (4.5,0) .. controls (4.625,0.4) and (4.875,0.4) .. (5,0);
			\draw (5.5,0) circle (3pt);
		\end{tikzpicture}
		$$
		
		\subsubsection{$\operatorname{dex}$ and simple modules} Given a dominant weight $\lambda$, we define
		\[
		\|\lambda\|=\sum\limits_{i=1}^{k}a_i-\ell(k-\text{tail}(\lambda))
		\]
		where $\text{tail}(\lambda)$ denotes the number of symbols $\times$ at 0 when $\ell=1$ (we omit the definition for other cases since we do not need it).  The we define
		\[
		\operatorname{dex}\lambda:=\|\lambda\| \text{ mod }2.
		\]

		We will write $L(\lambda)$ for the simple module of highest weight $\lambda$, where the parity of the highest weight vector is given by $\operatorname{dex}(\lambda)$.
		
		\begin{theorem}[\cite{GH1}, Thm. 8.2] \label{GH theorem}\
			
			\begin{itemize}
				\item[(i)] Let $\lambda\in\Lambda^0(\mathfrak{osp}(2k+t|2k)))$ and $\nu\in\Lambda^0(\mathfrak{osp}(2k+t-2|2k-2)$.  Then $[\DS_xL(\lambda):L(\nu)]_{non}\neq0$ if and only if the arc diagram of $\nu$ can be obtained from the arc diagram of $\lambda$ by removing a maximal arc.  If $t\neq 1$, then the sign of $\nu$ and $\lambda$ (if relevant) need not agree, while if $t=1$ then if $\nu$ has sign it must agree with the sign of $\lambda$.
				\item[(ii)] Let $e$ denote the number of free positions (i.e.,~those with $\circ$ and not attached to any arc) to the left of the maximal arc removed to obtain $\nu$.  For $t=1,2$ we have:
				\[
				[\DS_x(L(\lambda)):L(\nu)]=\left\{ \begin{array}{ll} (1|0) & e=0; \\  (2|0)  & e>0 \text{ and even}; \\ (0|2) & e\text{ odd} . \end{array}\right.
				\]
				For $t=0$ we have
				\[
				[\DS_x(L(\lambda)):L(\nu)]=\left\{ \begin{array}{ll} (1|0) & e\text{ even}; \\  (0|1)  & e\text{ odd}. \end{array}\right.
				\]
			\end{itemize}
		\end{theorem}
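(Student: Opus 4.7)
The plan is to reduce to computing $\DS^1$ on the principal blocks of $\mathfrak{osp}(2k+t|2k)$, and then to combine supercharacter methods with an inductive combinatorial analysis of arc diagrams. First I would invoke the block equivalences described in the preamble to Section~\ref{subsec_arc_diag_osp}, which send atypical blocks of $\mathfrak{osp}(m|2n)$ to principal blocks of $\mathfrak{osp}(2k+t|2k)$. Since these equivalences are implemented by translation functors and $\DS_x$ commutes with translation functors (being a symmetric monoidal tensor functor that preserves central characters up to the map $\eta_x^*$ of Proposition~\ref{maincch}), it suffices to prove the theorem when $\lambda$ lies in the principal block. The factorization $\ds^r=\ds^1\circ\cdots\circ\ds^1$ from Lemma~\ref{ds composition} allows me to restrict attention to rank one $x$.

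Next I would choose a Borel subalgebra adapted to $x$, as in the proof of Lemma~\ref{lemchichi}: pick $\bb$ so that the odd root $\beta$ with $x\in\gg_\beta$ is simple, and pick a compatible Borel $\bb_x$ inside $\gg_x=\mathfrak{osp}(2k+t-2|2(k-1))$ via the embedding of Proposition~\ref{lm202}. With this setup, I would compute $\sch L(\lambda)$ in terms of the virtual Kac modules $k(\mu)$, using the known Kazhdan--Lusztig type character formula for $\mathfrak{osp}$ encoded in the arc diagram (the coefficients in the expansion come from a signed count of paths in the diagram). Applying the Hoyt--Reif restriction formula~(\ref{HR formula}) $\sch \DS_x L(\lambda)=(\sch L(\lambda))|_{\ft_x}$, one sees that restriction kills precisely those $k(\mu)$ for which $\mu$ is not $A^\perp$-supported, leaving only those $k(\nu)$ corresponding to $\nu\in\Lambda^0(\mathfrak{osp}(2k+t-2|2(k-1)))$ whose arc diagram is obtained by erasing a maximal arc.

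The semisimplicity statement (part (1) of Theorem~\ref{gen thm DS simples}) for $\mathfrak{osp}$, proved via the purity mechanism of Remark~\ref{purity_rmk}, ensures $\DS_x L(\lambda)$ is a direct sum of $L(\nu)$'s, so the supercharacter identification translates directly into a multiplicity statement. To finish part (i) one checks that the maximal-arc bijection matches the central character constraint imposed by Proposition~\ref{maincch} and Theorem~\ref{th210} (which permits at most two preimages under $\eta_x^*$, realized precisely by the involution $\sigma_x$ on the target block).

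The hard part will be part (ii): pinning down the exact multiplicities $(1|0)$, $(2|0)$, $(0|2)$, $(1|0)$, $(0|1)$ in terms of the parity of the number $e$ of free positions to the left. The strategy here is to track the $\operatorname{dex}$-function carefully through the removal of an arc. Since the $\operatorname{dex}$ invariant governs the parity of the highest weight vector chosen as representative of the simple class, the shift $\operatorname{dex}\lambda-\operatorname{dex}\nu\pmod 2$ equals the parity contribution, and this shift can be read off as the number of $\times$-symbols passed over when sliding a $\times$ from its original position to the $\circ$-position at the arc's end, which is exactly controlled by $e$ together with the value of $t$ and whether the removed arc emanates from the zero position (the double-arc cases). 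For the $t\neq 1$ cases one must additionally verify that both sign choices of $\nu$ contribute when $e>0$, producing the multiplicity $2$; this is where the involution $\sigma_x$ of Section~\ref{involutions section} enters nontrivially, as in Lemma~\ref{case}, accounting for the two central characters above $\chi_\lambda$. The remaining combinatorial verification, as well as the $t=2$ double-arc case where both sides of a $\times$ at the origin contribute simultaneously, reduces to explicit bookkeeping using the adapted Borel and the semisimplicity of $\DS_x L(\lambda)$.
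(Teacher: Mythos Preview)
The paper does not prove this theorem; it simply cites \cite{GH1}, Thm.~8.2. So the comparison is between your outline and the actual argument in \cite{GH1}.

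Your proposal has a genuine circularity. You invoke semisimplicity of $\DS_x$ on $\mathfrak{osp}$ simples (part~(1) of Theorem~\ref{gen thm DS simples}) as an input, but that statement is precisely the main theorem of \cite{GH1}, proved jointly with the multiplicity formula you are trying to establish. You also justify semisimplicity ``via the purity mechanism of Remark~\ref{purity_rmk}'', but purity ($[\DS_xL:L'][\DS_xL:\Pi L']=0$) and semisimplicity are independent properties; the bipartite argument of \cite{G4} gives purity, not semisimplicity. Without semisimplicity, the supercharacter identity $\sch\DS_xL(\lambda)=(\sch L(\lambda))|_{\ft_x}$ only determines $ds_x[L(\lambda)]\in\Gr_-(\gg_x)$, which is far weaker than the graded multiplicity statement in part~(ii).

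The second gap is that the crucial computational step is asserted, not argued. You write that restricting $\sch L(\lambda)$ (expanded in virtual Kac modules $k(\mu)$ via a Kazhdan--Lusztig formula) to $\ft_x$ leaves ``only those $k(\nu)$ corresponding to $\nu$ whose arc diagram is obtained by erasing a maximal arc.'' This is the entire content of the theorem and does not follow formally from the Hoyt--Reif formula; one would need to control all cancellations among the $k(\mu)|_{\ft_x}$, which requires detailed knowledge of the Gruson--Serganova coefficients. In fact the flow of implication in the literature is the reverse: \cite{GH2} uses the $\DS$ computation of \cite{GH1} to \emph{derive} character formulas, not the other way around. The actual proof in \cite{GH1} proceeds by a direct inductive argument using translation functors (which commute with $\DS_x$), reducing to explicit base cases, together with a separate argument for semisimplicity; the arc-diagram combinatorics emerges from tracking how translation functors move the symbols $\times$, not from restricting a pre-existing character formula.
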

		
		\begin{remark}
			For every $m,n$ with $m>0$, the Lie superalgebra $\mathfrak{osp}(2m|2n)$ admits an involution $\sigma$ which comes from a reflection of its Kac-Dynkin diagram.  This involution is the same as the involution $\sigma_x$ defined in Section~\ref{involutions section}.
			
			In \cite{GH1} it is proven via a general argument that for a simple $\mathfrak{osp}(2m|2n)$-module $L$, we have $\DS_x(L^\sigma)\cong \DS_xL$, and $(\DS_xL)^{\sigma_x}\cong \DS_xL$, where $\sigma_x$ is the corresponding involution for $\mathfrak{osp}(2m-2|2n-2)$.
			
			It would be interesting to know if these isomorphisms hold for all finite-dimensional $\mathfrak{osp}(2m|2n)$-modules.
		\end{remark}
		
		We now give an example of the above theorem for each case $t=0,1,2$:
		\begin{example}
			For $t=1$, consider the weight $\lambda=9\epsilon_1+8\epsilon_2+4\epsilon_3+\epsilon_4+8\delta_1+7\delta_2+2\delta_3$.  We have $\operatorname{dex}\lambda=0$, and we looked at the weight diagram already in Section~\ref{subsec_arc_diag_osp}.  By  Theorem~\ref{GH theorem} we have
			\[
			\DS_xL(\lambda)=L(\lambda_{1})\oplus \Pi L(\lambda_2)^{\oplus 2}.
			\]
			Note for $\lambda_1$ we have $e=0$ and $\lambda_2$, $e=1$, hence the parities are as shown.
		\end{example}
		
		\begin{example}
			For $t=0$ consider the weight $\lambda=6(\delta_1+\epsilon_1)+2(\delta_2+\epsilon_2)+(\delta_3-\epsilon_3)$ for $\mathfrak{osp}(6|6)$, which has $\operatorname{dex}\lambda=1$ and arc diagram:
			$$
			\begin{tikzpicture}
				\draw (-0.5,0) node[label=center:{\large $-$}] {};
				\draw (0,0) -- (4.5,0);
				\draw (0.5,0)  circle (3pt);
				\draw (1,0)  node[label=center:{\large $\times$}] {};
				\draw (1.5,0) node[label=center:{\large $\times$}] {};
				\draw (2,0) circle (3pt);
				\draw (1.5,0) .. controls (1.625,0.4) and (1.875,0.4) .. (2,0);
				\draw (2.5,0) circle (3pt);
				\draw (1,0) .. controls (1.5,1) and (2,1) .. (2.5,0);
				\draw (3,0) circle (3pt);
				\draw (3.5,0) node[label=center:{\large $\times$}] {};
				\draw (4,0) circle (3pt);
				\draw (3.5,0) .. controls (3.625,0.4) and (3.875,0.4) .. (4,0);
			\end{tikzpicture}
			$$
			There are two maximal arcs.  Removing the arc starting at position one gives the arc diagram associated to $\lambda_1^{\pm}=6(\delta_1+\epsilon_1)+2(\delta_2\pm\epsilon_2)$ with $e=1$:
			$$
			\begin{tikzpicture}
				\draw (-0.5,0) node[label=center:{\large $\pm$}] {};
				\draw (0,0) -- (4.5,0);
				\draw (0.5,0)  circle (3pt);
				\draw (1,0)  circle (3pt);
				\draw (1.5,0) node[label=center:{\large $\times$}] {};
				\draw (2,0) circle (3pt);
				\draw (1.5,0) .. controls (1.625,0.4) and (1.875,0.4) .. (2,0);
				\draw (2.5,0) circle (3pt);
				\draw (3,0) circle (3pt);
				\draw (3.5,0) node[label=center:{\large $\times$}] {};
				\draw (4,0) circle (3pt);
				\draw (3.5,0) .. controls (3.625,0.4) and (3.875,0.4) .. (4,0);
			\end{tikzpicture}
			$$
			Removing the maximal arc starting at position 6 gives the arc diagrams associated to $\lambda_2^{\pm}=2(\delta_1+\epsilon_1)+(\delta_2\pm\epsilon_2)$ with $e=2$:
			$$
			\begin{tikzpicture}
				\draw (-0.5,0) node[label=center:{\large $\pm$}] {};
				\draw (0,0) -- (4.5,0);
				\draw (0.5,0)  circle (3pt);
				\draw (1,0)  node[label=center:{\large $\times$}] {};
				\draw (1.5,0) node[label=center:{\large $\times$}] {};
				\draw (2,0) circle (3pt);
				\draw (1.5,0) .. controls (1.625,0.4) and (1.875,0.4) .. (2,0);
				\draw (2.5,0) circle (3pt);
				\draw (1,0) .. controls (1.5,1) and (2,1) .. (2.5,0);
				\draw (3,0) circle (3pt);
				\draw (3.5,0) circle (3pt);
				\draw (4,0) circle (3pt);
			\end{tikzpicture}
			$$
			Thus we have that
			\[
			\DS_xL(\lambda)=\Pi L(\lambda_1^+)\oplus \Pi L(\lambda_1^-)\oplus L(\lambda_2^+)\oplus L(\lambda_2^-).
			\]
		\end{example}
		
		\begin{example}
			In the $t=2$ case consider the weight $\lambda=8(\epsilon_1+\delta_1)+5(\epsilon_2+\delta_2)+8(\epsilon_3+\delta_3)$ for $\mathfrak{osp}(6|4)$ with $\operatorname{dex}\lambda=0$ and arc diagram:
			$$
			\begin{tikzpicture}
				\draw (0,0) -- (5.5,0);
				\draw (0.5,0) node[label=center:{\large $>$}] {};
				\draw (0.5,0.5) node[label=center:{\large $\times$}] {};
				\draw (1,0)  circle (3pt);
				\draw (0.5,0.5) .. controls (0.625,0.75) and (0.875,0.3) .. (1,0);
				\draw (1.5,0) node[label=center:{\large $\times$}] {};
				\draw (2,0) circle (3pt);
				\draw (1.5,0) .. controls (1.625,0.4) and (1.875,0.4) .. (2,0);
				\draw (2.5,0) circle (3pt);
				\draw (0.5,0.5) .. controls (1.25,1) and (2,0.75)  .. (2.5,0);
				\draw (3,0) node[label=center:{\large $\times$}] {};
				\draw (3.5,0) circle (3pt);
				\draw (3,0) .. controls (3.125,0.4) and (3.375,0.4) .. (3.5,0);
				\draw (4,0) circle (3pt);
				\draw (4.5,0) node[label=center:{\large $\times$}] {};
				\draw (5,0) circle (3pt);
				\draw (4.5,0) .. controls (4.625,0.4) and (4.875,0.4) .. (5,0);
			\end{tikzpicture}
			$$
			There are three maximal arcs.  If we remove the one emanating from 0, we obtain the arc diagram associated to the weight $\lambda_1=8(\epsilon_1+\delta_1)+5(\epsilon_2+\delta_2)+2(\epsilon_3+\delta_3)$ with $e=0$:
			$$
			\begin{tikzpicture}
				\draw (0,0) -- (5.5,0);
				\draw (0.5,0) node[label=center:{\large $>$}] {};
				\draw (1,0)  circle (3pt);
				\draw (1.5,0) node[label=center:{\large $\times$}] {};
				\draw (2,0) circle (3pt);
				\draw (1.5,0) .. controls (1.625,0.4) and (1.875,0.4) .. (2,0);
				\draw (2.5,0) circle (3pt);
				\draw (3,0) node[label=center:{\large $\times$}] {};
				\draw (3.5,0) circle (3pt);
				\draw (3,0) .. controls (3.125,0.4) and (3.375,0.4) .. (3.5,0);
				\draw (4,0) circle (3pt);
				\draw (4.5,0) node[label=center:{\large $\times$}] {};
				\draw (5,0) circle (3pt);
				\draw (4.5,0) .. controls (4.625,0.4) and (4.875,0.4) .. (5,0);
			\end{tikzpicture}
			$$
			If we remove the maximal arc emanating from 5 we obtain the arc diagram associated to the weight $\lambda_2=8(\epsilon_1+\delta_1)+2(\epsilon_2+\delta_2)$ with $e=0$:
			$$
			\begin{tikzpicture}
				\draw (0,0) -- (5.5,0);
				\draw (0.5,0) node[label=center:{\large $>$}] {};
				\draw (0.5,0.5) node[label=center:{\large $\times$}] {};
				\draw (1,0)  circle (3pt);
				\draw (0.5,0.5) .. controls (0.625,0.75) and (0.875,0.3) .. (1,0);
				\draw (1.5,0) node[label=center:{\large $\times$}] {};
				\draw (2,0) circle (3pt);
				\draw (1.5,0) .. controls (1.625,0.4) and (1.875,0.4) .. (2,0);
				\draw (2.5,0) circle (3pt);
				\draw (0.5,0.5) .. controls (1.25,1) and (2,0.75)  .. (2.5,0);
				\draw (3,0) circle (3pt);
				\draw (3.5,0) circle (3pt);
				\draw (4,0) circle (3pt);
				\draw (4.5,0) node[label=center:{\large $\times$}] {};
				\draw (5,0) circle (3pt);
				\draw (4.5,0) .. controls (4.625,0.4) and (4.875,0.4) .. (5,0);
			\end{tikzpicture}
			$$
			Finally if we remove the arc emanating from 8 we obtain the arc diagram associated to the weight $\lambda_2=5(\epsilon_1+\delta_1)+2(\epsilon_2+\delta_2)$ with $e=1$:
			$$
			\begin{tikzpicture}
				\draw (0,0) -- (5.5,0);
				\draw (0.5,0) node[label=center:{\large $>$}] {};
				\draw (0.5,0.5) node[label=center:{\large $\times$}] {};
				\draw (1,0)  circle (3pt);
				\draw (0.5,0.5) .. controls (0.625,0.75) and (0.875,0.3) .. (1,0);
				\draw (1.5,0) node[label=center:{\large $\times$}] {};
				\draw (2,0) circle (3pt);
				\draw (1.5,0) .. controls (1.625,0.4) and (1.875,0.4) .. (2,0);
				\draw (2.5,0) circle (3pt);
				\draw (0.5,0.5) .. controls (1.25,1) and (2,0.75)  .. (2.5,0);
				\draw (3,0) node[label=center:{\large $\times$}] {};
				\draw (3.5,0) circle (3pt);
				\draw (3,0) .. controls (3.125,0.4) and (3.375,0.4) .. (3.5,0);
				\draw (4,0) circle (3pt);
				\draw (4.5,0) circle (3pt);
				\draw (5,0) circle (3pt);
			\end{tikzpicture}
			$$
			It follows that we have
			\[
			\DS_xL(\lambda)=L(\lambda_1)\oplus L(\lambda_2)\oplus \Pi L(\lambda_3)^{\oplus 2}.
			\]
		\end{example}

		\subsection{$\pp(n)$ case}
		
		We refer to \cite{ES2} for a full explanation of the $\pp(n)$ case with examples.
		
		We write $\Lambda_n$ for the $\ZZ$-module spanned by $\epsilon_1,\dots,\epsilon_n$.  We fix the following simple roots for $\pp(n)$:
		\[
		\pm\epsilon_n-\epsilon_{n-1},\epsilon_{n-1}-\epsilon_{n-2},\dots,\epsilon_2-\epsilon_1.
		\]
		Write $\Lambda^+(\pp(n))$ for the set of dominant integral weights with respect to the corresponding Borel subalgebra. Let 
		\[
		\rho=\epsilon_2+2\epsilon_3+\dots+(n-1)\epsilon_n.
		\]
		For $\lambda\in\Lambda^{\pp(n)}$ we write
		\[
		\lambda+\rho=a_1\epsilon+\dots+a_n\epsilon_n.
		\]
		Then the dominance condition is precisely that $a_1<\cdots<a_n$.  
		
		Given $\lambda\in\Lambda^+(\pp(n))$ we write $L(\lambda)$ for the irreducible representation corresponding to $\lambda$ such that the highest weight vector is even.

		\subsubsection{Weight and arc diagrams} To $\lambda\in\Lambda^+(\pp(n))$ we define the associated weight diagram $f_{\lambda}:\ZZ\to\{\circ,\bullet\}$ by $f_{\lambda}(a_i)=\bullet$ and $f_{\lambda}(n)=\circ$ if $n\neq a_i$ for all $i$.
		
		Now we define an arc diagram associated to $\lambda$ according to the same approach as for $\gg\ll(m|n)$, except we work from left to right now, i.e.,~from negative to positive integers. More explicitly, we draw an arc connecting $i$ and $j$ if $f_{\lambda}(i)=\circ$, $f_{\lambda}(j)=\bullet$ and all $k$ with $i<k<j$ already lie on an arc.
		
		\begin{theorem}[\cite{ES2}]
			Let $\lambda\in\Lambda^+(\pp(n))$ and $\mu\in\Lambda^+(\pp(n-1))$.  Then $\Pi^zL(\mu)$ appears as a factor  of $\DS_xL(\lambda)$ for some $z$ if and only if the arc diagram of $\mu$ can be obtained from the arc diagram of $\lambda$ by removing a maximal arc.   In this case, $z$ is equal to the number of arcs to the right of the one removed. Further, $\DS_xL(\lambda)$ is multiplicity-free.
		\end{theorem}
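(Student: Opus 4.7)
The plan is to reduce the statement to a supercharacter identity via the Hoyt--Reif formula of Example~\ref{exafin}, and then to extract module-level information using the known character formula for simple $\pp(n)$-modules. By Proposition~\ref{p orbits}, the rank-one elements of $X$ form a single $G_0$-orbit (the orbit $(0,1)$ in the table), so Lemma~\ref{lem:group ker} lets me work with a convenient representative, which I would choose to be a nonzero $x \in \gg_{2\epsilon_n}$. Proposition~\ref{embedding p} then identifies $\gg_x$ with the natural copy of $\pp(n-1)$ corresponding to the indices $\{1,\dots,n-1\}$, with Cartan $\ft_x \subset \ft$ spanned by $h_1,\dots,h_{n-1}$. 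Formula~(\ref{HR formula}) then translates $\DS_x$ on the supercharacter level into the algebra map $e^{\epsilon_n} \mapsto 1$.

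Next, I would invoke the explicit character formula for $\sch L(\lambda)$ established in~\cite{ES2}, which writes $\sch L(\lambda)$ as an alternating sum of supercharacters of thin Kac modules $k(\nu)$ indexed by decorations of the arc diagram of $\lambda$. The core combinatorial step is to prove that, after the specialization $e^{\epsilon_n} = 1$, almost all terms cancel in pairs, and the surviving terms assemble into
\[
\sum_{\mu} (-1)^{z_\mu}\, \sch L(\mu),
\]
where $\mu$ ranges over the weights obtained from $\lambda$ by removing a single maximal arc and $z_\mu$ is the number of arcs strictly to the right of the arc removed. The cancellation should be driven by a sign-reversing involution on the combinatorial data indexing the ES-expansion whose fixed points are precisely the maximal-arc removals; equivalently, one can track which monomials in $\sch L(\lambda)$ descend to each $\sch L(\mu)$ by induction on the number of arcs.

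To pass from the supercharacter identity to the claimed module-level statement, I would appeal to the multiplicity bound of Theorem~\ref{gen thm DS simples}(2) (which for $\pp(n)$ says $\DS_xL$ is multiplicity-free), and then observe that the supercharacters $\sch L(\mu)$ of distinct simples are linearly independent in $\Gr_{-}(\Fin(\pp(n-1)))_{\mathbb{Q}}$. The sign $(-1)^{z_\mu}$ then determines whether $L(\mu)$ or $\Pi L(\mu)$ occurs as the composition factor, giving the parity shift $\Pi^{z_\mu}$. Multiplicity-freeness of $\DS_x L(\lambda)$ as a $\gg_x$-module and the ``if and only if'' in the statement both drop out of the fact that the set of arc diagrams obtained by removing a maximal arc is in bijection with the surviving terms of the specialization.

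The hard part will be establishing the cancellation pattern in the specialized ES-character formula in an efficient way. The character formula of~\cite{ES2} carries enough parameters (reflecting the non-semisimplicity of $\DS_x$ on $\pp(n)$) that a naive application of $e^{\epsilon_n}=1$ produces a long alternating sum, and a carefully designed sign-reversing involution seems essential to isolate exactly the maximal-arc terms. A secondary technical issue is to justify that one may deduce the purity of the composition factors (i.e., that only one of $L(\mu)$ or $\Pi L(\mu)$ appears, not both) from the signed supercharacter identity; for $\pp(n)$ this is subtle because the purity explanation via semisimple subcategories of Remark~\ref{purity_rmk} is not directly available, so one must supply a separate argument, e.g.\ by tracking the $\ZZ$-grading inherited from $\pp(n) = \gg^{-1} \oplus \gg^0 \oplus \gg^1$ through the Kac-module filtration.
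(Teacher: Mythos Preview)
The paper itself does not prove this theorem; it is stated with attribution to \cite{ES2} and no argument is given in the text. So there is no ``paper's own proof'' to compare against, but I can assess whether your plan would stand on its own.

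There is a genuine circularity. You invoke Theorem~\ref{gen thm DS simples}(2) to conclude that $\DS_xL(\lambda)$ is multiplicity-free, but the $\pp(n)$ clause of that theorem \emph{is} the result you are proving: the paper explicitly lists \cite{ES2} as its source for the $\pp(n)$ case of Theorem~\ref{gen thm DS simples}. The same concern applies to the ``explicit character formula for $\sch L(\lambda)$ established in~\cite{ES2}'' that you plan to feed into the Hoyt--Reif specialization; you would need to verify that this formula is obtained in \cite{ES2} independently of, and logically prior to, the $\DS$ computation, rather than as a companion or consequence of it.

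Even granting an independent character formula, the supercharacter route cannot close the argument by itself. The identity $\sch\DS_xL(\lambda)=(\sch L(\lambda))|_{\ft_x}$ determines only the signed multiplicities $[\DS_xL:L(\mu)]-[\DS_xL:\Pi L(\mu)]$. To upgrade this to the actual list of composition factors with parities you need either semisimplicity of $\DS_xL$ (which the paper explicitly says fails for $\pp(n)$) or purity in the sense of Theorem~\ref{gen thm DS simples}(3) (which the paper asserts only for basic classical $\gg\neq\pp(n)$). Nothing in the supercharacter identity rules out, say, both $L(\mu)$ and $\Pi L(\mu)$ occurring once each and cancelling, or $L(\mu)$ occurring twice and $\Pi L(\mu)$ once. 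You flag this at the end as a ``secondary technical issue'' and propose tracking the $\ZZ$-grading through a Kac filtration, but this is the heart of the matter, not a side detail: the actual argument in \cite{ES2} proceeds by direct module-theoretic analysis (translation functors and weight-diagram combinatorics on the level of modules), precisely because the passage from supercharacters to composition factors is not available for $\pp(n)$ without that input.
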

		
		We can now give the proof of Theorem~\ref{thm2} for $\pp(n)$. 
		\begin{corollary}\label{p case thm 8.1}
			The map $ds^1:\Gr_-(\pp(n))\to\Gr_-(\pp(n-1))$ is surjective.
		\end{corollary}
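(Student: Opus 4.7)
The plan is to prove surjectivity by induction on the non-negative invariant
\[
N(\mu) := (n-1) - c_{\mathrm{rt}}(\mu),
\]
where $c_{\mathrm{rt}}(\mu)$ denotes the size of the rightmost cluster (i.e., rightmost maximal block of consecutive $\bullet$s) in the arc diagram of $\mu \in \Lambda^+(\pp(n-1))$. Since $1 \le c_{\mathrm{rt}}(\mu) \le n-1$, one has $0 \le N(\mu) \le n-2$, and $N(\mu)=0$ precisely when $\mu$'s diagram is a single block of $n-1$ consecutive $\bullet$s (the ``thin'' case), so the induction is well-founded.

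Given $\mu$ with $\bullet$s at $a_1<\cdots<a_{n-1}$, I would take the lift $\lambda := \mu \cup \{a_{n-1}+1\} \in \Lambda^+(\pp(n))$. Applying the preceding theorem on $\DS_x$ for $\pp(n)$ (composition factors of $\DS_x L(\lambda)$ are in bijection with the maximal arcs of $\lambda$'s diagram, each obtained by removing one maximal arc) one obtains
\[
ds^1[L(\lambda)] = \pm[L(\mu)] + \sum_i \pm[L(\mu_i')],
\]
where $\pm[L(\mu)]$ corresponds to removing the newly added arc and the sum ranges over the remaining maximal arcs of $\lambda$. The key combinatorial claim is that the new arc always encloses at least the rightmost cluster of $\mu$, so that each $\mu_i'$ is obtained from $\mu$ by deleting the right-endpoint $\bullet$ of some non-rightmost cluster and appending $a_{n-1}+1$ to the rightmost cluster. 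This forces $c_{\mathrm{rt}}(\mu_i') = c_{\mathrm{rt}}(\mu)+1$, whence $N(\mu_i') = N(\mu) - 1$.

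In the base case $N(\mu)=0$, the lift $\lambda$ is itself thin; its diagram has a unique (outermost) maximal arc, so $ds^1[L(\lambda)] = \pm[L(\mu)]$ and $[L(\mu)] \in \operatorname{Im}(ds^1)$ directly. For the inductive step, either the new arc of $\lambda$ happens to enclose every cluster of $\mu$ (so no $\mu_i'$ appears and $[L(\mu)] = \pm ds^1[L(\lambda)]$ lies immediately in the image) or else each $[L(\mu_i')]$ lies in the image by the inductive hypothesis applied to $N(\mu_i') < N(\mu)$; solving the displayed identity for $[L(\mu)]$ then completes the step.

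The main obstacle will be the precise combinatorial analysis of the new arc in $\lambda$. Its left endpoint $b_{\mathrm{new}}$ is the largest position $b < a_{n-1}+1$ with $f_\lambda(b)=\circ$, not on any arc of $\mu$, such that every position of $(b, a_{n-1}+1)$ already lies on an arc of $\mu$. One must check that this forces $b_{\mathrm{new}}$ to lie strictly to the left of the left endpoint of $\mu$'s rightmost maximal arc (guaranteeing enclosure of the entire rightmost cluster), and, more delicately, identify when the new arc also absorbs further clusters to the left (precisely when successive inter-cluster gaps are small enough to contain no ``free'' $\circ$s, i.e., $\circ$s not already on any arc of $\mu$). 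Once this structural description is established, the identification of each $\mu_i'$ and the equality $N(\mu_i') = N(\mu) - 1$ are immediate, and the induction concludes the proof.
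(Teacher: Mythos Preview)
Your proposal is correct and follows essentially the same approach as the paper: both lift $\mu$ by placing a new $\bullet$ immediately to the right of the rightmost $\bullet$, observe that the resulting new arc is maximal and encloses the entire rightmost cluster of $\mu$, and then induct on the size of that rightmost cluster (equivalently, on your invariant $N(\mu)$). The paper states the key combinatorial claim (that every remaining factor $L''$ has a strictly longer rightmost string of $\bullet$'s) without the detailed justification you outline, but the argument is the same.
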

		
		\begin{proof}
			In particular $DS_xL$ is multiplicity-free for a simple $\gg$-module $L$, and the factors are obtained by removing maximal arcs.  So let $L'$ be a simple $\gg_x$-module, with arc diagram $f'$.  Let $f_1$ be the arc diagram obtained by adding a symbol $\bullet$ to the first free space to the right of all symbols $\bullet$ of $f'$.  Then this new symbol will give a maximal arc in $f_1$. If we write $L_1$ for the irreducible representation corresponding to $f_1$, then in $\Gr_-(\pp(n-1))$ we will have a multiplicity-free sum
			\[
			ds_x[L_1]=\pm[L']+\sum_{L''} \pm[L''].
			\] 
			By switching the parity of $L_1$, we can assume the sign in front of $[L']$ is positive.  Now we can induct on the length of the furthest most right string of symbols $\bullet$ in the arc diagram, for which all $L''$s will have a longer length than $L$, giving surjectivity.
		\end{proof}
		
		\begin{example}[The following example is taken from \cite{ES2}]
			Let $n=9$, and consider the dominant weight $\lambda=\epsilon_3+3\epsilon_4+3\epsilon_5+6\epsilon_6+8\epsilon_7+8\epsilon_8+8\epsilon_9$.  We draw the associated arc diagram below:
			$$
			\begin{tikzpicture}
				\draw (0,0) -- (10,0);
				\draw (0.5,0) circle (3pt);
				\draw (1,0) circle (3pt);
				\draw (1.5,0) node[label=center:{\large $\bullet$}] {};
				\draw (1,0) .. controls (1.125,0.75) and (1.375,0.75) .. (1.5,0);
				\draw (2,0) node[label=center:{\large $\bullet$}] {};
				\draw (0.5,0) .. controls (1,1.5) and (1.5,1.5) .. (2,0);
				\draw (2.5,0) circle (3pt);
				\draw (3,0) node[label=center:{\large $\bullet$}] {};
				\draw (2.5,0) .. controls (2.625,0.75) and (2.875,0.75) .. (3,0);
				\draw (3.5,0) circle (3pt);
				\draw (4,0) circle (3pt);
				\draw (4.5,0) node[label=center:{\large $\bullet$}] {};
				\draw (4,0) .. controls (4.125,0.75) and (4.375,0.75) .. (4.5,0);
				\draw (5,0) node[label=center:{\large $\bullet$}] {};
				\draw (3.5,0) .. controls (4,1.5) and (4.5,1.5) .. (5,0);
				\draw (5.5,0) circle (3pt);
				\draw (6,0) circle (3pt);
				\draw (6.5,0) circle (3pt);
				\draw (7,0) node[label=center:{\large $\bullet$}] {};
				\draw (6.5,0) .. controls (6.625,0.75) and (6.875,0.75) .. (7,0);
				\draw (7.5,0) circle (3pt);
				\draw (8,0) circle (3pt);
				\draw (8.5,0) node[label=center:{\large $\bullet$}] {};
				\draw (8,0) .. controls (8.125,0.75) and (8.375,0.75) .. (8.5,0);
				\draw (9,0) node[label=center:{\large $\bullet$}] {};
				\draw (7.5,0) .. controls (8,1.5) and (8.5,1.5) .. (9,0);
				\draw (9.5,0) node[label=center:{\large $\bullet$}] {};
				\draw (6,0) .. controls (7,2.5) and (8.5,2.5) .. (9.5,0);
			\end{tikzpicture}
			$$
			We see that there are 4 maximal arcs in the diagram, thus $\DS_xL(\lambda)$ has 4 simple factors $\Pi L(\lambda_1)$, $L(\lambda_2)$, $L(\lambda_3)$, and $L(\lambda_4)$.  They are listed with their arc diagrams below, along with the corresponding value of $z$: $\lambda_1=2\epsilon_2+4\epsilon_3+4\epsilon_4+7\epsilon_5+9\epsilon_6+9\epsilon_7+9\epsilon_8$, $z=7$:
			$$
			\begin{tikzpicture}
				\draw (0,0) -- (10,0);
				\draw (0.5,0) circle (3pt);
				\draw (1,0) circle (3pt);
				\draw (1.5,0) node[label=center:{\large $\bullet$}] {};
				\draw (1,0) .. controls (1.125,0.75) and (1.375,0.75) .. (1.5,0);
				\draw (2,0) circle (3pt);
				\draw (2.5,0) circle (3pt);
				\draw (3,0) node[label=center:{\large $\bullet$}] {};
				\draw (2.5,0) .. controls (2.625,0.75) and (2.875,0.75) .. (3,0);
				\draw (3.5,0) circle (3pt);
				\draw (4,0) circle (3pt);
				\draw (4.5,0) node[label=center:{\large $\bullet$}] {};
				\draw (4,0) .. controls (4.125,0.75) and (4.375,0.75) .. (4.5,0);
				\draw (5,0) node[label=center:{\large $\bullet$}] {};
				\draw (3.5,0) .. controls (4,1.5) and (4.5,1.5) .. (5,0);
				\draw (5.5,0) circle (3pt);
				\draw (6,0) circle (3pt);
				\draw (6.5,0) circle (3pt);
				\draw (7,0) node[label=center:{\large $\bullet$}] {};
				\draw (6.5,0) .. controls (6.625,0.75) and (6.875,0.75) .. (7,0);
				\draw (7.5,0) circle (3pt);
				\draw (8,0) circle (3pt);
				\draw (8.5,0) node[label=center:{\large $\bullet$}] {};
				\draw (8,0) .. controls (8.125,0.75) and (8.375,0.75) .. (8.5,0);
				\draw (9,0) node[label=center:{\large $\bullet$}] {};
				\draw (7.5,0) .. controls (8,1.5) and (8.5,1.5) .. (9,0);
				\draw (9.5,0) node[label=center:{\large $\bullet$}] {};
				\draw (6,0) .. controls (7,2.5) and (8.5,2.5) .. (9.5,0);
			\end{tikzpicture}
			$$
			$\lambda_2=4\epsilon_3+4\epsilon_4+7\epsilon_5+9\epsilon_6+9\epsilon_7+9\epsilon_8$, $z=6$:
			$$
			\begin{tikzpicture}
				\draw (0,0) -- (10,0);
				\draw (0.5,0) circle (3pt);
				\draw (1,0) circle (3pt);
				\draw (1.5,0) node[label=center:{\large $\bullet$}] {};
				\draw (1,0) .. controls (1.125,0.75) and (1.375,0.75) .. (1.5,0);
				\draw (2,0) node[label=center:{\large $\bullet$}] {};
				\draw (0.5,0) .. controls (1,1.5) and (1.5,1.5) .. (2,0);
				\draw (2.5,0) circle (3pt);
				\draw (3,0) circle (3pt);
				\draw (3.5,0) circle (3pt);
				\draw (4,0) circle (3pt);
				\draw (4.5,0) node[label=center:{\large $\bullet$}] {};
				\draw (4,0) .. controls (4.125,0.75) and (4.375,0.75) .. (4.5,0);
				\draw (5,0) node[label=center:{\large $\bullet$}] {};
				\draw (3.5,0) .. controls (4,1.5) and (4.5,1.5) .. (5,0);
				\draw (5.5,0) circle (3pt);
				\draw (6,0) circle (3pt);
				\draw (6.5,0) circle (3pt);
				\draw (7,0) node[label=center:{\large $\bullet$}] {};
				\draw (6.5,0) .. controls (6.625,0.75) and (6.875,0.75) .. (7,0);
				\draw (7.5,0) circle (3pt);
				\draw (8,0) circle (3pt);
				\draw (8.5,0) node[label=center:{\large $\bullet$}] {};
				\draw (8,0) .. controls (8.125,0.75) and (8.375,0.75) .. (8.5,0);
				\draw (9,0) node[label=center:{\large $\bullet$}] {};
				\draw (7.5,0) .. controls (8,1.5) and (8.5,1.5) .. (9,0);
				\draw (9.5,0) node[label=center:{\large $\bullet$}] {};
				\draw (6,0) .. controls (7,2.5) and (8.5,2.5) .. (9.5,0);
			\end{tikzpicture}
			$$
			$\lambda_3=\epsilon_3+3\epsilon_4+7\epsilon_5+9\epsilon_6+9\epsilon_7+9\epsilon_8$, $z=4$:
			$$
			\begin{tikzpicture}
				\draw (0,0) -- (10,0);
				\draw (0.5,0) circle (3pt);
				\draw (1,0) circle (3pt);
				\draw (1.5,0) node[label=center:{\large $\bullet$}] {};
				\draw (1,0) .. controls (1.125,0.75) and (1.375,0.75) .. (1.5,0);
				\draw (2,0) node[label=center:{\large $\bullet$}] {};
				\draw (0.5,0) .. controls (1,1.5) and (1.5,1.5) .. (2,0);
				\draw (2.5,0) circle (3pt);
				\draw (3,0) node[label=center:{\large $\bullet$}] {};
				\draw (2.5,0) .. controls (2.625,0.75) and (2.875,0.75) .. (3,0);
				\draw (3.5,0) circle (3pt);
				\draw (4,0) circle (3pt);
				\draw (4.5,0) node[label=center:{\large $\bullet$}] {};
				\draw (4,0) .. controls (4.125,0.75) and (4.375,0.75) .. (4.5,0);
				\draw (5,0) circle (3pt);
				\draw (5.5,0) circle (3pt);
				\draw (6,0) circle (3pt);
				\draw (6.5,0) circle (3pt);
				\draw (7,0) node[label=center:{\large $\bullet$}] {};
				\draw (6.5,0) .. controls (6.625,0.75) and (6.875,0.75) .. (7,0);
				\draw (7.5,0) circle (3pt);
				\draw (8,0) circle (3pt);
				\draw (8.5,0) node[label=center:{\large $\bullet$}] {};
				\draw (8,0) .. controls (8.125,0.75) and (8.375,0.75) .. (8.5,0);
				\draw (9,0) node[label=center:{\large $\bullet$}] {};
				\draw (7.5,0) .. controls (8,1.5) and (8.5,1.5) .. (9,0);
				\draw (9.5,0) node[label=center:{\large $\bullet$}] {};
				\draw (6,0) .. controls (7,2.5) and (8.5,2.5) .. (9.5,0);
			\end{tikzpicture}
			$$
			$\lambda_4=\epsilon_3+3\epsilon_4+3\epsilon_5+6\epsilon_6+8\epsilon_7+8\epsilon_8$, $z=0$:
			$$
			\begin{tikzpicture}
				\draw (0,0) -- (10,0);
				\draw (0.5,0) circle (3pt);
				\draw (1,0) circle (3pt);
				\draw (1.5,0) node[label=center:{\large $\bullet$}] {};
				\draw (1,0) .. controls (1.125,0.75) and (1.375,0.75) .. (1.5,0);
				\draw (2,0) node[label=center:{\large $\bullet$}] {};
				\draw (0.5,0) .. controls (1,1.5) and (1.5,1.5) .. (2,0);
				\draw (2.5,0) circle (3pt);
				\draw (3,0) node[label=center:{\large $\bullet$}] {};
				\draw (2.5,0) .. controls (2.625,0.75) and (2.875,0.75) .. (3,0);
				\draw (3.5,0) circle (3pt);
				\draw (4,0) circle (3pt);
				\draw (4.5,0) node[label=center:{\large $\bullet$}] {};
				\draw (4,0) .. controls (4.125,0.75) and (4.375,0.75) .. (4.5,0);
				\draw (5,0) node[label=center:{\large $\bullet$}] {};
				\draw (3.5,0) .. controls (4,1.5) and (4.5,1.5) .. (5,0);
				\draw (5.5,0) circle (3pt);
				\draw (6,0) circle (3pt);
				\draw (6.5,0) circle (3pt);
				\draw (7,0) node[label=center:{\large $\bullet$}] {};
				\draw (6.5,0) .. controls (6.625,0.75) and (6.875,0.75) .. (7,0);
				\draw (7.5,0) circle (3pt);
				\draw (8,0) circle (3pt);
				\draw (8.5,0) node[label=center:{\large $\bullet$}] {};
				\draw (8,0) .. controls (8.125,0.75) and (8.375,0.75) .. (8.5,0);
				\draw (9,0) node[label=center:{\large $\bullet$}] {};
				\draw (7.5,0) .. controls (8,1.5) and (8.5,1.5) .. (9,0);
				\draw (9.5,0) circle (3pt);
			\end{tikzpicture}
			$$
		\end{example}

		\subsection{Exceptional cases} We now explain the case of the exceptional Lie superalgebras $G(3),F(4),$ or $D(2|1;a)$.  These all have defect one, and their atypical blocks have one of the following extension graphs: 
		\[
		\xymatrix{
			A_\infty^\infty: & \cdots \ar@{-}[r] & L^{-1} \ar@{-}[r] & L^0 \ar@{-}[r] & L^{1} \ar@{-}[r] & \cdots 
		}
		\]
		\[
		\xymatrix{D_\infty: & L^{1} \ar@{-}[r] & L^2 \ar@{-}[r] & L^{3} \ar@{-}[r] & L^4 \ar@{-}[r] & \cdots \\
			&  & L^0\ar@{-}[u] & & & }
		\]
		
		Specifically, $A_{\infty}^{\infty}$ will be the extension graph for certain blocks of $F(4)$ and for blocks of $D(2|1;a)$ when $a\in\mathbb{Q}$.  On the other hand $D_{\infty}$ will be the extension graph for all blocks of $G(3)$, along with certain blocks of $F(4)$ and $D(2|1;a)$.
		
		\begin{remark}
			Note that it is clear that the above extension graphs are bipartite.  Extension graphs are in fact always bipartite for basic classical Lie superalgebras, as was hinted in Remark~\ref{purity_rmk} and is shown in \cite{G4}.
		\end{remark}
		
		The following lemma is determined from the full relations on the extension graphs for each block, which are described in \cite{Ger} and \cite{M}.
		\begin{lemma}\label{atyp 1 projs}
			Let $P(L^i)$ be the projective indecomposable cover of a simple, non-projective module $L^i$ over an exceptional Lie superalgebra. Then the radical and socle filtrations of $P(L^i)$ coincide, with socle and cosocle isomorphic to $L$, and middle layer isomorphic to 
			\[
			\bigoplus\limits_{j\in Adj(i)}L^j
			\]
			where $Adj(j)$ denotes the vertices adjacent to $i$ in the extension graph containing $L^i$.  
		\end{lemma}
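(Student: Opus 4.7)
The plan is to reduce the assertion to the explicit descriptions of the category $\cF(\gg)$ for atypicality $1$ blocks of exceptional Lie superalgebras given by Germoni \cite{Ger} (for $D(2|1;a)$) and Martirosyan \cite{M} (for $G(3)$ and $F(4)$). From those works one can read off the Loewy structure of every indecomposable projective, so the lemma becomes a case-by-case verification; the task is to organise the verification in a conceptual way.

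First I would exploit the fact that $\cF(\gg)$ is a Frobenius category: projective and injective objects coincide via the duality $M\mapsto M^{*}$, so $P(L^{i})\cong I(L^{i})$. This immediately forces both the socle and the cosocle of $P(L^{i})$ to be isomorphic to $L^{i}$, giving the outer layers of the assertion. Next, the first radical layer $\operatorname{rad} P(L^{i})/\operatorname{rad}^{2} P(L^{i})$ is the semisimple module $\bigoplus_{j} \operatorname{Ext}^{1}_{\cF(\gg)}(L^{i},L^{j})\otimes L^{j}$, which by the very definition of the extension graph equals $\bigoplus_{j\in\operatorname{Adj}(i)} L^{j}$, each summand appearing exactly once because the $\operatorname{Ext}^{1}$-spaces between distinct simples are one-dimensional in a defect one block. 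Self-duality together with $P(L^{i})\cong I(L^{i})$ then yields the same description of the penultimate socle layer $\operatorname{soc}^{2} P(L^{i})/\operatorname{soc} P(L^{i})$.

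The main obstacle is the rigidity assertion: that the Loewy length of $P(L^{i})$ is exactly $3$ and that the radical and socle filtrations actually agree (not just that the top two and bottom two layers match). To handle this I would compute $\dim P(L^{i})$ via BGG reciprocity, expressing it as $\sum_{\mu}\, [K(\mu):L^{i}]\cdot\dim K(\mu)$ for an appropriate family of standardly filtered modules, and verify that the result equals $2\dim L^{i}+\sum_{j\in\operatorname{Adj}(i)}\dim L^{j}$. This dimension count already shows that the three conjectured Loewy layers exhaust $P(L^{i})$, ruling out additional composition factors hidden in an intermediate filtration piece. Finally, either the explicit Loewy series tabulated in \cite{Ger,M}, or the observation that for the quivers $A_{\infty}^{\infty}$ and $D_{\infty}$ the only possible relations are consistent with a single three-step filtration, forces the radical and socle filtrations of $P(L^{i})$ to coincide, completing the proof.
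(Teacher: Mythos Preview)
Your approach matches the paper's: the paper does not give an argument at all, it simply states that the lemma ``is determined from the full relations on the extension graphs for each block, which are described in \cite{Ger} and \cite{M}.'' You are doing the same thing---extracting the result from Germoni and Martirosyan---but with more conceptual scaffolding (Frobenius structure, identification of the head of the radical via $\operatorname{Ext}^1$, BGG dimension count). That scaffolding is sound and is a reasonable way to organise what one actually finds in those references.

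One small gap: the step ``$\cF(\gg)$ is Frobenius, so $P(L^i)\cong I(L^i)$, hence $\operatorname{soc}P(L^i)\cong L^i$'' is not automatic. The Frobenius property only tells you that $P(L^i)$ is injective, i.e.\ $P(L^i)\cong I(L^j)$ for \emph{some} simple $L^j$; in general the Nakayama permutation $i\mapsto j$ need not be the identity. To get $j=i$ you need either that every simple in the block is self-dual (which does hold for the exceptional superalgebras---the paper notes $-\operatorname{Id}\in W$ for $D(2|1;a)$ and $F(4)$, and it can be checked for $G(3)$), or you need to read it off directly from the explicit projective resolutions in \cite{Ger} and \cite{M}. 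Since you ultimately invoke those references anyway, this is easily repaired, but the claim is not ``immediate'' from the Frobenius structure alone.
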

		
		Using the above lemma, we get the following.
		
		\begin{proposition}\label{atyp 1 blocks ds}
			Let $\mathcal{B}$ be an atypical block for an exceptional Lie superalgebra $\gg$, and let $x\in\gg_{1}$ be non-zero such that $[x,x]=0$.  Suppose that for some simple module $L$ in $\mathcal{B}$, $L_x$ is pure.  Then $L_x$ is pure for all simples $L$ in $\mathcal{B}$.  Further we have the following isomorphisms of $\gg_x$-modules:
			\[
			\hspace{-12.25em} \Ext(\mathcal{B})=A_{\infty}^\infty: \ \ \  L^i_x\cong\Pi^iL^0_x
			\]
			\[
			\Ext(\mathcal{B})=D_{\infty}: \ \ \  L^0_x\cong L^1_x, \ \ \ L^i_x\cong \Pi^{i-1} (L_x^0)^{\oplus 2} \text{ for }i\geq 2.
			\]
			By $\Ext(\mathcal{B})$ we denote the extension graph of $\mathcal{B}$.
		\end{proposition}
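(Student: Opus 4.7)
The plan is to extract the module isomorphisms from the structure of the projective covers $P(L^i)$ given by Lemma~\ref{atyp 1 projs}, combined with the vanishing $\DS_x P(L^i)=0$ (which follows from Theorem~\ref{th142}, since $P(L^i)$ is projective). The two canonical short exact sequences
$$0\to\rad(P(L^i))\to P(L^i)\to L^i\to 0,\qquad 0\to L^i\to\rad(P(L^i))\to\bigoplus_{j\in Adj(i)}L^j\to 0$$
will be fed into Hinich's 5-term sequence (Lemma~\ref{reduced}) to produce relations at the module level, not merely in the Grothendieck group.

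From the first short exact sequence, the vanishing $\DS_x P(L^i)=0$ immediately yields $\DS_x L^i\cong\Pi\DS_x(\rad(P(L^i)))$ as $\gg_x$-modules. Plugging this identification into the 5-term sequence for the second short exact sequence produces
$$0\to E\to \DS_x L^i\xrightarrow{\phi}\Pi\DS_x L^i\to\bigoplus_{j\in Adj(i)}\DS_x L^j\to\Pi E\to 0.$$
The crucial observation is that $\phi=0$: by Theorem~\ref{th209}, since $\atyp L^i=1$ equals the rank of $x$, $\DS_x L^i$ is semisimple over $\gg_x$, and because $\gg_x$ is one of the purely even Lie algebras $\CC$, $\fsl_2$, or $\fsl_3$, no simple $\gg_x$-module is isomorphic to its own parity reverse, so $\Hom_{\gg_x}(N,\Pi N)=0$ for any semisimple $N$. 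The sequence then collapses into a short exact sequence $0\to\Pi\DS_x L^i\to\bigoplus_j\DS_x L^j\to\Pi\DS_x L^i\to 0$, which splits by semisimplicity, yielding the fundamental module isomorphism
$$\bigoplus_{j\in Adj(i)}\DS_x L^j\cong(\Pi\DS_x L^i)^{\oplus 2}.\qquad(\star)$$

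The explicit formulas then follow from iterating $(\star)$ along the extension graph, using Krull--Schmidt uniqueness of decomposition in the semisimple category $\cF(\gg_x)$. For $D_\infty$, applying $(\star)$ at $i=0$ and $i=1$ (where $Adj(0)=Adj(1)=\{2\}$) gives $L^2_x\cong(\Pi L^0_x)^{\oplus 2}\cong(\Pi L^1_x)^{\oplus 2}$, which forces $L^0_x\cong L^1_x$; iterating $(\star)$ for $i\geq 2$ then yields $L^i_x\cong\Pi^{i-1}(L^0_x)^{\oplus 2}$ by induction. For $A_\infty^\infty$, $(\star)$ reads $L^{i-1}_x\oplus L^{i+1}_x\cong(\Pi L^i_x)^{\oplus 2}$; combined with the block's parity-normalized labeling (which pins down $L^1_x\cong\Pi L^0_x$ as the only consistent pure semisimple solution), induction yields $L^i_x\cong\Pi^i L^0_x$.

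Finally, every $L^i_x$ is thereby isomorphic to either a parity shift of $L^0_x$ or of $(L^0_x)^{\oplus 2}$, and purity is preserved under parity shifts, direct-sum doubling, and the taking of summands in a semisimple category; hence purity of any single $L^k_x$ transfers back to $L^0_x$ and then forward to every other $L^i_x$. The main obstacle is the key structural step of establishing $\phi=0$ and extracting a genuine module-level splitting from Hinich's 5-term sequence, rather than settling for a weaker Grothendieck-group identity which would not suffice to determine the $\DS_x L^i$ as modules.
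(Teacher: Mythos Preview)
Your overall strategy matches the paper's: feed the two short exact sequences coming from $P(L^i)$ into Hinich's lemma (Lemma~\ref{reduced}) and exploit $P(L^i)_x=0$. Your argument that $\phi=0$ is, however, different from the paper's and in one respect cleaner. The paper kills $\Hom(L^i_x,\Pi L^i_x)$ using purity of $L^i_x$, which is only assumed at a single vertex and therefore has to be propagated inductively along the extension graph. You instead invoke Theorem~\ref{th209} to get semisimplicity of every $L^i_x$, and observe that for the purely even algebra $\gg_x$ one always has $\Hom_{\gg_x}(N,\Pi N)=0$ for semisimple $N$. This yields the relation $(\star)$ at every vertex simultaneously, with no inductive propagation, and shows in passing that the module formulas hold independently of the purity hypothesis (purity is then used only for the final purity conclusion, exactly as you say).

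There is, however, a genuine gap in the $A_\infty^\infty$ case. The relation $L^{i-1}_x\oplus L^{i+1}_x\cong(\Pi L^i_x)^{\oplus 2}$ does not by itself determine $L^1_x$ from $L^0_x$: if $L^0_x=A\oplus B$ with $A\not\cong B$ simple, one could a priori have $L^1_x=(\Pi A)^{\oplus 2}$ and $L^{-1}_x=(\Pi B)^{\oplus 2}$, and this is compatible with $(\star)$ at $i=0$. Your appeal to ``the block's parity-normalized labeling'' is not an argument. To close the gap, fix a simple $\gg_x$-module $S$ and set $a_i=[L^i_x:S]$, $b_i=[L^i_x:\Pi S]$. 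Then $(\star)$ gives $a_{i-1}+a_{i+1}=2b_i$ and $b_{i-1}+b_{i+1}=2a_i$. Adding shows $c_i:=a_i+b_i$ satisfies $c_{i-1}+c_{i+1}=2c_i$, so $c_i$ is affine in $i$; since $c_i\geq 0$ for all $i\in\ZZ$, it is constant. Subtracting shows $d_i:=a_i-b_i$ satisfies $d_{i-1}+d_{i+1}=-2d_i$, whose general solution is $d_i=(-1)^i(\alpha+\beta i)$; the bound $|d_i|\leq c_i$ forces $\beta=0$. Hence $(a_i,b_i)=(a_0,b_0)$ for $i$ even and $(a_i,b_i)=(b_0,a_0)$ for $i$ odd, which (by semisimplicity) gives $L^i_x\cong\Pi^iL^0_x$. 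The paper's proof is equally terse at this point, but the two-sided infinitude of $A_\infty^\infty$ is genuinely what makes the recursion rigid.
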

		\begin{proof}
			
			For any simple module $L^i$ in $\mathcal{B}$ we have a short exact sequence
			\[
			0\to M\to P(L^i) \to L^i \to 0,
			\]
			where $P(L^i)$ denotes the projective cover of $L^i$ and $M$ is its radical.  By Lemma~\ref{reduced} and that $P(L^i)_x=0$, we find that $M_x\cong \Pi L_x^i$.  On the other hand by Lemma~\ref{atyp 1 projs}, we have the short exact sequence
			\[
			0\to L^i\to M\to\ \bigoplus\limits_{j\in Adj(i)}L^j\to 0
			\]
			Suppose that $L^i_x$ is pure, so that $\Hom(L^i_x,M_x)=0$.  Then we obtain by Lemma~\ref{reduced} the short exact sequence (using that $M_x\cong \Pi L_x^i$):
			\[
			0\to \Pi L_x^i\to\bigoplus\limits_{j\in Adj(i)}L^j_x\to\Pi L_x^i\to0
			\]
			Using connectedness of the extension graph, purity of $L_x^j$ for any $j$ easily follows, along with the formulas in the case of each type of block.
		\end{proof}

		In order to complete the description of the DS functor for exceptional  Lie superalgebras, we need to compute the value of the $DS$ functor on one module in every atypical block, and in particular check that it is pure so that Proposition~\ref{atyp 1 blocks ds} will apply.
		
		Thus let $\gg$ be one of the Lie superalgebras $D(2|1;a), G(3), F(4).$
		Let $\hh$ be a Cartan subalgebra of $\gg_0$.
		We denote by  $W$  the Weyl group of $\gg_0$ and  by $(-|-)$ the symmetric non-degenerate form on $\hh^*$ which is induced by a non-degenerate invariant
		form on $\gg$.
		
		Let $\Sigma$ be a base of $\gg$ which contains an isotropic root $\beta$.
		Fix a non-zero $x\in\gg_{\beta}$.  Set
		${\Delta}_x:=(\beta^{\perp}\cap{\Delta})\setminus\{\beta,-\beta\}$.
		By Proposition~\ref{lm202},  ${\gg}_x$ can be identified
		with a subalgebra of ${\gg}$ generated by the root spaces
		$\gg_{\alpha}$ with $\alpha\in{\Delta}_x$ and a Cartan subalgebra
		${\hh}_x\subset\hh$. If ${\Delta}_x$ is not empty, then
		${\Delta}_x$ is the root system of the Lie superalgebra ${\gg}_x$ and one can choose
		${\Sigma}_x$ in $\Delta_x$ such that 
		$\Delta^+({\Sigma}_x)=\Delta^+\cap {\Delta}_x$.
		For 
		$\gg=D(2|1;a),G(3),F(4)$  one has   ${\gg}_x=\mathbb{C},\mathfrak{sl}_2,\mathfrak{sl}_3$ respectively.
		
		\begin{lemma}\label{tame0}
			Let $L:=L(\lambda)$ be a finite-dimensional module and $(\lambda|\beta)=0$.
			Set $L':=L_{\gg_x}(\lambda|_{\hh_x})$. One has
			$$\DS_x(L)\cong \left\{\begin{array}{lll}
				L' &\text{ for } & G(3)\\
				L' &\text{ for } & D(2|1;a),\ F(4) \text{ if }\  L'\cong (L')^*\\
				L'\oplus (L')^*&\text{ for } & D(2|1;a),\ F(4) \text{ if }\  L'\not\cong (L')^*.
			\end{array}\right.$$
		\end{lemma}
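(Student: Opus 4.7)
The plan is to construct $\DS_x(L)$ from the highest weight vector and then constrain its decomposition using typicality, the involution $\sigma_x$, and the Hoyt--Reif supercharacter formula. First, since $\beta\in\Sigma$ is a simple isotropic root and $(\lambda|\beta)=0$, a highest weight vector $v_\lambda\in L(\lambda)$ is annihilated by $x$ (as $\beta$ is positive) and does not lie in $xL(\lambda)$: the latter follows because the weight space $L(\lambda)_{\lambda-\beta}$ vanishes, which one reads off from the representation theory of the $\mathfrak{sl}(1|1)$-subalgebra generated by $\gg_{\pm\beta}$ combined with $\lambda(\beta^\vee)=0$.  Thus $[v_\lambda]$ is a nonzero class in $\DS_x(L)$ of weight $\lambda|_{\hh_x}$, exhibiting $L'=L_{\gg_x}(\lambda|_{\hh_x})$ as a simple subquotient. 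This is the same argument used in the proof of Lemma~\ref{lemchichi}, adapted to the exceptional setting.

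Next, because $L$ has atypicality $1$, which equals the defect, Theorem~\ref{th209} forces $\DS_x(L)$ to be a typical $\gg_x$-module; since $\gg_x\in\{\CC,\mathfrak{sl}_2,\mathfrak{sl}_3\}$ is reductive, every finite-dimensional $\gg_x$-module is already semisimple, so $\DS_x(L)$ decomposes as a direct sum of simples. By Proposition~\ref{maincch} and Proposition~\ref{thetabasic}, the central characters of the summands lie in $(\eta_x^*)^{-1}(\chi_\lambda)$, which by Theorem~\ref{th210} has at most two elements, of the form $\{\chi',\sigma_x(\chi')\}$. For $G(3)$, $\sigma_x=\Id$ and only one character occurs; since distinct finite-dimensional simple $\mathfrak{sl}_2$-modules have distinct central characters, $\DS_x(L)$ is isotypic of type $L'$. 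For $D(2|1;a)$ and $F(4)$, the involution $\sigma_x$ acts on simple $\gg_x$-modules by duality---this is immediate from $\sigma_x=-\Id$ in the $D(2|1;a)$ case and follows from the fact that the nontrivial Dynkin-diagram involution of $\mathfrak{sl}_3$ sends a simple module to its dual in the $F(4)$ case---so the only simples that can appear are $L'$ and $(L')^*$.

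To pin down the multiplicities, I would apply the Hoyt--Reif formula $\sch\DS_x(L)=(\sch L)|_{\hh_x}$ from~\eqref{HR formula}. Using $\dim L(\lambda)_\lambda=1$ together with the vanishing $L(\lambda)_{\lambda-n\beta}=0$ for all $n\geq 1$---forced for $n=1$ exactly as above and for $n\geq2$ by inspection of the root systems, where in each exceptional case $n\beta$ is not expressible as a sum of other positive roots---one concludes that the coefficient of $e^{\lambda|_{\hh_x}}$ in $(\sch L)|_{\hh_x}$ equals $1$, so $L'$ appears in $\DS_x(L)$ with multiplicity exactly $1$. In the non-self-dual case for $D(2|1;a)$ and $F(4)$, the $\sigma_x$-symmetry forced by the central-character constraint then yields $(L')^*$ with multiplicity exactly $1$ as well. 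Combining the three cases gives the four stated alternatives.

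The main obstacle is the final multiplicity argument: the structural machinery easily reduces $\DS_x(L)$ to a sum of copies of $L'$ and $(L')^*$, but ruling out higher multiplicities requires genuine control over the weight spaces $L(\lambda)_{\lambda-n\beta}$, or equivalently over $(\sch L)|_{\hh_x}$ near its leading term.  The root-theoretic check for $n\geq2$ is case-by-case and the most likely place for a subtle slip; a cleaner alternative, trading elegance for length, is to simply invoke the explicit character formulas for the atypical simple modules of $D(2|1;a)$, $G(3)$, and $F(4)$ from \cite{M} and \cite{Ger} and read off $(\sch L)|_{\hh_x}$ directly.
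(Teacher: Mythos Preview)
Your overall strategy—produce $L'$ from the highest weight vector, then use the central-character machinery to restrict the possible constituents to $\{L',(L')^*\}$, then pin down multiplicities—matches the paper's, and your more explicit treatment of $[\DS_x(L):L']=1$ via the Hoyt--Reif formula (where the paper simply says ``it is easy to see'') is reasonable, with the root-theoretic caveat you already flagged.

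There is, however, a genuine gap in your last step for the non-self-dual case of $D(2|1;a)$ and $F(4)$. You assert that ``the $\sigma_x$-symmetry forced by the central-character constraint'' gives $(L')^*$ with multiplicity exactly~$1$, but Proposition~\ref{thetabasic} (and the surrounding discussion of $\eta_x^*$) only tells you that the simple constituents of $\DS_x(L)$ lie in $\{L',(L')^*\}$; it says nothing about their relative multiplicities. Nothing in the central-character analysis prevents $\DS_x(L)\cong L'$ alone, for instance. The paper closes this gap by a different and very short argument: for $D(2|1;a)$ and $F(4)$ the Weyl group of $\gg_0$ contains $-\Id$, so every finite-dimensional simple module satisfies $L\cong L^*$, whence $\DS_x(L)\cong\DS_x(L^*)\cong(\DS_x L)^*$ by Lemma~\ref{tensor}. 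Self-duality of $\DS_x(L)$ then forces $[\DS_x(L):(L')^*]=[\DS_x(L):L']=1$. You should replace the appeal to ``$\sigma_x$-symmetry from central characters'' with this self-duality argument.
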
 
		\begin{proof}
			It is easy to see that $[\DS_x(L):L']=1$.     
			Set $\lambda':=\lambda|_{\hh_x}$. 
			By Section~\ref{subsec_preimage_pullback_map},
			$\DS_x(L)$ is a typical module and each simple subquotient of $\DS_x(L)$
			is of the form
			$L_{\gg_x}(\nu)$ with $\nu\in\{\lambda', \sigma(\lambda')\}$, where $\sigma=\text{Id}$
			for $\gg=G(3)$, $\sigma=-\text{Id}$ for $D(2|1;a)$, and
			$\sigma$ is the Dynkin diagram automorphism of $\gg_x=\mathfrak{sl}_3$ in $F(4)$-case. 
			This gives the first formula.
			For $D(2|1;a)$, $F(4)$ one has $L_{\gg_x}(\nu)^*\cong L_{\gg_x}(\sigma(\nu))$;
			giving the second formula. Finally in $D(2|1;a), F(4)$
			the Weyl group contains $-\text{Id}$, so $L\cong L^*$ and thus
			$\DS_x(L)\cong \DS_x(L^*)$ by Lemma~\ref{tensor}, implying the third formula.
		\end{proof}

		We fix a triangular decomposition of $\gg_0$ and denote by $\Delta^+_0$ the corresponding set of positive roots. We consider 
		all bases $\Sigma$ for $\Delta$ which satisfy
		$\Delta^+_0\subset \Delta^+(\Sigma)$.
		We say that an isotropic root $\beta$ is of the {\em first type}
		if $\beta$ lies in  a base $\Sigma$ with $\Delta^+_0\subset \Delta^+(\Sigma)$.

		Take any base $\Sigma$ as above and denote by $\rho$ the corresponding Weyl vector.
		It is easy to see that a simple atypical module $L=L(\nu)$  satisfies the assumptions of Lemma~\ref{tame0} for some $\Sigma'$ and $\beta\in\Sigma'$ if and only if $\nu+\rho$ is orthogonal
		to an isotropic root of the first type.

		Let $\mathcal{B}$ be  an atypical block of $\gg$.  We call the block containing the trivial module  $L(0)$
		a {\em principal block}. Clearly, $\DS_x(L(0))$ is the trivial $\gg_x$-module,
		so Proposition~\ref{atyp 1 blocks ds} gives  $\DS_x(L)$ for each simple module $L$ in $\mathcal{B}_0)$.

		Combining Proposition~\ref{atyp 1 blocks ds} and Lemma~\ref{tame0}, we see that in order to compute $\DS_x(L)$ for each  simple $L$ in $\mathcal{B}$, it is enough to find
		$L(\nu)\in\text{Irr}(\mathcal{B})$ such that $\nu+\rho$ is orthogonal to  an isotropic root of the first type.
		Below we will list such $\nu$ for each non-principal atypical block for $D(2|1;a)$, $F(4)$ and $G(3)$.

		%
		%

		\subsubsection{Case $D(2|1;a)$}\label{D21a}
		For $\gg:=D(2|1;a)$ one has $\gg_x=\mathbb{C}$.
		The atypical blocks were described in \cite{Ger}, Thm. 3.1.1.

		The extension graph of the principal block $\mathcal{B}_0$ is $D_{\infty}$, so for a simple $L^i$ in $\mathcal{B}_0$ we have
		$\DS_x(L^i)=\mathbb{C}$
		for $i=0,1$ and $\DS_x(L^i)=\Pi^{i-1}(\mathbb{C})^{\oplus 2}$
		for $i>1$ (where $\mathbb{C}$ stands for the trivial even $\gg_x$-module).

		If $a$ is irrational, the principal block is
		the only atypical block in $\cF(\gg)$. Consider the case when $a$ is rational.
		Recall that  $\hh^*$ has an orthogonal basis
		$\{\varepsilon_1,\varepsilon_2,\varepsilon_3\}$ with 
		$$(\varepsilon_1,\varepsilon_1)=-\frac{1+a}{2},\ \ \ 
		(\varepsilon_2,\varepsilon_2)=\frac{1}{2},\ \ \ (\varepsilon_3,\varepsilon_3)=\frac{a}{2}.$$
		One has
		$$D(2|1;1)=\mathfrak{osp}(4|2),\ \ D(2|1;a)\cong D(2|1; -1-a)\cong D(2|1; a^{-1})$$
		so we can assume that  $0<a<1$ and write  $a=\frac{p}{q}$, where $p,q$ are relatively prime positive integers. 
		
		The atypical blocks  are $\mathcal{B}_k$ for  $k\in\mathbb{N}$ 
		(the principal block is $\mathcal{B}_0$). Consider the block $\mathcal{B}_k$ with 
		$k>0$. The extension graph of $\mathcal{B}_k$ is
		$A^{\infty}_{\infty}$. By \cite{Ger}, Thm. 3.1.1, the block 
		$\mathcal{B}_k$ contains a simple module $L$ with the highest weight $\lambda_{k;0}$
		satisfying $(\lambda_{k;0}+\rho|\beta)=0$ for 
		$$\beta:=\varepsilon_1+\varepsilon_2-\varepsilon_3.$$
		Taking $x\in\gg_{\beta}$ we can
		identify $\gg_x$ with $\mathbb{C}h$ for
		$h:=\varepsilon_1^*-\varepsilon_2^*$ (where $\varepsilon_1^*,\varepsilon_2^*,\varepsilon_3^*$ is the dual basis in $\hh$). 
		By Lemma~\ref{tame0} we get
		$$\DS_x(L)=L_{\gg_x}(k)\oplus L_{\gg_x}(-k),$$
		where $L_{\gg_x}(u)$ stands for the even one-dimensional $\gg_x$-module
		with $h$ acting by $k(p+q)$.
		By Proposition~\ref{atyp 1 blocks ds}, $\DS_x(L^i)\cong \Pi^i(\DS_x(L))$ 
		for each $L^i\in\text{Irr}(\mathcal{B}_k)$ (for $k>0$).

		\subsubsection{Case $G(3)$}
		For $\gg:=G(3)$ the atypical blocks were described in Thm. 4.1.1 of \cite{Ger}.
		The atypical blocks in $\cF(\gg)$ are $\mathcal{B}_k$ for  $k\in\mathbb{N}$;
		the extension graphs are  $D_{\infty}$. The block 
		$\mathcal{B}_k$ contains a simple module with the highest weight $\lambda_{k;0}$
		satisfying $(\lambda_{k;0}+\rho|\beta)=0$ for 
		$$\beta:=-\varepsilon_1+\delta.$$
		Taking $\Sigma:=\{\delta-\varepsilon_1,\varepsilon_2-\delta,\delta\}$ and 
		$x\in\gg_{\beta}$ we can
		identify $\gg_x$ with
		$\mathfrak{sl}_2$-triple corresponding to the root $\alpha=\varepsilon_1+2\varepsilon_2$.
		One has $\lambda_{k;0}=k\alpha$.
		Combining Lemma~\ref{tame0} and Proposition~\ref{atyp 1 blocks ds} we get
		$$\DS_x(L^0)\cong \DS_x(L^1)\cong L_{\mathfrak{sl}_2}(2k),\ \ \ 
		\DS_x(L^i)=\Pi^{i-1}(L_{\mathfrak{sl}_2}(2k))^{\oplus 2}\ 
		\text{ for }i>1.$$

		\subsubsection{Case $F(4)$}
		For $\gg:=F(4)$ we have $\gg_x\cong \mathfrak{sl}_3$. 
		The integral weight lattice is spanned by $\varepsilon_1$, $\varepsilon_2$,
		$\frac{1}{2}(\varepsilon_1+\varepsilon_2+\varepsilon_3)$ and $\frac{1}{2}\delta$;
		the parity is given by 
		$p(\frac{\varepsilon_i}{2})=0$ and $p(\frac{\delta}{2})=1$.

		The atypical blocks are described
		in Thm. 2.1 of \cite{M}. These blocks are
		parametrized by the pairs $(m_1,m_2)$, where
		$m_1,m_2\in \mathbb{N}$, $m_1\geq m_2$, and $m_1-m_2$ is divisible by 3.
		We denote the corresponding block by $\mathcal{B}_{(m_1;m_2)}$.

		The extension
		graph of $\mathcal{B}_{(i;i)}$ is $D_{\infty}$; the block
		$\mathcal{B}_{(0;0)}$ is principal.
		For $i>0$ the block 
		$\mathcal{B}_{(i;i)}$  contains a simple module $L(\lambda)$ with
		$$\lambda+\rho=(i+1)(\varepsilon_1+\varepsilon_2)-\beta_1, \ \text{ where }
		\beta_1:=
		\frac{1}{2}(-\varepsilon_1+\varepsilon_2-\varepsilon_3+\delta).$$
		One has
		$(\lambda+\rho|\beta_1)=0$. Take $x\in \gg_{\beta_1}$ and consider the base
		$$\Sigma_1:=\{\beta_1; \frac{1}{2}(\varepsilon_1+\varepsilon_2-\varepsilon_3-\delta); 
		\varepsilon_3; \varepsilon_1-\varepsilon_2\}.
		$$
		Then
		$\gg_x$ can be identified with $\mathfrak{sl}_3$ corresponding to the set of simple roots
		$\ \{\varepsilon_2+\varepsilon_3;\varepsilon_1-\varepsilon_3\}$ and Lemma~\ref{tame0} gives
		$$\DS_x(L(\lambda))=L_{\mathfrak{sl}_3}(i\omega_1+i\omega_2),$$
		where $\omega_1,\omega_2$  are the fundamental weights of $\mathfrak{sl}_3$.
		By Proposition~\ref{atyp 1 blocks ds} we get for the simple module $L^j$ in $\mathcal{B}_{(i;i)}$:
		$$
		\DS_x(L^0)\cong \DS_x(L^1)\cong L_{\mathfrak{sl}_3}(i\omega_1+i\omega_2),\ \ 
		\DS_x(L^j)\cong \Pi^{j-1}(L_{\mathfrak{sl}_3}(i\omega_1+i\omega_2))^{\oplus 2}\
		\text{ for } j>1.$$ 
		
		Consider a   block  $\mathcal{B}_{(i_1;i_2)}$  for $i_1\not=i_2$.
		The extension
		graph of this block is $A_{\infty}^{\infty}$  and this
		block  contains a simple module $L:=L(\lambda')$ with
		$$\lambda'+\rho=i_1\varepsilon_1+i_2\varepsilon_2+(i_1-i_2)\varepsilon_3.$$
		In particular, $(\lambda'+\rho|\beta_2)=0$ for
		$\beta_2:=\frac{1}{2}(-\varepsilon_1+\varepsilon_2+\varepsilon_3+\delta)$.
		Taking  $x\in\gg_{\beta_1}$ and
		$$\Sigma_2:=\{\beta_2; \varepsilon_2-\varepsilon_3;  -\beta_1; \frac{1}{2}(\varepsilon_1-\varepsilon_2-\varepsilon_3+\delta)\}$$
		we identify
		$\gg_x$ with $\mathfrak{sl}_3$ corresponding to the set of simple roots
		$\{\varepsilon_2-\varepsilon_3;\varepsilon_1+\varepsilon_3\}$. Combining Lemma~\ref{tame0} 
		and Proposition~\ref{atyp 1 blocks ds} we get 
		$$\DS_x(L)=L_{\mathfrak{sl}_3}(i_1\omega_1+i_2\omega_2)\oplus 
		L_{\mathfrak{sl}_3}(i_2\omega_1+i_1\omega_2),\ \ \DS_x(L^i)\cong \Pi^i(\DS_x(L))$$
		for each $L^i$ in the block $\mathcal{B}_{(i_1;i_2)}$.
		
		\section{Declarations}
		
		\subsection{Funding}
		M.G. was supported by  ISF Grant 1957/21. C.H. was supported by ISF Grant 1221/17. V.S. was supported by NSF Grant 2001191. A.S. was   supported by ISF Grant 711/18 and NSF-BSF Grant 2019694. 
		
		\subsection{Ethical statement:} The authors have no conflicts of interest.

	\end{document}